\setlist[enumerate]{leftmargin=*}
\newtheorem{theorem}{Theorem}[section]
\newtheorem{lemma}[theorem]{Lemma}
\newtheorem{corollary}[theorem]{Corollary}
\newtheorem{prop}[theorem]{Proposition}
\theoremstyle{definition}
\newtheorem{definition}[theorem]{Definition}
\theoremstyle{remark}
\newtheorem{remark}[theorem]{Remark}
\numberwithin{equation}{section}
\newcommand{\ZZ}{\mathbb{Z}}
\newcommand{\NN}{\mathbb{N}}
\newcommand{\RR}{\mathbb{R}}
\newcommand{\CC}{\mathbb{C}}
\newcommand{\Npos}{\NN_+}
\newcommand{\Rz}{\mathcal{R}}
\newcommand{\opL}{\mathcal{L}}
\newcommand{\opAv}{\mathcal{A}}
\newcommand{\opM}{\mathcal{M}}
\newcommand{\modE}{\mathcal{E}}
\newcommand{\Comp}{\mathcal{C}}
\newcommand{\Dy}{\mathcal{D}}
\newcommand{\Int}{\mathcal{I}}
\DeclareMathOperator{\diam}{diam}
\newcommand{\Hol}{\mathcal{H}}
\newcommand{\Pol}{\mathcal{P}}
\newcommand{\fin}{\mathrm{fin}}
\newcommand{\cO}{\mathcal{O}}
\newcommand{\baK}{\mathcal{K}}
\newcommand{\Bdd}{\mathcal{B}}
\newcommand{\Abel}{\mathcal{J}}
\newcommand{\id}{\mathrm{id}}
\newcommand{\myth}{\omega_*}
\renewcommand{\colon}{\,:\,}
\newcommand{\pred}{\mathfrak{p}}
\renewcommand{\succ}{\mathfrak{s}}
\newcommand{\chr}{\mathbf{1}}
\newcommand{\defeq}{\mathrel{:=}}
\newcommand{\eqdef}{\mathrel{=:}}
\newcommand{\ord}{\mathrm{ord}}
\DeclareMathOperator{\supp}{supp}
\newcommand{\convcc}{\xrightarrow[\mathrm{cc}]{}}
\newcommand{\TT}[1]{\mathbb{T}_{#1}}
\newcommand{\Tq}{\TT{q}}
\newcommand{\mTT}[1]{m_{\TT{#1}}}
\newcommand{\mq}{\mTT{q}}
\newcommand{\Lq}{\opL_{\Tq}}
\newcommand{\Rq}{\Rz_{\Tq}}
\begin{document}

\title[Riesz transform and spectral multipliers for the flow Laplacian]{Riesz transform and spectral multipliers for the flow Laplacian on nonhomogeneous trees}

\author[A. Martini]{Alessio Martini}
\address[A. Martini]{Dipartimento di Scienze Matematiche ``Giuseppe Luigi Lagrange'' \\   Politecnico di Torino \\ Corso Duca degli Abruzzi 24 \\ 10129 Torino \\ Italy}
\email{alessio.martini@polito.it}

\author[F. Santagati]{Federico Santagati}
\address[F. Santagati]{Dipartimento di Matematica e Applicazioni \\   Universit\`a degli Studi di Milano-Bicocca  \\ Via Cozzi 55 \\ 20125 Milano \\ Italy}
\email{federico.santagati@unimib.it}

\author[A. Tabacco]{Anita Tabacco}
\address[A. Tabacco]{Dipartimento di Scienze Matematiche ``Giuseppe Luigi Lagrange'' \\  Politecnico di Torino \\ Corso Duca degli Abruzzi 24 \\ 10129 Torino \\ Italy}
\email{anita.tabacco@polito.it}

\author[M. Vallarino]{Maria Vallarino}
\address[M. Vallarino]{Dipartimento di Scienze Matematiche ``Giuseppe Luigi Lagrange'' \\  Politecnico di Torino \\ Corso Duca degli Abruzzi 24 \\ 10129 Torino \\ Italy}
\email{maria.vallarino@polito.it}

\thanks{Work partially supported by the Project ``Harmonic analysis on continuous and discrete structures'' (bando Trapezio Compagnia di San Paolo CUP E13C21000270007). The first-named author gratefully acknowledges the financial support of Compagnia di San Paolo through the ``Starting Grant'' programme. The authors are members of the Gruppo Nazionale per l'Analisi Matema\-tica, la Probabilit\`a e le loro Applicazioni (GNAMPA) of the Istituto Nazionale di Alta Matematica (INdAM)}

\subjclass[2020]{05C05, 05C21, 42B20, 43A99}
\keywords{Tree, nondoubling measure, Riesz transform, heat kernel, spectral multiplier}

\begin{abstract}
Let $T$ be a locally finite tree equipped with a flow measure $m$. Let $\opL$ be the flow Laplacian on $(T,m)$. We prove that the first order Riesz transform $\nabla \opL^{-1/2}$ is bounded on $L^p(m)$ for $p\in (1,\infty)$. Moreover, we prove a sharp $L^p$ spectral multiplier theorem of Mihlin--H\"ormander type for $\opL$. In the case where $m$ is locally doubling, we also prove corresponding weak type and Hardy space endpoint bounds. This generalises results by Hebisch and Steger for the canonical flow Laplacian on homogeneous trees to the setting of nonhomogeneous trees with arbitrary flow measures. The proofs rely on approximation and perturbation arguments, which allow one to transfer to any flow tree a number of $L^p$ bounds that hold on homogeneous trees of arbitrarily large degree and are uniform in the degree.
\end{abstract}

\maketitle

\section{Introduction}

\subsection{Summary of the results}
Let $T$ denote an infinite tree, i.e., an infinite connected graph with no loops, equipped with the usual discrete distance $d$. We identify $T$ with its set of vertices and say that $x,y \in T$ are neighbours if $d(x,y)=1$; in this case, we write $x \sim y$. We shall assume throughout that $T$ is \emph{locally finite}, i.e., every vertex has finitely many neighbours; the number of neighbours of a vertex is also known as its \emph{degree}.

Let $\partial T$ be the boundary at infinity of $T$ (defined, e.g., as in \cite[Section I.1]{FTN}). We choose a boundary point $\myth \in \partial T$ that we think of as the root of $T$. We shall call $T$ a \emph{tree with root at infinity} whenever a root $\myth \in \partial T$ has been fixed. Such a choice induces a natural partial order on $T$: namely, for any $x,y \in T$, we say that $x \leq y$ if $y$ belongs to the semi-infinite geodesic from $x$ to $\myth$. We shall think of $T$ as hanging from its root $\myth$, so if $x \leq y$ we say that $x$ is below $y$ and $y$ is above $x$. In this way, any $x \in T$ has exactly one neighbour lying above $x$, which shall be referred to as the \emph{predecessor} of $x$ and denoted by $\pred(x)$; the remaining neighbours lie below $x$ and form the set $\succ(x)$ of \emph{successors} of $x$. 

We set $q(x) = \# \succ(x)$, where $\#$ is the counting measure.
The tree $T$ is called \emph{homogeneous} if $x \mapsto q(x)$ is constant, i.e., if every vertex $x$ has the same degree, which is then said to be the degree of the tree.
For any positive integer $q$, we shall denote by $\Tq$ the homogeneous tree of degree $q+1$ with a fixed root at infinity (so $q(x) = q$ for all $x \in \Tq$); of course, for any positive integer $q$ there is only one such tree $\Tq$, up to isomorphisms. 
While homogeneous trees play an important role in our discussion, one of the main objectives of the present paper is to develop an analysis also encompassing the case of \emph{nonhomogeneous} trees $T$ with root at infinity.

Given a semi-infinite geodesic $(x_j)_{j \geq 0}$ in $T$ with $\myth$ as an endpoint, we define the \emph{level function} $\ell: T \to \ZZ$  as
\[
\ell(x) = \lim_{j \to \infty} \left[ j-d(x,x_j) \right], \qquad x \in T.
\]
It is easily seen that the level function $\ell$ is uniquely determined by $\myth$ up to an additive shift; in particular, the collection of the level sets of $\ell$, also known as \emph{horocycles}, does not depend on the geodesic $(x_j)_{j \geq 0}$. When working with a tree $T$ with root at infinity, we shall assume that a level function $\ell$ has been chosen. Notice that, for any $x,y \in T$, we have $x \leq y$ if and only if $d(x,y) = \ell(y)-\ell(x)$.

The following definition describes a natural class of measures on trees with root at infinity, which are the object of our study.

\begin{definition}\label{d: flow}
A \emph{flow measure} on a tree $T$ with root at infinity is a function  $m: T \to (0,\infty)$ such that
\[
m(x) = \sum_{y \in \succ(x)} m(y), \qquad x \in T.
\]
We say that the pair $(T,m)$ is a \emph{flow tree} if $T$ is a tree with root at infinity and $m$ is a flow measure on $T$.
\end{definition}

Variants of the above definition can be found in the literature, also encompassing the case where the root is a vertex and not a boundary point; we refer the reader to \cite{LP} for a wide-ranging account of flows in Probability and Analysis on trees, with connections to Computer Science and Operations Research. In this work we focus on a Harmonic Analysis perspective and, specifically, the study of certain singular integral operators naturally associated with flow trees; of course, as we are working with a discrete measure, here local integrability is not an issue, so the term ``singular integral'' refers to a lack of integrability at infinity.

The existence of a flow measure on a tree $T$ with root at infinity, in the sense of Definition \ref{d: flow},  implies that $T$ has no leaves, i.e., $q(x) > 0$ for all $x \in T$. We shall identify a flow measure $m$ on $T$ with the discrete measure with density $m$ with respect to the counting measure $\#$ on $T$. In this way, we can consider Lebesgue spaces $L^p(m)$ and other function spaces on $T$ associated with a flow measure $m$.

In the case of the homogeneous tree $\Tq$, the \emph{canonical flow measure} $m_{\Tq}$ is given by $m_{\Tq}(x) = q^{\ell(x)}$ for all $x \in \Tq$. When $q \geq 2$, it is readily seen that the $m_{\Tq}$-measure of balls with a fixed centre grows exponentially with the radius. This example shows that, for an arbitrary flow tree $(T,m)$, the metric measure space $(T,d,m)$ need not satisfy the doubling property; indeed, \cite[Theorem 2.5]{LSTV} shows that the doubling property fails for most flow trees, so many standard techniques for the analysis of singular integrals are not directly available in this context. On the other hand, in recent years there has been considerable interest in extending aspects of the theory of singular integrals to nondoubling settings (see, e.g., \cite{CMM,CeMe,hs,MMV,NTV,T,To,Ve} and references therein), and the present work can be thought of as a contribution to this effort.

A flow measure $m$ is called \emph{locally doubling} if for every $R>0$ there exists a constant $D_R$ such that 
\[
m(B_{2r}(x))\leq D_R \, m(B_r(x)) \qquad \forall x\in T, \, r \in (0,R],
\]
where $B_r(x)$ denotes the ball centred at $x$ of radius $r$ with respect to $d$. It is known that, if a tree with root at infinity can be equipped with a locally doubling flow measure, then $T$ has \emph{bounded degree}, i.e., $\sup_{x \in T} q(x) < \infty$ \cite[Corollary 2.3]{LSTV}.
Following the seminal work \cite{hs}, a Calder\'on--Zygmund theory on a locally doubling flow tree $(T,m)$ was developed in \cite{LSTV}, and suitable Hardy and BMO spaces $H^1(m)$ and $BMO(m)$ were introduced (see also \cite{ATV2,ATV1,LSTV2,San}). Such theory is applied in this paper to study boundedness properties of singular integral operators associated with a natural Laplacian on flow trees. 

Let $(T,m)$ be a flow tree, and denote by $\CC^{T}$ the set of complex-valued functions defined on $T$. We  define the \emph{flow gradient} $\nabla$ as 
\[
\nabla f(x)=f(x)-f(\pred(x)), \qquad f \in \CC^T, \, x\in T,
\]
and the \emph{flow Laplacian} $\opL$ as $\frac{1}{2} \nabla^*\nabla$ (see Section \ref{s: Laplacian} for more details). We prove in Corollary \ref{c: spectrum} that for every $p\in [1,\infty]$ the spectrum of $\opL$ on $L^p(m)$ is $[0,2]$. In particular, $\opL$ has no spectral gap on $L^2(m)$.

One of our main results deals with $L^p$ boundedness properties of the first order Riesz transform associated with $\opL$, which is defined as $\Rz=\nabla \opL^{-1/2}$. The boundedness properties of $\Rz$ were studied in \cite{hs, LMSTV, MSV} in the setting of homogeneous trees equipped with the canonical flow Laplacian. In this paper, we obtain an analogous result for any tree equipped with a locally doubling flow measure, and actually the non-endpoint boundedness properties hold true for arbitrary flow trees.

\begin{theorem}\label{t: Riesz}
Let $(T,m)$ be a flow tree.
Then, the Riesz transform $\Rz$ is bounded on $L^p(m)$ for every $p \in (1,\infty)$.
Moreover, if $m$ is locally doubling, then $\Rz$ is of weak type $(1,1)$ and bounded from $H^1(m)$ to $L^1(m)$.
\end{theorem}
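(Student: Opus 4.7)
The plan is to reduce the theorem to corresponding estimates on the homogeneous trees $\Tq$ equipped with the canonical flow measure $\mq$---for which the Riesz transform $\Rq = \nabla \Lq^{-1/2}$ has been analysed in \cite{hs, LMSTV, MSV}---and then to transfer those bounds to an arbitrary flow tree via the approximation/perturbation scheme advertised in the abstract. The endpoint statements, available under the locally doubling hypothesis, will be derived from the $L^p$ bounds by invoking the Calder\'on--Zygmund theory for locally doubling flow trees developed in \cite{LSTV}.

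The first step is to revisit the known proofs on $(\Tq,\mq)$ and extract constants that are uniform in the degree $q$. Using subordination one writes
\[
\Rq = \frac{1}{\sqrt{\pi}} \int_0^\infty t^{-1/2} \, \nabla e^{-t\Lq} \, dt,
\]
so the $L^p$ bounds reduce to suitable estimates for the kernel of the gradient of the heat semigroup generated by $\Lq$. I would track the dependence on $q$ throughout the spherical/integral representations used in the homogeneous case---in particular the Plancherel measure and the spherical functions on $\Tq$---to check that the resulting operator norms, as well as the pointwise and integral kernel estimates needed later, remain uniformly bounded as $q \to \infty$.

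For the transfer step, given a flow tree $(T,m)$, I would compare $(T,m)$ with $(\Tq,\mq)$ for $q$ chosen larger than $q(x)$ on the relevant portion of $T$. A natural setup is to exhaust $T$ by finite or bounded-degree pieces, realise each such piece as a subtree of a sufficiently large $\Tq$, and transfer the bounds through an extension/restriction pair that is compatible with the respective flow structures up to a controlled error. The discrepancy between the general flow measure $m$ and the canonical measure $\mq$ on the embedded subtree, together with the mismatch at its boundary, is then absorbed as a perturbation of $\Lq$; closing this perturbation argument on $L^p$ depends crucially on the uniform-in-$q$ estimates from the previous step, since for trees of unbounded degree the comparison degree $q$ must be allowed to grow. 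I expect this embedding/perturbation step to be the main obstacle, because it is not immediately clear how to choose the comparison so that the difference of the two Laplacians acts tamely when composed with the gradient and the fractional power $\opL^{-1/2}$.

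Once the $L^p$ boundedness of $\Rz$ is established and analogous kernel estimates for $\nabla e^{-t\opL}$ on $(T,m)$ have been transferred by the same mechanism, the endpoint statements in the locally doubling case reduce to verifying the off-diagonal integral smoothness condition required by the Calder\'on--Zygmund framework of \cite{LSTV}: a standard splitting of the subordination integral into a ``local'' and a ``global'' part, combined with the kernel bounds, yields the desired H\"ormander-type integral inequality, and the weak type $(1,1)$ bound together with the $H^1(m) \to L^1(m)$ boundedness of $\Rz$ then follow from the abstract results therein.
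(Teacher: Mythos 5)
Your outline captures the broad spirit of the paper (uniform-in-$q$ estimates on the homogeneous trees transferred to general flow trees, then Calder\'on--Zygmund theory for the endpoint bounds), but there are two substantive problems.

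The most serious gap is that you never explain how to obtain $L^p$ boundedness for $p>2$, and the natural route via duality is closed. What the CZ machinery in \cite{LSTV} gives (once one has the transferred kernel bounds and $L^2$ boundedness) is weak type $(1,1)$ and hence $L^p$ for $p\in(1,2]$; dualizing yields $L^p$ boundedness of $\Rz^*$ for $p\in[2,\infty)$, not of $\Rz$. One would hope to obtain $\Rz:L^\infty\to BMO$ by duality from $\Rz:H^1\to L^1$, but the paper proves (Proposition~\ref{p: RunboundedLinftyBMO}) that $\Rz$ is in fact \emph{unbounded} from $L^\infty(m)$ to $BMO(m)$, so this approach cannot succeed. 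The paper bridges the gap with a separate argument: it proves the identity $(\Rz-\Rz^*)f=\sum_{n}\tilde k_\ZZ(n)\tilde\Sigma_{-n}f$ (Proposition~\ref{p: Riesz_somma}), identifying the skewsymmetric part of $\Rz$ with a ``vertical convolution'' by the kernel of the discrete Hilbert transform on $\ZZ$, and then bounds $\Rz-\Rz^*$ on all $L^p(m)$ via a lifting to $\Omega\times\ZZ$ (Propositions~\ref{p: PhiPhi*} and~\ref{p: Rzdiff_Lp}); writing $\Rz=\Rz^*+(\Rz-\Rz^*)$ then handles $p>2$. Your proposal contains no analogue of this step, and without it the $p>2$ case is simply missing.

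Your description of the transfer mechanism also differs in a way that matters. You envision embedding pieces of $T$ into a large $\Tq$ and treating the discrepancy of measures/Laplacians as a perturbation, and you yourself flag that closing this is unclear. The paper instead works with flow \emph{submersions} (quotients): any $q$-uniformly rational flow tree is an exact flow quotient of $(\Tq,\mq)$ (Proposition~\ref{p: comp unif rat}), which makes operators in the $(\Sigma,\Sigma^*)$ functional calculus $\pi$-compatible with \emph{no error term} (Proposition~\ref{p: Intral}), and $L^p$ norms transfer with constant $1$ (Proposition~\ref{p: Lptransf}). Arbitrary flow measures are then reached by approximation with uniformly rational ones (Corollary~\ref{c: point approx}), passing the estimates through the limit (Corollary~\ref{c: conv_hker}). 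This exact-quotient structure is what avoids the uncontrolled ``perturbation of the Laplacian'' problem you anticipate. Two smaller points: the $q$-uniform kernel estimates on $\Tq$ come from the discrete Abel transform connection with $\ZZ$ rather than from spherical function/Plancherel analysis on $\Tq$; and the Calder\'on--Zygmund theorem itself must be sharpened (Theorem~\ref{t: lemmahebisch_sharp}) because the constants produced by the construction in \cite{LSTV} are \emph{not} uniform in $q$, a nontrivial refinement of the admissible-trapezoid decomposition that your outline does not anticipate.
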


We also prove that, apart from trivial cases, $\Rz$ is unbounded from $L^\infty(m)$ to $BMO(m)$ whenever $(T,m)$ is locally doubling (see Proposition \ref{p: RunboundedLinftyBMO}). This shows that the case $p \geq 2$ of Theorem \ref{t: Riesz} cannot be simply obtained by duality considerations from the case $p \leq 2$, and a different approach is needed.

$L^p$ boundedness properties of Riesz transforms in discrete settings have received considerable attention in the literature. For example, in \cite{BaRu,fe,Ru2,Ru} several results are proved in the context of graphs satisfying the doubling property. The results of \cite{CeMe,CCH}, instead, do not require the doubling condition, but assume that the corresponding Laplacian has a spectral gap. As the doubling property may fail for arbitrary flow measures, while the flow Laplacian $\opL$ has no spectral gap, our Theorem \ref{t: Riesz} does not fall under the scope of those results.

Beside Riesz transforms, in this work we also deal with spectral multipliers of $\opL$. First of all, we establish that the flow Laplacian has a differentiable $L^p$ functional calculus: namely, we prove the $L^p$ boundedness for $p \in [1,\infty]$ of operators of the form $F(\opL)$ whenever 
$F$ lies in the standard inhomogeneous Sobolev space $L^2_s(\RR)$ of order $s>3/2$.

\begin{theorem}\label{t: multipliers}
Let $(T,m)$ be a flow tree. Let $s>3/2$. Then, there exists a positive constant $C_s$ such that, if $F\in L^2_s(\RR)$, then $F(\opL)$ is bounded on $L^p(m)$ for every $p\in [1,\infty]$ and
\begin{equation}\label{f: multipliers_bound}
\|F(\opL)\|_{L^p \to L^p} \le C_s\|F\|_{L^2_s}.
\end{equation}
\end{theorem}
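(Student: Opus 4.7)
The plan is to prove Theorem~\ref{t: multipliers} in two stages, mirroring the strategy advertised in the abstract: first a uniform-in-degree multiplier theorem on the homogeneous trees $(\Tq,\mq)$, and then a transfer step to an arbitrary flow tree $(T,m)$.

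\emph{Step 1: uniform multiplier bound on $\Tq$.} The initial step is to prove that, for every $q\in\Npos$ and every $F\in L^2_s(\RR)$ with $s>3/2$,
\[
\|F(\Lq)\|_{L^p(\mq)\to L^p(\mq)} \le C_s \|F\|_{L^2_s}, \qquad p\in[1,\infty],
\]
with $C_s$ depending only on $s$. This is a version, with explicit uniformity in the degree, of the Hebisch--Steger multiplier theorem for the canonical flow Laplacian on a homogeneous tree. By interpolation and duality, the decisive endpoint is $L^1\to L^1$, which in turn reduces to a Schur-type estimate
\[
\sup_{y\in \Tq} \|K_F(\cdot,y)\|_{L^1(\mq)} \le C_s \|F\|_{L^2_s}
\]
on the integral kernel $K_F$ of $F(\Lq)$. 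The natural route uses the wave equation representation $F(\Lq) = \int \hat F(\tau) \cos(\tau \sqrt{\Lq})\, d\tau$ combined with weighted $L^2$ bounds on the kernels of finite-propagation multipliers of $\sqrt{\Lq}$; the Sobolev threshold $s>3/2$ is precisely what is needed to convert such $L^2$ estimates into $L^1$ estimates via Cauchy--Schwarz against the exponential growth of $\mq$.

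\emph{Step 2: transfer to a general flow tree.} Granted the uniform bound of Step~1, we invoke the approximation and perturbation machinery to pass from $\Tq$ to $(T,m)$. The idea is to couple finite portions of $T$ with corresponding portions of a homogeneous tree $\Tq$ of large enough degree, in such a way that $\opL$ and $\Lq$ agree on the coupled region up to a controlled boundary error; iterating and letting the finite portions exhaust $T$, together with the uniformity of $C_s$ in $q$, yields the bound~\eqref{f: multipliers_bound} for $F(\opL)$ on $L^p(m)$. The same transfer framework is expected to underlie the proof of Theorem~\ref{t: Riesz}.

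The main obstacle is securing full uniformity in $q$ in Step~1. The Hebisch--Steger argument on a fixed $\Tq$ supplies the template, but every ingredient---Plancherel-type estimates for the spherical transform associated with $\Lq$, pointwise bounds on the heat kernel, and weighted $L^2$ bounds on finite-propagation multipliers---must be reexamined to confirm that the implicit constants depend only on $s$, and not on the degree of the tree. Once this degree-independence has been verified, the transfer in Step~2 becomes an essentially structural argument, whose delicate point is to ensure that the boundary-coupling errors introduced at each finite scale indeed vanish in the limit at the level of spectral multipliers of the class $L^2_s$.
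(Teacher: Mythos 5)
Your two-stage plan (uniform bound on $(\Tq,\mq)$, then transfer to $(T,m)$) is the right high-level skeleton, and the reduction to a Schur-type $L^1\to L^1$ kernel bound (then duality and interpolation) is exactly what the paper does. However, both steps contain gaps where your proposal diverges from, or stops short of, what actually makes the argument close.

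In Step 1 you propose the wave-equation / finite-propagation-speed route with weighted $L^2$ estimates on $\Tq$. That is a standard template, but you only \emph{pose} the uniformity-in-$q$ problem ("every ingredient must be reexamined"); you do not supply the mechanism that resolves it. The paper's uniformity comes from the \emph{discrete Abel transform}: Proposition~\ref{p: KF(L)} and Corollary~\ref{c: KF(L)} express the kernel of $F(\Lq)$ as $q^{-(\ell(x)+\ell(y))/2}E_F(d(x,y))$ with $E_F$ an explicit (weighted) transform of the one-dimensional kernel $\tilde\nabla_\ZZ k_{F(\Delta_\ZZ)}$. This reduces the weighted $L^1$ bound on $\Tq$ (Proposition~\ref{p: stimaL1pesataTq}) to an estimate on $\ZZ$ with a single extra factor of $n$, \emph{with a constant independent of $q$}, and all the Plancherel work is done on $\ZZ$ (Lemma~\ref{l: FDeltaZsenzasupporto}). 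Absent this, a direct wave-equation estimate on $\Tq$ runs into the fact that the Plancherel theory and spherical-function growth for $\Lq$ are genuinely $q$-dependent, so the $q$-uniformity is not at all obvious. Relatedly, your heuristic that $3/2$ comes from "Cauchy--Schwarz against the exponential growth of $\mq$" is not the correct accounting: the exponential growth is cancelled exactly by the $q^{-n/2}$ factors in the Abel transform, and the shift from $1/2$ to $3/2$ is precisely the extra degree-one weight $n$ that survives this cancellation (see the computation following \eqref{f: stimaL1pesata_trans}).

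In Step 2, the description of "coupling finite portions of $T$ with portions of $\Tq$ so that $\opL$ and $\Lq$ agree up to a controlled boundary error" does not match the actual transference mechanism and is not obviously implementable, since $\opL$ and $\Lq$ act on different measure spaces and there is no canonical identification on which to compare them. The paper instead uses \emph{flow quotients}: any $q$-uniformly rational flow tree is a quotient of $(\Tq,\mq)$ via a flow submersion $\pi$ (Proposition~\ref{p: comp unif rat}), and weighted $L^1$ kernel bounds and $L^p$ bounds pass from $(\Tq,\mq)$ to any flow quotient (Propositions~\ref{p: stimeK2K1} and~\ref{p: Lptransf}). A general flow tree is then handled by exhausting it with $\pred$-subtrees carrying uniformly rational flow measures (Corollary~\ref{c: point approx}) and using Fatou-type limiting arguments (Corollary~\ref{c: conv_hker}, Theorem~\ref{t: unif_transf_opL}). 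Your boundary-error formulation would need to be replaced by, or shown equivalent to, this quotient-plus-perturbation framework for the limiting step to go through at the level of $L^2_s$ multipliers.
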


Notice that, as the spectrum of $\opL$ is $[0,2]$, one can change $F : \RR \to \CC$ outside $[0,2]$ without changing $F(\opL)$, thus the Sobolev bound on $F$ is effectively only required on a neighbourhood of the interval $[0,2]$.

We also obtain a singular integral version of the above result, i.e., the following spectral multiplier theorem of Mihlin--H\"ormander type for the flow Laplacian.

\begin{theorem}\label{t: MHmultipliers}
Let $\chi\in C^{\infty}_c(\RR)$ be supported in $(3/4,5/4)$ and such that $0\leq \chi\leq 1$ and $\chi(1)=1$.
Let $(T,m)$ be a flow tree. Let $s>3/2$ and let $F:\RR \to \CC$ be a Borel measurable function.
\begin{enumerate}[label=(\roman*)]
\item\label{en: MHmultipliers_twist}
If
\begin{equation}\label{f: MH_cond_twist}
\sup_{0<t\leq 1}\|F(t\cdot)\chi\|_{L^2_s}+\sup_{0<t\leq 1}\|F(2-t\cdot)\chi\|_{L^2_s}<\infty,
\end{equation}
then $F(\opL)$ is bounded on $L^p(m)$ for every $p\in (1,\infty)$. If moreover $m$ is locally doubling, then  $F(\opL)$ is also of weak type $(1,1)$.
\item\label{en: MHmultipliers_std}
If $m$ is locally doubling and $F$ satisfies the more restrictive condition
\begin{equation}\label{f: MH_cond_std}
\sup_{0 < t \leq 2}\|F(t\cdot)\chi\|_{L^2_s}<\infty,
\end{equation}
then $F(\opL)$ is also bounded from $H^1(m)$ to $L^1(m)$, as well as from $L^{\infty}(m)$ to $BMO(m)$.
\end{enumerate}
\end{theorem}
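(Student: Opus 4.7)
The plan is to reduce Theorem \ref{t: MHmultipliers} to the analysis of spectral multipliers supported near the spectral origin $0$ (for part (i)) or, under local doubling, to a direct Calderón--Zygmund analysis (for part (ii)). Since the spectrum of $\opL$ is $[0, 2]$ by Corollary \ref{c: spectrum}, I would choose a smooth partition of unity on $[0, 2]$ and split $F = F_0 + F_m + F_2$ with $F_0$, $F_2$ supported in small neighbourhoods of $0$, $2$ respectively, and $F_m$ compactly supported in $(0, 2)$. The middle piece is handled by a single application of Theorem \ref{t: multipliers}, and under local doubling the Hardy/$BMO$ endpoint bounds for $F_m(\opL)$ follow since $F_m$ is smooth and supported away from the spectrum endpoints.

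In part (i), to handle $F_2(\opL)$, I would exploit the bipartite structure of the tree. Denoting $\opM f(x) = f(\pred(x))$, the identity $\opM^* \opM = I$ (a consequence of the flow property) yields $\opL = I - \frac{1}{2}(\opM + \opM^*)$, and the sign-flip $U f(x) = (-1)^{\ell(x)} f(x)$---an involutive $L^p$-isometry for every $p$---satisfies $U \opM U = -\opM$, hence $U \opL U = 2 - \opL$. Consequently $F_2(\opL) = U G_0(\opL) U$ with $G_0(\lambda) = F_2(2 - \lambda)$ supported near $0$, and the Sobolev control on $G_0$ at all small scales is precisely the second supremum in \eqref{f: MH_cond_twist}. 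The problem is thus reduced to that of a generic multiplier $G$ supported near $0$ with $\sup_{t \in (0, 1]} \|G(t \cdot)\chi\|_{L^2_s} < \infty$.

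For such $G$, I would perform a dyadic Littlewood--Paley decomposition $G = \sum_{j \geq 0} G_j$ with $G_j$ localized at scale $2^{-j}$ around $0$, and estimate the kernel of each $G_j(\opL)$ in $L^2$ via Plancherel and off-diagonally in $L^1$ via finite-propagation/heat-kernel estimates. Crucially, these kernel estimates are first established on the homogeneous trees $\Tq$ uniformly in $q$ (extending Hebisch--Steger's analysis) and then transferred to the general flow tree via the approximation/perturbation machinery highlighted in the abstract. Summing the contributions yields Calderón--Zygmund kernel bounds, giving $L^p$ for $p \in (1, 2]$ and, under local doubling, weak type $(1, 1)$; for $p > 2$ in part (i), duality via $F(\opL)^* = \overline F(\opL)$, which satisfies the same hypothesis \eqref{f: MH_cond_twist}, converts the $p < 2$ bound into the desired one. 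For part (ii), under local doubling and the stricter condition \eqref{f: MH_cond_std}, $F$ is smooth at both spectrum endpoints, so $F(\opL)$ can be treated as a genuine Calderón--Zygmund operator via a single dyadic decomposition at scales $t \in (0, 2]$; combining the kernel bounds with the atomic/$BMO$ theory for flow trees in \cite{LSTV} then yields the $H^1 \to L^1$ and $L^\infty \to BMO$ endpoint bounds.

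The main obstacle will be the uniform-in-$q$ kernel estimates for $G_j(\opL)$ on $\Tq$ and their faithful transfer to $(T, m)$. The absence of a spectral gap for $\opL$ and the possible failure of the doubling property rule out direct Gaussian heat-kernel methods, so the off-diagonal decay must be extracted from the specific combinatorial/flow structure of the tree, with the approximation/perturbation scheme preserving the quantitative estimates at every dyadic scale and for arbitrarily large $q$.
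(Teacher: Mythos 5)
Your overall strategy is aligned with the paper's: modulation symmetry $\modE\opL\modE = 2I-\opL$ (your $U\opL U = 2-\opL$) to handle the spectral endpoint at $2$, a dyadic Littlewood--Paley decomposition of the multiplier near $0$, weighted $L^1$ kernel estimates and gradient estimates for each dyadic piece, Calder\'on--Zygmund theory for the endpoint conclusions under local doubling, and transference from homogeneous trees with $q$-uniform bounds. Your two-/three-piece partition of the spectrum is only cosmetically different from the paper's. Part (ii) and the locally doubling case of part (i) are handled in essentially the same way as the paper.

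There is, however, a genuine gap in your argument for the $L^p$ bounds in part (i) on an \emph{arbitrary} flow tree (not locally doubling). You write that you transfer the kernel estimates for the dyadic pieces $G_j(\opL)$ from $\Tq$ to $(T,m)$, sum them to obtain Calder\'on--Zygmund kernel bounds on $(T,m)$, and then conclude $L^p$ boundedness for $p\in(1,2]$. This last step implicitly invokes Calder\'on--Zygmund theory on $(T,m)$, which requires local doubling (the existing theory on flow trees, from \cite{hs,LSTV}, needs a Calder\'on--Zygmund decomposition of $L^1$ functions, which is exactly what local doubling provides). On a general flow tree you cannot run this argument. You also cannot directly transfer the $L^p$ bound for the full singular integral $F^{(0)}(\opL)$, because the transference theorems apply to $F\in L^2_{3/2+\varepsilon}(\RR)$, and the localised piece $F^{(0)}$ of a Mihlin--H\"ormander multiplier (for instance a truncated $\lambda^{i\alpha}$) is typically \emph{not} in any such Sobolev space, due to the singularity at the origin. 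The paper closes this gap by introducing the truncated partial sums $F^{(0)}_{(N)} = F^{(0)}\sum_{\ell\leq N}\phi(2^\ell\cdot)$, which \emph{are} in $L^2_s(\RR)$ and hence transferable (Theorem \ref{t: unif_transf_opL}); it then establishes an $L^p$ bound for $F^{(0)}_{(N)}(\Lq)$ that is uniform in both $q$ and $N$, which requires a $q$-uniform sharpening of the Calder\'on--Zygmund theorem on $(\Tq,\mq)$ (Theorem \ref{t: lemmahebisch_sharp}, involving a non-trivial reworking of the admissible-trapezoid construction of \cite{LSTV} so that the constants do not grow with $q$); finally one takes $N\to\infty$ via strong-operator convergence on $L^2$. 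Your proposal gestures at $q$-uniformity but does not isolate the truncation step or the $q$-uniform Calder\'on--Zygmund theorem, and without these the passage from homogeneous trees to arbitrary flow trees does not go through.
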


The regularity threshold $3/2$ in Theorems \ref{t: multipliers} and \ref{t: MHmultipliers} is sharp, that is, it cannot be replaced by any smaller number, at least in the case $(T,m) = (\Tq,\mq)$ with $q \geq 2$ (see Proposition \ref{p: mult_sharpness} and Remark \ref{r: sharpness} below).

Part \ref{en: MHmultipliers_twist} of Theorem \ref{t: MHmultipliers} was essentially proved in \cite[Theorem 2.3]{hs} for the canonical flow Laplacian on homogeneous trees.
The assumption on the multiplier $F$ in part \ref{en: MHmultipliers_twist} is weaker than that in \ref{en: MHmultipliers_std}, as the former allows $F$ to be singular not only at $0$ but also at $2$. The assumption in part \ref{en: MHmultipliers_std} is analogous to the scale-invariant smoothness assumption for Fourier and spectral multipliers on $\RR^d$ and other settings (see, e.g., \cite{Hor0,MMu,Mih}); the restriction of the supremum in \eqref{f: MH_cond_std} to $t \leq 2$ is just due to the boundedness of the spectrum $[0,2]$ of $\opL$.

The fact that the condition on the multiplier $F$ in part \ref{en: MHmultipliers_twist} is invariant under the change of spectral variable $\lambda \mapsto 2-\lambda$ is a natural consequence of a certain ``modulation symmetry'' of the flow Laplacian $\opL$, see \eqref{f: modul_opL} below. However, while this modulation preserves Lebesgue and Lorentz spaces, it does not preserve the Hardy and BMO spaces on $(T,m)$, as it may destroy cancellations. Indeed (see Proposition \ref{p: twist_MH_counterex} below) the $H^1 \to L^1$ and $L^\infty \to BMO$ endpoint bounds, which hold under the assumption \eqref{f: MH_cond_std}, may fail under the weaker assumption \eqref{f: MH_cond_twist}.

An important tool in the proof of the above results is a class of weighted $L^1$ estimates for the heat kernel of $\opL$, denoted by $K_{e^{-t\opL}}$, and its gradient. These estimates, extending those obtained in \cite{MSV} in the particular case of homogeneous trees, are of independent interest and read as follows.

\begin{theorem}\label{t: est on T}
Let $(T,m)$ be a flow tree. Then, for every $\varepsilon>0$ and $t \ge 1$,
\begin{align*}  
\sup_{y \in T} \sum_{x \in T} e^{\varepsilon d (x,y)/\sqrt{t}} \, |K_{e^{-t\opL}}(x,y)| \, m(x) &\le c_\varepsilon, \\ 
\sup_{y \in T} \sum_{x \in T} e^{\varepsilon d (x,y)/\sqrt{t}} \, |K_{\nabla e^{-t\opL}}(x,y)| \, m(x) &\le \frac{c_\varepsilon}{\sqrt{t}}, \\ 
\sup_{y \in T} \sum_{x \in T} e^{\varepsilon d (x,y)/\sqrt{t}} \, |K_{e^{-t\opL}\nabla^*}(x,y)| \, m(x) &\le \frac {c_\varepsilon}{\sqrt{t}}, \\
\sup_{y \in T} \sum_{x \in T} e^{\varepsilon d (x,y)/\sqrt{t}} \, |K_{\nabla  e^{-t\opL} \nabla^*}(x,y)| \, m(x)&\le \frac{c_\varepsilon}{t},
\end{align*}
where $c_\varepsilon>0$ is a constant independent of $(T,m)$. Moreover, for every $t>0$,
\begin{equation}\label{f: levelestimates}
\begin{aligned}
\sup_{l \in \ZZ} \sup_{x \in T} \sum_{z \in T \colon \ell(z)=l} |K_{\nabla e^{-t\opL}}(x,z)| \, m(z) &\le \frac{C}{1+t},\\ 
\sup_{l \in \ZZ} \sup_{x \in T} \sum_{z \in T \colon \ell(z)=l} |K_{\nabla e^{-t\opL}}(z,x)| \, m(z) &\le \frac{C}{1+t},
\end{aligned}
\end{equation}
where $C>0$ is a constant independent of $(T,m)$.
\end{theorem}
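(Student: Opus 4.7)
My plan is to derive the estimates from corresponding ones on the homogeneous flow trees $(\Tq, \mq)$, holding uniformly in $q$, via an approximation/perturbation argument.

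First, on $(\Tq, \mq)$ the canonical flow Laplacian $\Lq$ admits an explicit spherical Fourier representation, so the heat kernel $K_{e^{-t\Lq}}$ and its gradient variants can be written as oscillatory integrals against the Plancherel measure and the spherical functions of $\Tq$. Starting from the analysis in \cite{MSV}, I would refine the stationary-phase and integration-by-parts bounds, tracking carefully the dependence on $q$ in the asymptotic formulas for the spherical functions and Plancherel density, and verifying that the resulting constants can be taken independent of $q$. The horocyclic-sum estimates \eqref{f: levelestimates} reduce on $\Tq$ to a single geometric-series computation, thanks to the transitive action of horocycle-preserving isometries.

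Second, I would transfer the uniform-in-$q$ bounds to a general flow tree $(T,m)$. The key observation is that $\opL - 1$ is a nearest-neighbour averaging operator, so in the Chebyshev expansion
\[
e^{-t\opL} = \sum_{n \geq 0} c_n(t) \, T_n(\opL - 1)
\]
each term $T_n(\opL - 1)$ has matrix entries supported on vertex pairs at graph distance at most $n$ and depending only on the local ratios $m(y)/m(x)$ for edges in a neighbourhood of $y$ of radius at most $n$. These ratios satisfy the flow constraint $\sum_{y \in \succ(x)} m(y)/m(x) = 1$ exactly as in the homogeneous case. Combined with positivity of the heat semigroup and the freedom to choose $q$ larger than the local degree, this structural compatibility should allow the weighted sums on $(T,m)$ to be dominated by the corresponding $q$-uniform bounds on $(\Tq, \mq)$.

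Third, the gradient kernel estimates follow from the heat-kernel estimate via the identity
\[
K_{\nabla e^{-t\opL}}(x,y) = K_{e^{-t\opL}}(x,y) - K_{e^{-t\opL}}(\pred(x),y),
\]
the spectral identity $\nabla^*\nabla e^{-t\opL} = -\partial_t e^{-t\opL}$ coming from $\opL = \tfrac12 \nabla^*\nabla$, and the semigroup decomposition $\nabla e^{-t\opL}\nabla^* = (\nabla e^{-t\opL/2})(e^{-t\opL/2}\nabla^*)$, which together account for the extra factors of $t^{-1/2}$ and $t^{-1}$. The main obstacle I anticipate is the transfer in the second step: making the comparison between $(T,m)$ and $(\Tq, \mq)$ precise enough to preserve both the exponential weight and the sharp decay, uniformly in $q$ and despite the possibly unbounded degree and highly irregular measure on $(T,m)$.
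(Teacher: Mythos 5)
Your overall strategy — establish the estimates on the homogeneous trees $(\Tq,\mq)$ uniformly in $q$, then transfer them to an arbitrary flow tree — is indeed the paper's strategy. But the \emph{mechanism} of the transfer you propose is both different from and materially weaker than what the paper uses, and as written it contains a genuine gap.

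The gap is in your second step. You propose a Chebyshev expansion of $e^{-t\opL}$ in $\opL-1$, observe that each term has finite propagation speed and depends only on local ratios $m(y)/m(x)$, and then claim that ``combined with positivity of the heat semigroup and the freedom to choose $q$ larger than the local degree, this structural compatibility should allow the weighted sums on $(T,m)$ to be dominated by the corresponding $q$-uniform bounds on $(\Tq,\mq)$.'' This is not an argument: the weighted kernel estimates involve absolute values and an exponential weight in the distance, and there is no pointwise domination of $|K_{e^{-t\opL}}|$ on an arbitrary $(T,m)$ by $K_{e^{-t\Lq}}$ on $\Tq$; positivity of the semigroup does not produce such a comparison. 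The finite-propagation structure of the Chebyshev terms is also shared by every nearest-neighbour operator, so it carries no information about the flow structure. What the paper actually does is quite different: it (i) introduces \emph{flow submersions} $\pi\colon\Tq\to T'$ and proves (Proposition~\ref{p: stimeK2K1}) that for any increasing-in-distance weight, the weighted kernel norm on the quotient $T'$ is bounded by that on $\Tq$, by exhibiting $K_{\pi(\cO)}$ as a fibre average of $K_{\cO}$ and using $1$-Lipschitzness of $\pi$; (ii) characterises the flow quotients of $\Tq$ as exactly the $q$-uniformly rational flow trees (Proposition~\ref{p: comp unif rat}); and (iii) approximates an arbitrary $(T,m)$ — possibly with unbounded degree and irrational ratios — by uniformly rational flow trees on an increasing exhaustion of $\pred$-subtrees (Corollary~\ref{c: point approx}), passing to the limit in the kernel estimates via Fatou (Corollary~\ref{c: conv_hker}). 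This is the content of the ``universal transference'' Theorem~\ref{t: unif_transf}; your proposal hints at the relevance of choosing $q$ large enough, but the submersion/quotient structure, the rationality constraint, and the Fatou-type limit are the ingredients you are missing, and without them the domination you assert does not follow.

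Two smaller points. For the homogeneous-tree bounds, the paper simply cites \cite{MSV}, where the $q$-uniformity is already established via the Abel-transform connection to $\ZZ$, so the re-derivation you sketch is unnecessary. And you do not address the regime $0<t<1$ in \eqref{f: levelestimates}: the universal transference handles $t\ge 1$ (this is where the $q$-uniform bounds of \cite{MSV} live), while the paper treats $t<1$ separately by the elementary observation that $\nabla e^{-t\opL}$ is bounded on $L^1(m)$ and $L^\infty(m)$ uniformly in $t$, which gives the claimed $C/(1+t)$ bound directly in that range.
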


\subsection{Proof strategy}
One of the fundamental techniques underlying the proof of our results is \emph{transference}. Namely, as we show in Sections \ref{s: submersions} and \ref{s: perturbation} below, any flow tree $(T,m)$ can be approximated by a sequence of flow trees $(T_j,m_j)$, each of which is a quotient of a homogeneous tree $(\TT{q_j},m_{\TT{q_j}})$. In addition, we show that, for a large class of operators in the functional calculus for the flow Laplacian, $L^p$ estimates and weighted $L^1$ kernel estimates can be transferred through this quotienting and approximation procedure. This results in the following ``universal transference'' result for $L^p$ bounds,
\[
\|F(\opL)\|_{L^p(m) \to L^p(m)} \leq \sup_q \|F(\Lq)\|_{L^p(\mq) \to L^p(\mq)},
\]
and in a similar transference result for weighted $L^1$ estimates of the integral kernel $K_{F(\opL)}$, which are valid for any sufficiently regular function $F$ (see Theorem \ref{t: unif_transf_opL} below). In particular, any bound of the above form that holds on homogeneous trees \emph{uniformly in $q$} can be transferred to an arbitrary flow tree.

We also prove analogous transference results for joint functions of the flow gradient $\nabla$ and its adjoint $\nabla^*$ (see Theorem \ref{t: unif_transf} below); a technical difficulty here is that $\nabla$ and $\nabla^*$ do not commute in general, however one can make sense of a ``joint functional calculus'' for $\nabla$ and $\nabla^*$ by means of noncommutative power series (see Section \ref{ss: noncommutative}). This allows us to transfer estimates for operators such as $\nabla \exp(-\opL)$ or $\nabla \exp(-\opL) \nabla^*$, so indeed (once the transference results are established) the heat kernel bounds of Theorem \ref{t: est on T} are a direct consequence of those proved in \cite{MSV} for homogeneous trees.

The $L^p$ transference results for quotients of flow trees developed in Section \ref{s: submersions} below are similar in spirit to the classical $L^p$ transference results for actions of amenable groups (see, e.g., \cite{BPW,CW,dL}). However, also due to the nonhomogeneity of the involved trees, in our context there does not seem to be an obvious group action to which our transference results can be reduced. The fact that transference methods can be applied even in a non-group-invariant context may be another reason of interest for the present work. The results of Section \ref{s: perturbation} are somewhat different in nature, as they are based on a perturbative argument that is affine to those used in other contexts for the transplantation of $L^p$ estimates (see, e.g., \cite{CMMP,KST,AMsphere,Mit}).

We stress once more that, in order for our transference strategy to yield results on arbitrary flow trees, the original bounds on the homogeneous trees $\Tq$ must hold uniformly in $q$. As it turns out, one source of such $q$-uniform bounds is a connection between the functional calculus for the flow Laplacian on $\Tq$ with that on $\TT{1}$, which is expressed in terms of a certain \emph{discrete Abel transform} (see, e.g., \cite{CMS0}). This connection was implicitly used in \cite{MSV} to derive $q$-uniform weighted $L^1$ bounds for the heat kernel and its gradient on $\Tq$, starting from similar estimates on $\TT{1}$. In Section \ref{s: homogeneous} below we use a similar strategy to obtain $q$-uniform weighted $L^1$ estimates for more general functions $F(\Lq)$ of the flow Laplacian, which are at the basis of Theorems \ref{t: multipliers} and \ref{t: MHmultipliers} above.

Notice that the flow tree $\TT{1}$ can be identified with $\ZZ$ with the usual discrete Laplacian. As the latter can be thought of as a discrete version of $\RR$ with the standard Laplacian, one may expect that the multiplier theorems stated above (Theorems \ref{t: multipliers} and \ref{t: MHmultipliers}) would hold under a weaker smoothness assumption, i.e., $s>1/2$ instead of $s>3/2$: indeed, the smoothness condition in the classical Mihlin--H\"ormander theorem for Fourier multipliers on $\RR^d$ \cite{Hor0,Mih} is $s>d/2$. As a matter of fact, the $s>1/2$ improvement of Theorems \ref{t: multipliers} and \ref{t: MHmultipliers} is possible if one restricts to the case of $\TT{1}$; this is likely well known to experts (cf.\ \cite{Alex,dL}) and could also be proved by adapting the arguments below. However, the ``Abel transform connection'' between $\TT{1}$ and $\Tq$ effectively introduces an additional degree-one weight in $L^1$ estimates (see Proposition \ref{p: stimaL1pesataTq} below), which explains the shift from $1/2$ to $3/2$. As already mentioned, this increased smoothness requirement is not just due to the proof, because we can show (see Proposition \ref{p: mult_sharpness}) that, for the canonical flow Laplacian on homogeneous trees $\Tq$ with $q \geq 2$, the threshold $3/2$ is sharp and cannot be lowered.

The above transference results do not apply to weak-type or Hardy space bounds. This is the main reason why the various endpoint results for singular integrals in the above theorems are only proved in the context of locally doubling flow trees. Indeed, in that context, the Calder\'on--Zygmund theory of \cite{hs,LSTV} is available; so one can use it to prove the desired bounds, by standard dyadic decompositions of the operators of interest, whereby each dyadic piece satisfies better estimates, which are amenable to transference techniques.

Even in relation to $L^p$ bounds, our transference results do not apply directly to singular integrals such as Riesz transforms or Mihlin--H\"ormander multipliers. In other words, for these operators, one cannot directly transfer $L^p$ bounds from homogeneous trees to nonhomogeneous ones, so we cannot just use as a black box the results in the homogeneous setting available in the literature. Nevertheless, we can approximate those singular integrals with appropriate ``truncations'', which are no longer singular, and show that the latter satisfy uniform $L^p$ bounds in the truncation parameter; moreover, by a suitable adjustment of the Calder\'on--Zygmund theory of \cite{hs,LSTV}, discussed in Section \ref{ss: CZ} below, we also obtain the $q$-uniformity of these $L^p$ bounds on $(\Tq,\mq)$. By transferring the $L^p$ bounds for the truncations, and then passing to the limit, one eventually recovers the desired $L^p$ boundedness results for singular integrals on arbitrary flow trees.

\subsection{Some open problems}
Homogeneous trees $\Tq$ with $q \geq 2$ are often seen as discrete counterparts of real hyperbolic spaces. Indeed, analogues of Theorems \ref{t: Riesz} to \ref{t: est on T} are known to hold for distinguished (sub-)Laplacians on hyperbolic spaces and more general solvable Lie groups \cite{GS,hs,AMriesz,MOV,MaPl,MaVa,Sj,SV2,Va}. The results of this paper show that the homogeneity constraint on trees can be dispensed with in the discrete setting; it would be interesting to investigate whether the continuous counterparts also admit more robust versions, with less rigid assumptions on the operator and the underlying manifold.

The smoothness threshold $3/2$ in our multiplier theorems (Theorems \ref{t: multipliers} and \ref{t: MHmultipliers}) also appears in their continuous counterparts on hyperbolic spaces and other solvable Lie groups \cite{hs,MOV,MaPl,Va}, where it can be interpreted as half the ``pseudodimension'' of the group (see also \cite{CGHM}). As homogeneous trees are expected to capture the ``coarse structure'' of hyperbolic spaces, it is not surprising that what plays the role of the ``dimension at infinity'' in the continuous setting appears in the discrete setting too. In this work we actually prove that the threshold $3/2$ is optimal for the homogeneous tree $(\Tq,\mq)$ with $q \geq 2$, while for $q=1$ we know it can be lowered to $1/2$; it remains an open problem to characterise the optimal threshold for any given flow tree $(T,m)$ and to determine whether it may attain intermediate values between $1/2$ and $3/2$ for certain nonhomogeneous trees (values below $1/2$ are not possible, as $\TT{1} = \ZZ$ is a quotient of any flow tree via the level function).

As a matter of fact, on homogeneous trees, we can prove the optimality of $3/2$ in reference to $L^1 \to L^1$ and $H^1 \to L^1$ bounds (see Proposition \ref{p: mult_sharpness}), but we do not know whether a lower smoothness requirement could be enough for weak type $(1,1)$ bounds or $L^p$ bounds for all $p \in (1,\infty)$. On the other hand, our proof of Theorem \ref{t: MHmultipliers} is fundamentally based on $L^1$ bounds for the dyadic pieces in which the singular integral operator is decomposed (see Theorems \ref{t: lemmahebisch} and \ref{t: lemmahebisch_sharp}), so in any case nothing better than $3/2$ could be achieved without a substantial change of strategy to prove bounds for singular integrals.

As already mentioned, the endpoint bounds in Theorems \ref{t: Riesz} and \ref{t: MHmultipliers} are only proved in the case of locally doubling flow trees $(T,m)$. It would be interesting to know whether, for example, the weak type $(1,1)$ endpoint bounds may also be valid on arbitrary flow trees. This would appear to be out of reach for the existing Calder\'on--Zygmund theory on flow trees \cite{hs,LSTV} and may require new ideas.

The negative $H^1 \to L^1$ boundedness results for the adjoint Riesz transform $\Rz^*$ and certain spectral multipliers $F(\opL)$ of the flow Laplacian (see Propositions \ref{p: RunboundedLinftyBMO} and \ref{p: twist_MH_counterex}) appear to indicate some limitations of the atomic Hardy space $H^1(m)$ on locally doubling flow trees $(T,m)$ defined in \cite{LSTV}. It remains an open problem whether a different definition of Hardy space could be given this setting, in order to better capture endpoint boundedness properties of natural singular integral operators; some related investigations can be found in \cite{CeMe,San}.

In any case, our results do not provide any explicit $p=1$ endpoint bounds for the adjoint Riesz transform $\Rz^*$. In particular, the question whether $\Rz^*$ is of weak type $(1,1)$ is open even in the setting of homogeneous trees $(\Tq,\mq)$ with $q \geq 2$ \cite{LMSTV}, as well as in the continuous counterpart discussed in \cite{AMriesz}.

\subsection*{Notation}
We write $\chr_S$ for the characteristic function of a set $S$.
The symbol $\NN$ denotes the set of natural numbers, including zero; we write $\Npos$ for the set $\NN \setminus \{0\}$ of positive integers.
For a real number $x$, we denote by $x_+$ its positive part $\max\{x,0\}$, and by $\lfloor x \rfloor$ its integer part $\max\{ k \in \ZZ \colon k \leq x \}$.

Given two nonnegative quantities $A$ and $B$, $A \lesssim B$ means that there exists a finite positive constant $C$ such that $A \le C B$, while $A \approx B$ means $A \lesssim B$ and $B \lesssim A$. Moreover, $A \lesssim_{x_1,x_2,\dots,x_n} B$, for some parameters $x_1,\dots,x_n$, means that the implicit constant may depend on $x_1,\dots,x_n$.

\section{Flow trees and flow Laplacians}\label{s: Laplacian}  

In this section we recall a few basic properties of flow Laplacians, obtaining in particular a description of their $L^p$ spectra. We also introduce some important spaces of functions on flow trees that will be used throughout, as well as a ``joint functional calculus'' for noncommuting operators via power series.

\subsection{The flow Laplacian and its spectrum}\label{ss: spectrum}

Let $(T,m)$ be a flow tree. Define the \emph{shift operator} $\Sigma : \CC^{T} \to \CC$ as 
\begin{equation}\label{f: shift}
\Sigma f(x)=f(\pred(x)), \qquad f \in \CC^T, \ x\in T.
\end{equation}
From Definition \ref{d: flow} it is clear that $\Sigma$ is a linear isometry on $L^p(m)$ for every $p \in [1,\infty]$. Moreover, the adjoint operator $\Sigma^*$ with respect to the $L^2(m)$ pairing acts on functions $f \in \CC^T$ as follows:
\[
\Sigma^* f(x)= \frac{1}{m(x)} \sum_{y \in \succ(x)} f(y) \, m(y), \qquad f \in \CC^T, \ x\in T.
\]
 
Given a flow tree $(T,m)$, we define the \emph{flow gradient} $\nabla$ as $I-\Sigma$ and the \emph{flow Laplacian} $\opL$ as $\frac{1}{2} \nabla^*\nabla$, where $\nabla^*$ is the adjoint operator of $\nabla$ on $L^2(m)$. An easy calculation shows that $\opL= I - \opAv$, where $\opAv = (\Sigma+\Sigma^*)/2$ is the averaging operator given by
\begin{equation}\label{f: mathcalA}
\opAv f(x)= \frac{1}{2} f(\pred(x))+ \frac{1}{2m(x)} \sum_{y \in \succ(x)} f(y) \, m(y), \qquad f \in \CC^T, \ x\in T.
\end{equation}

In the next result, we show that $\opL$ has no spectral gap.
 
\begin{prop}\label{p: spectrum}
Let $(T,m)$ be a flow tree and $\opL$ be the corresponding flow Laplacian. For every $p\in [1,\infty]$ the real points of the spectrum of $\opL$ on $L^p(m)$ are the points of the interval $[0,2]$. In particular, the $L^2$ spectrum of $\opL$ is $[0,2]$.
\end{prop}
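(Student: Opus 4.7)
Writing $\opL = I - \opAv$ with $\opAv = \tfrac12(\Sigma + \Sigma^*)$, it is enough to show that the real part of the spectrum of $\opAv$ on $L^p(m)$ equals $[-1,1]$ for every $p \in [1,\infty]$; the ``in particular'' statement will then follow at once, since $\opAv$ is self-adjoint on $L^2(m)$ and therefore has real $L^2$-spectrum.

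For the upper inclusion, the flow identity $m(x) = \sum_{y \in \succ(x)} m(y)$ shows that $\Sigma$ is an isometry on each $L^p(m)$, while $\Sigma^* f(x)$ is a weighted average of $f$ over $\succ(x)$ with probability weights $m(y)/m(x)$; Jensen's inequality then gives $|\Sigma^* f(x)|^p m(x) \le \sum_{y \in \succ(x)} |f(y)|^p m(y)$, and summation in $x$ yields $\|\Sigma^*\|_{L^p \to L^p} \le 1$ (the case $p = \infty$ being immediate). Hence $\|\opAv\|_{L^p \to L^p} \le 1$, so the spectrum of $\opAv$ on $L^p(m)$ is contained in the closed unit disk, and its real part in $[-1,1]$.

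For the matching lower inclusion, I would construct an approximate eigenvector for every $\lambda = \cos\theta$ with $\theta \in [0,\pi]$, by transplanting the standard approximate eigenvectors of the discrete averaging operator on $\ZZ$. Fix any $o \in T$ and set $T^o = \{x \in T \colon x \le o\}$; iterating the flow condition yields the key ``level-mass identity''
\[
\sum_{x \in T^o,\, \ell(x) = n} m(x) = m(o) \qquad \text{for every integer } n \le \ell(o).
\]
Given $\phi \in C_c^\infty((-1,0))$, set $g_N(n) = e^{in\theta}\phi(n/N)$ on $\ZZ$ and $f_{o,N}(x) = g_N(\ell(x) - \ell(o))\chr_{T^o}(x)$. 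Using $\ell(\pred(x)) = \ell(x)+1$ and $\sum_{y \in \succ(x)} m(y) = m(x)$, a direct check shows that $(\opAv f_{o,N})(x) = \tfrac12\bigl(g_N(\ell(x)-\ell(o)+1) + g_N(\ell(x)-\ell(o)-1)\bigr)$ for $x \in T^o$ strictly below $o$, while the contributions at $x = o$ and at its ancestors either vanish (by the vanishing of $\phi$ near the endpoints of $(-1,0)$) or are of size $O(N^{-1})$. A Taylor expansion of $\phi$ then gives the pointwise bound $\bigl|\tfrac12[g_N(n+1)+g_N(n-1)] - \cos\theta\, g_N(n)\bigr| \lesssim N^{-1}$, and the level-mass identity turns this into the norm estimate $\|(\opAv - \lambda) f_{o,N}\|_p \lesssim N^{-1}\|f_{o,N}\|_p$ for every $p \in [1,\infty]$; hence $\lambda$ lies in the approximate point spectrum, and \emph{a fortiori} in the spectrum, of $\opAv$ on $L^p(m)$. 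I expect the main bookkeeping subtlety to be the treatment of the ``boundary'' contribution at $x = o$, but the vanishing to infinite order of $\phi$ at $0$ keeps that term of order $N^{-1}$, which is safely absorbed.
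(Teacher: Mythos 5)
Your proof is correct, and it takes a genuinely different route from the paper at the point that matters. The paper also reduces to showing that the real $L^p$-spectrum of $\opAv$ is $[-1,1]$: the norm bound $\|\opAv\|_{p\to p}\le 1$ (which you derive via Jensen, the paper by writing $\opAv$ as an average of two isometries-or-contractions) gives the upper inclusion, and the lower inclusion comes from approximate eigenfunctions built from $e^{i\theta\ell(x)}$ restricted to the descendant cone $\Delta_o=\{x\le o\}$. The divergence is in the truncation: the paper uses the \emph{sharp} cutoff $f_{\theta,d}=e^{i\theta\ell}\chr_{V_d}$, which produces a nonzero boundary error of size comparable to $m(o)$ concentrated on $O(1)$ boundary vertices, while the bulk has $L^p$-mass growing like $(d m(o))^{1/p}$; the ratio tends to zero only for $p<\infty$, which is why the paper first invokes self-adjointness and an $L^p$–$L^{p'}$ duality argument to dispose of $p=\infty$. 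You instead use a smooth profile $\phi\in C_c^\infty((-1,0))$, so that the boundary terms at $o$ and above actually vanish for large $N$ (since $\phi\equiv 0$ near $0$ and $-1$), and the Taylor-remainder bound $\bigl|\frac12[g_N(n{+}1)+g_N(n{-}1)]-\cos\theta\,g_N(n)\bigr|\lesssim N^{-1}\|\phi'\|_\infty$ holds pointwise; combined with the level-mass identity this yields $\|(\opAv-\cos\theta)f_{o,N}\|_p/\|f_{o,N}\|_p\lesssim N^{-1}$ for \emph{every} $p\in[1,\infty]$, including $p=\infty$, so the duality step is unnecessary. Your version is marginally longer to set up but more uniform; the paper's is shorter because it piggybacks on self-adjointness. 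Both are sound.
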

\begin{proof}
As $\opL$ is selfadjoint, by duality (see, e.g., \cite[Section VIII.6]{Y}) we may assume that $p \leq 2$, and in particular $p<\infty$.

We first prove that the spectrum of $\opAv$ on $L^p(m)$ contains $[-1,1]$. Given $\theta\in \RR$, define $f_\theta : T \to \CC$ by
\[
f_{\theta}(x) = e^{i\theta \ell(x)}, \qquad x \in T.
\]
Choose any $o \in T$ with $\ell(o) = 0$. For every integer $d \geq 2$, define the sets
\[\begin{aligned}
V_d &= \{x\in T \colon x\leq o, \ \ell(x)\geq -d\}, \\
\overline{V_d} &= \{x\in T \colon x \leq o, \ \ell(x)\geq -d-1\}\cup \{\pred(o)\}, \\
{V}^{\circ}_d &= \{x\in T \colon x \leq o, \ -d+1 \le \ell(x)\leq -1\} .
\end{aligned}\] 
Consider the function $f_{\theta,d} = f_{\theta} \chr_{V_d}$, whose $L^p$ norm is $[m(o)(d+1)]^{1/p}$. It is easy to see that 
\[
\opAv f_{\theta,d}(x) = \begin{cases}
\cos\theta \, f_{\theta,d}(x) & \text{if } x\in {V}^{\circ}_d, \\
0 & \text{if } x\notin \overline{V_d}, \\
\frac{m(o)}{2m(\pred(o))} & \text{if } x=\pred(o), \\
\frac{1}{2} e^{-i\theta} & \text{if } x=o, \\ 
\frac{1}{2} e^{i\theta (\ell(x)+1)} & \text{if } x\in \overline{V_d} \text{ and } \ell(x)=-d,-d-1. \\
\end{cases} 
\]
It follows that 
\[
\frac{\| \opAv f_{\theta,d}-\cos\theta f_{\theta,d}\|_p}{\|f_{\theta,d}\|_{p}}
\lesssim \frac{m(\overline{V_d}\setminus {V}^{\circ}_d)^{1/p}}{[m(o)(d+1)]^{1/p}}
= \frac{[m(\pred(o))+3m(o)]^{1/p}}{[m(o)(d+1)]^{1/p}},
\]
which tends to $0$ as $d \to \infty$. This implies that $\cos\theta$ is in the spectrum of $\opAv$ for every $\theta\in\RR$. Hence $[-1,1]$ is contained in the spectrum of $\opAv$ on $L^p(m)$.

As $\opAv = (\Sigma+\Sigma^*)/2$ is clearly $L^p$ bounded with norm at most $1$, the spectrum of $\opAv$ on $L^p(m)$ is contained in the closed unit ball centred at the origin of $\CC$. As the spectrum contains $[-1,1]$, this interval exhausts the real points of the spectrum. In the case $p=2$, as $\opAv$ is selfadjoint, its spectrum is real and therefore $[-1,1]$ is the whole $L^2$ spectrum.

Since $\opL = I-\opAv$, we finally deduce that the real points of the $L^p$ spectrum of $\opL$ are the points of the interval $[0,2]$, which is the whole spectrum when $p=2$.
\end{proof}

We shall complete the characterisation of the $L^p$ spectrum of $\opL$ in Corollary \ref{c: spectrum} below.

As mentioned in the Introduction, an important example of flow tree is the homogeneous tree $\Tq$ with the so-called canonical flow measure $\mq$, for any $q \in \Npos$. Namely, $\Tq$ is a tree with root at infinity and $q(x) = q$ for all $x \in \Tq$, while
\begin{equation}\label{f: can_flow}
\mq(x) = q^{\ell(x)}, \qquad x \in \Tq.
\end{equation}
We shall denote by $\Sigma_{\Tq}$, $\nabla_{\Tq}$ and $\opL_{\Tq}$ the shift operator, the flow gradient and flow Laplacian on $(\Tq,\mq)$.

\subsection{Function spaces on a flow tree}

We now introduce some spaces of functions on a flow tree $(T,m)$, as well as classes of operators between these spaces, which will be relevant in the subsequent discussion.

We denote by $c_{00}(T)$ the set of the functions $f \in \CC^T$ with finite support.
For a linear operator $\cO : c_{00}(T) \to \CC^T$, we denote by $K_{\cO}$ its integral kernel, i.e., $K_{\cO} : T \times T \to \CC$ is such that, for all $f\in c_{00}(T)$,
\begin{equation}\label{eq:kernel_op}
\cO f(x)=\sum_{y\in T} K_{\cO}(x,y) \, f(y) \, m(y)  \qquad \forall x\in T.
\end{equation}
For any such operator $\cO$, the (formal) adjoint $\cO^* : c_{00}(T) \to \CC^T$ is the operator with integral kernel
\[
K_{\cO^*}(x,y) = \overline{K_{\cO}(y,x)}, \qquad x,y \in T,
\]
and satisfies
\[
\langle \cO^* f,g \rangle_{L^2(m)} = \langle f, \cO g \rangle_{L^2(m)} \qquad\forall f,g \in c_{00}(T).
\]
We point out that, if $\cO : L^p(m) \to L^p(m)$ is bounded for some $p \in [1,\infty)$, then $\cO$ is uniquely determined by its restriction $\cO|_{c_{00}(T)}$, and therefore by its integral kernel $K_{\cO}$. In particular, if $p=1$, then
\[
\|\cO\|_{1 \to 1} = \sup_{y \in T} \sum_{x \in T} |K_{\cO}(x,y)| \, m(x),
\]
and the formula \eqref{eq:kernel_op} is valid for any $f \in L^1(m)$.

We denote by $\Bdd(m)$ the set of the bounded operators $\cO : L^1(m) \to L^1(m)$ such that the adjoint $\cO^*$ is also bounded on $L^1(m)$. It is readily checked that $\Bdd(m)$ is a unital Banach $*$-algebra with the norm
\begin{equation}\label{eq:Bddm_norm}
\|\cO\|_{\Bdd(m)} = \max\{ \|\cO\|_{1 \to 1} , \|\cO^*\|_{1 \to 1}\}.
\end{equation}
Moreover, any $\cO \in \Bdd(m)$ can also be thought of as a bounded operator on $L^\infty(m)$, where $\cO f$ is given by \eqref{eq:kernel_op} for any $f \in L^\infty(m)$, and clearly
\[
\|\cO\|_{\infty \to \infty} = \|\cO^*\|_{1 \to 1} = \sup_{x \in T} \sum_{y \in T} |K_{\cO}(x,y)| \, m(y).
\]
So, we can rewrite \eqref{eq:Bddm_norm} as
\[
\|\cO\|_{\Bdd(m)} = \max\{ \|\cO\|_{1 \to 1} , \|\cO\|_{\infty \to \infty}\}.
\]
By interpolation, any $\cO \in \Bdd(m)$ is also bounded on $L^p(m)$ for all $p \in [1,\infty]$, with
\begin{equation}\label{f: BddLp}
\|\cO\|_{p \to p} \leq \|\cO\|_{\Bdd(m)}.
\end{equation}

Finally, it is not difficult to check that the set
\[
\Bdd_{\fin}(m) = \{ \cO \in \Bdd(m) \colon \sup \{d(x,y) \colon K_{\cO}(x,y) \neq 0\}  < \infty \}.
\]
is a unital $*$-subalgebra of $\Bdd(m)$, and that $\cO(c_{00}(T)) \subseteq c_{00}(T)$ for all $\cO \in \Bdd_{\fin}(m)$.
Moreover, by means of the formula \eqref{eq:kernel_op}, each $\cO \in \Bdd_\fin(m)$ extends to an operator $\cO : \CC^T \to \CC^T$.
Clearly the shift operator $\Sigma$, the flow gradient $\nabla$ and the flow Laplacian $\opL$ are all members of $\Bdd_\fin(m)$.

\subsection{Noncommutative polynomials and power series}\label{ss: noncommutative}

It is convenient to introduce some notation for noncommutative polynomials and power series in multiple indeterminates. This will allow us to consider a sort of ``joint functional calculus'' for the two noncommuting operators $\Sigma$ and $\Sigma^*$, including, e.g., operators of the form $H(\opL)$ for an entire function $H$ on $\CC$, as well as more complicated expressions such as
\begin{equation}\label{f: esempio_FC}
\nabla^k H(\opL) (\nabla^*)^h
\end{equation}
for all $k,h \in \NN$.

Let $d \in \Npos$. A formal power series in the $d$ noncommutative indeterminates $Z_1,\dots,Z_d$ is an expression of the form
\begin{equation}\label{f: formalpow}
F(Z_1,\dots,Z_d) = \sum_{N \in \NN} \sum_{\alpha \in \{1,\dots,d\}^N} c_\alpha (Z_1,\dots,Z_d)^\alpha
\end{equation}
for some coefficients $c_\alpha \in \CC$, where $\alpha \in \{1,\dots,d\}^N$ is a noncommutative multi-index of length $|\alpha| \defeq N \in \NN$
and
\[
(Z_1,\dots,Z_d)^\alpha \defeq Z_{\alpha_1} \cdots Z_{\alpha_N}
\]
denotes the noncommutative monomial of multi-degree $\alpha$. In case only finitely many of the coefficients $c_\alpha$ are nonzero, then $F(Z_1,\dots,Z_d)$ is called a noncommutative polynomial. More generally, if $R \in (0,\infty)$ and
\[
\|F\|_{(R)} \defeq \sum_{N \in \NN} R^N \sum_{\alpha \in \{1,\dots,d\}^N} |c_\alpha| < \infty,
\]
then we say that the power series $F(Z_1,\dots,Z_d)$ is $R$-absolutely convergent.

The formal power series in $d$ noncommutative indeterminates form an algebra $\Hol(d)$ with the natural operations; notice that the product of two noncommutative monomials is given by
\[
(Z_1,\dots,Z_d)^\alpha (Z_1,\dots,Z_d)^\beta = (Z_1,\dots,Z_d)^{\alpha \cup \beta}
\]
where $\alpha \cup \beta$ is the concatenation of the multi-indices $\alpha$ and $\beta$. For any $R \in (0,\infty)$, we shall write $\Hol(d,R)$ for the subalgebra of the $R$-absolutely convergent power series, and $\Pol(d)$ for the subalgebra of noncommutative polynomials. It is readily checked that $\Hol(d,R)$ is a Banach algebra with the norm $\|\cdot\|_{(R)}$, and that $\|(Z_1,\dots,Z_d)^\alpha\|_{(R)} = R^{|\alpha|}$ for any multi-index $\alpha$. Thus, for any $F \in \Hol(d,R)$, the series in the right-hand side of \eqref{f: formalpow}, thought of as an infinite sum in the Banach algebra $\Hol(d,R)$, converges absolutely to $F$ in $\Hol(d,R)$. In particular, $\Pol(d)$ is dense in $\Hol(d,R)$.

Much like their commutative counterparts, noncommutative power series and polynomials can be used to define, via substitutions, certain ``joint functions'' of a tuple of noncommuting elements of a (Banach) algebra. Namely, from the above definitions one immediately deduces the following result.

\begin{lemma}\label{l: subs_pws}
Assume that $\baK$ is a Banach algebra and $M_1,\dots,M_d \in \baK$ satisfy $\|M_j\|_{\baK} \leq R$ for $j=1,\dots,d$, where $R \in (0,\infty)$. If $F \in \Hol(d,R)$ is given by \eqref{f: formalpow}, then the series
\[
F(M_1,\dots,M_d) \defeq \sum_{N \in \NN} \sum_{\alpha \in \{1,\dots,d\}^N} c_\alpha (M_1,\dots,M_d)^\alpha
\]
converges absolutely in $\baK$, and $\|F(M_1,\dots,M_d)\|_{\baK} \leq \|F\|_{(R)}$.
\end{lemma}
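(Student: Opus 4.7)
The plan is to verify absolute convergence of the substitution series through submultiplicativity of the Banach algebra norm, and then to read off the operator norm bound from the triangle inequality. The argument is essentially routine, so I would aim for a short and direct proof.

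First, I would observe that, for any multi-index $\alpha \in \{1,\dots,d\}^N$, iterated submultiplicativity of the Banach algebra norm yields
\[
\|(M_1,\dots,M_d)^\alpha\|_{\baK} = \|M_{\alpha_1} \cdots M_{\alpha_N}\|_{\baK} \leq \prod_{j=1}^{N} \|M_{\alpha_j}\|_{\baK} \leq R^N = R^{|\alpha|},
\]
so that each monomial in the substitution series is controlled precisely by the corresponding monomial norm used in the definition of $\|\cdot\|_{(R)}$ on $\Hol(d,R)$.

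Next, I would combine these termwise bounds to dominate the series of norms by $\|F\|_{(R)}$:
\[
\sum_{N \in \NN} \sum_{\alpha \in \{1,\dots,d\}^N} |c_\alpha| \, \|(M_1,\dots,M_d)^\alpha\|_{\baK} \leq \sum_{N \in \NN} R^N \sum_{\alpha \in \{1,\dots,d\}^N} |c_\alpha| = \|F\|_{(R)} < \infty.
\]
Since $\baK$ is complete, absolute summability implies summability, so $F(M_1,\dots,M_d)$ is well-defined as the limit of an absolutely convergent series in $\baK$. Applying the triangle inequality to the partial sums and passing to the limit via continuity of the norm then yields the asserted bound $\|F(M_1,\dots,M_d)\|_{\baK} \leq \|F\|_{(R)}$.

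There is no real obstacle here: the lemma amounts to the statement that the substitution $Z_j \mapsto M_j$, which is manifestly a unital algebra homomorphism $\Pol(d) \to \baK$ that is norm-decreasing with respect to $\|\cdot\|_{(R)}$, extends by continuity to a norm-decreasing algebra homomorphism $\Hol(d,R) \to \baK$ since $\Pol(d)$ is dense in $\Hol(d,R)$. One could present the proof in either of these two equivalent forms; the direct termwise estimate sketched above seems slightly more transparent and avoids an unnecessary appeal to density.
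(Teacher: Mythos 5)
Your proof is correct and is precisely the standard argument the paper has in mind (the paper states the lemma without a detailed proof, noting it follows immediately from the definitions): bound each monomial by submultiplicativity, sum to dominate by $\|F\|_{(R)}$, invoke completeness of $\baK$, and conclude via the triangle inequality. No gaps.
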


We highlight two particular cases of the above assertion, which are especially significant for our discussion.
\begin{enumerate}
\item As $\|\Sigma\|_{\Bdd(m)} = \|\Sigma^*\|_{\Bdd(m)} = 1$, for any $F \in \Hol(2,1)$ the series
$F(\Sigma,\Sigma^*)$
converges absolutely in $\Bdd(m)$, and $\|F(\Sigma,\Sigma^*)\|_{\Bdd(m)} \leq \|F\|_{(1)}$.
\item As $\|\opL\|_{\Bdd(m)} = 2$, for any $H \in \Hol(1,2)$, the series $H(\opL)$ converges absolutely in $\Bdd(m)$ and $\|H(\opL)\|_{\Bdd(m)} \leq \|H\|_{(2)}$.
\end{enumerate}
The latter statement actually reduces to the former, because
\begin{equation}\label{f: opL_Sigma}
\opL = \frac{1}{2} (I-\Sigma^*)(I-\Sigma),
\end{equation}
and because of the following result.

\begin{lemma}\label{l: chvar}
If $H \in \Hol(1,2)$ and $F$ is defined by 
\begin{equation}\label{f: chvar}
F(Z_1,Z_2) = H((1-Z_2)(1-Z_1)/2),
\end{equation}
then $F \in \Hol(2,1)$ and $\|F\|_{(1)} \leq \|H\|_{(2)}$.
\end{lemma}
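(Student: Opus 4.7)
The plan is to derive Lemma \ref{l: chvar} as essentially a one-line application of Lemma \ref{l: subs_pws}, using as the target Banach algebra $\baK$ the algebra $\Hol(2,1)$ itself. Since $H$ is a power series in a single indeterminate, in the notation of Lemma \ref{l: subs_pws} I take $d=1$, $R=2$, and choose as the element into which to substitute
\[
M \defeq (1-Z_2)(1-Z_1)/2 \in \Hol(2,1).
\]
The expression \eqref{f: chvar} defining $F$ is then naturally interpreted as the Banach-algebra-valued sum $F = \sum_{n \in \NN} c_n M^n$, where $H(Z) = \sum_{n \in \NN} c_n Z^n$.

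The one computation to check is that $\|M\|_{(1)} \leq R = 2$. This is immediate by expanding the noncommutative polynomial
\[
M = \tfrac{1}{2} - \tfrac{1}{2} Z_1 - \tfrac{1}{2} Z_2 + \tfrac{1}{2} Z_2 Z_1,
\]
so that $\|M\|_{(1)} = 4 \cdot \tfrac{1}{2} = 2$; alternatively, one invokes submultiplicativity of $\|\cdot\|_{(1)}$ in $\Hol(2,1)$ together with the trivial bounds $\|1-Z_j\|_{(1)} = 2$ for $j = 1,2$.

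With this hypothesis in place, Lemma \ref{l: subs_pws} directly yields that the series $\sum_{n} c_n M^n$ converges absolutely in $\Hol(2,1)$, with $\Hol(2,1)$-norm bounded by $\|H\|_{(R)} = \|H\|_{(2)}$; this is precisely the desired estimate $\|F\|_{(1)} \leq \|H\|_{(2)}$. There is no real obstacle here: the only conceptually nontrivial step is the decision to apply the Banach-algebra substitution lemma within the algebra of formal power series itself rather than within an operator algebra, but once this is noted everything is mechanical.
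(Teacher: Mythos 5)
Your proposal is correct and follows essentially the same route as the paper: both apply Lemma \ref{l: subs_pws} inside the Banach algebra $\baK = \Hol(2,1)$ with $M = (1-Z_2)(1-Z_1)/2$ and $R=2$, after verifying $\|M\|_{(1)} \leq 2$. The only minor difference is that you additionally compute $\|M\|_{(1)} = 2$ exactly by direct expansion, whereas the paper only records the submultiplicativity bound; this is a cosmetic refinement, not a different argument.
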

\begin{proof}
Notice that $\|Z_1\|_{(1)} = \|Z_2\|_{(1)} = 1$, so
\[
\|(1-Z_2)(1-Z_1)/2\|_{(1)} \leq \|1-Z_2\|_{(1)} \|1-Z_1\|_{(1)} / 2 \leq 2.
\]
Therefore, Lemma \ref{l: subs_pws}, applied to the Banach algebra $\baK = \Hol(2,1)$, its element $M_1 = (1-Z_2)(1-Z_1)/2$ and the power series $H \in \Hol(1,2)$, shows that the substitution \eqref{f: chvar} indeed defines an element $F$ of $\Hol(2,1)$ with the desired norm estimate.
\end{proof}

From Lemma \ref{l: chvar} and the expression \eqref{f: opL_Sigma} we deduce that the operators
of the form
\[
F(\Sigma,\Sigma^*), \qquad \text{where } F \in \Hol(2,1),
\]
include, among others, all the operators \eqref{f: esempio_FC} for any $k,h \in \NN$ and any $H \in \Hol(1,2)$. Thus, the ``noncommutative functional calculus'' for $(\Sigma,\Sigma^*)$ based on $\Hol(2,1)$ is sufficiently rich to include many of the operators of interest related to the flow gradient and the flow Laplacian.

\section{Flow submersions and transference}\label{s: submersions}

In this section we develop a transference theory for appropriately defined quotients of flow trees. We also characterise those flow trees $(T,m)$ that are quotients of the homogeneous tree $(\Tq,\mq)$, in terms of rationality properties of the flow $m$.

\subsection{Submersions and compatible operators}
We introduce a particular class of mappings between trees with root at infinity, which preserve the underlying structure.

\begin{definition}
Let $T_1,T_2$ be trees with root at infinity.
We say that $\pi: T_1 \to T_2$ is a \emph{submersion} if 
\begin{align}
\label{f: 1}
\pi(\pred(x)) &= \pred(\pi(x)) \qquad \forall x \in T_1, \\ 
\label{f: 2}
\pi(\succ(x)) &= \succ(\pi(x)) \qquad \forall x \in T_1 .
\end{align} 
\end{definition}

Observe that \eqref{f: 1} and \eqref{f: 2} imply that a submersion $\pi$ is surjective and strictly increasing, i.e., if $x < y$ on $T_1$ then $\pi(x) < \pi(y)$ on $T_2$.
Moreover, iteration of \eqref{f: 2} gives
\[
\pi(\succ^n(x)) = \succ^n(\pi(x)) \qquad\forall n \in \NN,
\]
where $\succ^n(x)$ is the set of the $n$th-generation descendants of $x$.
In particular,
\[
\pi(\Delta_x) = \Delta_{\pi(x)},
\]
where
\begin{equation}\label{f: tenda}
\Delta_z \defeq \{y \in T_j \colon y \le z\} = \bigcup_{n \in \NN} \succ^n(z)
\end{equation}
is the set of all the descendants of $z \in T_j$.

From the above properties, it is readily seen that a submersion $\pi$ is a $1$-Lipschitz map, i.e., 
\begin{equation}\label{f: 1-Lip}
d_2(\pi(x),\pi(y)) \le d_1(x,y) \qquad \forall x,y \in T_1,
\end{equation}
where $d_j$ is the distance function on $T_j$, and moreover
\[
\ell(\pi(x)) = \ell(x) + c_\pi \qquad\forall x \in T_1
\]
for some constant $c_\pi \in \ZZ$; in particular, up to an appropriate shifting of the level functions, one may assume that $c_\pi = 0$, i.e., the submersion $\pi$ is \emph{level-preserving}.

\begin{definition}
Let $T_1,T_2$ be trees with root at infinity.
\begin{enumerate}[label=(\alph*)]
\item Given a submersion $\pi : T_1 \to T_2$, we say that two flow measures $m_1$ and $m_2$, on $T_1$ and $T_2$, respectively, are \emph{$\pi$-compatible} if
\begin{equation}\label{f: daga}
\frac{m_2(\pi(x))}{m_2(\pred(\pi(x)))}=\frac{m_1(\pi^{-1}\{\pi(x)\} \cap \succ(\pred(x)))}{m_1(\pred(x))} \qquad \forall x \in T_1.
\end{equation}

\item Let $(T_1,m_1)$ and $(T_2,m_2)$ be flow trees. We say that $\pi: (T_1, m_1) \to (T_2,m_2)$ is a \emph{flow submersion} if $\pi : T_1 \to T_2$ is a submersion and the flow measures $m_1$ and $m_2$ are $\pi$-compatible. In this case we also say that $(T_2,m_2)$ is a \emph{flow quotient} of $(T_1,m_1)$.
\end{enumerate}
\end{definition}

The compatibility property \eqref{f: daga} says, roughly speaking, that the flow measure $m_2$ can be thought of as the push-forward of the flow measure $m_1$ via $\pi$, provided both flows are restricted to appropriate subsets of the trees. This idea is made precise in the following statement.

\begin{prop}\label{p: pushforward}
Let $(T_1,m_1)$ and $(T_2,m_2)$ be flow trees and let $\pi:(T_1,m_1) \to (T_2,m_2)$ be a flow  submersion.  Let $x \in T_1$, and let $\pi_{x}$ be the restriction of $\pi$ to $\Delta_x$.
Then, for every $z \in \pi(\Delta_x) = \Delta_{\pi(x)}$,
\begin{equation}\label{f: pushforward}
m_2(z)=c(x) \, m_1(\pi^{-1}_{x}\{z\}),
\end{equation}
where $c(x)=\frac{m_2(\pi(x))}{m_1(x)}$. In other words, the measure $m_2|_{\Delta_{\pi(x)}}$ is $c(x)$ times the push-forward of the measure $m_1|_{\Delta_x}$ via $\pi_x$.
\end{prop}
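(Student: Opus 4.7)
My plan is to induct on the nonnegative integer $n \defeq \ell(\pi(x)) - \ell(z)$; this is a legitimate induction parameter since $z \le \pi(x)$ and $\pi$ shifts levels by a fixed additive constant. In the base case $n=0$ I would observe that $z = \pi(x)$, and that any $y \in \Delta_x$ with $\pi(y) = \pi(x)$ must satisfy $\ell(y) = \ell(x)$ by the level-shift property of $\pi$; combined with $y \le x$ this forces $y = x$, so $\pi_x^{-1}\{\pi(x)\} = \{x\}$, and \eqref{f: pushforward} collapses to the defining relation $m_2(\pi(x)) = c(x) \, m_1(x)$.

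For the inductive step, I would fix $z \in \Delta_{\pi(x)}$ with $n \ge 1$, set $z' \defeq \pred(z) \in \Delta_{\pi(x)}$, and prove the disjoint decomposition
\[
\pi_x^{-1}\{z\} = \bigsqcup_{w \in \pi_x^{-1}\{z'\}} \bigl( \pi^{-1}\{z\} \cap \succ(w) \bigr).
\]
Disjointness is automatic (unique predecessors), and the inclusion ``$\supseteq$'' follows because $w \in \pi_x^{-1}\{z'\} \subseteq \Delta_x \setminus \{x\}$ implies $\succ(w) \subseteq \Delta_x$. The other inclusion is the key combinatorial point: for $y \in \pi_x^{-1}\{z\}$, the relation $\pi(y) = z \ne \pi(x)$ excludes $y=x$, so $y < x$ strictly and hence $\pred(y) \in \Delta_x$; then property \eqref{f: 1} gives $\pi(\pred(y)) = z'$, placing $\pred(y)$ in $\pi_x^{-1}\{z'\}$.

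Once the decomposition is in hand, the $\pi$-compatibility condition \eqref{f: daga} does the rest of the work. For each $w \in \pi_x^{-1}\{z'\}$, property \eqref{f: 2} ensures that some $y \in \succ(w)$ has $\pi(y) = z$, and applying \eqref{f: daga} to that $y$ (for which $\pred(y) = w$, $\pi(y) = z$, $\pred(\pi(y)) = z'$) yields
\[
m_1\bigl( \pi^{-1}\{z\} \cap \succ(w) \bigr) = m_1(w) \, \frac{m_2(z)}{m_2(z')}.
\]
Summing this identity over $w \in \pi_x^{-1}\{z'\}$ and invoking the inductive hypothesis $m_1(\pi_x^{-1}\{z'\}) = m_2(z')/c(x)$ closes the induction. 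I expect the main obstacle to be precisely the verification of the decomposition above, and specifically the fact that $\pred$ maps $\pi_x^{-1}\{z\}$ into $\pi_x^{-1}\{z'\}$ without leaving $\Delta_x$; this is the one place where the strict level-shift behaviour of a submersion, rather than merely the $\pi$-compatibility of the flows, is used. Once that combinatorial point is secured, the measure identity is essentially a direct transcription of \eqref{f: daga}.
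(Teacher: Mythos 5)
Your proof is correct and follows essentially the same route as the paper: induction on the level of $z$, decomposing $\pi_x^{-1}\{z\}$ according to predecessors, applying \eqref{f: daga} to each piece, and closing with the inductive hypothesis. You simply spell out more explicitly the two combinatorial facts that the paper uses silently (that $\pred$ maps $\pi_x^{-1}\{z\}$ into $\pi_x^{-1}\{z'\}$ without leaving $\Delta_x$, and that surjectivity from \eqref{f: 2} lets you apply \eqref{f: daga} over every $w\in\pi_x^{-1}\{z'\}$), which is fine.
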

\begin{proof}
We prove \eqref{f: pushforward} by induction on the level of $z$.

If $\ell(z)=\ell(x)$, i.e., $z=\pi(x)$, then \eqref{f: pushforward} is trivially verified.

Now assume that \eqref{f: pushforward} holds for all the vertices in $\Delta_{\pi(x)}$ whose level is between $\ell(x)$ and $\ell(x)-n$, where $n \in \NN$,  and pick $z \in \Delta_{\pi(x)}$ at level $\ell(x)-n-1$. We can then choose $y_1,\dots,y_N \in \pi_x^{-1}\{z\}$ such that $\pred(y_j) \ne \pred(y_k)$ if $j \ne k$ and $\pi_x^{-1}\{\pred(z)\} = \{\pred(y_j)\}_{j=1}^N$.
Then, by \eqref{f: daga},
\[\begin{split}
m_1(\pi_{x}^{-1}\{z\})
&= \sum_{j=1}^N m_1(\pi^{-1}\{z\} \cap \succ(\pred(y_j)))=\sum_{j=1}^{N} \frac{m_2(z)}{m_2(\pred(z))} m_1(\pred(y_j)) \\ 
&= \frac{m_2(z)}{m_2(\pred(z))} m_1(\pi^{-1}_{x}(\pred(z))) =\frac{m_2(z)}{c(x)},
\end{split}\]
where in the last step we used the inductive hypothesis.
\end{proof}

\begin{definition}
Given two flow trees $(T_1,m_1)$ and $(T_2,m_2)$ and a flow submersion $\pi:(T_1,m_1) \to (T_2,m_2)$, we define the \emph{lifting operator} $\Phi_\pi :\CC^{T_2} \to \CC^{T_1}$  by 
\[
\Phi_\pi f=f \circ \pi.
\]
We say that an operator $\cO \in \Bdd(m_1)$ is \emph{$\pi$-compatible} if there exists $\tilde\cO \in \Bdd(m_2)$ such that
\begin{equation}\label{f: M1M2comp}
\cO \Phi_\pi =\Phi_\pi \tilde\cO   \qquad\text{and}\qquad \cO^*\Phi_\pi = \Phi_\pi \tilde\cO^* \qquad\text{on } L^\infty(m_2).
\end{equation}
\end{definition}

\begin{remark}
By taking adjoints, the condition \eqref{f: M1M2comp} is equivalent to
\begin{equation}\label{f: M1M2comp_adj}
\Phi_\pi^* \cO = \tilde\cO \Phi_\pi^*   \qquad\text{and}\qquad \Phi_\pi^* \cO^* = \tilde\cO^* \Phi_\pi^* \qquad\text{on } L^1(m_1),
\end{equation}
where $\Phi_\pi^* : L^1(m_1) \to L^1(m_2)$ is the adjoint of the lifting operator $\Phi_\pi : L^\infty(m_2) \to L^\infty(m_1)$, i.e., the fibre-averaging operator
\begin{equation}\label{f: adj_lifting}
\Phi_\pi^* h(x) = \frac{1}{m_2(x)} \sum_{\overline{x} \in \pi^{-1}\{x\}} h(\overline{x}) \, m_1(\overline{x}) .
\end{equation}
\end{remark}

\begin{prop}\label{p: K2K1}
Let $\pi : (T_1,m_1) \to (T_2,m_2)$ be a flow submersion and let $\cO \in \Bdd(m_1)$ be $\pi$-compatible. Then $\Phi_\pi(L^\infty(m_2))$ is both $\cO$- and $\cO^*$-invariant.
Moreover, the operator $\tilde\cO \in \Bdd(m_2)$ satisfying \eqref{f: M1M2comp} is uniquely determined by $\cO$, and
\[
\|\tilde \cO\|_{1 \to 1} \leq \|\cO\|_{1 \to 1}, \qquad \|\tilde \cO\|_{\infty \to \infty} \leq \|\cO\|_{\infty \to \infty}.
\]
Furthermore, for all $x,y \in T_2$,
\begin{align}
\label{f: rel ker} 
K_{\tilde \cO}(x,y) &= \frac{1}{m_2(y)} \sum_{z \in \pi^{-1}\{y\}} K_{\cO}(\overline{x},z) \, m_1(z) \qquad \forall \overline{x}\in\pi^{-1}\{x\}, \\ 
\label{f: rel ker2}
K_{\tilde \cO}(x,y) &= \frac{1}{m_2(x)}\sum_{z \in \pi^{-1}\{x\}} K_{\cO}(z,\overline{y}) \, m_1(z) \qquad \forall \overline{y}\in\pi^{-1}\{y\},
\end{align}
where the sums converge absolutely.
\end{prop}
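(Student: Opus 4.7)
My plan is to dispense with the easy parts first. The $\cO$- and $\cO^*$-invariance of $\Phi_\pi(L^\infty(m_2))$ is built directly into the defining equations \eqref{f: M1M2comp}. Uniqueness of $\tilde\cO$ follows because $\Phi_\pi : L^\infty(m_2) \to L^\infty(m_1)$ is injective (indeed an isometry, since $\pi$ is surjective by \eqref{f: 2}): any two candidates $\tilde\cO_1, \tilde\cO_2 \in \Bdd(m_2)$ satisfying \eqref{f: M1M2comp} would agree on $L^\infty(m_2)$, which determines them as elements of $\Bdd(m_2)$ (their integral kernels are recovered by testing on point indicators $\chr_{\{y\}}$).

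The heart of the argument is to derive the kernel formulas by evaluating the compatibility identities on point indicators. For \eqref{f: rel ker}, I would plug $f = \chr_{\{y\}} \in L^\infty(m_2)$ into \eqref{f: M1M2comp}: since $\Phi_\pi f = \chr_{\pi^{-1}\{y\}}$, writing both sides of $\cO \Phi_\pi f = \Phi_\pi \tilde\cO f$ at a point $\overline{x} \in \pi^{-1}\{x\}$ via the kernel representation \eqref{eq:kernel_op} gives
\[
\sum_{z \in \pi^{-1}\{y\}} K_{\cO}(\overline{x}, z) \, m_1(z) = K_{\tilde\cO}(x,y) \, m_2(y),
\]
and \eqref{f: rel ker} follows after dividing by $m_2(y)$. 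For \eqref{f: rel ker2}, I would test the adjoint form \eqref{f: M1M2comp_adj} on $\chr_{\{\overline{y}\}} \in L^1(m_1)$, observing that $\Phi_\pi^* \chr_{\{\overline{y}\}} = (m_1(\overline{y})/m_2(y)) \, \chr_{\{y\}}$ by \eqref{f: adj_lifting}. An analogous computation, after cancelling the factor $m_1(\overline{y})$, yields \eqref{f: rel ker2}.

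With the kernel formulas in hand, the norm bounds drop out by direct estimation. Using \eqref{f: rel ker2}, for any $\overline{y} \in \pi^{-1}\{y\}$,
\[
\sum_{x \in T_2} |K_{\tilde\cO}(x,y)| \, m_2(x) \leq \sum_{x \in T_2} \sum_{z \in \pi^{-1}\{x\}} |K_{\cO}(z,\overline{y})| \, m_1(z) = \sum_{z \in T_1} |K_{\cO}(z,\overline{y})| \, m_1(z) \leq \|\cO\|_{1 \to 1},
\]
yielding $\|\tilde\cO\|_{1 \to 1} \leq \|\cO\|_{1 \to 1}$. The dual estimate $\|\tilde\cO\|_{\infty \to \infty} \leq \|\cO\|_{\infty \to \infty}$ is entirely symmetric via \eqref{f: rel ker}, and can also be read off directly from $\|\Phi_\pi \tilde\cO f\|_\infty = \|\cO \Phi_\pi f\|_\infty$ together with the isometric property of $\Phi_\pi$ on $L^\infty$.

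The one subtle point, worth flagging rather than a genuine obstacle, is the absolute convergence of the sums over the (possibly infinite) fibres $\pi^{-1}\{y\}$ in the kernel formulas; this is supplied by the standing hypothesis $\cO \in \Bdd(m_1)$, since $\sum_{z \in T_1} |K_{\cO}(\overline{x}, z)| \, m_1(z) \leq \|\cO\|_{\infty \to \infty}$ uniformly in $\overline{x}$. The same estimate justifies the use of \eqref{eq:kernel_op} on $L^\infty$ inputs such as $\chr_{\pi^{-1}\{y\}}$ in the derivation above.
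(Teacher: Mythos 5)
Your proof is correct and follows essentially the same strategy as the paper's: derive \eqref{f: rel ker} from the first compatibility identity (you via point indicators, the paper by rearranging for general $f\in L^\infty(m_2)$), obtain \eqref{f: rel ker2} and the $L^1$ bound from the adjoint/dual version, and read off the $L^\infty$ bound from the isometry of $\Phi_\pi$. The small presentational differences (testing on $\chr_{\{y\}}$ and $\chr_{\{\overline y\}}$ rather than stating the rearrangement abstractly; estimating the $L^1$ norm directly from \eqref{f: rel ker2} rather than invoking the argument for $\cO^*$) do not change the substance.
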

\begin{proof}
As $\cO$ is $\pi$-compatible, the $\cO$- and $\cO^*$-invariance of $\Phi_\pi(L^\infty(m_2))$ is clear from \eqref{f: M1M2comp}. Moreover, as $\Phi_\pi : L^\infty(m_2) \to L^\infty(m_1)$ is an isometric embedding, from the first equality in \eqref{f: M1M2comp} we deduce that $\tilde \cO = \Phi_{\pi}^{-1} \cO \Phi_\pi$ on $L^\infty(m_2)$, so $\tilde\cO$ is uniquely determined by $\cO$, and clearly $\|\tilde \cO\|_{\infty \to \infty} \leq \|\cO\|_{\infty \to \infty}$.

In addition, for any $f \in L^\infty(m_2)$, $x \in T_2$ and $\overline{x} \in \pi^{-1}\{x\}$,
\[
\tilde\cO f(x) = \cO \Phi_\pi f(\overline{x})  = \sum_{z \in T_1} K_{\cO}(\overline{x},z) \, f(\pi(z)) \, m_1(z),
\]
and a rearrangement of this expression readily shows that the integral kernel of $\tilde\cO$ is given by 
\eqref{f: rel ker}. Notice that all the above sums converge absolutely, as
\[
\sum_{y \in T_2} |K_{\tilde\cO}(x,y)| \, m_2(y) \leq \sum_{z \in T_1} |K_{\cO}(\bar x,z)| \, m_1(z) \leq \|\cO\|_{\infty \to \infty} < \infty.
\]

The remaining assertions follow by repeating the above argument with $\cO^*$ in place of $\cO$.
\end{proof}

\begin{definition}
Let $\pi : (T_1,m_1) \to (T_2,m_2)$ be a flow submersion.
We denote by $\Comp(\pi)$ the set of the $\pi$-compatible operators in $\Bdd(m_1)$. Moreover, for any $\cO \in \Comp(\pi)$, we denote by $\pi(\cO)$ the unique operator $\tilde \cO$ satisfying \eqref{f: M1M2comp}. Furthermore, we set $\Comp_{\fin}(\pi) = \Bdd_{\fin}(m_1) \cap \Comp(\pi)$, and denote by $\overline{\Comp_{\fin}(\pi)}$ the closure of $\Comp_{\fin}(\pi)$ with respect to the norm of $\Bdd(m_1)$.
\end{definition}

In light of Proposition \ref{p: K2K1} and the inequality \eqref{f: 1-Lip}, it is easy to verify the following result. 

\begin{prop}\label{p: Cpi}
Let $\pi : (T_1,m_1) \to (T_2,m_2)$ be a flow submersion. Then
$\Comp(\pi)$ is a closed unital $*$-subalgebra of $\Bdd(m_1)$, and the mapping
\[
\pi : \Comp(\pi) \to \Bdd(m_2)
\]
is a unital Banach $*$-algebra homomorphism of norm $1$. Furthermore, $\Comp_{\fin}(\pi)$  and its closure $\overline{\Comp_{\fin}(\pi)}$ are unital $*$-subalgebras of $\Comp(\pi)$, and $\pi(\Comp_{\fin}(\pi)) \subseteq \Bdd_{\fin}(m_2)$.
\end{prop}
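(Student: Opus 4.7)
The proof consists of routine verifications built on Proposition \ref{p: K2K1}, which already furnishes the key ingredient: the uniqueness of $\pi(\cO) \in \Bdd(m_2)$ for each $\cO \in \Comp(\pi)$ and the norm bound $\|\pi(\cO)\|_{\Bdd(m_2)} \leq \|\cO\|_{\Bdd(m_1)}$. The plan is to address each assertion by direct manipulation of the defining relations \eqref{f: M1M2comp}, invoking this uniqueness to identify the images of algebraic combinations of compatible operators.

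First, to see that $\Comp(\pi)$ is a unital $*$-subalgebra and $\pi$ is a $*$-homomorphism, I would argue as follows. Unitality is trivial since $I \Phi_\pi = \Phi_\pi I$ gives $I \in \Comp(\pi)$ with $\pi(I) = I$. Given $\cO_1, \cO_2 \in \Comp(\pi)$, a direct computation using \eqref{f: M1M2comp} shows $\cO_1 \cO_2 \Phi_\pi = \Phi_\pi \pi(\cO_1) \pi(\cO_2)$, and analogously $(\cO_1\cO_2)^* \Phi_\pi = \Phi_\pi (\pi(\cO_1)\pi(\cO_2))^*$, so $\cO_1\cO_2 \in \Comp(\pi)$ with $\pi(\cO_1\cO_2) = \pi(\cO_1)\pi(\cO_2)$ by uniqueness; linearity of $\pi$ is identical. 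The involution $\cO \mapsto \cO^*$ stabilises $\Comp(\pi)$ because the two equations in \eqref{f: M1M2comp} are interchanged under the swap $(\cO, \pi(\cO)) \leftrightarrow (\cO^*, \pi(\cO)^*)$, which yields $\pi(\cO^*) = \pi(\cO)^*$. Combining the norm bound from Proposition \ref{p: K2K1} with $\pi(I) = I$ shows that $\pi$ has operator norm exactly $1$.

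To verify that $\Comp(\pi)$ is closed in $\Bdd(m_1)$, take a sequence $\cO_n \in \Comp(\pi)$ converging to some $\cO$ in $\Bdd(m_1)$. The linearity of $\pi$ and the norm bound force $\{\pi(\cO_n)\}$ to be Cauchy in $\Bdd(m_2)$, hence convergent to some $\tilde\cO$. The identity $\cO_n \Phi_\pi = \Phi_\pi \pi(\cO_n)$ on $L^\infty(m_2)$, together with its adjoint counterpart, passes to the limit in $L^\infty \to L^\infty$ operator norm (using \eqref{f: BddLp}), yielding $\cO \in \Comp(\pi)$ with $\pi(\cO) = \tilde\cO$.

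Finally, $\Bdd_{\fin}(m_1)$ is itself a unital $*$-subalgebra of $\Bdd(m_1)$ (composition adds finite ranges, sums respect the maximum, and the adjoint preserves range), so $\Comp_{\fin}(\pi)$ is as well, and $\overline{\Comp_{\fin}(\pi)}$ inherits this structure as a closure in a Banach $*$-algebra, sitting inside $\Comp(\pi)$ by the closedness just proved. The inclusion $\pi(\Comp_{\fin}(\pi)) \subseteq \Bdd_{\fin}(m_2)$ is the last point: if $K_\cO$ is supported in $\{d_1 \leq N\}$ and $K_{\pi(\cO)}(x,y) \neq 0$, formula \eqref{f: rel ker} forces the existence, for any lift $\bar x \in \pi^{-1}\{x\}$, of some $z \in \pi^{-1}\{y\}$ with $K_\cO(\bar x, z) \neq 0$, so the $1$-Lipschitz property \eqref{f: 1-Lip} gives $d_2(x,y) \leq d_1(\bar x, z) \leq N$. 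The only point requiring any care is the closedness step, where one must ensure the compatibility relations \eqref{f: M1M2comp} survive the limit, but this is automatic once the norm bound on $\pi$ is in hand; I do not foresee a genuine obstacle.
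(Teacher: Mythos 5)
Your proof is correct and follows precisely the route the paper indicates: the paper gives no explicit proof, simply asserting that the result follows from Proposition \ref{p: K2K1} and the Lipschitz bound \eqref{f: 1-Lip}, which is exactly what you use. Your verifications of multiplicativity, $*$-compatibility, closedness, and the finite-range inclusion are all sound.
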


The following result provides some notable examples of $\pi$-compatible operators and shows why their theory is relevant to the study of the flow gradient and the flow Laplacian. Recall from Section \ref{ss: noncommutative} the definition of a joint functional calculus for noncommuting operators via power series.

\begin{prop}\label{p: Intral}
Let $\pi : (T_1,m_1) \to (T_2,m_2)$ be a flow submersion. Let $\Sigma_j$ denote the shift operator on $T_j$ for $j=1,2$. Then, for any $F \in \Hol(2,1)$, we have $F(\Sigma_1,\Sigma_1^*) \in \overline{\Comp_\fin(\pi)}$ and
\[
\pi(F(\Sigma_1,\Sigma_1^*)) = F(\Sigma_2,\Sigma_2^*).
\]
Moreover, if $F \in \Pol(2)$, then $F(\Sigma_1,\Sigma_1^*) \in \Comp_{\fin}(\pi)$.
\end{prop}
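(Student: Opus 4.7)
The plan is to handle the polynomial case first via the algebra structure of $\Comp_\fin(\pi)$, and then pass to power series by a density argument. The heart of the proof is showing that the generators $\Sigma_1$ and $\Sigma_1^*$ are themselves $\pi$-compatible and are mapped by $\pi$ to $\Sigma_2$ and $\Sigma_2^*$ respectively.

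First I would verify that $\Sigma_1,\Sigma_1^* \in \Comp_\fin(\pi)$ with $\pi(\Sigma_1) = \Sigma_2$ and $\pi(\Sigma_1^*) = \Sigma_2^*$. Membership in $\Bdd_\fin(m_1)$ is immediate since the kernels are supported on pairs of neighbours. The identity $\Sigma_1 \Phi_\pi = \Phi_\pi \Sigma_2$ on $L^\infty(m_2)$ is a direct consequence of \eqref{f: 1}: for $f \in L^\infty(m_2)$ and $x \in T_1$, both sides evaluate to $f(\pred(\pi(x)))$. The identity $\Sigma_1^* \Phi_\pi = \Phi_\pi \Sigma_2^*$ is the computational core of the proof: expanding via \eqref{f: 2}, partitioning $\succ(x)$ into the fibres $\pi^{-1}\{z\} \cap \succ(x)$ for $z \in \succ(\pi(x))$, and using the compatibility relation \eqref{f: daga} to convert $m_1(\pi^{-1}\{z\} \cap \succ(x))$ into $m_2(z) m_1(x)/m_2(\pi(x))$, one finds
\[
\Sigma_1^* \Phi_\pi f(x) = \frac{1}{m_2(\pi(x))} \sum_{z \in \succ(\pi(x))} f(z) \, m_2(z) = \Phi_\pi \Sigma_2^* f(x).
\]
This settles the compatibility of both generators, and confirms what $\pi$ assigns to each of them.

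Next, I would invoke Proposition \ref{p: Cpi}: since $\Comp_\fin(\pi)$ is a unital $*$-subalgebra of $\Bdd(m_1)$ and $\pi \colon \Comp_\fin(\pi) \to \Bdd(m_2)$ is a unital $*$-algebra homomorphism, the containment $\{\Sigma_1,\Sigma_1^*\} \subseteq \Comp_\fin(\pi)$ extends automatically to every noncommutative polynomial: for $F \in \Pol(2)$, $F(\Sigma_1,\Sigma_1^*) \in \Comp_\fin(\pi)$ and
\[
\pi(F(\Sigma_1,\Sigma_1^*)) = F(\pi(\Sigma_1),\pi(\Sigma_1^*)) = F(\Sigma_2,\Sigma_2^*).
\]
This establishes the last sentence of the proposition.

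Finally, for arbitrary $F \in \Hol(2,1)$, I would use a density/continuity argument. By Lemma \ref{l: subs_pws} applied with $\baK = \Bdd(m_j)$ and $R=1$ (legal because $\|\Sigma_j\|_{\Bdd(m_j)} = \|\Sigma_j^*\|_{\Bdd(m_j)} = 1$), the substitution map $F \mapsto F(\Sigma_j,\Sigma_j^*)$ is a bounded linear map from $\Hol(2,1)$ into $\Bdd(m_j)$. Since $\Pol(2)$ is dense in $\Hol(2,1)$ and the polynomial substitutions land in $\Comp_\fin(\pi)$, the image $F(\Sigma_1,\Sigma_1^*)$ lies in the $\Bdd(m_1)$-closure $\overline{\Comp_\fin(\pi)}$. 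The identity $\pi(F(\Sigma_1,\Sigma_1^*)) = F(\Sigma_2,\Sigma_2^*)$ then follows by passing to the limit in the polynomial case, using that $\pi$ is continuous on $\Comp(\pi)$ (norm $1$, by Proposition \ref{p: Cpi}) and hence on $\overline{\Comp_\fin(\pi)}$.

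The main obstacle is the verification of the $\pi$-compatibility of $\Sigma_1^*$: this is where the compatibility axiom \eqref{f: daga} for the flow measures comes into play, and the bookkeeping of fibres over the successors requires some care. Once that identity is in hand, the rest of the proof is a formal consequence of the algebraic and topological structure already established in Proposition \ref{p: Cpi} and Lemma \ref{l: subs_pws}.
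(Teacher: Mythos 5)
Your proposal is correct and follows essentially the same route as the paper: verify $\pi$-compatibility of the generator $\Sigma_1$ via \eqref{f: 1}, \eqref{f: 2}, and \eqref{f: daga}, then invoke the $*$-algebra homomorphism structure of Proposition \ref{p: Cpi} to handle polynomials, and absolute convergence of the series in $\Bdd(m_1)$ together with continuity of $\pi$ to pass to $\Hol(2,1)$. The paper phrases the reduction slightly more compactly — it reduces everything to the single assertion $\Sigma_1 \in \Comp_\fin(\pi)$ with $\pi(\Sigma_1) = \Sigma_2$, which already encodes both intertwining identities because $\pi$-compatibility of $\cO$ requires the identity for $\cO^*$ as well — but the computations and the logical skeleton are identical.
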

\begin{proof}
Thanks to Proposition \ref{p: Cpi} and the absolute convergence in $\Bdd(m_1)$ of the series $F(\Sigma_1,\Sigma_1^*)$ for any $F \in \Hol(2,1)$, the above statement is an immediate consequence of the assertion that $\Sigma_1 \in \Comp_\fin(\pi)$ and $\pi(\Sigma_1) = \Sigma_2$, which we now proceed to prove.

Notice that, by \eqref{f: 1}, for all $x \in T_1$,
\[
\Sigma_1(f\circ \pi)(x)=f(\pi(\pred(x)))=f(\pred(\pi(x)))=(\Sigma_2f) \circ \pi(x),
\]
that is $\Sigma_1 \Phi_\pi = \Phi_\pi \Sigma_2$. Moreover, by \eqref{f: 2},
\begin{multline*}
\Sigma_1^*(f \circ \pi) (z)
= \sum_{w \in \succ(z)}f(\pi(w)) \frac{m_1(w)}{m_1(z)}
= \sum_{y \in \succ(\pi(z))}f(y) \frac{m_1(\succ(z) \cap \pi^{-1}\{y\})}{m_1(z)}\\
= \sum_{y \in \succ(\pi(z))} f(y) \frac{m_2(y)}{m_2(\pi(z))}
= (\Sigma_2^*f) \circ \pi (z),
\end{multline*}
where the third equality follows by applying \eqref{f: daga} to any $x \in \pi^{-1}\{y\} \cap \succ(z)$. This shows that $(\Sigma_1)^* \Phi_\pi = \Phi_\pi (\Sigma_2)^*$, thus $\Sigma_1 \in \Comp(\pi)$ and $\pi(\Sigma_1) = \Sigma_2$. As clearly $\Sigma_1 \in \Bdd_{\fin}(m_1)$, this completes the proof of the above assertion.
\end{proof}  

\subsection{Transference to flow quotients}\label{ss: transference}

Let $\pi$ be a flow submersion.
From Proposition \ref{p: K2K1} we know that
\begin{equation}\label{eq:L1Linfty_transference}
\|\pi(\cO)\|_{1 \to 1} \leq \|\cO\|_{1 \to 1} \qquad\text{and}\qquad \|\pi(\cO)\|_{\infty \to \infty} \leq \|\cO\|_{\infty \to \infty}
\end{equation}
for any $\pi$-compatible operator $\cO$, i.e., $L^1$ and $L^\infty$ bounds transfer from $\cO$ to $\pi(\cO)$. The relation between the integral kernels of $\cO$ and $\pi(\cO)$ actually allows us to prove a weighted variant of these transference estimates.

\begin{prop}\label{p: stimeK2K1}
Let $\pi:(T_1,m_1) \to (T_2,m_2)$ be a level-preserving flow submersion and $\cO \in \Comp(\pi)$. Then, for every weight $w:\NN \times \ZZ\times \ZZ \to [0,\infty)$ which is increasing in the first variable,
\begin{equation}\label{f: weightedestimateK1K2}
\begin{split}
&\sup_{y \in T_2} \sum_{x \in T_2} w(d_2(x,y),\ell(x),\ell(y)) \, |K_{\pi(\cO)}(x,y)| \, m_2(x)\\
&\le \sup_{y \in T_1} \sum_{x \in T_1} w(d_1(x,y),\ell(x),\ell(y)) \, |K_\cO(x,y)| \, m_1(x),
\end{split}
\end{equation}
and
\begin{equation}\label{f: weightedestimateK1K2bis}
\begin{split}
&\sup_{x \in T_2} \sum_{y \in T_2} w(d_2(x,y),\ell(x),\ell(y)) \, |K_{\pi(\cO)}(x,y)| \, m_2(y)\\
&\le \sup_{x \in T_1} \sum_{y \in T_1} w(d_1(x,y),\ell(x),\ell(y)) \, |K_\cO(x,y)| \, m_1(y).
\end{split}
\end{equation}
\end{prop}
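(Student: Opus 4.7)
The plan is to deduce both inequalities directly from the two kernel formulas \eqref{f: rel ker} and \eqref{f: rel ker2} provided by Proposition \ref{p: K2K1}, combined with the fact that the submersion $\pi$ is level-preserving and $1$-Lipschitz (see \eqref{f: 1-Lip}). The key observation is that each kernel formula rewrites a single value of $K_{\pi(\cO)}$ at $(x,y) \in T_2 \times T_2$ as a weighted average, over one fibre of $\pi$, of values of $K_{\cO}$ at a pair where one coordinate is a fixed pre-image. This structure is tailor-made to exchange a sum over $T_2$ for a sum over $T_1$.

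For \eqref{f: weightedestimateK1K2}, I would fix $y \in T_2$ together with an arbitrary $\overline{y} \in \pi^{-1}\{y\}$ and apply \eqref{f: rel ker2}, which after taking absolute values gives
\[
m_2(x) \, |K_{\pi(\cO)}(x,y)| \leq \sum_{z \in \pi^{-1}\{x\}} |K_{\cO}(z,\overline{y})| \, m_1(z).
\]
Multiplying by $w(d_2(x,y),\ell(x),\ell(y))$ and summing over $x \in T_2$, the inner sum together with the outer sum reassembles into a single sum over all $z \in T_1$ (as $\pi$ is surjective and the fibres $\pi^{-1}\{x\}$ partition $T_1$). Since $\pi$ is level-preserving, $\ell(\pi(z)) = \ell(z)$ and $\ell(y) = \ell(\overline{y})$, so the level arguments of $w$ become $\ell(z)$ and $\ell(\overline{y})$ unchanged; since $\pi$ is $1$-Lipschitz, $d_2(\pi(z),y) = d_2(\pi(z),\pi(\overline{y})) \leq d_1(z,\overline{y})$, and monotonicity of $w$ in the first variable upgrades the argument to $d_1(z,\overline{y})$. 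This yields
\[
\sum_{x \in T_2} w(d_2(x,y),\ell(x),\ell(y)) \, |K_{\pi(\cO)}(x,y)| \, m_2(x) \leq \sum_{z \in T_1} w(d_1(z,\overline{y}),\ell(z),\ell(\overline{y})) \, |K_{\cO}(z,\overline{y})| \, m_1(z),
\]
and taking the supremum in $y$ on the left (and noting that $\overline{y}$ was an arbitrary preimage, hence any element of $T_1$ is admissible) delivers \eqref{f: weightedestimateK1K2}.

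The proof of \eqref{f: weightedestimateK1K2bis} is entirely symmetric, using \eqref{f: rel ker} in place of \eqref{f: rel ker2}: one fixes $x \in T_2$ and $\overline{x} \in \pi^{-1}\{x\}$, estimates $m_2(y) \, |K_{\pi(\cO)}(x,y)| \leq \sum_{z \in \pi^{-1}\{y\}} |K_{\cO}(\overline{x},z)| \, m_1(z)$, and then repeats the fibre-repackaging and $1$-Lipschitz/level-preserving bounds on the arguments of $w$.

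There is no real obstacle here beyond careful bookkeeping: the only place where the hypotheses on $w$ and on $\pi$ are used are (i) the monotonicity of $w$ in its first argument, invoked exactly once to pass from $d_2(\pi(z),y)$ to $d_1(z,\overline{y})$, and (ii) the level-preserving property, used to ensure the level arguments of $w$ transfer verbatim from $T_2$ to $T_1$. Absolute convergence of all sums is automatic from the last assertion of Proposition \ref{p: K2K1}, so no additional Fubini-type justification is needed.
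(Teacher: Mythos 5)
Your proof is correct and takes essentially the same route as the paper's: apply the kernel formula \eqref{f: rel ker2} (resp.\ \eqref{f: rel ker}), collapse the double sum $\sum_{x\in T_2}\sum_{z\in\pi^{-1}\{x\}}$ into a single sum over $T_1$, and use the $1$-Lipschitz and level-preserving properties of $\pi$ together with monotonicity of $w$ in the first variable to compare the weights. Nothing to add.
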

\begin{proof}
By \eqref{f: rel ker2} and \eqref{f: 1-Lip},
\[\begin{split}
&\sum_{x \in T_2}  w(d_2(x,y),\ell(x),\ell(y)) \, |K_{\pi(\cO)}(x,y)| \, m_2(x)  \\
&\le \sum_{x \in T_2} w(d_2(  {x},y),\ell(x),\ell(y))  \sum_{z \in \pi^{-1}\{x\}} |K_\cO(z,\overline{y})| \, m_1(z) \\ 
&\leq \sum_{x \in T_1}  w(d_1( {x}, \overline{y}),\ell(x),\ell(\overline{y})) \, |K_\cO(x, \overline{y})| \, m_1(x)  ,
\end{split}\]
for any $\overline{y}\in\pi^{-1}\{y\}$, where we used that $w$ is increasing in the first variable. Taking the supremum over all $y\in T_2$ gives \eqref{f: weightedestimateK1K2}. The estimate \eqref{f: weightedestimateK1K2bis} is proved analogously, using \eqref{f: rel ker} and \eqref{f: 1-Lip}.
\end{proof}

Finally, we show that an $L^p$ variant of the transference estimates \eqref{eq:L1Linfty_transference} holds for a subclass of $\pi$-compatible operators $\cO$. The following result can be thought of as an $L^p$ transference result, which may be compared, e.g., to those in \cite{BPW,CW} for actions of amenable groups.

\begin{prop}\label{p: Lptransf}
Let $\pi : (T_1,m_1) \to (T_2,m_2)$ be a flow submersion and $\cO \in \overline{\Comp_\fin(\pi)}$. Then, for all $p \in [1,\infty]$,
\[
\|\pi(\cO)\|_{p \to p} \leq \|\cO\|_{p \to p}.
\]
\end{prop}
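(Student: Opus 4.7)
The plan is to handle the endpoint cases $p=1,\infty$ directly from Proposition \ref{p: K2K1}, then use a localisation argument for $p \in (1,\infty)$. By \eqref{f: BddLp}, convergence in $\Bdd(m_1)$ dominates convergence in each $L^p$ operator norm, and the same holds for $\Bdd(m_2)$; since $\pi$ is a norm-$1$ Banach algebra homomorphism by Proposition \ref{p: Cpi}, both sides of the inequality depend $\Bdd$-continuously on $\cO$. Hence it suffices to prove the bound for $\cO \in \Comp_\fin(\pi)$, i.e.\ with finite propagation radius: $K_\cO(x,y)=0$ whenever $d_1(x,y) > R$ for some $R \in \NN$. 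By harmlessly shifting level functions, I may also assume that $\pi$ is level-preserving, so that $\ell \circ \pi = \ell$.

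Fix $f \in c_{00}(T_2)$. Its image $\pi(\cO) f$ is supported in the $R$-neighbourhood $F_R \defeq \{y \in T_2 \colon d_2(y,\supp f) \leq R\}$, which is finite. I pick a vertex $v \in T_2$ on a semi-infinite geodesic to $\myth$ so high that $F_R \subseteq W \defeq \{z \in \Delta_v \colon \ell(z) \leq \ell(v)-R\}$, choose any $\bar v \in \pi^{-1}\{v\}$, and introduce the localised lift
\[
g \defeq (f \circ \pi) \, \chr_{\Delta_{\bar v}}.
\]
Since $\supp f \subseteq \Delta_v = \pi(\Delta_{\bar v})$, Proposition \ref{p: pushforward} applied to $\bar v$ gives $\|g\|_{L^p(m_1)}^p = c(\bar v)^{-1} \|f\|_{L^p(m_2)}^p$, where $c(\bar v) = m_2(v)/m_1(\bar v)$. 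The key geometric fact is that if $x \in \Delta_{\bar v}$ with $d_1(x,\bar v) \geq R$ (equivalently, $\ell(x) \leq \ell(\bar v)-R$), then the ball $B_R(x)$ is entirely contained in $\Delta_{\bar v}$: indeed, for any $y \not\leq \bar v$, the join $x \vee y$ lies strictly above $\bar v$, forcing $d_1(x,y) \geq d_1(x,\bar v)+1 > R$. Denote by $V_R \defeq \{x \in \Delta_{\bar v} \colon \ell(x) \leq \ell(\bar v)-R\} = \pi_{\bar v}^{-1}(W)$ the set of such deep points. On $V_R$ the truncation $g$ agrees with $\Phi_\pi f$ on the whole $R$-ball determining $\cO g(x)$, so the compatibility relation \eqref{f: M1M2comp} yields
\[
\cO g(x) = \cO \Phi_\pi f(x) = \Phi_\pi \pi(\cO) f(x) = \pi(\cO) f (\pi(x)), \qquad x \in V_R.
\]

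Applying Proposition \ref{p: pushforward} once more to transport the integral of $|\cO g|^p$ over $V_R$ to an integral of $|\pi(\cO) f|^p$ over $W$, and using the $L^p(m_1)$ boundedness of $\cO$, I obtain
\[
\frac{1}{c(\bar v)} \, \|(\pi(\cO) f)\chr_W\|_{L^p(m_2)}^p \leq \|\cO g\|_{L^p(m_1)}^p \leq \|\cO\|_{p\to p}^p \, \|g\|_{L^p(m_1)}^p = \frac{\|\cO\|_{p\to p}^p}{c(\bar v)} \, \|f\|_{L^p(m_2)}^p,
\]
and the choice of $v$ guarantees that $\pi(\cO) f$ is supported in $F_R \subseteq W$, so $\|(\pi(\cO)f)\chr_W\|_p = \|\pi(\cO) f\|_p$. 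Density of $c_{00}(T_2)$ in $L^p(m_2)$ finishes the case $p<\infty$; for $p=\infty$ the bound is already in Proposition \ref{p: K2K1}. The main obstacle I anticipate is precisely the geometric truncation step: one must arrange that (i) the lifted function $g$ lives in $L^p(m_1)$ despite each fibre of $\pi$ being potentially infinite, (ii) truncating to $\Delta_{\bar v}$ does not alter the values of $\cO g$ on a region whose $\pi$-image covers $\supp \pi(\cO) f$, and (iii) Proposition \ref{p: pushforward} can be invoked on both sides to cancel the $c(\bar v)$ factor. Once the tree-geometric observation about escape from $\Delta_{\bar v}$ is in place, the remaining steps are essentially a bookkeeping of push-forward measures.
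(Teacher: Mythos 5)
Your proposal is correct and follows essentially the same route as the paper's proof: both reduce to $\cO \in \Comp_\fin(\pi)$ by density and \eqref{f: BddLp}, lift $f$ via $\Phi_\pi$ truncated to a cone $\Delta_{\bar v}$, use the finite propagation radius together with the tree-geometric fact that $R$-balls around deep points of $\Delta_{\bar v}$ stay inside $\Delta_{\bar v}$, and transfer norms through Proposition \ref{p: pushforward}. The only differences are cosmetic bookkeeping (you pick $v$ adaptively and track the factor $c(\bar v)$, whereas the paper decomposes $c_{00}(T_2)$ first, rescales so $c(w)=1$, and routes the comparison through the contraction $\Phi_\pi^*$).
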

\begin{proof}
Thanks to \eqref{f: BddLp} and the boundedness of $\pi : \Comp(\pi) \to \Bdd(m_2)$, it is enough to prove the assertion under the assumption that $\cO \in \Comp_\fin(\pi)$, i.e.,
\begin{equation}\label{f: Cfin_ker}
N \defeq \sup \{ d_1(x,y) \colon K_{\cO}(x,y) \neq 0\} < \infty.
\end{equation}
In addition, in light of \eqref{eq:L1Linfty_transference}, it is enough to consider $p \in (1,\infty)$. As $c_{00}(T_2)$ is dense in $L^p(m_2)$, we only need to check that
\begin{equation}\label{f: Lptrest}
\|\pi(\cO) f\|_{L^p(m_2)} \leq \|\cO\|_{p \to p} \|f\|_{L^p(m_2)}
\end{equation}
for all $f \in c_{00}(T_2)$.

If $S$ is a subset of $T_j$, let us write $c_{00}(S)$ for the set of the functions in $c_{00}(T_j)$ supported in $S$. Recall moreover the notation $\Delta_x$ from \eqref{f: tenda}, and define, for any $x \in T_j$,
\[
\Delta_x^N = \{ y \in \Delta_x \colon d_j(x,y) \geq N \} = \bigcup_{y \in \succ^N(x)} \Delta_y.
\]
Then clearly
\[
c_{00}(T_2) = \bigcup_{z \in T_2} c_{00}(\Delta_z) = \bigcup_{z \in T_2} c_{00}(\Delta^N_z) = \bigcup_{w \in T_1} c_{00}(\Delta^N_{\pi(w)}),
\]
where we used the surjectivity of $\pi : T_1 \to T_2$. So we are reduced to proving that, for any $w \in T_1$, the estimate \eqref{f: Lptrest} holds for all $f \in c_{00}(\Delta^N_{\pi(w)})$.

Let us now fix $w \in T_1$. As $L^p$ operator norms are unchanged if the underlying measure is scaled, by appropriately scaling the flow measures we may assume that $m_2(\pi(w)) = m_1(w)$ when proving \eqref{f: Lptrest} for all $f \in c_{00}(\Delta^N_{\pi(w)})$. Under this assumption, Proposition \ref{p: pushforward} then tells us that the measure $m_2|_{\Delta_{\pi(w)}}$ is the push-forward via $\pi|_{\Delta_w}$ of the measure $m_1|_{\Delta_w}$. In particular, for all $g \in c_{00}(\Delta_{\pi_{w}})$ and all $q \in [1,\infty]$,
\[
\|\chr_{\Delta_w} \Phi_\pi g\|_{L^q(m_1)} = \| g \|_{L^q(m_2)},
\]
and moreover
\begin{equation}\label{f: inv_lifting}
\Phi_\pi^* \chr_{\Delta_w} \Phi_\pi g = g,
\end{equation}
where $\Phi_\pi^* : L^1(m_1) \to L^1(m_2)$ is as in \eqref{f: adj_lifting}.
Furthermore, for all $h \in c_{00}(\Delta_{w})$ and $q \in [1,\infty]$,
\[
\|\Phi_\pi^* h\|_{L^q(m_2)} \leq \|h\|_{L^q(m_1)}.
\]
Finally, by \eqref{f: M1M2comp_adj},
\[
\Phi_\pi^* \cO = \pi(\cO) \Phi_\pi^* \qquad\text{on } L^1(m_1).
\]

Given now $f \in c_{00}(\Delta_{\pi(w)}^N)$, by \eqref{f: inv_lifting} we can write
\[
f = \Phi_\pi^* \chr_{\Delta_w} \Phi_\pi f,
\]
and moreover, as $\pi$ is a submersion, $\supp(\chr_{\Delta_w} \Phi_\pi f) \subseteq \Delta_w^N$. From \eqref{f: Cfin_ker} we then deduce that $\cO \chr_{\Delta_w} \Phi_\pi f \in c_{00}(\Delta_w)$, so
\begin{multline*}
\|\pi(\cO) f\|_{L^p(m_2)} = \|\pi(\cO) \Phi_\pi^* \chr_{\Delta_w} \Phi_\pi f\|_{L^p(m_2)} = \|\Phi_\pi^* \cO \chr_{\Delta_w} \Phi_\pi f\|_{L^p(m_2)} \\
\leq \|\cO \chr_{\Delta_w} \Phi_\pi f\|_{L^p(m_1)} \leq \|\cO\|_{p \to p} \|\chr_{\Delta_w} \Phi_\pi f\|_{L^p(m_1)} = \|\cO\|_{p \to p} \|f\|_{L^p(m_2)},
\end{multline*}
as desired.
\end{proof}

\subsection{Uniformly rational flows}\label{s: rationalflows}

The results of Section \ref{ss: transference} show that weighted kernel estimates and $L^p$ bounds for operators related to the flow gradient and the flow Laplacian can be transferred from a flow tree to any of its flow quotients. It is therefore of interest to know when a given flow tree is the flow quotient of another. In this section, we characterise the class of flow quotients of homogeneous trees.

\begin{definition}
Let $(T,m)$ be a flow tree. We say that $m$ is a \emph{uniformly rational flow measure} if there exists an integer $q \in \Npos$ such that
\[
q\frac{m(x)}{m(\pred(x))} \in \Npos \qquad\forall x \in T.
\]
In this case, we shall refer to $m$ as a \emph{$q$-uniformly rational flow measure}, and correspondingly $(T,m)$ will be said a \emph{$q$-uniformly rational flow tree}.
\end{definition} 

\begin{remark}\label{r: rational_bdddeg}
If $T$ admits a $q$-uniformly rational flow measure, then $q(x) \le q$ for every $x \in T$. So, in this case, $T$ has bounded degree.
\end{remark}

\begin{remark}\label{r: rational_constraint}
From \eqref{f: can_flow} it is clear that the homogeneous tree $(\Tq,\mq)$ is $q$-uniformly rational. Moreover, from the $\pi$-compatibility condition \eqref{f: daga} it follows that any flow quotient of a $q$-uniformly rational flow tree is also $q$-uniformly rational. Therefore, if a flow tree $(T,m)$ is a flow quotient of the homogeneous tree $(\Tq,\mq)$, then $m$ is $q$-uniformly rational.
\end{remark}

As we shall see, being $q$-uniformly rational is not only necessary, but also a sufficient condition for a flow tree to be a flow quotient of $(\Tq,\mq)$.

We start with an auxiliary result, which provides, for any tree $T$ with root at infinity, a convenient way to enumerate the successors of any given vertex of $T$; this will be of use when constructing a submersion with image $T$.

\begin{prop}\label{p: enumerator}
Let $T$ be a tree with root at infinity.
Then, there exists a function $\ord : T \to \NN$ 
such that
\begin{equation}\label{f: enumerator}
\begin{gathered}
\ord(\succ(x)) =\{0,\dots,q(x)-1\} \qquad\forall x \in T,\\
\lim_{k \to \infty} \ord(\pred^{k}(x))=0 \qquad\forall x \in T.
\end{gathered}
\end{equation}
Furthermore, for any given vertex $x_0 \in T$, there exists such a function $\ord$ with the additional property that
\[
\ord(\pred^k(x_0)) = 0 \qquad \forall k \in \NN.
\]
\end{prop}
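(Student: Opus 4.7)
The plan is to build $\ord$ by fixing a distinguished ``spine'' from $x_0$ to the root at infinity and forcing $\ord$ to vanish along the spine, then filling in the enumeration on the other successor-sets arbitrarily. Concretely, set $g_k \defeq \pred^k(x_0)$ for $k \in \NN$, so that $(g_k)_{k \ge 0}$ is the geodesic from $x_0$ to $\myth$ and $g_{k-1} \in \succ(g_k)$ for every $k \ge 1$. For each vertex $y \in T$, pick a bijection $\sigma_y : \succ(y) \to \{0,1,\dots,q(y)-1\}$, subject only to the extra constraint that $\sigma_{g_k}(g_{k-1})=0$ whenever $y=g_k$ with $k \ge 1$ (possible, as we fix the image of a single element). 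Then I define $\ord(x) \defeq \sigma_{\pred(x)}(x)$ for every $x \in T$.

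The first condition in \eqref{f: enumerator} is immediate from the bijectivity of each $\sigma_y$. The constraint on $\sigma_{g_k}$ gives $\ord(g_{k-1})=0$ for every $k \ge 1$, i.e.\ $\ord(\pred^k(x_0))=0$ for every $k \in \NN$, which already yields the additional property claimed in the last sentence of the proposition (and, taking $x=x_0$, also the second condition in \eqref{f: enumerator} along the spine).

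For the second condition in \eqref{f: enumerator} at a general $x \in T$, I will invoke the standard fact that in a tree two geodesic rays sharing the same boundary endpoint must eventually coincide: applied to the rays $(\pred^k(x))_{k \ge 0}$ and $(g_k)_{k \ge 0}$, both landing at $\myth \in \partial T$, it produces an integer $c \in \ZZ$ and some $K \in \NN$ with $\pred^k(x)=g_{k+c}$ for all $k \ge K$. Then for every $k > K$ the vertex $\pred^{k-1}(x)=g_{k-1+c}$ lies on the spine, so $\ord(\pred^{k-1}(x)) = 0$ by the previous step. Hence $\ord(\pred^k(x))$ is eventually $0$, and the integer-valued limit in \eqref{f: enumerator} holds.

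There is no real obstacle here: the construction is elementary and the only nontrivial ingredient is the elementary tree-geometric fact that two rays converging to the same boundary point at infinity eventually merge (cf.\ the definition of $\partial T$ in \cite[Section I.1]{FTN}), which is a direct consequence of the absence of cycles.
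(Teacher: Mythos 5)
Your proof is correct and follows essentially the same approach as the paper's: fix the ray $\pred^*(x_0)$ as a distinguished spine, choose the bijections on each successor set so that the spine vertex (when present) is sent to $0$, and note that for any $x$ the ray of predecessors eventually merges with the spine since both converge to $\myth$. The only cosmetic difference is that the paper speaks of $x$ and $x_0$ having a common ancestor, while you phrase the merging of rays in terms of the boundary point at infinity; these are the same elementary fact.
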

\begin{proof}
Let $\pred^*(x_0) = \{\pred^n(x_0)\}_{n \in \NN}$. Then, for all $x \in T$, the set $\pred^*(x_0)$ intersects $\succ(x)$ in at most one point. Therefore, for any $x \in T$, we can choose a bijection $\ord_x : \succ(x) \to \{0,\dots,q(x)-1\}$ in such a way that $\ord_x(z) = 0$ if $z \in \succ(x) \cap \pred^*(x_0)$. Gluing together all the functions $\ord_x$ yields a function $\ord : T \to \NN$ with the desired properties. Indeed, for any $x \in T$, the vertices $x$ and $x_0$ have a common ancestor, thus $\pred^k(x) \in \pred^*(x_0)$ for all sufficiently large $k \in \NN$, and therefore $\ord(\pred^k(x)) = 0$ for all $k$ sufficiently large.
\end{proof}

\begin{definition}
Let $T$ be a tree with root at infinity. A function $\ord : T \to \NN$ with the properties \eqref{f: enumerator} will be referred to as an \emph{enumerator} of $T$.
\end{definition}

\begin{remark}
If $\ord$ is an enumerator of $T$, then the set
\[
\Gamma_\ord \defeq \{ x \in T \colon \ord(\pred^{(k)}(x)) = 0 \ \forall k \in \NN \}
\]
is a bi-infinite geodesic in $T$ with the root $\myth$ as one endpoint. Proposition \ref{p: enumerator} therefore tells us that, for any given $x_0 \in T$, we can find an enumerator $\ord$ of $T$ such that $x_0 \in \Gamma_{\ord}$; a simple variation of the proof would actually allow us to construct an enumerator $\ord$ so that $\Gamma_{\ord}$ is any prescribed bi-infinite geodesic with endpoint $\myth$.
\end{remark}

The next result, together with Remark \ref{r: rational_constraint}, shows that being a flow quotient of $(\Tq,\mq)$ is a characterisation of $q$-uniformly rational flow trees.

\begin{prop}\label{p: comp unif rat}
Let $m$ be a $q$-uniformly rational flow measure on a tree $T$. Then $T$ is a flow quotient of the homogeneous tree $\Tq$ equipped with the canonical flow. More precisely, for any given $\overline{w_0} \in \Tq$ and $w_0 \in T$, we can find a flow submersion $\pi : \Tq \to T$ such that $\pi(\overline{w_0}) = w_0$.
\end{prop}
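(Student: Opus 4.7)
The plan is to build $\pi$ recursively, starting with a prescribed image along a bi-infinite geodesic through $\overline{w_0}$ and then extending downward to the rest of $\Tq$, using $q$-uniform rationality at each step to ensure that the compatibility condition \eqref{f: daga} is preserved.

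First, I would fix a bi-infinite geodesic $\overline{\Gamma} = \{\bar{v}_k\}_{k \in \ZZ}$ in $\Tq$ with one endpoint at $\myth$, $\bar{v}_0 = \overline{w_0}$ and $\pred(\bar{v}_k) = \bar{v}_{k+1}$, and similarly a bi-infinite geodesic $\Gamma = \{v_k\}_{k \in \ZZ}$ in $T$ with $v_0 = w_0$; both exist because every vertex of a flow tree has a unique predecessor (by definition of root at infinity) and at least one successor (by Definition \ref{d: flow}, which rules out leaves). After shifting the level function on $T$ so that $\ell(v_0) = \ell(\bar{v}_0)$, I would set $\pi(\bar{v}_k) \defeq v_k$ for all $k \in \ZZ$.

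Next I would extend $\pi$ to all of $\Tq$ by induction. The key observation is that, for every $\bar{v} \in \Tq$ with $\pi(\bar{v}) = v$ already defined, the $q$-uniform rationality of $m$ forces the numbers $n_x \defeq q \, m(x)/m(v)$ ($x \in \succ(v)$) to be positive integers, and the flow property gives
\[
\sum_{x \in \succ(v)} n_x = \frac{q}{m(v)} \sum_{x \in \succ(v)} m(x) = q = \#\succ(\bar{v}).
\]
I can therefore partition $\succ(\bar{v})$ into subsets $\{S_x\}_{x \in \succ(v)}$ with $|S_x| = n_x$, and define $\pi|_{S_x} \equiv x$. If $\bar{v} = \bar{v}_k$ for some $k \geq 1$, the successor $\bar{v}_{k-1}$ is already mapped to $v_{k-1}$; in that case one just chooses the partition so that $\bar{v}_{k-1} \in S_{v_{k-1}}$, which is possible because $n_{v_{k-1}} \geq 1$. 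Iterating this extension in a breadth-first fashion starting from $\overline{\Gamma}$ covers all of $\Tq$, since every vertex of $\Tq$ is a descendant of some $\bar{v}_k$.

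It remains to verify that the resulting $\pi$ is a flow submersion. Properties \eqref{f: 1} and \eqref{f: 2} are built into the construction, as each $S_x$ is nonempty and $\pi$ maps $\succ(\bar{v})$ onto $\succ(v)$. For the compatibility condition \eqref{f: daga}, for any $\bar{y} \in \Tq$ with $\bar{v} \defeq \pred(\bar{y})$, $v \defeq \pi(\bar{v})$ and $y \defeq \pi(\bar{y})$, one has $\pi^{-1}\{y\} \cap \succ(\bar{v}) = S_y$, hence
\[
\frac{\mq(\pi^{-1}\{y\} \cap \succ(\bar{v}))}{\mq(\bar{v})} = \frac{|S_y|}{q} = \frac{n_y}{q} = \frac{m(y)}{m(v)} = \frac{m(\pi(\bar{y}))}{m(\pred(\pi(\bar{y})))},
\]
as required. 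Since $\pi(\overline{w_0}) = \pi(\bar{v}_0) = v_0 = w_0$, this yields the desired flow submersion. I do not foresee a genuine obstacle: $q$-uniform rationality is precisely the arithmetic condition that makes the required partitions exist, and the flow-measure normalisation ensures that the sizes sum correctly to $q$.
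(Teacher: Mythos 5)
Your construction is correct and follows essentially the same route as the paper's proof: the paper's noninjective enumeration $\tilde\succ_0,\dots,\tilde\succ_{q-1}$ of $\succ(x)$, in which each $y\in\succ(x)$ is repeated $q\,m(y)/m(x)$ times, is precisely the data encoded by your partition $\{S_x\}$ with $|S_x|=n_x$, and the recursive formula $\pi(x)=\tilde\succ_{\ord_q(x)}(\pi(\pred(x)))$ is your breadth-first extension in the language of enumerators. The only cosmetic difference is that the paper fixes a canonical choice via enumerators of both trees (and an increasing ordering) before running the recursion, whereas you make the choices ad hoc at each step while ensuring consistency along the distinguished geodesic.
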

\begin{proof}
By Proposition \ref{p: enumerator}, we can find an enumerator $\ord$ of $T$ such that $w_0 \in \Gamma_{\ord}$. In particular, for all $x \in T$, we can write
\[
\succ(x) = \{ \succ_0(x),\dots,\succ_{q(x)-1}(x)\},
\]
where $\succ_j(x) \in \succ(x)$ is uniquely determined by $\ord(\succ_j(x)) = j$. We now modify this enumeration of the elements of $\succ(x)$, by repeating each element a number of times proportional to its relative $m$-measure within $\succ(x)$. As the relative measures $m(y)/m(x)$ for $y \in \succ(x)$ are all rational numbers with common denominator $q$ and add up to one, we can construct such a noninjective enumeration of $\succ(x)$ as a list of length $q$. In other words, there exist functions $\tilde \succ_j : T \to T$ for $j=0,\dots,q-1$ such that
\begin{equation}\label{f: noninj_enum}
\succ(x)=\{\tilde\succ_0(x), \dots,\tilde\succ_{q-1}(x)\} \qquad\forall x \in T
\end{equation}
and moreover, for all $x \in T$,
the function $j \mapsto \phi_x(j) \defeq \ord(\tilde\succ_j(x))$ is increasing and 
\begin{equation}\label{f: asterisco}
\#\phi_x^{-1}\{\ord(y)\}=q \frac{m(y)}{m(x)} \qquad \forall y\in \succ(x).
\end{equation} 
In particular, as $\phi_x$ is increasing,
\begin{equation}\label{f: ord_zero}
\ord(\tilde\succ_0(x)) = 0 \qquad \forall x \in T.
\end{equation}

Let now $\ord_q$ be an enumerator of $\Tq$ such that $\overline{w_0} \in \Gamma_{\ord_q}$.
We now claim that there exists a unique map $\pi: \Tq \to T$ such that $\pi(\overline{w_0}) = w_0$ and
\begin{equation}\label{f: pi}
\pi(x)=\tilde \succ_{\ord_q(x)}(\pi(\pred(x))) \qquad \forall x \in \Tq
\end{equation}
Indeed, if $x \in \Tq$ is such that $\ord_q(x) = 0$, then \eqref{f: ord_zero} and \eqref{f: pi} imply that $\ord(\pi(x)) = 0$; in other words, $\pi(\Gamma_{\ord_q}) \subseteq \Gamma_\ord$. As the bi-infinite geodesics $\Gamma_{\ord}$ and $\Gamma_{\ord_q}$ in $T$ and $\Tq$ contain one element for each level, there is only one way to define a map $\pi : \Gamma_{\ord_q} \to \Gamma_{\ord}$ in such a way that $\pi(\overline{w_0}) = w_0$ and $\pi$ preserves the predecessor-successor relation, i.e., so that \eqref{f: pi} is satisfied for all $x \in \Gamma_{\ord_q}$. Clearly such a map satisfies $\pi(\Gamma_{\ord_q}) = \Gamma_{\ord}$. We can now extend $\pi$ to the whole $\Tq$ by iterated applications of \eqref{f: pi}: more precisely, we can write $\Tq$ as the increasing union
\[
\Tq = \bigcup_{n \in \NN} \Gamma_{\ord_q}^n, \qquad \Gamma_{\ord_q}^n \defeq \{ x \in \Tq \colon d_{\Tq}(x,\Gamma_{\ord_q}) \leq n\} = \bigcup_{x \in \Gamma_{\ord_q}} \succ^n(x),
\]
and then inductively extend $\pi$ to each of the sets $\Gamma_{\ord_q}^n$, by using \eqref{f: pi} and the fact that $\pred(\Gamma_{\ord_q}^{n+1}) = \Gamma_{\ord_q}^{n}$. This completes the proof of the claim.

Now, from \eqref{f: pi} it is clear that $\pred(\pi(x)) = \pi(\pred(x))$; moreover, from \eqref{f: pi} and \eqref{f: noninj_enum} we also deduce that, for all $z \in \Tq$,
\[
\pi(\succ(z))=\{\pi(x) \colon x \in \succ(z)\}=\{\tilde\succ_j(\pi(z)) \colon j=0,\dots,q-1\}=\succ(\pi(z)),
\]
namely $\pi : \Tq \to T$ is a submersion.

It remains to check that $m$ and the canonical flow $\mq$ are $\pi$-compatible. For any $x \in \Tq$, by \eqref{f: asterisco},
\[\begin{split}
q \frac{m(\pi(x))}{m(\pred(\pi(x)))}
&= \#\phi^{-1}_{\pred(\pi(x))}\{\ord(\pi(x))\}\\ 
&= \#\{j\in\{0,\dots,q-1\} \colon \ord(\tilde \succ_j(\pred(\pi(x))))=\ord(\pi(x))\}\\ 
&= \#\{j\in\{0,\dots,q-1\} \colon \tilde \succ_j(\pred(\pi(x)))=\pi(x)\}\\ 
&= \#\{y \in \succ(\pred(x)) \colon \tilde \succ_{\ord_q(y)}(\pred(\pi(x)))=\pi(x)\}\\ 
&= \#\{y \in \succ(\pred(x)) \colon \pi(y)=\pi(x)\} \\ 
&= \#(\pi^{-1}\{\pi(x)\} \cap \succ(\pred(x))),
\end{split}\]
where in the second last equality we used that $\{\ord_q(y)\}_{y \in \succ(\pred(x))}=\{0,\dots,q-1\}$  and \eqref{f: pi}. Thus,
\[
\frac{m(\pi(x))}{m(\pred(\pi(x)))}
= \frac{\#(\pi^{-1}\{\pi(x)\} \cap \succ(\pred(x)))}{q}
= {\frac{\mq(\pi^{-1}\{\pi(x)\} \cap \succ(\pred(x)))}{\mq(\pred(x))}},
\]
as desired.
\end{proof}

\section{Perturbation of flow measures}\label{s: perturbation}

The construction in Section \ref{s: rationalflows} shows that any uniformly rational flow tree is a flow quotient of a homogeneous tree; in light of the results of Section \ref{s: submersions}, this means that a number of estimates for the flow gradient and the flow Laplacian on homogeneous trees may be transferred to analogous estimates on uniformly rational flow trees.

From Remark \ref{r: rational_constraint} we know that the uniform rationality constraint on $(T,m)$ is necessary in order for the flow tree to be a quotient of a homogeneous tree. On the other hand, this constraint is quite restrictive, as it rules out, e.g., any flow measure $m$ such that the ratio $m(x)/m(\pred(x))$ is irrational for some $x \in T$.

We now show how a perturbative argument can be used, in some cases, to get rid of such rationality constraint and obtain a sort of transference result that applies to any flow tree.
The key idea is that any flow measure on a tree $T$ can be approximated by uniformly rational flow measures on suitable subtrees of $T$, and moreover the estimates we are interested in are preserved by this approximation process. Of course, in order to be able to approximate a flow measure $m$ with irrational ratios $m(x)/m(\pred(x))$, we will need to use $q$-uniformly rational measures with $q$ larger and larger; as a consequence, we will be able to transfer to an arbitrary flow tree $(T,m)$ only those estimates that hold on $(\Tq,\mq)$ \emph{uniformly in $q$}.

\subsection{A perturbative argument}
We start by presenting a perturbative argument, showing that many estimates for the joint functional calculus of $(\Sigma,\Sigma^*)$ are preserved under pointwise convergence of the underlying flow measure. Similar arguments have been used in different contexts for the purpose of transplanting $L^p$ estimates (see, e.g., \cite{KST}, \cite[Theorem 5.2]{AMsphere} or \cite[Lemma 2.3]{CMMP}).

\begin{definition}
Let $T$ be a tree with root at infinity. A \emph{$\pred$-subtree} of $T$ is a subset $S$ of $T$ such that, if $x \in S$, then $\pred(x) \in S$ too. With the structure induced by $T$, any such $S$ is also a tree with root at infinity.
\end{definition}

\begin{definition}
Let $(T,m)$ be a flow tree. An \emph{approximating sequence} for $(T,m)$ is a sequence $((T_j,m_j))_j$ such that:
\begin{enumerate}[label=(\alph*)]
\item the $T_j$ form an increasing sequence of $\pred$-subtrees of $T$, and $T = \bigcup_j T_j$;
\item each $m_j$ is a flow measure on $T_j$;
\item if we extend by zeros each $m_j$ to a function on the whole $T$, then $m_j \to m$ pointwise on $T$.
\end{enumerate} 
\end{definition}

Let $((T_j,m_j))_j$ be an approximating sequence of a flow tree $(T,m)$. We shall denote by $\Sigma_j,\nabla_j,\opL_j$ the shift operator, flow gradient and flow Laplacian on $(T_j,m_j)$, while $\Sigma,\nabla,\opL$ denote the corresponding operators on $(T,m)$.

Recall that $c_{00}(T)$ denotes the space of finitely supported functions on $T$. We shall identify any $f \in c_{00}(T_j)$ with its extension by zeros to $T$; in this way, $c_{00}(T_j)$ is the subspace of $c_{00}(T)$ of the functions supported in $T_j$. As the $T_j$ are an increasing sequence with $T = \bigcup_j T_j$, any function $f \in c_{00}(T)$ also belongs to $c_{00}(T_j)$ for sufficiently large $j$.

We say that a $c_{00}(T)$-valued sequence $(f_j)_j$ converges to $f$ in $c_{00}(T)$ if $f_j \to f$ pointwise and $\bigcup_{j} \supp(f_j)$ is finite. In this case, we shall write $f_j \convcc f$.

Notice that, if $f_j \convcc f$, then $\bigcup_{j} \supp(f_j)$ is contained in $T_k$ for any sufficiently large $k$. In particular, $f_j \in c_{00}(T_j)$ for all $j$ large enough. Therefore, if $\cO_j \in \Bdd_\fin(T_j)$, then $\cO_j f_j \in c_{00}(T_j) \subseteq c_{00}(T)$ is well defined for any $j$ large enough, and we can construct a new sequence $(\cO_j f_j)_j$ in $c_{00}(T)$; while this sequence is only defined for $j$ large enough, this will not be a problem for our discussion, as we shall only be interested in asymptotic properties as $j \to \infty$.

Our perturbative argument for approximating sequences of flow trees is encoded in the following two statements.

\begin{prop}\label{p: conv_hker_basic}
Let $(T,m)$ be a flow tree.
Let $((T_j,m_j))_j$ be an approximating sequence.
Let $f,g \in c_{00}(T)$, and let $(f_j)_j,(g_j)_j$ be sequences in $c_{00}(T)$ such that $f_j \convcc f$ and $g_j \convcc g$. Then:
\begin{enumerate}[label=(\roman*)]
\item\label{en: conv_hker_a}
$\|f_j\|_{L^p(m_j)} \to \|f\|_{L^p(m)}$ for all $p\in [1,\infty]$;
\item\label{en: conv_hker_b}
$\langle f_j, g_j\rangle_{L^2(m_j)} \to \langle f,g\rangle_{L^2(m)}$;
\item\label{en: conv_hker_c}
$F(\Sigma_j,\Sigma_j^*) f_j \convcc F(\Sigma,\Sigma^*) f$ for any $F \in \Pol(2)$.
\end{enumerate}
\end{prop}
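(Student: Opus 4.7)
The proof hinges on reducing everything to manipulations inside a fixed finite subset of $T$. Since $f_j\convcc f$ and $g_j\convcc g$, we can fix a finite $S\subseteq T$ containing $\supp(f)\cup\supp(g)$ as well as the supports of all the $f_j$ and $g_j$. Because $T=\bigcup_j T_j$ is an increasing union, there is $j_0$ with $S\subseteq T_j$ for all $j\ge j_0$; and since $S$ is finite and $m_j\to m$ pointwise, in fact $m_j|_S\to m|_S$ uniformly, with $m_j(x)>0$ for every $x\in S$ and $j\ge j_0$. With this setup, parts \ref{en: conv_hker_a} and \ref{en: conv_hker_b} are immediate: for $j\ge j_0$ and $p<\infty$,
\[
\|f_j\|_{L^p(m_j)}^p=\sum_{x\in S}|f_j(x)|^p\,m_j(x)\longrightarrow \sum_{x\in S}|f(x)|^p\,m(x)=\|f\|_{L^p(m)}^p
\]
is a finite sum of convergent products; the case $p=\infty$ uses $\|f_j\|_{L^\infty(m_j)}=\max_{x\in S}|f_j(x)|$, valid because $m_j>0$ on $S$ once $j\ge j_0$; the inner product in \ref{en: conv_hker_b} is handled identically.

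For \ref{en: conv_hker_c}, by linearity in the coefficients of $F\in\Pol(2)$ it suffices to treat noncommutative monomials $(Z_1,Z_2)^\alpha$, and then by induction on $|\alpha|$ the claim reduces to the two base cases: if $h_j\convcc h$, then $\Sigma_j h_j\convcc \Sigma h$ and $\Sigma_j^* h_j\convcc \Sigma^* h$. The uniform support bounds come from $\supp(\Sigma_j h_j)\subseteq\bigcup_{y\in S'}\succ(y)$ and $\supp(\Sigma_j^* h_j)\subseteq\pred(S')$, where $S'$ is a common finite support set for the $h_j$ and $h$; both sets are finite, the former by local finiteness of $T$, the latter because $\pred$ is a function. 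Pointwise convergence is equally easy, since for fixed $x\in T$ we eventually have $\{x\}\cup\succ(x)\subseteq T_j$, and then
\[
\Sigma_j^* h_j(x)=\frac{1}{m_j(x)}\sum_{y\in\succ(x)}h_j(y)\,m_j(y)
\]
is a finite sum of convergent products with nonzero limiting denominator $m(x)$; the case of $\Sigma_j$ is analogous and simpler.

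The main subtlety is not analytical but rather bookkeeping: along the induction one must produce at each step a new common finite support set, by alternating the operations $S\mapsto\bigcup_{y\in S}\succ(y)$ and $S\mapsto\pred(S)$, both of which preserve finiteness. Once this is in place, the potential issue that a $\pred$-subtree $T_j$ need not contain the successors of its own vertices evaporates, because any prescribed finite set is eventually contained in $T_j$, so the operators on $(T_j,m_j)$ and on $(T,m)$ act by the same arithmetic up to the uniform errors $m_j\to m$ on $S$.
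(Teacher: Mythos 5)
Your proof is correct and follows essentially the same route as the paper's: parts~\ref{en: conv_hker_a} and~\ref{en: conv_hker_b} reduce to finite sums of convergent products once a common finite support set is fixed, and part~\ref{en: conv_hker_c} is obtained by reducing, via linearity and iteration on monomials, to the two base convergences $\Sigma_j h_j \convcc \Sigma h$ and $\Sigma_j^* h_j \convcc \Sigma^* h$. The paper dispatches~\ref{en: conv_hker_a}--\ref{en: conv_hker_b} as ``trivial'' and glosses the support bookkeeping in~\ref{en: conv_hker_c}; you spell out exactly what is being iterated and why the support sets stay finite, which is the right filling-in of the same argument.
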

\begin{proof}  
Parts \ref{en: conv_hker_a} and \ref{en: conv_hker_b} and trivial.

As for part \ref{en: conv_hker_c}, it readily follows by iteration from the two particular cases
\begin{equation}\label{f: conv_pert_Sigma}
\Sigma_j f_j \convcc \Sigma f, \qquad \Sigma_j^* f_j \convcc \Sigma^* f.
\end{equation}
The first convergence in \eqref{f: conv_pert_Sigma} is clear, since by \eqref{f: shift} it follows that $\Sigma_j f_j(x) = f_j(\pred(x)) = \Sigma f_j(x)$ for all $j$ large enough. As for the second one, for any fixed $x \in T$, we have $\{x\} \cup \succ(x) \subseteq T_j$ for all $j$ large enough; thus, for any large enough $j$,
\[
\Sigma_j^* f_j(x) = \frac{1}{m_j(x)} \sum_{y \in \succ(x)} f_j(y) \, m_j(y),
\]
and the right-hand side clearly converges to $\Sigma^* f(x)$ as $j \to \infty$.
\end{proof}

\begin{prop}\label{p: conv_hker_adv}
Let $(T,m)$ be a flow tree.
Let $((T_j,m_j))_j$ be an approximating sequence.
Let $f,g \in c_{00}(T)$.
\begin{enumerate}[label=(\roman*)]
\item\label{en: conv_hker_d}
For every $F \in \Hol(2,1)$,
\begin{equation}\label{f: d1}
\langle F(\Sigma_j,\Sigma_j^*) f, g \rangle_{L^2(m_j)} \to \langle F(\Sigma,\Sigma^*) f , g \rangle_{L^2(m)}
\end{equation}
and
\begin{equation}\label{f: d2}
K_{F(\Sigma_j,\Sigma_j^*)} \to K_{F(\Sigma,\Sigma^*)}
\end{equation} 
pointwise on $T\times T$; here $K_{F(\Sigma_j,\Sigma_j^*)}$ denotes the integral kernel with respect to $m_j$, extended by zeros to the whole $T \times T$, while $K_{F(\Sigma,\Sigma^*)}$ denotes the integral kernel with respect to $m$.
\item\label{en: conv_hker_e}
For every $F \in C[0,2]$,
\[
\langle F(\opL_j) f, g \rangle_{L^2(m_j)} \to \langle F(\opL) f , g \rangle_{L^2(m)}
\]
and
\[
K_{F(\opL_j)} \to K_{F(\opL)}
\]
pointwise on $T\times T$; here $K_{F(\opL_j)}$ denotes the integral kernel with respect to $m_j$, extended by zeros to the whole $T \times T$, while $K_{F(\opL)}$ denotes the integral kernel with respect to $m$.
\end{enumerate}
\end{prop}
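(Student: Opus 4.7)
The plan is to reduce both parts to Proposition~\ref{p: conv_hker_basic}, which handles noncommutative polynomials, via a density argument.

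For part~\ref{en: conv_hker_d}, the polynomial case is immediate: if $F \in \Pol(2)$, then $F(\Sigma_j,\Sigma_j^*) f \convcc F(\Sigma,\Sigma^*) f$ by Proposition~\ref{p: conv_hker_basic}\ref{en: conv_hker_c}, and then \eqref{f: d1} follows by applying part~\ref{en: conv_hker_b} of the same proposition with the constant sequence $g_j = g$. To extend to arbitrary $F \in \Hol(2,1)$, I would approximate $F$ by its partial sums $F_N \in \Pol(2)$, which satisfy $\|F-F_N\|_{(1)} \to 0$ as $N\to\infty$. Since $\|\Sigma_j\|_{\Bdd(m_j)} = \|\Sigma_j^*\|_{\Bdd(m_j)} = 1$ uniformly in $j$, Lemma~\ref{l: subs_pws} gives
\[
\|(F-F_N)(\Sigma_j,\Sigma_j^*)\|_{\Bdd(m_j)} \le \|F-F_N\|_{(1)},
\]
and the analogous bound for $(\Sigma,\Sigma^*)$. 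Splitting the difference $\langle F(\Sigma_j,\Sigma_j^*)f,g\rangle_{L^2(m_j)} - \langle F(\Sigma,\Sigma^*)f,g\rangle_{L^2(m)}$ into three pieces by inserting $F_N$, then controlling the two outer pieces by $\|F-F_N\|_{(1)} \|f\|_\infty \|g\|_{L^1(m_j)}$ and $\|F-F_N\|_{(1)} \|f\|_\infty \|g\|_{L^1(m)}$ via the duality bound $|\langle \cO f, g\rangle_{L^2(\mu)}| \le \|\cO\|_{\Bdd(\mu)} \|f\|_\infty \|g\|_{L^1(\mu)}$ (and noting $\|g\|_{L^1(m_j)} \to \|g\|_{L^1(m)}$ by Proposition~\ref{p: conv_hker_basic}\ref{en: conv_hker_a}), the polynomial case handles the middle piece and a standard three-$\varepsilon$ argument yields \eqref{f: d1}. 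For the kernel convergence \eqref{f: d2}, I take $f = \chr_{\{y\}}$ and $g = \chr_{\{x\}}$ in \eqref{f: d1}: for $j$ large enough that $x,y \in T_j$, one has $\langle F(\Sigma_j,\Sigma_j^*)\chr_{\{y\}}, \chr_{\{x\}}\rangle_{L^2(m_j)} = K_{F(\Sigma_j,\Sigma_j^*)}(x,y)\, m_j(x) m_j(y)$ and analogously for the limit operator, so dividing by the ultimately positive products $m_j(x)m_j(y) \to m(x)m(y)$ yields the pointwise convergence of the kernels.

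For part~\ref{en: conv_hker_e}, I first handle the case when $F$ is the restriction to $[0,2]$ of a polynomial on $\RR$: by \eqref{f: opL_Sigma}, the operator $F(\opL)$ belongs to $\Pol(2)$ as a noncommutative polynomial in $(\Sigma,\Sigma^*)$, hence part~\ref{en: conv_hker_d} applies in its polynomial version. For general $F \in C[0,2]$, I approximate $F$ uniformly on $[0,2]$ by polynomials $F_N$ via the Stone--Weierstrass theorem, so $\|F-F_N\|_{L^\infty[0,2]} \to 0$. Each $\opL_j = \frac{1}{2}\nabla_j^*\nabla_j$ is self-adjoint on $L^2(m_j)$ with spectrum equal to $[0,2]$ by Proposition~\ref{p: spectrum}, so by the spectral theorem $\|G(\opL_j)\|_{L^2(m_j)\to L^2(m_j)} \le \|G\|_{L^\infty[0,2]}$ for any continuous $G$, and the same bound holds on $L^2(m)$. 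A three-piece decomposition analogous to the one in part~\ref{en: conv_hker_d}, now using Cauchy--Schwarz to bound the outer pieces by $\|F-F_N\|_{L^\infty[0,2]} \|f\|_{L^2(m_j)} \|g\|_{L^2(m_j)}$ (respectively with $m$ in place of $m_j$), together with the convergences $\|f\|_{L^2(m_j)} \to \|f\|_{L^2(m)}$ and $\|g\|_{L^2(m_j)} \to \|g\|_{L^2(m)}$ from Proposition~\ref{p: conv_hker_basic}\ref{en: conv_hker_a}, completes the proof of the inner product convergence. The pointwise kernel convergence then follows by the same duality trick as in part~\ref{en: conv_hker_d}.

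No single step presents a substantial obstacle; the only delicate point, which motivates the whole plan, is securing the uniformity in $j$ of the various operator norm bounds used in the three-$\varepsilon$ arguments. In part~\ref{en: conv_hker_d} this uniformity is built into Lemma~\ref{l: subs_pws} via the identity $\|\Sigma_j\|_{\Bdd(m_j)}=\|\Sigma_j^*\|_{\Bdd(m_j)}=1$, while in part~\ref{en: conv_hker_e} it comes from the $j$-independent inclusion $\sigma(\opL_j) \subseteq [0,2]$ delivered by Proposition~\ref{p: spectrum}.
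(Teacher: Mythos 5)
Your proposal is correct and follows essentially the same route as the paper: reduce to the polynomial case via Proposition~\ref{p: conv_hker_basic}\ref{en: conv_hker_c}, extend by density of $\Pol(2)$ in $\Hol(2,1)$ (resp.\ of polynomials in $C[0,2]$ via Stone--Weierstrass) using the uniform-in-$j$ operator norm bounds $\|F(\Sigma_j,\Sigma_j^*)\|_{\Bdd(m_j)}\le\|F\|_{(1)}$ (resp.\ $\|F(\opL_j)\|_{L^2(m_j)\to L^2(m_j)}\le\|F\|_\infty$), and then deduce kernel convergence by testing against indicator functions. The only cosmetic differences are that the paper phrases the density argument as showing a certain subset is a closed subspace (which is precisely your three-$\varepsilon$ argument made abstract), and that in part~\ref{en: conv_hker_d} it controls the outer pieces via the $L^2$ bound $\|\cO\|_{2\to2}\le\|\cO\|_{\Bdd}$ rather than your $L^\infty$--$L^1$ duality; both are valid.
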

\begin{proof}
Notice first that the convergence \eqref{f: d1} follows from Proposition \ref{p: conv_hker_basic} whenever $F \in \Pol(2)$. We shall now extend this result to any $F \in \Hol(2,1)$. Recall first from Section \ref{ss: noncommutative} that
\begin{equation}\label{f: norm_est}
\|F(\Sigma_j,\Sigma_j^*)\|_{L^2(m_j) \to L^2(m_j)} \leq \|F(\Sigma_j,\Sigma_j^*)\|_{\Bdd(m_j)} \leq \|F\|_{(1)} \qquad\forall F \in \Hol(2,1).
\end{equation}
Define now
\[
A = \{ F \in \Hol(2,1) \colon \langle F(\Sigma_j,\Sigma_j^*) f,g \rangle_{L^2(m_j)} \to \langle F(\Sigma,\Sigma^*) f,g \rangle_{L^2(m)} \ \forall f,g \in c_{00}(T) \}.
\]
So, proving \eqref{f: d1} is the same as proving that $A = \Hol(2,1)$. As we already know that $\Pol(2) \subseteq A$, and $\Pol(2)$ is dense in $\Hol(2,1)$, it will be enough to show that $A$ is a closed linear subspace of $\Hol(2,1)$.

It is straightforward to check that $A$ is a linear subspace of $\Hol(2,1)$. In addition, if $F_n \in A$ and $F_n \to F$ in $\Hol(2,1)$, then, for all $f,g \in c_{00}(T)$, by \eqref{f: norm_est},
\[\begin{split}
&|\langle F(\Sigma_j,\Sigma_j^*) f,g \rangle_{L^2(m_j)} - \langle F(\Sigma,\Sigma^*) f,g \rangle_{L^2(m)}| \\
 &\leq C_{f,g} \|F_n-F\|_{(1)}  + |\langle F_n(\Sigma_j,\Sigma_j^*) f,g \rangle_{L^2(m_j)} - \langle F_n(\Sigma,\Sigma^*) f,g \rangle_{L^2(m)}|
\end{split}
\]
where $C_{f,g} = \sup_j \|f\|_{L^2(m_j)} \|g\|_{L^2(m_j)} + \|f\|_{L^2(m)} \|g\|_{L^2(m)} < \infty$ by Proposition \ref{p: conv_hker_basic}\ref{en: conv_hker_a}; from this estimate it is easy to conclude that $\langle F(\Sigma_j,\Sigma_j^*) f,g \rangle_{L^2(m_j)} \to \langle F(\Sigma,\Sigma^*) f,g \rangle_{L^2(m)}$, and therefore, by the arbitrariness of $f,g \in c_{00}(T)$, that $F \in A$ too. This shows that $A$ is closed in $\Hol(2,1)$, as desired.

Finally, \eqref{f: d2} readily follows by taking $f=\chr_{\{y\}}$ and $g=\chr_{\{x\}}$ in \eqref{f: d1} for any $x,y \in T$. This completes the proof of part \ref{en: conv_hker_d}.

The proof of part \ref{en: conv_hker_e} is similar. Here one considers instead the set
\[
B = \{ F \in C[0,2] \colon \langle F(\opL_j) f,g \rangle_{L^2(m_j)} \to \langle F(\opL) f,g \rangle_{L^2(m)} \ \forall f,g \in c_{00}(T) \}
\]
and shows that $B$ is a closed linear subspace of $C[0,2]$; in place of \eqref{f: norm_est}, one can use the estimate
\[
\|F(\opL_j)\|_{L^2(m_j) \to L^2(m_j)} \leq \|F\|_{\infty} \qquad\forall F \in C[0,2]
\]
from the Borel functional calculus. As we already know from the above discussion that $B$ contains all polynomials, the Stone--Weierstrass Theorem implies that $B = C[0,2]$.
\end{proof}

From Propositions \ref{p: conv_hker_basic} and \ref{p: conv_hker_adv} we finally deduce the following crucial result.

\begin{corollary}\label{c: conv_hker}
Let $(T,m)$ be a flow tree, and let $((T_j,m_j))_j$ be an approximating sequence. 
\begin{enumerate}[label=(\roman*)]
\item\label{en: c_conv_hker_Sigma} Let $F\in \Hol(2,1)$. Then, for all $p \in [1,\infty]$,
\begin{equation}\label{f: Lp_pert_Sigma}
\|F(\Sigma,\Sigma^*)\|_{L^p(m) \to L^p(m)} \leq \liminf_{j \to \infty} \|F(\Sigma_j,\Sigma_j^*)\|_{L^p(m_j) \to L^p(m_j)}.
\end{equation}
Moreover, for every weight $w : T \times T \to [0,\infty)$,
\begin{equation}\label{f: weight_pert_Sigma}
\begin{split}
&\sup_{y \in T} \sum_{x \in T}  w(x,y) \, |K_{F(\Sigma,\Sigma^*)}(x,y)| \, m(x)  \\
&\le \liminf_{j \to \infty} \sup_{y \in T} \sum_{x \in T} w(x,y) \, |K_{F(\Sigma_j,\Sigma_j^*)}(x,y)| \, m_j(x).
\end{split}
\end{equation}
\item\label{en: c_conv_hker_opL} Let $F\in C[0,2]$. Then, for all $p \in [1,\infty]$,
\[
\|F(\opL)\|_{L^p(m) \to L^p(m)} \leq \liminf_{j \to \infty} \|F(\opL_j)\|_{L^p(m_j) \to L^p(m_j)}.
\]
Moreover, for every weight $w : T \times T \to [0,\infty)$,
\[\begin{split}
&\sup_{y \in T} \sum_{x \in T}  w(x,y) \, |K_{F(\opL)}(x,y)| \, m(x)  \\
&\le \liminf_{j \to \infty} \sup_{y \in T} \sum_{x \in T} w(x,y) \, |K_{F(\opL_j)}(x,y)| \, m_j(x).
\end{split}\]\end{enumerate}
\end{corollary}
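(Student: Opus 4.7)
The entire result is a Fatou/lower-semicontinuity statement for the estimates of interest, and the plan is to extract it from the weak convergence and pointwise kernel convergence already established in Propositions \ref{p: conv_hker_basic} and \ref{p: conv_hker_adv}. I will treat the weighted kernel bounds first (these are the ``harder'' direction because no operator tools are directly available for $p \notin \{1,\infty\}$), then use kernel representations at the endpoints $p=1,\infty$, and finally invoke duality against $c_{00}(T)$ to cover the intermediate range $p \in (1,\infty)$.

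\emph{Weighted kernel estimates.} Fix $y \in T$ and an arbitrary finite subset $S \subseteq T$. For $j$ large enough, $\{y\} \cup S \subseteq T_j$; by the pointwise convergence $K_{F(\Sigma_j,\Sigma_j^*)}(x,y) \to K_{F(\Sigma,\Sigma^*)}(x,y)$ from Proposition \ref{p: conv_hker_adv}\ref{en: conv_hker_d} and by $m_j(x) \to m(x)$, the finite sum $\sum_{x \in S} w(x,y)\,|K_{F(\Sigma_j,\Sigma_j^*)}(x,y)|\,m_j(x)$ converges to $\sum_{x \in S} w(x,y)\,|K_{F(\Sigma,\Sigma^*)}(x,y)|\,m(x)$. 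Since each term of the former is bounded above by the full $T$-sum (which, because $m_j$ vanishes off $T_j$, agrees with the $T_j$-sum), passing to $\liminf_{j\to\infty}$ and then taking the supremum over finite $S \nearrow T$ and over $y \in T$ yields \eqref{f: weight_pert_Sigma}. The kernel bound in part \ref{en: c_conv_hker_opL} is proven identically from Proposition \ref{p: conv_hker_adv}\ref{en: conv_hker_e}.

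\emph{$L^p$ operator norms.} For the endpoints $p=1$ and $p=\infty$, the operator norm of any $\cO \in \Bdd(m)$ equals $\sup_y \sum_x |K_\cO(x,y)|\,m(x)$ and $\sup_x \sum_y |K_\cO(x,y)|\,m(y)$ respectively, so \eqref{f: Lp_pert_Sigma} for $p \in \{1,\infty\}$ is a direct consequence of the weighted kernel estimate with $w \equiv 1$ (the $p=\infty$ case requiring the symmetric version obtained by swapping the roles of $x$ and $y$, proved in exactly the same manner). For $p \in (1,\infty)$, since $c_{00}(T)$ is dense in $L^p(m)$ and $L^{p'}(m)$,
\[
\|F(\Sigma,\Sigma^*)\|_{L^p(m)\to L^p(m)} = \sup_{\substack{f,g \in c_{00}(T)\\ \|f\|_{L^p(m)}, \|g\|_{L^{p'}(m)} \le 1}} |\langle F(\Sigma,\Sigma^*) f, g\rangle_{L^2(m)}|.
\]
For fixed $f,g \in c_{00}(T)$, Proposition \ref{p: conv_hker_basic}\ref{en: conv_hker_a} gives $\|f\|_{L^p(m_j)} \to \|f\|_{L^p(m)}$ and $\|g\|_{L^{p'}(m_j)} \to \|g\|_{L^{p'}(m)}$, while Proposition \ref{p: conv_hker_adv}\ref{en: conv_hker_d} gives $\langle F(\Sigma_j,\Sigma_j^*) f, g\rangle_{L^2(m_j)} \to \langle F(\Sigma,\Sigma^*) f, g\rangle_{L^2(m)}$. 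Plugging these three limits into the trivial inequality
\[
|\langle F(\Sigma_j,\Sigma_j^*) f, g\rangle_{L^2(m_j)}| \le \|F(\Sigma_j,\Sigma_j^*)\|_{L^p(m_j)\to L^p(m_j)}\,\|f\|_{L^p(m_j)}\,\|g\|_{L^{p'}(m_j)}
\]
and passing to $\liminf_{j\to\infty}$ produces \eqref{f: Lp_pert_Sigma} after taking the supremum over $f,g$. The corresponding estimate in part \ref{en: c_conv_hker_opL} is proved in the same way, using Proposition \ref{p: conv_hker_adv}\ref{en: conv_hker_e} in place of \ref{en: conv_hker_d}.

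\emph{Main obstacle.} There is no deep mathematical step; the only points that require care are bookkeeping issues: ensuring that for every fixed $y$, $f$, $g$ or finite $S$ the relevant vertices eventually lie in $T_j$ (so that $m_j$ and the truncated kernel agree with the natural restrictions), and exploiting the convention that $m_j$ and $K_{F(\Sigma_j,\Sigma_j^*)}$ are extended by zero, which makes suprema and sums over $T_j$ coincide with those over $T$, allowing the straightforward passage to $\liminf$.
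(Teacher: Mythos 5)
Your proposal is correct and follows essentially the same route as the paper: the weighted kernel bounds come from pointwise convergence of $K_{F(\Sigma_j,\Sigma_j^*)}\,m_j$ plus Fatou's Lemma (which you simply re-derive by hand via finite partial sums), and the $L^p$ bounds follow from the duality characterization of the operator norm over $c_{00}(T)$ together with the convergence statements of Propositions \ref{p: conv_hker_basic} and \ref{p: conv_hker_adv}. The only stylistic difference is that you settle the endpoints $p=1,\infty$ via the kernel characterization of the $L^1$ and $L^\infty$ operator norms, whereas the paper reads off $p\in[1,\infty)$ directly from the duality formula and obtains $p=\infty$ by passing to the adjoint $F(\Sigma,\Sigma^*)^*=\overline{F}(\Sigma^*,\Sigma)$; both are valid and of comparable length.
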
 
\begin{proof}
We only prove part \ref{en: c_conv_hker_Sigma}, as the proof of part \ref{en: c_conv_hker_opL} is analogous.

Notice that
\[
\| F(\Sigma,\Sigma^*) \|_{L^p(m)\to L^p(m)} = \sup_{\substack{f,g \in c_{00}(T) \\ \|f\|_{L^p(m)} = \|g\|_{L^{p'}(m)} = 1}} |\langle F(\Sigma,\Sigma^*) f,g \rangle_{L^2(m)}|,
\]
where $p'$ is the conjugate exponent to $p$; this is clear when $p<\infty$, and for $p=\infty$ it follows by duality from the case $p=1$, as $F(\Sigma,\Sigma^*)^*= \overline{F}(\Sigma^*,\Sigma)$.
Moreover, by Propositions \ref{p: conv_hker_basic} and \ref{p: conv_hker_adv},
\begin{multline*}
|\langle F(\Sigma,\Sigma^*) f,g \rangle_{L^2(m)}| = \lim_{j \to \infty} |\langle F(\Sigma_j,\Sigma_j^*) f,g \rangle_{L^2(m_j)}| \\
\leq \|f\|_{L^p(m)} \|g\|_{L^{p'}(m)}  \liminf_{j \to \infty} \|F(\Sigma_j,\Sigma_j^*)\|_{L^p(m_j) \to L^p(m_j)},
\end{multline*}
whence \eqref{f: Lp_pert_Sigma} follows.

In addition, as $K_{F(\Sigma_j,\Sigma_j^*)} \to K_{F(\Sigma,\Sigma*)}$ pointwise on $T \times T$ by Proposition \ref{p: conv_hker_adv}, the estimate \eqref{f: weight_pert_Sigma} is an immediate consequence of Fatou's Lemma.
\end{proof}

\subsection{Uniformly rational approximation}

We now show that whenever $(T,m)$ is a flow tree with bounded degree it is possible to approximate $m$ with a sequence of uniformly rational flow measures on $T$. The bounded degree assumption is needed in order for uniformly rational flow measures to exist on $T$ (see Remark \ref{r: rational_bdddeg}). 

\begin{prop}\label{p: point approx}
Let $T$ be a tree with bounded degree and let $m$ be a flow measure on $T$.
Then, there is a sequence of uniformly rational flow measures on $T$ converging pointwise to $m$ on $T$.
\end{prop}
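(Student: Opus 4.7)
The plan is to exploit the fact that a flow measure on $T$ is completely determined by its value at one reference vertex together with the positive \emph{splitting ratios} $r(y) \defeq m(y)/m(\pred(y))$, which are only constrained by $\sum_{y \in \succ(x)} r(y) = 1$ for every $x \in T$; and $q$-uniform rationality is precisely the assertion that each $r(y)$ lies in $\{1/q, 2/q, \dots\}$. I would therefore construct each $m_n$ by replacing $r$ with a $q_n$-rational approximation, choosing $q_n$ to grow fast enough that a vertex-by-vertex rounding scheme produces pointwise convergence.

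Concretely, fix a reference vertex $x_0 \in T$, enumerate $T = (z_j)_{j \in \Npos}$ with $z_1 = x_0$, and for each $n$ let $U_n$ be the union of the finite geodesics from $x_0$ to $z_1,\dots,z_n$, so that $(U_n)_n$ is an exhaustion of $T$ by finite sets. Set $Q \defeq \sup_T q(x) < \infty$, $\rho_n \defeq \min\{r(y) \colon \pred(y) \in U_n\} > 0$, and pick integers $q_n \to \infty$ with $q_n \ge \max\{Q, 1/\rho_n\}$. For each $x \in T$, enumerate $\succ(x) = \{y_1,\dots,y_k\}$ (with $k = q(x) \le Q$), and define $r_n(y_i) \defeq a_i(x)/q_n$ as follows:
\begin{itemize}
\item if $x \in U_n$, set $a_i(x) \defeq \lfloor q_n r(y_i)\rfloor$ for $i<k$ and $a_k(x) \defeq q_n - \sum_{i<k} a_i(x)$;
\item if $x \notin U_n$, set $a_i(x) \defeq \lfloor q_n/k\rfloor$ for $i<k$ and $a_k(x) \defeq q_n - (k-1)\lfloor q_n/k\rfloor$.
\end{itemize}
An elementary check shows that the choice of $q_n$ guarantees $a_i(x) \in \Npos$ in both cases, while $\sum_i a_i(x) = q_n$ by construction; moreover, for $x \in U_n$ one has $|r_n(y_i) - r(y_i)| \le Q/q_n$. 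I would then define $m_n$ as the unique flow measure on $T$ with $m_n(x_0) = m(x_0)$ and splitting ratios $r_n$, which is $q_n$-uniformly rational by construction.

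For pointwise convergence, fix $x \in T$ and pick $n_x$ with $x = z_{n_x}$; then for $n \ge n_x$, the value $m_n(x)$ arises from $m(x_0)$ by multiplying and dividing by finitely many ratios $r_n(y)$, one for each $y$ on the geodesic from $x_0$ to $x$. Each such $y$ eventually satisfies $\pred(y) \in U_n$, so $|r_n(y) - r(y)| \le Q/q_n \to 0$, and the finite product converges to the corresponding product for $m(x)$.

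The main obstacle I expect is the choice of $q_n$: since $\rho_n$ can decay very fast as $U_n$ grows (there is no global positive lower bound on splitting ratios in a general flow tree), $q_n$ must grow correspondingly fast to keep the rounding procedure on $U_n$ well defined. This is possible precisely because each $U_n$ is finite, but it does rule out a ``universal'' choice of $q_n$ independent of $(T,m)$.
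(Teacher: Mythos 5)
Your proof is correct and takes essentially the same approach as the paper: both approximate the splitting ratios $m(y)/m(\pred(y))$ by multiples of $1/q$ via floor rounding (treating one distinguished successor as the remainder term), use an arbitrary rational splitting where rounding would fail, propagate from a fixed reference vertex, and deduce pointwise convergence from the finite-product formula along the geodesic joining the reference vertex to any given $x$. The only cosmetic difference is that the paper parametrises directly by the denominator $q$ and lets the set $W_q$ of ``roundable'' vertices be determined by $q$ (exhausting $T$ as $q\to\infty$), whereas you fix a finite exhaustion $U_n$ first and then pick $q_n$ large enough for rounding to work on $U_n$.
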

\begin{proof}
By the bounded degree assumption, there exists $q_0 \in \Npos$ such that $q(x) \le q_0$ for every $x \in T$. We shall construct, for any $q \geq q_0$, a $q$-uniformly rational flow measure $m_q$ on $T$ so that $m_q \to m$ pointwise as $q \to \infty$.

Notice that, as $m$ is a flow measure on $T$, for every $x \in T$ we have a $q(x)$-tuple $\left(m(y)/m(x)\right)_{y \in \succ(x)}$ of strictly positive numbers whose sum is $1$. The first step in our construction is to approximate these $q(x)$-tuples with analogous tuples with rational entries with common denominator $q$.

For any $q \geq q_0$, define
\begin{equation}\label{f: cond x}
W_q = \left\{ x \in T \colon \min_{y \in \succ(x)} \frac{m(y)}{m(x)} \geq \frac{1}{q} \right\}.
\end{equation}
Clearly the sets $W_q$ grow with $q$ and $T = \bigcup_{q \geq q_0} W_q$, because $1/q \to 0$ as $q \to \infty$.

If $x \in W_q$, then we can pick any $y_x \in \succ(x)$ and define a $q(x)$-tupe $(w_q(y))_{y \in \succ(x)}$ by setting
\[
w_{q}(y) = \begin{cases} \frac{1}{q} \left\lfloor q \frac{m(y)}{m(x)}\right\rfloor &\text{if } y \in \succ(x)\setminus \{y_x\},\\
1-\sum_{z \in \succ(x) \setminus \{y_x\}} w_q(z) &\text{if } y=y_x.
\end{cases}
\]
By \eqref{f: cond x} we deduce that, for every $y \in \succ(x) \setminus \{y_x\}$, we have $w_q(y) > 0$, thus
\[
\frac{1}{q} \le w_q(y) \le  \frac{m(y)}{m(x)} \qquad \text{and} \qquad \left| \frac{m(y)}{m(x)}-w_q(y)\right| \le \frac{1}{q};
\]
in addition,
\[
1-w_{q}(y_x)=\sum_{y \in \succ(x)\setminus \{y_x\}} w_q(y) \leq \sum_{y \in \succ(x)\setminus \{y_x\}} \frac{m(y)}{m(x)} = 1-\frac{m(y_x)}{m(x)}\le 1-\frac{1}{q},
\]
so $w_{q}(y_x) \ge 1/q > 0$ too. Moreover, 
\[
\left|w_q(y_x)- \frac{m(y_x)}{m(x)}\right|\le \sum_{y \in \succ(x) \setminus \{y_x\}}  \left| w_q(y)- \frac{m(y)}{m(x)} \right| \le \frac{q(x)-1}{q}\le \frac{q_0-1}{q}.
\]
Thus, the $q(x)$-tuple $(w_q(y))_{y \in \succ(x)}$ is made of positive rational numbers with common denominator $q$ adding up to $1$. Moreover, as $q \to \infty$, the tuple $(w_q(y))_{y \in \succ(x)}$ converges to $(m(y)/m(x))_{y \in \succ(x)}$.

We can extend the definition of the tuple $(w_q(y))_{y \in \succ(x)}$ to all $x \in T$ and $q \geq q_0$, by picking, when $x \in T \setminus W_q$, any $q(x)$-tuple of positive rational numbers with common denominator $q$ and adding up to $1$. As $x \in W_q$ for any sufficiently large $q$, the convergence of $(w_q(y))_{y \in \succ(x)}$ to $(m(y)/m(x))_{y \in \succ(x)}$ as $q \to \infty$ remains true after the extension. In other words,
\begin{equation}\label{f: wj_conv}
\lim_{q \to \infty} w_q(y) = \frac{m(y)}{m(\pred(y))} \qquad\forall y \in T.
\end{equation}

Fix a vertex $o \in T$. We now define $m_q$ as the flow measure such that $m_q(o)=m(o)$ and 
\[
\frac{m_q(y)}{m_q(\pred(y))}=w_q(y) \qquad\forall y \in T.
\]
In particular, $m_q$ is a $q$-uniformly rational flow measure for every $q \geq q_0$. It remains to verify that $m_q \to m$ pointwise on $T$ as $q \to \infty$. Pick $x \in T$, and let $z \in T$ be a common ancestor of $x$ and $o$. By definition, 
\begin{equation}\label{f: mq_prod}
m_q(x)
= \left( \prod_{h=0}^{n-1} \frac{1}{w_q(\pred^h(o))}\right) \left( \prod_{k=0}^{m-1} w_q(\pred^k(x))\right) 
m(o),
\end{equation}
where $z=\pred^{m}(x)=\pred^n(o)$. Since $\{\pred^h(o)\}_{h < n} \cup \{\pred^k(x)\}_{k<m}$ is a finite set of vertices independent of $q$, from \eqref{f: wj_conv} and \eqref{f: mq_prod} we readily deduce that $m_q(x) \to m(x)$, as desired.
\end{proof}

Via a covering argument, we can now get rid of the bounded degree assumption and construct a uniformly rational approximating sequence for any flow tree.

\begin{corollary}\label{c: point approx}
Let $(T,m)$ be a flow tree. Then there exists an approximating sequence of $(T,m)$ made of uniformly rational flow trees.
\end{corollary}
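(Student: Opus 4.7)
The plan is to reduce to Proposition \ref{p: point approx}, which requires bounded degree, by exhausting $T$ with an increasing sequence of $\pred$-subtrees of bounded degree, approximating $m$ by a flow measure on each such subtree, and then diagonalising. The tool that makes such a truncation possible is an enumerator of $T$ in the sense of Proposition \ref{p: enumerator}.

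Concretely, I would fix an enumerator $\ord$ of $T$ and, for every $N \in \Npos$, set
\[
T_N \defeq \{ x \in T \colon \ord(\pred^k(x)) < N \text{ for all } k \in \NN \}.
\]
This is plainly a $\pred$-subtree. For $x \in T_N$, a neighbour $y \in \succ(x)$ belongs to $T_N$ iff $\ord(y) < N$, since the remaining conditions $\ord(\pred^k(y)) = \ord(\pred^{k-1}(x)) < N$ for $k \geq 1$ are automatic; hence $\succ(x) \cap T_N$ has cardinality $\min\{q(x),N\} \leq N$ and contains the unique element of $\succ(x)$ with $\ord = 0$, so $T_N$ has bounded degree at most $N$ and no leaves. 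Moreover, the property $\lim_k \ord(\pred^k(x)) = 0$ forces the integer sequence $(\ord(\pred^k(x)))_k$ to be eventually zero and hence bounded, so $x \in T_N$ for every sufficiently large $N$, giving $T = \bigcup_N T_N$.

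Next, I would fix $o \in T$ (discarding finitely many initial indices so that $o \in T_N$ for all $N$) and define $m_N$ to be the flow measure on $T_N$ determined by $m_N(o) = m(o)$ and
\[
\frac{m_N(y)}{m_N(\pred(y))} = \frac{m(y)/m(\pred(y))}{\sum_{z \in \succ(\pred(y)) \cap T_N} m(z)/m(\pred(y))}, \qquad y \in T_N.
\]
Since $\succ(x) \cap T_N$ increases to $\succ(x)$ as $N \to \infty$, the denominator tends to $1$ and each ratio converges to $m(y)/m(\pred(y))$; expressing $m_N(x)$ as a finite product of such ratios (and their reciprocals) along the path joining $o$ and $x$ then yields $m_N(x) \to m(x)$ for every $x \in T$, after extending each $m_N$ by zero off $T_N$.

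Since every $(T_N,m_N)$ has bounded degree, Proposition \ref{p: point approx} supplies, for each $N$, a sequence $(m_{N,k})_{k\in\Npos}$ of uniformly rational flow measures on $T_N$ with $m_{N,k} \to m_N$ pointwise as $k \to \infty$. Fixing an exhaustion $(F_N)_N$ of $T$ by finite sets and choosing $k_N$ so that $|m_{N,k_N}(x) - m_N(x)| < 1/N$ for every $x \in F_N \cap T_N$, the diagonal sequence $\tilde m_N \defeq m_{N,k_N}$ satisfies $\tilde m_N \to m$ pointwise on $T$, and each $(T_N,\tilde m_N)$ is a uniformly rational flow tree, producing the desired approximating sequence. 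The only substantive obstacle is Step 1: one needs a truncation of $T$ that is simultaneously compatible with the $\pred$-structure, of bounded degree, and exhausts $T$, and the enumerator from Proposition \ref{p: enumerator} is precisely what enables this; once the $T_N$ are in hand, the renormalisation of $m$ and the diagonal extraction from Proposition \ref{p: point approx} are routine.
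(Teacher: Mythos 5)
Your argument is correct and follows the same overall plan as the paper's proof: exhaust $T$ by an increasing sequence of bounded-degree $\pred$-subtrees, equip each with a flow measure converging pointwise to $m$, invoke Proposition \ref{p: point approx} on each, and diagonalise. Where you differ is in the construction of the truncations and of the intermediate flow measures. You cut the tree ``laterally'' via the enumerator, setting $T_N = \{x \colon \ord(\pred^k(x)) < N \ \forall k\}$, which gives degree $\leq N+1$ uniformly and, because the successor with $\ord = 0$ always survives, automatically produces an infinite $\pred$-subtree with no leaves and no need to attach extra tails; you then renormalise the ratios $m(y)/m(\pred(y))$ over $\succ(\pred(y)) \cap T_N$ to make them sum to $1$, obtaining a genuine flow measure $m_N$ on $T_N$ with $m_N \to m$ via the finite-product-of-ratios formula. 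The paper instead truncates ``in depth'': it takes the descendants of $\pred^n(o)$ up to distance $2n$, explicitly attaches the zeroth-successor rays $\gamma_x$ below the bottom row, and extends $m$ along those rays by constants. The payoff of the paper's construction is that the intermediate flow measure $\tilde m_n$ agrees \emph{exactly} with $m$ on a finite exhausting set $S_n$, so the convergence $\tilde m_n \to m$ is trivial and the final diagonalisation only needs to control the error from Proposition \ref{p: point approx}; your construction is arguably tidier to state and gives a uniform degree bound, at the small cost of having to verify $m_N \to m$ by the product-of-ratios argument, which you do correctly. Both routes are sound.
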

\begin{proof}
Choose a vertex $o \in T$ and an enumerator $\ord$ such that $o \in \Gamma_{\ord}$. For any $x \in T$, let $\succ_0(x)$ denote the only successor of $x$ with $\ord(x) = 0$. Finally, let $\gamma_x = \{ \succ_0^k(x) \colon k \in \NN\}$ for any $x \in T$; in other words, $\gamma_x$ is the infinite descending geodesic starting from $x$ and where any subsequent vertex is the zeroth successor of the preceding one, according to $\ord$.

Then, for any $w \in T$ and $n \in \NN$, the set
\[
\tilde T_{w,n} = \{ \pred^{k}(w) \colon k \in \NN \} \cup \{ x \in T \colon x \leq w, \ d(x,w) \leq n \} \cup \bigcup_{x \in T \colon x \leq w, \, d(x,w) = n} \gamma_x
\]
is a $\pred$-subtree of $T$ of bounded degree, and
\[
\tilde m_{w,n}(x) = \begin{cases}
m(w) &\text{if } x \geq w,\\
m(x) &\text{if } x \leq w, \ d(x,w) \leq n,\\
m(y) &\text{if } x \in \gamma_y \text{ for some } y \leq w \text{ with } d(w,y) = n
\end{cases}
\]
defines a flow measure on $\tilde T_{w,n}$, which coincides with $m$ on $\tilde S_{w,n} = \{ x \in T \colon x \leq w, \ d(x,w) \leq n \}$.

In particular, if we set, for all $n \in \NN$,
\[
T_n = \tilde T_{\pred^n(o),2n}, \qquad \tilde m_n = \tilde m_{\pred^n(o),2n}, \qquad S_n = \tilde S_{\pred^{n}(o),2n},
\]
then $T_n$ is an increasing sequence of $\pred$-subtrees of $T$, $S_n$ is an increasing sequence of finite subsets of $T$, and
\[
\bigcup_{n \in \NN} T_n = \bigcup_{n \in \NN} S_n = T.
\]
Moreover, $\tilde m_n$ is a flow measure on $T_n$ which coincides with $m$ on $S_n$.

By applying Proposition \ref{p: point approx} to $(T_n,\tilde m_n)$, we can find a uniformly rational flow measure $m_n$ on $T_n$ such that $|m_n(x) - \tilde m_n(x)| \leq 2^{-n}$ for all $x \in S_n$ (here we use that $S_n$ is finite). As $m|_{S_n} = \tilde m_n|_{S_n}$, and the $S_n$ form an increasing sequence with $\bigcup_n S_n = T$, this proves that $m_n \to m$ pointwise on $T$, where $m_n$ is extended by zeros to $T$.

Thus, $((T_n,m_n))_n$ is the desired approximating sequence of $(T,m)$.
\end{proof}

\subsection{Universal transference}

For any flow tree $(T,m)$, Corollary \ref{c: point approx} gives us an approximating sequence $(T_j,m_j)$, where $m_j$ is $q_j$-uniformly rational for some $q_j \in \NN$. 
Therefore, according to Proposition \ref{p: comp unif rat}, $(T_j,m_j)$ is a flow quotient of the homogeneous tree $(\TT{q_j},m_{\TT{q_j}})$. As a consequence of the transference results for quotients of Section \ref{s: submersions} and the perturbation results of the present section, we can effectively transfer to $(T,m)$ those estimates that hold on $(\Tq,\mq)$ uniformly in $q$.

Namely, by combining Propositions \ref{p: stimeK2K1}, \ref{p: Lptransf} and \ref{p: comp unif rat}, and Corollaries \ref{c: conv_hker}\ref{en: c_conv_hker_Sigma} and \ref{c: point approx}, we deduce the following ``universal transference'' result for the joint functional calculus of $(\Sigma,\Sigma^*)$.

\begin{theorem}\label{t: unif_transf}
Let $(T,m)$ be a flow tree. 
Let $F\in \Hol(2,1)$. Then, for all $p \in [1,\infty]$,
\begin{equation}\label{f: unif_transf_Lp}
\|F(\Sigma,\Sigma^*)\|_{L^p(m) \to L^p(m)} \leq \liminf_{q \to \infty} \|F(\Sigma_{\Tq},\Sigma_{\Tq}^*)\|_{L^p(\mq) \to L^p(\mq)}.
\end{equation}
Moreover, for every weight $w:\NN \times \ZZ\times \ZZ \to [0,\infty)$ which is increasing in the first variable,
\begin{multline}\label{f: unif_transf_weight}
\sup_{y \in T} \sum_{x \in T} w(d(x,y),\ell(x),\ell(y)) \, |K_{F(\Sigma,\Sigma^*)}(x,y)| \, m(x) \\
\le \liminf_{q \to \infty} \sup_{y \in \Tq} \sum_{x \in \Tq} w(d_{\Tq}(x,y),\ell(x),\ell(y)) \, |K_{F(\Sigma_{\Tq},\Sigma_{\Tq}^*)}(x,y)| \, \mq(x).
\end{multline}
\end{theorem}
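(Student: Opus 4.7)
My plan is to combine the two main tools developed in the preceding sections: the transference of $\pi$-compatible operators along flow submersions (Propositions \ref{p: Lptransf} and \ref{p: stimeK2K1}), and the perturbative passage to the limit along approximating sequences (Corollary \ref{c: conv_hker}). The strategy is to build, for the given $(T,m)$, an approximating sequence of uniformly rational flow trees whose uniform rationality degrees $q_j$ can be forced to run through any prescribed sequence diverging to infinity; then, since each approximant is a flow quotient of a homogeneous tree of degree $q_j+1$, a uniform-in-$j$ bound on $\Tq_j$ transfers first to the approximant and then, in the limit, to $(T,m)$.

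For the $L^p$ bound \eqref{f: unif_transf_Lp}, I would first reduce to the case where the right-hand side $L$ is finite. Given $\varepsilon>0$, pick an increasing sequence $q_j \to \infty$ with $\|F(\Sigma_{\TT{q_j}},\Sigma_{\TT{q_j}}^*)\|_{L^p(\mq_j) \to L^p(\mq_j)} \le L+\varepsilon$ for all large $j$. Applying Corollary \ref{c: point approx} together with a careful inspection of its proof (which ultimately relies on Proposition \ref{p: point approx}, where the rationality degree can be chosen to be any sufficiently large integer), I would construct an approximating sequence $((T_j,m_j))_j$ of $(T,m)$ in which each $m_j$ is $q_j$-uniformly rational. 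By Proposition \ref{p: comp unif rat}, each $(T_j,m_j)$ is a flow quotient of $(\TT{q_j},\mq_j)$ via a flow submersion $\pi_j$, which can be taken to be level-preserving (as the construction in Proposition \ref{p: comp unif rat} preserves levels through the choice of enumerators). By Proposition \ref{p: Intral}, $F(\Sigma_{\TT{q_j}},\Sigma_{\TT{q_j}}^*) \in \overline{\Comp_\fin(\pi_j)}$ with $\pi_j$-image $F(\Sigma_{T_j},\Sigma_{T_j}^*)$, so Proposition \ref{p: Lptransf} yields
\[
\|F(\Sigma_{T_j},\Sigma_{T_j}^*)\|_{L^p(m_j)\to L^p(m_j)} \le \|F(\Sigma_{\TT{q_j}},\Sigma_{\TT{q_j}}^*)\|_{L^p(\mq_j) \to L^p(\mq_j)} \le L+\varepsilon.
\]
Applying Corollary \ref{c: conv_hker}\ref{en: c_conv_hker_Sigma} to this approximating sequence produces $\|F(\Sigma,\Sigma^*)\|_{L^p(m)\to L^p(m)} \le L+\varepsilon$, and letting $\varepsilon \to 0$ gives \eqref{f: unif_transf_Lp}.

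For the weighted estimate \eqref{f: unif_transf_weight}, I would run the same scheme but with Proposition \ref{p: stimeK2K1} in place of Proposition \ref{p: Lptransf}, exploiting the fact that $\pi_j$ is level-preserving and $w$ is increasing in its first argument so that the weighted $\ell^1$-norm of the kernel can only decrease under the submersion. Using the same approximating sequence one obtains, for every $j$,
\[
\sup_{y\in T_j}\sum_{x\in T_j} w(d_j(x,y),\ell(x),\ell(y))\,|K_{F(\Sigma_{T_j},\Sigma_{T_j}^*)}(x,y)|\,m_j(x) \le \Xi_{q_j},
\]
where $\Xi_q$ denotes the right-hand-side of \eqref{f: unif_transf_weight} evaluated on $(\Tq,\mq)$. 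Then the weight estimate in Corollary \ref{c: conv_hker}\ref{en: c_conv_hker_Sigma}, which is a Fatou-type statement on $T\times T$ built from the pointwise kernel convergence in Proposition \ref{p: conv_hker_adv}, lets me pass to the limit and bound the left-hand side of \eqref{f: unif_transf_weight} by $\liminf_j \Xi_{q_j}$, hence by $\liminf_{q\to\infty}\Xi_q$ thanks to the choice of $q_j$.

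The only technical point that requires a bit of care, and that I view as the main obstacle, is the ability to choose the rationality degrees $q_j$ in the approximating sequence to match a prescribed subsequence realising $\liminf_{q\to\infty}$. Corollary \ref{c: point approx} by itself only asserts the existence of \emph{some} uniformly rational approximation; one needs to revisit its construction to see that, at each stage, one first truncates $T$ to a subtree $T_n$ of bounded degree and then invokes Proposition \ref{p: point approx}, in which the rationality degree $q$ is free to be any sufficiently large integer. Consequently, a simple diagonal extraction allows one to pick $q_n \in \{q_j\}_j$ with $q_n \to \infty$ and to control the pointwise approximation error on the exhausting finite sets $S_n$ simultaneously. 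Once this refinement is in place, everything else is a clean composition of transference and perturbation.
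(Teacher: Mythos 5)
Your proposal is correct and follows exactly the route the paper intends (the paper states Theorem~\ref{t: unif_transf} as a direct consequence of Propositions~\ref{p: stimeK2K1}, \ref{p: Lptransf}, \ref{p: comp unif rat} and Corollaries~\ref{c: conv_hker}\ref{en: c_conv_hker_Sigma}, \ref{c: point approx}, without writing out the combination). You fill in that combination faithfully, and you are right to flag that the one nontrivial point is arranging for the rationality degrees of the approximating sequence to run through a prescribed subsequence realising the $\liminf$ — this is indeed not covered by the bare statement of Corollary~\ref{c: point approx} and requires revisiting Proposition~\ref{p: point approx}, where the degree $q$ is a free parameter once it exceeds the local bound on the number of successors. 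One small simplification worth noting: when $T$ already has bounded degree, Proposition~\ref{p: point approx} alone produces an approximating family $(T,m_q)_{q\ge q_0}$ indexed directly by $q$, so no diagonal extraction is needed; the diagonal/selection argument you describe is only required to handle unbounded degree, where the truncated subtrees $T_n$ have growing degree thresholds. Your treatment handles both cases correctly.
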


\begin{remark}
From Section \ref{ss: noncommutative} we know that the $\Hol(2,1)$-based joint functional calculus for $(\Sigma,\Sigma^*)$ includes, among others, the operators of the form $F(\opL)$ for $F \in \Hol(1,2)$. As we shall see, if we restrict to functions of $\opL$, the analyticity condition on $F$ can be substantially relaxed (see Theorem \ref{t: unif_transf_opL} below).
\end{remark}

\begin{remark}
In Theorem \ref{t: unif_transf} above we can take as $(T,m)$ any homogeneous tree $(\Tq,m_{\Tq})$. As a consequence, for any $F \in \Hol(2,1)$ and $p \in [1,\infty]$, from \eqref{f: unif_transf_Lp} we deduce that 
\[
\liminf_{q \to \infty} \|F(\Sigma_{\Tq},\Sigma_{\Tq}^*)\|_{L^p(\mq) \to L^p(\mq)} = \sup_{q \in \Npos} \|F(\Sigma_{\Tq},\Sigma_{\Tq}^*)\|_{L^p(\mq) \to L^p(\mq)}.
\]
In other words, Theorem \ref{t: unif_transf} says that, for a given $F \in \Hol(2,1)$ and $p \in [1,\infty]$, the supremum 
\[
\sup_{(T,m)} \|F(\Sigma,\Sigma^*)\|_{L^p(m) \to L^p(m)}
\]
is the same, irrespective of whether $(T,m)$ ranges among
\begin{itemize}
\item all flow trees, or
\item all the homogeneous trees $(\Tq,\mq)$, or
\item any infinite subclass of the $(\Tq,\mq)$.
\end{itemize}
Analogous considerations hold for the weighted estimates \eqref{f: unif_transf_weight}.
\end{remark}

\section{Deriving estimates from homogeneous trees}\label{s: homogeneous}

In light of the universal transference result of Theorem \ref{t: unif_transf}, in order to obtain estimates for the functional calculus of $(\Sigma,\Sigma^*)$ on any nonhomogeneous flow tree, it is enough to prove analogous estimates on homogeneous trees $(\Tq,\mq)$ which are uniform in $q$. As we shall see, one way to obtain such uniform estimates is reducing the analysis of the flow Laplacian and related operators on $\Tq$ to that of similar operators on $\TT{1}$, that is, the discrete group $\ZZ$ equipped with the counting measure and the discrete Laplacian.

The key technical tool that makes this reduction possible is a \emph{discrete Abel transform}, taking advantage of the symmetries of $\Tq$ to reduce matters to the ``one-dimensional case'' of $\ZZ$. The discrete Abel transform on $\Tq$ has already been introduced and used elsewhere, especially in the context of the analysis on homogeneous trees with the counting measure and the combinatorial Laplacian (see \cite{CMS0,CMS} and references therein). A possibly novel aspect of our analysis, beside the fact that we work with the flow Laplacian in place of the combinatorial one, is the focus on the uniformity in $q$ of the estimates on $\Tq$ obtained from $\ZZ$.

\subsection{Analysis on \texorpdfstring{$\ZZ$}{Z}}
The homogeneous tree $\TT{1}$ of degree $2$ can be identified with $\ZZ$ via the level map $\ell : \TT{1} \to \ZZ$, which in this case is a bijection. Via this identification, the canonical flow measure $\mTT{1}$ of \eqref{f: can_flow}, that is, the counting measure on $\TT{1}$, corresponds to the counting measure $\#$ on $\ZZ$.

As in \cite[Section 2.3]{LMSTV}, for every function $\psi$ in $\CC^{\ZZ}$ we define the symmetric gradient $\tilde\nabla_\ZZ \psi$ and the combinatorial Laplacian $\Delta_{\ZZ} \psi$ by
\begin{equation*}
\tilde{\nabla}_{\ZZ}\psi(n) =\psi(n-1)-\psi(n+1), \qquad
\Delta_{\ZZ} \psi(n) = \psi(n)-\frac{1}{2} \left( \psi(n-1)+\psi(n+1) \right)
\end{equation*}
for all $n \in \ZZ$. By comparing the above expression with \eqref{f: mathcalA}, and recalling that the underlying flow measure here is the counting measure, it is clear that the combinatorial Laplacian $\Delta_\ZZ$ is the same as the flow Laplacian $\opL_{\TT{1}}$. Instead, $\tilde\nabla_{\ZZ}$ corresponds to $\nabla_{\TT{1}}-\nabla_{\TT{1}}^*$, i.e., (twice) the skewsymmetric part of the flow gradient.

Of course, the operators $\tilde\nabla_\ZZ$ and $\Delta_\ZZ$ are translation-invariant on $\ZZ$. In particular, in terms of the usual group convolution $*$ on $\ZZ$, we can write
\[
\Delta_{\ZZ} f = f * k_{\Delta_{\ZZ}},
\]
where, for all $n \in \ZZ$,
\begin{equation}\label{f: ker_LapZ}
k_{\Delta_{\ZZ}}(n) =
\begin{cases}
1 & \text{if } n=0, \\
-1/2 & \text{if } n=\pm 1, \\
0 & \text{otherwise.}
\end{cases}
\end{equation}

We shall take advantage of Fourier analysis to intertwine translation-invariant operators on $\ZZ$ with multiplication operators on the torus $\RR/(2\pi \ZZ)$.
Specifically, for a (nice) $2\pi$-periodic function $M : \RR \to \CC$, we denote by $\widehat{M}$ its Fourier transform, that is, the sequence $(\widehat{M}(n))_{n \in \ZZ}$ of the Fourier coefficients of $M$:
\[
\widehat{M}(n) = \frac{1}{2\pi} \int_{-\pi}^\pi M(\theta) \, e^{i n \theta} \,d\theta, \qquad n \in \ZZ.
\]

From \eqref{f: ker_LapZ} it follows that the inverse Fourier transform of $k_{\Delta_{\ZZ}}$, i.e., the Fourier multiplier corresponding to $\Delta_\ZZ$, is the function 
\begin{equation}\label{f: Fmult_DeltaZ}
H(\theta) = 1-\frac{1}{2} e^{i\theta}-\frac12 e^{-i\theta} = 1-\cos\theta, \qquad \theta\in \RR/(2\pi \ZZ).
\end{equation}
Notice that the image of $H$ is the interval $[0,2]$, which corresponds to the $L^2$ spectrum of $\Delta_\ZZ$ (see Proposition \ref{p: spectrum}).

An elementary computation then allows us to obtain an expression for the skewsymmetric gradient of the convolution kernel of operators in the functional calculus for the Laplacian on $\ZZ$.

\begin{lemma}\label{l: kernelFZ}
For every bounded Borel function $F : [0,2] \to \CC$,
\begin{equation}\label{f: kernelFZ}
\tilde \nabla_{\ZZ} k_{F(\Delta_{\ZZ})}(n)
= - \frac{1}{2\pi} \int_{-\pi}^{\pi} 2i\sin\theta \, F(1-\cos\theta) \, e^{in\theta} \, d\theta = -2i \widehat{M}_F(n),
\end{equation}
where
\begin{equation}\label{f: multFZ}
M_F(\theta)=\sin\theta F(1-\cos\theta), \qquad \theta\in \RR/(2\pi\ZZ),
\end{equation}
and $\widehat M_F$ denotes the Fourier transform of $M_F$. 
\end{lemma}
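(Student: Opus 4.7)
The plan is to reduce everything to a direct Fourier-analytic computation, exploiting the fact that $\Delta_{\ZZ}$ is a translation-invariant operator on $\ZZ$ with Fourier symbol $H(\theta) = 1-\cos\theta$ (cf.\ \eqref{f: Fmult_DeltaZ}).

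First I would identify the convolution kernel of $F(\Delta_{\ZZ})$. By the spectral theorem (or equivalently, by Fourier inversion together with the Borel functional calculus), for any bounded Borel $F : [0,2] \to \CC$ the operator $F(\Delta_{\ZZ})$ is the Fourier multiplier on the torus $\RR/(2\pi\ZZ)$ with symbol $F \circ H$. Hence its convolution kernel is
\[
k_{F(\Delta_{\ZZ})}(n) = \widehat{F \circ H}(n) = \frac{1}{2\pi}\int_{-\pi}^{\pi} F(1-\cos\theta)\, e^{in\theta}\, d\theta, \qquad n \in \ZZ.
\]

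Next I would apply $\tilde\nabla_{\ZZ}$ under the integral sign; this is trivially justified since $F$ is bounded and the integration is over a compact interval. Using the definition of $\tilde\nabla_{\ZZ}$,
\[
\tilde\nabla_{\ZZ} k_{F(\Delta_{\ZZ})}(n) = \frac{1}{2\pi}\int_{-\pi}^{\pi} F(1-\cos\theta)\bigl(e^{i(n-1)\theta} - e^{i(n+1)\theta}\bigr)\, d\theta.
\]
The elementary identity $e^{i(n-1)\theta} - e^{i(n+1)\theta} = -2i\sin\theta\, e^{in\theta}$ then yields the first equality of \eqref{f: kernelFZ}, and recognising the right-hand side as $-2i\widehat{M_F}(n)$ with $M_F$ as in \eqref{f: multFZ} gives the second equality.

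There is no serious obstacle here; the only point worth noting is the identification of the Fourier symbol of $F(\Delta_{\ZZ})$, which is standard spectral theory for translation-invariant operators on $\ZZ$ and uses only that $\Delta_\ZZ$ has Fourier symbol $H(\theta)=1-\cos\theta$ and $L^2$-spectrum $[0,2]$ (as already noted after \eqref{f: Fmult_DeltaZ}).
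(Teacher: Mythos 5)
Your proposal is correct and follows essentially the same route as the paper: identify $k_{F(\Delta_\ZZ)}(n) = \frac{1}{2\pi}\int_{-\pi}^\pi F(1-\cos\theta)\,e^{in\theta}\,d\theta$ via the Fourier symbol $H(\theta)=1-\cos\theta$ of $\Delta_\ZZ$, then apply $\tilde\nabla_\ZZ$ term by term and use $e^{i(n-1)\theta}-e^{i(n+1)\theta}=-2i\sin\theta\,e^{in\theta}$. The only (minor) difference is that you spell out the Borel functional calculus justification for the kernel formula, which the paper leaves implicit.
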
 
\begin{proof}
For every $n\in\ZZ$, by \eqref{f: Fmult_DeltaZ},
\[
k_{F(\Delta_{\ZZ})}(n) = \frac{1}{2\pi} \int_{-\pi}^{\pi} F(1-\cos\theta) \, e^{in\theta} \, d\theta
\]
and therefore
\[\begin{split}
\tilde\nabla_{\ZZ} k_{F(\Delta_{\ZZ})}(n)
&= k_{F(\Delta_{\ZZ})}(n-1)- k_{F(\Delta_{\ZZ})}(n+1) \\
&= \frac{1}{2\pi} \int_{-\pi}^{\pi} F(1-\cos\theta) \, [e^{i(n-1)\theta}-e^{i(n+1)\theta}] \, d\theta \\
&= \frac{1}{2\pi} \int_{-\pi}^{\pi} F(1-\cos\theta) \, (-2i\sin\theta) \, e^{in\theta} \, d\theta,
\end{split}\]
as desired.
\end{proof}

The reason why we are interested in the kernels $\tilde\nabla_\ZZ k_{F(\Delta_{\ZZ})}$ will become clear in the next section: the skewsymmetric gradient $\tilde\nabla_\ZZ$ appears in the inversion formula for the Abel transform (see Proposition \ref{p: KF(L)} below).

Thanks to the expression in Lemma \ref{l: kernelFZ}, together with the Plancherel formula and other properties of the Fourier transform, we can establish the following weighted $L^2$ estimates.

\begin{lemma}\label{l: FDeltaZsenzasupporto}
For every $\alpha\in [0,\infty)$ and $F\in L^2_\alpha(\RR)$,
\begin{equation}\label{f: alphain02senzat}
\sum_{n\in\ZZ}(1+{|n|})^{2\alpha} \, |\tilde\nabla_{\ZZ} k_{F(\Delta_{\ZZ})}(n)|^2 \lesssim_\alpha \|F\|_{L^2_{\alpha}}^2.
\end{equation}
\end{lemma}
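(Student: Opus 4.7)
The plan is to combine Lemma \ref{l: kernelFZ} with Plancherel's theorem on the torus $\TT = \RR/(2\pi\ZZ)$ to rewrite the left-hand side of \eqref{f: alphain02senzat} in terms of the squared Sobolev norm of $M_F$. Specifically, $|\tilde\nabla_\ZZ k_{F(\Delta_\ZZ)}(n)|^2 = 4|\widehat M_F(n)|^2$, and the standard Fourier characterisation of the torus Sobolev norm gives
\[
\sum_{n\in\ZZ}(1+|n|)^{2\alpha}\, |\widehat M_F(n)|^2 \approx_\alpha \|M_F\|^2_{H^\alpha(\TT)}.
\]
It therefore suffices to prove the composition-type Sobolev estimate $\|M_F\|_{H^\alpha(\TT)} \lesssim_\alpha \|F\|_{L^2_\alpha(\RR)}$, where $M_F(\theta) = \sin\theta\, F(1-\cos\theta)$.

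For nonnegative integer $\alpha = k$, I would apply the Leibniz rule to $M_F = \sin\theta \cdot (F\circ\phi)$, with $\phi(\theta) = 1-\cos\theta$, and expand each factor $(F\circ\phi)^{(j)}$ via Fa\`a di Bruno's formula, arriving at a finite sum
\[
M_F^{(k)}(\theta) = \sum_{j=0}^{k} c_{k,j}(\theta)\, F^{(j)}(1-\cos\theta),
\]
where each $c_{k,j}$ is a trigonometric polynomial in $\sin\theta$ and $\cos\theta$; crucially, the top-order coefficient is $c_{k,k}(\theta) = (\sin\theta)^{k+1}$, containing one extra factor of $\sin\theta$ arising from the outer Leibniz factor. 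The $L^2(\TT)$ norm of each term is then estimated by the change of variable $u = 1-\cos\theta$, $du = \sin\theta\, d\theta$, using the identity $\sin^2\theta = u(2-u)$ and the oddness of $M_F$. The top-order term yields
\[
\int_{\TT} (\sin\theta)^{2(k+1)}\, |F^{(k)}(1-\cos\theta)|^2\, d\theta = 2\int_0^2 [u(2-u)]^{k+1/2}\, |F^{(k)}(u)|^2\, du \lesssim \|F^{(k)}\|^2_{L^2(\RR)},
\]
since the weight $[u(2-u)]^{k+1/2}$ is bounded on $[0,2]$. For the lower-order terms ($j < k$) one uses $\|F^{(j)}\|_{L^\infty(\RR)} \lesssim \|F\|_{L^2_k(\RR)}$ (one-dimensional Sobolev embedding, valid for $k\ge 1$) together with boundedness of $c_{k,j}$ and the fact that the torus has finite measure. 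The base case $k=0$ follows directly from the same change of variables, giving $\|M_F\|_{L^2(\TT)}^2 \lesssim \|F\|^2_{L^2(\RR)}$.

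For general noninteger $\alpha\ge 0$, the estimate is obtained by complex interpolation between the consecutive integer cases, applied to the linear map $F \mapsto M_F$. The main technical point, and the reason the estimate holds with no loss of regularity, is the precise algebraic balance between the factor $\sin\theta$ in the definition of $M_F$ and the Jacobian $1/\sqrt{u(2-u)}$ produced by the change of variables: without the extra $\sin\theta$, the top-order contribution would generate a non-bounded weight and the argument would fail. Handling this cancellation is the heart of the proof; once it is in place, the bookkeeping of the Fa\`a di Bruno expansion, Sobolev embedding for the subcritical terms, and interpolation for non-integer exponents are all standard.
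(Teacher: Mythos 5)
Your proof is correct and follows the same broad strategy as the paper's (Plancherel to reduce to a torus Sobolev bound for $M_F$, Leibniz/Fa\`a di Bruno plus change of variables at integer regularity, then interpolation), but it takes a genuinely different route in the details. The paper restricts to \emph{even} integers $\alpha = 2h$ and records the precise power $\sin^{1+2(\ell-h)_+}\theta$ in front of each $F^{(\ell)}$ in $\partial_\theta^{2h}M_F$ (formula \eqref{f: iterated_der_MF}); this guarantees that after the substitution every term, not just the top-order one, carries a bounded weight $\lambda^{1/2+2(\ell-h)_+}$, so the whole sum is controlled by $L^2$ norms of derivatives alone, and the paper also inserts a cutoff and splits $\supp F$ near $0$ and near $2$ by the reflection $\lambda \mapsto 2-\lambda$. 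You instead interpolate between \emph{consecutive} integers, estimate only the top-order coefficient $c_{k,k}=(\sin\theta)^{k+1}$ via the change of variables $u=1-\cos\theta$, and dispatch all lower-order terms ($j\le k-1$) by the one-dimensional Sobolev embedding $\|F^{(j)}\|_\infty \lesssim \|F\|_{L^2_k}$ together with boundedness of the trigonometric polynomial coefficients. This is a blunter but entirely adequate argument: it avoids the parity-dependent $\sin\theta$-power bookkeeping and the cutoff/reflection machinery, at the cost of invoking Sobolev embedding where the paper gets by with pure $L^2$ estimates. Both proofs correctly identify the crucial cancellation --- the extra $\sin\theta$ in $M_F$ exactly compensates the Jacobian $1/\sqrt{u(2-u)}$, so the top-order weight $[u(2-u)]^{k+1/2}$ is bounded --- and both conclude by the same interpolation step for non-integer $\alpha$.
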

\begin{proof}
By interpolation, it is sufficient to prove the estimate for $\alpha \in 2\NN$.

In addition, from the expression \eqref{f: kernelFZ} it is clear that the kernel $\tilde\nabla_{\ZZ} k_{F(\Delta_{\ZZ})}$ does not change if $F$ is modified outside the interval $[0,2]$. Thus, up to multiplying $F$ with an appropriate smooth cutoff function, we may assume that $\supp F \subseteq [-1,3]$.

A further splitting by means of cutoff functions allows us to reduce to the cases where $\supp F \subseteq [-1,7/4]$ and $\supp F \subseteq [1/4,3]$, which we shall consider separately.

Let us first assume that $\supp F \subseteq [-1,7/4]$. By differentiating \eqref{f: multFZ}, one can readily check that, for all $h \in \NN$,
\begin{equation}\label{f: iterated_der_MF}
\begin{aligned}
\partial_\theta^{2h} M_F(\theta) &= \sum_{\ell=0}^{2h} \sin^{1+2(\ell-h)_+}(\theta) \, p_{h,\ell}(\cos\theta) \, F^{(\ell)}(1-\cos\theta),\\
\partial_\theta^{2h+1} M_F(\theta) &= \sum_{\ell=0}^{2h+1} \sin^{2(\ell-h)_+}(\theta) \, q_{h,\ell}(\cos\theta) \, F^{(\ell)}(1-\cos\theta)
\end{aligned}
\end{equation}
for appropriate real polynomials $p_{h,\ell}$ and $q_{h,\ell}$ independent of $F$; the proof goes by induction on $h$, using the Chain and Leibniz Rules, as well as the fact that $\sin^2\theta = 1-\cos^2 \theta$ (so any even power of $\sin\theta$ can be absorbed into the polynomial in $\cos\theta$ where needed).

Thus, by Lemma \ref{l: kernelFZ}, the Plancherel formula and \eqref{f: iterated_der_MF}, for all $h \in \NN$,
\[\begin{split}
\sum_{n\in\ZZ} |n|^{4h} \, |\tilde\nabla_{\ZZ}k_{F(\Delta_{\ZZ})}(n)|^2 
&\approx_h \|\partial_\theta^{2h} M_{F}\|_{L^2(-\pi,\pi)}^2 \\
&\lesssim_h \sum_{\ell=0}^{2h}  \int_{-\pi}^\pi \left|\sin^{1+2(\ell-h)_+}(\theta) \, F^{(\ell)}(1-\cos\theta)\right|^2 \,d\theta.
\end{split}\]
The change of variables $x^2=1-\cos\theta$, i.e., $x = \sqrt{2} \sin(\theta/2)$, then shows that
\begin{equation}\label{f: stimaell=2h}
\begin{split}
\sum_{n\in\ZZ} |n|^{4h} \, |\tilde\nabla_{\ZZ}k_{F(\Delta_{\ZZ})}(n)|^2 
&\lesssim_h \sum_{\ell=0}^{2h} \int_{\RR} |x^{1+2(\ell-h)_+} \, F^{(\ell)}(x^2)|^2 \,dx \\
&\approx \sum_{\ell=0}^{2h} \int_{\RR} |\lambda^{(\ell-h)_+} \, F^{(\ell)}(\lambda)|^2 \lambda^{1/2} \,d\lambda \\
&\lesssim_h \|F\|_{L^2_{2h}}^2,
\end{split}
\end{equation}
due to our support assumption on $F$. Combining this estimate with that for $h=0$ proves the case $\alpha = 2h$ of \eqref{f: alphain02senzat}.

Assume now instead that $\supp F \subseteq [1/4,3]$. Then \eqref{f: multFZ} shows that $M_F(\theta+\pi) = -M_G(\theta)$, where $G(\lambda) \defeq F(2-\lambda)$, and in particular $\supp G \subseteq [-1,7/4]$. As $\|\partial_\theta^{2h} M_F\|_{L^2(-\pi,\pi)} = \|\partial_\theta^{2h} M_G\|_{L^2(-\pi,\pi)}$ and $\|F\|_{L^2_{2h}} = \|G\|_{L^2_{2h}}$, the desired estimates are obtained by applying the above computations with $G$ in place of $F$.
\end{proof}

We now prove a ``scale-invariant version'' of the above estimate, which will be useful when discussing spectral multipliers of Mihlin--H\"ormander type.

\begin{lemma}\label{l: FtDeltaZconsupporto}
For every $\alpha\in [0,\infty)$, $F\in L^2_\alpha(\RR)$ supported in $[1/4,7/4]$ and $t\geq 1$, 
\begin{equation}\label{f: alphain02}
\sum_{n \in \ZZ} \left(1+\frac{|n|}{\sqrt{t}}\right)^{2\alpha} \, |\tilde\nabla_{\ZZ} k_{F(t\Delta_{\ZZ})}(n)|^2
\lesssim_\alpha t^{-3/2} \, \|F\|_{L^2_{\alpha} }^2. 
\end{equation}
\end{lemma}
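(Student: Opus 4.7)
The plan is to adapt the proof of Lemma \ref{l: FDeltaZsenzasupporto} to the rescaled setting, carefully tracking the $t$-dependence of all quantities. As in the previous lemma, complex interpolation of the Sobolev scale $L^2_\alpha(\RR)$ and the weighted sequence scale $\ell^2(\ZZ, (1+|n|/\sqrt{t})^{2\alpha})$ reduces matters to the case $\alpha = 2h$ with $h \in \NN$, with the constants in the interpolation being independent of $t$.

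Next, introduce the auxiliary function $G_t(\lambda) \defeq F(t\lambda)$, so that $F(t\Delta_\ZZ) = G_t(\Delta_\ZZ)$ and $G_t$ is supported in $[1/(4t),7/(4t)]$. Lemma \ref{l: kernelFZ} together with Plancherel yields
\[
\sum_{n \in \ZZ} |n|^{4h} \, |\tilde\nabla_\ZZ k_{F(t\Delta_\ZZ)}(n)|^2 \approx_h \|\partial_\theta^{2h} M_{G_t}\|_{L^2(-\pi,\pi)}^2,
\]
and the Leibniz-type identity \eqref{f: iterated_der_MF} applied to $G_t$, combined with $G_t^{(\ell)}(\lambda) = t^\ell F^{(\ell)}(t\lambda)$, expresses $\partial_\theta^{2h} M_{G_t}$ as a finite linear combination of terms $\sin^{1+2(\ell-h)_+}\theta \cdot p_{h,\ell}(\cos\theta) \cdot t^\ell F^{(\ell)}(t(1-\cos\theta))$.

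Now mimic the change of variables in \eqref{f: stimaell=2h}: first pass from $\theta$ to $\lambda = 1-\cos\theta$, and then from $\lambda$ to $\mu = t\lambda$. The support condition on $F$ forces $\lambda \in [1/(4t), 7/(4t)] \subseteq [0,7/4]$ for $t \geq 1$, so $2-\lambda$ stays bounded below and can be absorbed into the implicit constant. The two successive changes of variable produce, for the $\ell$-th summand, a factor $t^{2\ell - 2(\ell-h)_+ - 3/2}$ times $\int \mu^{1/2 + 2(\ell-h)_+} |F^{(\ell)}(\mu)|^2 \,d\mu$, and since $F$ is supported in $[1/4,7/4]$ the latter is $\lesssim_h \|F^{(\ell)}\|_{L^2}^2$. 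A quick case check ($\ell \leq h$ versus $\ell > h$, using $t \geq 1$) shows that the resulting power of $t$ is always bounded by $t^{2h-3/2}$, so
\[
\sum_{n \in \ZZ} |n|^{4h} \, |\tilde\nabla_\ZZ k_{F(t\Delta_\ZZ)}(n)|^2 \lesssim_h t^{2h-3/2} \, \|F\|_{L^2_{2h}}^2.
\]
The unweighted case $h=0$ gives $\sum_n |\tilde\nabla_\ZZ k_{F(t\Delta_\ZZ)}(n)|^2 \lesssim t^{-3/2} \|F\|_{L^2}^2$, and combining the two via $(1+|n|/\sqrt{t})^{4h} \approx 1 + |n|^{4h}/t^{2h}$ yields the target bound for integer $\alpha = 2h$.

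The main obstacle is bookkeeping: one has to chase the powers of $t$ through the two successive substitutions and verify that in every case the net exponent is at most $2h - 3/2$. The one technical subtlety is checking that on the support of the integrand $\theta$ is bounded away from $\pm\pi$ (so that $\lambda \mapsto 1-\cos\theta$ is a diffeomorphism and $(2-\lambda)^{1/2 + 2(\ell-h)_+}$ is harmless), which is precisely where the assumption $t \geq 1$ enters; by contrast, in Lemma \ref{l: FDeltaZsenzasupporto} this issue is handled by an explicit two-piece decomposition of $\supp F$, which is not needed here.
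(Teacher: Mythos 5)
Your proof is correct and follows essentially the same route as the paper's: reduce to $\alpha=2h$ by interpolation, use Lemma \ref{l: kernelFZ}, Plancherel and \eqref{f: iterated_der_MF}, and track powers of $t$ through the substitutions $\lambda = 1-\cos\theta$ and $\mu = t\lambda$. The paper streamlines this by first rewriting the conclusion of Lemma \ref{l: FDeltaZsenzasupporto} (via \eqref{f: stimaell=2h}) in the dilation-friendly form $\sum_{j\leq\alpha}\int_0^\infty|\lambda^j F^{(j)}(\lambda)|^2\,\lambda^{3/2-\alpha}\,\frac{d\lambda}{\lambda}$ and then simply substituting $F(t\cdot)$ (which is supported in $[-1,7/4]$ for $t\geq 1$), thereby avoiding re-running the Leibniz/Plancherel computation — but the underlying bookkeeping and the final exponent $2h-3/2$ are identical to yours.
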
 
\begin{proof}
Again, by interpolation, it is sufficient to consider the case $\alpha \in 2\NN$.

Notice that, for any $F$ supported in $[-1,7/4]$, from \eqref{f: stimaell=2h} we deduce that, if $\alpha = 2h$, $h \in \NN$, then
\[
\sum_{n\in\ZZ} |n|^{2\alpha} \, |\tilde\nabla_{\ZZ} k_{F(\Delta_{\ZZ})}(n)|^2 \lesssim_\alpha \sum_{j=0}^{\alpha} \int_0^\infty |\lambda^j F^{(j)}(\lambda)|^2 \, \lambda^{3/2-\alpha} \, \frac{d\lambda}{\lambda}.
\]

We now apply this bound to $F(t\cdot)$ in place of $F$, under the assumption that $\supp F \subseteq [1/4,7/4]$; as $t \geq 1$, we still have $\supp F(t\cdot) \subseteq [-1,7/4]$, so the bound can be applied. Thus we deduce that, for every $t\geq 1$ and $\alpha \in 2\NN$,
\[
\sum_{n\in\ZZ} |n|^{2\alpha} \, |\tilde\nabla_{\ZZ} k_{F(t\Delta_{\ZZ})}(n)|^2 \lesssim_\alpha t^{\alpha-3/2} \, \|F\|^2_{{L^2_\alpha}},
\]
which clearly implies the bound \eqref{f: alphain02}.
\end{proof}

\subsection{The Abel transform connection}
We shall now consider the homogeneous tree $\Tq$ with $q \geq 2$. As before, we equip $\Tq$ with the canonical flow measure $\mq$ given by \eqref{f: can_flow}, and denote by $\opL_{\Tq}$ the corresponding flow Laplacian.

The aim of this section is to describe a relation between the functional calculi of $\opL_{\Tq}$ and $\Delta_\ZZ$, which is illustrated by the following result. The proof that we present below is based in a fundamental way on results and ideas from \cite{CMS0}, particularly in regards to the use of the discrete Abel transform on $\Tq$.

Recall that $\Pol(1)$ denotes the set of polynomials with complex coefficients in one indeterminate. We initially prove the formula \eqref{f: kernelF(Lq)} below for $F \in \Pol(1)$, in order to avoid convergence problems; however, by a density argument, the formula extends to any continuous functions $F$, and indeed, by applying it to $F(\lambda) = e^{-t\lambda}$, one recovers the heat kernel formula of \cite[Proposition 2.4]{LMSTV}.

\begin{prop}\label{p: KF(L)}
For every $F$ in $\Pol(1)$,
\begin{equation}\label{f: kernelF(Lq)}
K_{F(\Lq)}(x,y)=q^{-\frac12(\ell(x)+\ell(y))} E_F(d(x,y)),\qquad x,y\in\Tq,
\end{equation}
where $E_F:\NN\to \CC$ is defined by 
\begin{equation}\label{f: E_F}
E_F(k)=\sum_{j\geq 0} q^{-\frac{k+2j}{2}} \, \tilde{\nabla}_{\ZZ} k_{F(\Delta_{\ZZ})}(k+2j+1) \qquad k\in\NN.
\end{equation}
\end{prop}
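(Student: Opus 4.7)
The plan is to prove \eqref{f: kernelF(Lq)} by induction on the degree of $F$, exploiting how $\Lq$ acts on ``radial'' kernels of the form $q^{-(\ell(x)+\ell(y))/2}\phi(d(x,y))$. Since both sides of \eqref{f: kernelF(Lq)} depend linearly on $F$, and for $F \in \Pol(1)$ the sum in \eqref{f: E_F} is in fact finite (because $k_{F(\Delta_\ZZ)}$ has finite support, as a convolution kernel of a polynomial in $\Delta_\ZZ$), it is enough to verify the identity for the monomials $F_n(\lambda) = \lambda^n$, $n \in \NN$. The base case $n=0$ is direct: $F_0(\Lq) = I$ gives $K_I(x,y) = \mq(y)^{-1}\delta_{x,y} = q^{-\ell(y)} \delta_{x,y}$, while $k_1(\Delta_\ZZ) = \delta_0$ yields $\tilde\nabla_\ZZ k_1(\Delta_\ZZ) = \delta_1 - \delta_{-1}$, hence $E_{F_0}(k) = \delta_{k,0}$ via \eqref{f: E_F}, and the two sides agree since $d(x,y) = 0$ forces $\ell(x) = \ell(y)$.

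For the inductive step, I shall assume \eqref{f: kernelF(Lq)} for $F_n$ and compute $K_{F_{n+1}(\Lq)}(\cdot, y) = \Lq[K_{F_n(\Lq)}(\cdot, y)]$ using \eqref{f: mathcalA} and the homogeneity of $\Tq$ (each vertex has exactly $q$ successors with $\mq(z)/\mq(x) = q^{-1}$). Setting $h(x) \defeq q^{-\ell(x)/2}E_{F_n}(d(x,y))$ and using $\ell(\pred(x)) = \ell(x)+1$ and $\ell(z) = \ell(x)-1$ for $z \in \succ(x)$, the prefactor $q^{-\ell(x)/2}$ factors out and one obtains
\[
\Lq h(x) = q^{-\ell(x)/2}\Bigl[E_{F_n}(d(x,y)) - \tfrac{1}{2\sqrt{q}}E_{F_n}(d(\pred(x),y)) - \tfrac{1}{2\sqrt{q}}\sum_{z\in\succ(x)}E_{F_n}(d(z,y))\Bigr].
\]
A brief geometric case analysis (whether $y \le x$ or $y \not\le x$) shows that in every case exactly one neighbour of $x$ lies at distance $d(x,y)-1$ from $y$ and the remaining $q$ neighbours lie at distance $d(x,y)+1$; thus the bracket equals $(\mathrm R E_{F_n})(d(x,y))$, where
\[
(\mathrm R\phi)(k) \defeq \phi(k) - \tfrac{1}{2\sqrt{q}}\phi(k-1) - \tfrac{\sqrt{q}}{2}\phi(k+1), \qquad k \in \NN,
\]
under the convention $\phi(-1) \defeq \phi(1)$ needed only at $k=0$ (the degenerate case $y=x$). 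This convention is consistent with \eqref{f: E_F}: setting $g_F \defeq k_{F(\Delta_\ZZ)}$, the symmetry $g_F(-m) = g_F(m)$ gives $\tilde\nabla_\ZZ g_F(0) = g_F(-1)-g_F(1) = 0$, which is precisely the telescoping term that reconciles the $k=0$ boundary with the generic $k \ge 1$ formula.

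It then remains to verify the algebraic identity $\mathrm R E_{F_n} = E_{F_{n+1}}$ directly from \eqref{f: E_F}. For this, note that $g_{\lambda F(\lambda)} = k_{\Delta_\ZZ} \ast g_F$; since $\Delta_\ZZ$ and $\tilde\nabla_\ZZ$ are both translation-invariant, they commute, giving
\[
\tilde\nabla_\ZZ g_{\lambda F}(m) = \tilde\nabla_\ZZ g_F(m) - \tfrac{1}{2}\bigl[\tilde\nabla_\ZZ g_F(m-1) + \tilde\nabla_\ZZ g_F(m+1)\bigr].
\]
Substituting this into the definition of $E_{\lambda F}(k)$ and reindexing the three resulting sums so that they match \eqref{f: E_F} applied at indices $k$, $k-1$, and $k+1$ (the index shifts being absorbed into prefactors $q^{\pm 1/2}$) reproduces exactly $(\mathrm R E_F)(k)$. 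The main obstacle is the delicate edge case $k=0$, where a formal $E_{F_n}(-1)$ appears in $\mathrm R E_{F_n}$ but not in $E_{F_{n+1}}$; this is disposed of precisely by the evenness of $g_F$ noted above, which is also the conceptual reason why the skewsymmetric gradient $\tilde\nabla_\ZZ$ (rather than $\Delta_\ZZ$ or a one-sided difference) appears in \eqref{f: E_F}. Once this point is settled, the induction closes and \eqref{f: kernelF(Lq)} is established for every $F \in \Pol(1)$.
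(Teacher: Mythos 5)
Your argument is correct and follows a genuinely different route from the paper's. The paper's proof first deduces that $K_{F(\Lq)}$ is radial, i.e.\ of the form $q^{-(\ell(x)+\ell(y))/2}E_F(d(x,y))$, by conjugating the averaging operator $\opAv_q$ with the multiplication operator $H_qf(x)=q^{-\ell(x)/2}f(x)$, which intertwines the functional calculus of the flow Laplacian with that of the isotropic averaging operator $\opM_q$ on $(\Tq,\#)$, where radiality is a standard consequence of the symmetries of $\Tq$; it then determines $E_F$ by pushing the kernel forward along the level submersion $\pi:\Tq\to\ZZ$ using the vertex-counting formula \eqref{f: vertices_number}, which yields the discrete Abel-transform relation $k_{F(\Delta_{\ZZ})}(n)=\Abel_q(E_F)(|n|)$, and finally inverts $\Abel_q$ via the explicit formula from \cite[Theorem 2.3]{CMS}. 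You instead verify \eqref{f: kernelF(Lq)} directly by induction on $\deg F$ after reducing to monomials: the geometric observation that, for $x\neq y$, exactly one neighbour of $x$ lies at distance $d(x,y)-1$ from $y$ and the remaining $q$ lie at distance $d(x,y)+1$ converts the action of $\Lq$ on a radial kernel into the three-term recursion $(\mathrm R\phi)(k)=\phi(k)-\tfrac{1}{2\sqrt q}\phi(k-1)-\tfrac{\sqrt q}{2}\phi(k+1)$ (with $\phi(-1):=\phi(1)$ absorbing the degenerate case $x=y$), and the algebraic identity $\mathrm R E_{F_n}=E_{F_{n+1}}$ then follows from \eqref{f: E_F} via $\tilde{\nabla}_{\ZZ} k_{\Delta_{\ZZ} F_n(\Delta_{\ZZ})}=k_{\Delta_{\ZZ}}*\tilde{\nabla}_{\ZZ} k_{F_n(\Delta_{\ZZ})}$, the $k=0$ boundary closing precisely because $\tilde{\nabla}_{\ZZ} k_{F_n(\Delta_{\ZZ})}(0)=0$ by evenness of $k_{F_n(\Delta_{\ZZ})}$. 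Your approach is more elementary and self-contained---it sidesteps the conjugation by $H_q$, the submersion to $\ZZ$, the counting lemma, and the Abel inversion---at the cost of an explicit geometric case analysis and some index bookkeeping; the paper's route is more conceptual, obtains radiality from a general symmetry principle rather than re-deriving it inductively, and situates the identity within the Abel-transform framework on which the subsequent degree-uniform weighted estimates also rely.
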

\begin{proof}
On the homogeneous tree $\Tq$, we shall denote by $\opAv_q$ the averaging operator from \eqref{f: mathcalA}, i.e.,
\[
\opAv_q f(x)=\frac{1}{2} f(\pred(x))+ \frac{1}{2q} \sum_{y \in \succ(x)} f(y), \qquad f \in \CC^{\Tq}, \ x\in \Tq,
\]
so that $\Lq = I-\opAv_q$. Let $\opM_q$ denote moreover the ``isotropic'' averaging operator
\[
\opM_q f(x)=\frac{1}{q+1} \sum_{y\sim x} f(y), \qquad f\in \CC^{\Tq}, \ x\in \Tq,
\]
which is adapted to the counting measure $\#$ on $\Tq$; indeed $I - \opM_q$ is the combinatorial Laplacian on $\Tq$.
Consider also the operator $H_q$ defined by 
\[
H_q f(x)=q^{-\ell(x)/2}f(x), \qquad f\in \CC^{\Tq}, \ x\in \Tq,
\]
which is an isometry from $L^2(\Tq,\#)$ to $L^2(\Tq,\mq)$. 

An easy computation shows that
\[
\opAv_q H_q = \frac{q+1}{2\sqrt q} H_q \opM_q,
\]
so that, for every $F\in\Pol(1)$,
\begin{equation}\label{f: rel_comb_flow}
K_{F(\opAv_q)}(x,y)=q^{-\frac12(\ell(x)+\ell(y))}K_{F(\frac{q+1}{2\sqrt q} \opM_q)}(x,y),\qquad x,y\in\Tq ;
\end{equation}
here $K_{F(\opAv_q)}$ denotes the integral kernel with respect to the flow measure $m_{\Tq}$, while $K_{F(\frac{q+1}{2\sqrt q} \opM_q)}$ is the kernel with respect to the counting measure $\#$.

As is well known (see \cite{CMS0,CMS}), thanks to the symmetries of $(\Tq,\#)$, the kernel $K_{F(\frac{q+1}{2\sqrt q} \opM_q)}(x,y)$ only depends on $d(x,y)$. So, from \eqref{f: rel_comb_flow}, we deduce that \eqref{f: kernelF(Lq)} holds for some $E_F : \NN \to \CC$, and it only remains to compute the expression of such $E_F$ in terms of $F \in \Pol(1)$. Notice that, as $F \in \Pol(1)$, we know that $F(\opL) \in \Bdd_\fin(m_{\Tq})$, thus we must have $E_F \in c_{00}(\NN)$ in this case.

To compute $E_F$, consider the map $\pi : \Tq \to \ZZ$ defined by $\pi(x)=\ell(x)$ for every $x\in\Tq$. It is easy to see that $\pi$ is a flow submersion for the canonical flows on $\Tq$ and $\ZZ$, respectively. Thus from Proposition \ref{p: Intral} we deduce that $F(\opL_{\Tq}) \in \Comp_\fin(\pi)$ and $\pi(F(\opL_{\Tq})) = F(\Delta_\ZZ)$. In particular, Proposition \ref{p: K2K1} gives us a relation between the integral kernels of $F(\opL_{\Tq})$ on $(\Tq,\mq)$ and of $F(\Delta_\ZZ)$ on $\ZZ$. Taking into account that $F(\Delta_\ZZ)$ is a convolution operator, i.e., $K_{F(\Delta_\ZZ)}(n,m) = k_{F(\Delta_\ZZ)}(n-m)$, this relation can be written as
\begin{equation}\label{f: first_abel}
\begin{split}
k_{F(\Delta_{\ZZ})}(m-n) 
&= \sum_{x\in\Tq \colon \ell(x)=m} K_{F(\Lq)}(x,y) \, q^{\ell(x)}\\
&= q^{\frac{m-n}{2}}\sum_{x\in\Tq \colon \ell(x)=m} E_F(d(x,y))
\end{split}
\end{equation}
for all $m,n \in \ZZ$ and $y \in \Tq$ with $\ell(y) = n$, where we also used \eqref{f: kernelF(Lq)}.

Now, thanks to the symmetries of $\Tq$, it is easily checked that, for all $y \in \Tq$, $d \in \NN$ and $r \in \ZZ$,
\begin{equation}\label{f: vertices_number}
\# \{ x \in \Tq \colon d(x,y) = d, \ \ell(x)-\ell(y) = r \}  = \begin{cases}
q^{(d-r)/2} &\text{if } |r|=d,\\
(q-1) q^{(d-r)/2-1} &\text{if } d-|r| \in 2\Npos,\\
0 &\text{otherwise}.
\end{cases}
\end{equation}
From \eqref{f: first_abel} and \eqref{f: vertices_number} we then deduce that
\[
\begin{split}
&k_{F(\Delta_{\ZZ})}(m-n) \\
&= q^{\frac{m-n}{2}} \sum_{k=0}^{\infty} E_F(|m-n|+2k) \, \#\{x\in\Tq \colon \ell(x)=m, \ d(x,y)=|m-n|+2k\}    \\
&= q^{\frac{|m-n|}{2}} \left[E_F(|m-n|)+\frac{q-1}{q}\sum_{k=1}^{\infty}q^kE_F(|m-n|+2k) \right].
\end{split}
\]

The above formula can be rephrased as follows: 
\begin{equation}\label{f: abel_rel}
k_{F(\Delta_{\ZZ})}(n) = \Abel_q (E_F)(|n|), \qquad n \in \ZZ,
\end{equation}
where for every $\phi \in c_{00}(\NN)$, the Abel transform $\Abel_q(\phi) \in c_{00}(\NN)$ of $\phi$ is given by
\[
\Abel_q(\phi)(j) = q^{j/2} \left[\phi(j) + \frac{q-1}{q} \sum_{k=1}^{\infty}q^k\phi(j+2k) \right],  \qquad j\in \NN.
\]
By \cite[Theorem 2.3]{CMS} we know that the Abel transform $\Abel_q : c_{00}(\NN) \to c_{00}(\NN)$ is invertible, and we have the inversion formula
\[\begin{split}
\Abel_q^{-1} \psi(n)
&= q^{-n/2} \psi(n) - (q-1) \sum_{j>0} q^{-\frac{n+2j}{2}} \, \psi(n+2j)\\
&= \sum_{j\geq 0} q^{-\frac{n+2j}{2}} \, \tilde{\nabla}_{\ZZ}\psi(n+2j+1), \qquad n \in \NN.
\end{split}\]
By using this formula, we can invert the relation \eqref{f: abel_rel} and obtain \eqref{f: E_F}.
\end{proof}

A density argument allows us to extend the previous formula beyond the class of polynomials.

\begin{corollary}\label{c: KF(L)}
The formula \eqref{f: kernelF(Lq)} holds true for all $F \in C[0,2]$.
\end{corollary}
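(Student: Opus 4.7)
The plan is to obtain the statement by approximating any $F\in C[0,2]$ by a sequence of polynomials $F_n\in\Pol(1)$ converging to $F$ uniformly on $[0,2]$ (Weierstrass' theorem), and then passing to the limit in the identity \eqref{f: kernelF(Lq)} which is already established for the $F_n$ by Proposition \ref{p: KF(L)}. So the task reduces to checking that both sides of \eqref{f: kernelF(Lq)} depend continuously on $F\in C[0,2]$ in a sense strong enough to make this limit meaningful, pointwise in $x,y\in\Tq$.

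For the left-hand side, since $\opL_{\Tq}$ is selfadjoint and (by Proposition \ref{p: spectrum}) its $L^2$-spectrum is $[0,2]$, the Borel functional calculus gives
\[
\|F_n(\Lq) - F(\Lq)\|_{L^2(\mq)\to L^2(\mq)} \leq \|F_n-F\|_{L^\infty[0,2]} \to 0.
\]
Writing the kernel as a matrix coefficient, $K_{F(\Lq)}(x,y) = m_{\Tq}(y)^{-1}\langle F(\Lq)\chr_{\{y\}}, \chr_{\{x\}}\rangle_{L^2(\mq)}$, this yields $K_{F_n(\Lq)}(x,y)\to K_{F(\Lq)}(x,y)$ pointwise on $\Tq\times\Tq$.

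For the right-hand side, I will use the Fourier representation from Lemma \ref{l: kernelFZ}, namely $\tilde\nabla_{\ZZ} k_{F(\Delta_\ZZ)}(n) = -2i\widehat{M_F}(n)$ with $M_F(\theta)=\sin\theta\,F(1-\cos\theta)$. Since $1-\cos\theta\in[0,2]$, the function $M_F$ depends only on $F|_{[0,2]}$, and $\|M_F\|_{L^\infty(-\pi,\pi)}\leq \|F\|_{L^\infty[0,2]}$. Therefore
\[
|\tilde\nabla_{\ZZ} k_{F(\Delta_\ZZ)}(n)| \leq 2\|F\|_{L^\infty[0,2]} \qquad \forall n\in\ZZ,
\]
and analogously $|\tilde\nabla_{\ZZ}(k_{F_n(\Delta_\ZZ)}-k_{F(\Delta_\ZZ)})(n)|\leq 2\|F_n-F\|_{L^\infty[0,2]}$, uniformly in $n$. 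Because $q\geq 2$, the geometric factor $q^{-(k+2j)/2}$ in \eqref{f: E_F} is summable over $j\geq 0$, and we obtain both the absolute convergence of the series defining $E_F(k)$ and the bound
\[
|E_{F_n}(k) - E_F(k)| \leq \frac{2 q^{-k/2}}{1-q^{-1}}\,\|F_n-F\|_{L^\infty[0,2]} \xrightarrow[n\to\infty]{} 0
\]
for every $k\in\NN$. Multiplying by $q^{-(\ell(x)+\ell(y))/2}$ and using \eqref{f: kernelF(Lq)} applied to each polynomial $F_n$ then yields \eqref{f: kernelF(Lq)} for the given $F\in C[0,2]$. There is no real obstacle here: the only care needed is to observe that $M_F$ is manifestly determined by $F|_{[0,2]}$ and that the sum in \eqref{f: E_F} converges absolutely under just an $L^\infty$ bound on $F$, so uniform convergence of $F_n$ to $F$ on the spectrum is exactly what is needed on both sides.
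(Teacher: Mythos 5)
Your proof is correct and follows essentially the same route as the paper's: approximate $F$ uniformly on $[0,2]$ by polynomials (Stone--Weierstrass), get pointwise kernel convergence on the left from $L^2$ operator norm convergence via the Borel functional calculus, and get convergence of $E_{F_n}(k)\to E_F(k)$ on the right from the uniform bound $\|\tilde\nabla_\ZZ k_{F(\Delta_\ZZ)}\|_\infty\lesssim\|F\|_\infty$ together with summability of the geometric factor $q^{-(k+2j)/2}$ (using $q\geq 2$). One trivial slip: the matrix-coefficient formula should read $K_{F(\Lq)}(x,y)=[m_{\Tq}(x)\,m_{\Tq}(y)]^{-1}\langle F(\Lq)\chr_{\{y\}},\chr_{\{x\}}\rangle_{L^2(\mq)}$, with the factor $m_{\Tq}(x)^{-1}$ as well, but this does not affect the argument.
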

\begin{proof}
From the expression \eqref{f: kernelFZ} it is clear that $\|\tilde \nabla_\ZZ k_{F(\Delta_\ZZ)}\|_\infty \lesssim \|F\|_\infty$; in particular, the sum in \eqref{f: E_F} converges absolutely whenever $F \in C[0,2]$, so $E_F$ is well defined and 
\begin{equation}\label{f: EF_cont}
\|E_F\|_\infty \lesssim \|\tilde \nabla_\ZZ k_{F(\Delta_\ZZ)}\|_\infty \lesssim \|F\|_\infty.
\end{equation}

Let $A$ be the set of the functions $F \in C[0,2]$ such that the formula \eqref{f: kernelF(Lq)} holds true. It is immediately checked that $A$ is a linear subspace of $C[0,2]$, and from Proposition \ref{p: KF(L)} we know that $A$ contains all the polynomials. To conclude that $A = C[0,2]$, by the Stone--Weierstrass Theorem it is enough to prove that $A$ is closed under uniform convergence.

On the other hand, if $F_n \in A$ and $F_n \to F$ uniformly on $[0,2]$, then $F_n(\Lq) \to F(\Lq)$ in the $L^2$ operator norm, thus $K_{F_n(\Lq)} \to K_{F(\Lq)}$ pointwise on $\Tq \times \Tq$. 
Moreover, since the map $F \mapsto E_F$ given by \eqref{f: E_F} is linear, the bound \eqref{f: EF_cont} shows that the uniform convergence $F_n \to F$ implies the uniform convergence $E_{F_n} \to E_F$. Thus, we can take the limit in both sides of \eqref{f: kernelF(Lq)} and from the identity for the $F_n$ we deduce that for $F$.
\end{proof}

We shall now use the formula from Proposition \ref{p: KF(L)} to derive, for any $F \in C[0,2]$, a weighted $L^1$ estimate for the kernel of $F(\opL_{\Tq})$ in terms of a suitable weighted $L^1$ norm 
of $\tilde\nabla_\ZZ k_{F(\Delta_\ZZ)}$; a crucial aspect of the estimate below is its uniformity in $q$.

\begin{prop}\label{p: stimaL1pesataTq}
For all $F \in C[0,2]$ and all increasing weights $w : \NN \to [0,\infty)$,
\[
\sup_{y\in\Tq} \sum_{x\in\Tq} |K_{F(\opL_{\Tq})}(x,y)| \, w(d(x,y)) \,\mq(x)
\lesssim \sum_{n\in\NN} n \, w(n) \, |\tilde\nabla_{\ZZ}k_{F(\Delta_{\ZZ})}(n)|,
\]
where the implicit constant does not depend on $q$.
\end{prop}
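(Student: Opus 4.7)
My plan is to use Corollary \ref{c: KF(L)} directly, which expresses the kernel $K_{F(\Lq)}$ on $\Tq$ in terms of the one-dimensional kernel $\tilde\nabla_\ZZ k_{F(\Delta_\ZZ)}$ via the Abel-inverted formula \eqref{f: E_F}. First I would write
\[
\sum_{x \in \Tq} |K_{F(\Lq)}(x,y)|\, w(d(x,y))\, \mq(x)
= q^{-\ell(y)/2} \sum_{d \geq 0} |E_F(d)|\, w(d) \sum_{\substack{x \in \Tq \\ d(x,y)=d}} q^{\ell(x)/2},
\]
reducing the estimate to a computation of the inner sum, which is independent of $F$.

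Next I would use the vertex-count formula \eqref{f: vertices_number}: separating the contributions $|r|=d$ and $d-|r| \in 2\Npos$, and collecting the $q$-powers, one obtains for $d \geq 1$
\[
\sum_{\substack{x \in \Tq \\ d(x,y)=d}} q^{\ell(x)/2}
= q^{(\ell(y)+d)/2}\left[2 + (d-1)\,\tfrac{q-1}{q}\right] \leq (d+1)\, q^{(\ell(y)+d)/2},
\]
with the $d=0$ case being trivial. This is the crucial step where the factor $d+1$ appears, and where the bound ends up being uniform in $q$ (the ratio $(q-1)/q$ being at most $1$). Substituting back yields
\[
\sum_{x \in \Tq} |K_{F(\Lq)}(x,y)|\, w(d(x,y))\, \mq(x)
\lesssim \sum_{d \geq 0} (d+1)\, w(d)\, q^{d/2}\, |E_F(d)|.
\]

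The third step is to insert the explicit expression \eqref{f: E_F} for $E_F$, giving
\[
q^{d/2} |E_F(d)| \leq \sum_{j \geq 0} q^{-j}\, |\tilde\nabla_\ZZ k_{F(\Delta_\ZZ)}(d+2j+1)|,
\]
and then change variables $n = d+2j+1$ to reverse the order of summation. Using that $w$ is increasing, so $w(n-2j-1) \leq w(n)$, and that $n-2j \leq n$, one arrives at
\[
\sum_{d \geq 0} (d+1)\, w(d)\, q^{d/2}\, |E_F(d)|
\leq \sum_{n \geq 1} n\, w(n)\, |\tilde\nabla_\ZZ k_{F(\Delta_\ZZ)}(n)| \sum_{j \geq 0} q^{-j},
\]
and since $q \geq 2$, the geometric series is bounded by $2$, giving the desired $q$-independent constant.

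The main point requiring care is the combinatorial step 2: it is essential that the quantity $\sum_{x: d(x,y)=d} q^{\ell(x)/2}$ be controlled by $(d+1) q^{(\ell(y)+d)/2}$ with a $q$-independent implicit constant, since this is what produces the polynomial weight $n$ on the right-hand side and ultimately explains the shift from the $L^2_{1/2}$-threshold on $\ZZ$ to the $L^2_{3/2}$-threshold on $\Tq$ alluded to in the introduction.
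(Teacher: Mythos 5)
Your proof is correct and follows essentially the same route as the paper's: both use Corollary \ref{c: KF(L)}, the vertex-count estimate $\sum_{x:d(x,y)=d} q^{(\ell(x)-\ell(y))/2}\approx (d+1)q^{d/2}$ (which the paper cites from \cite[Lemma 2.7]{MSV} but you rederive from \eqref{f: vertices_number}), substitution of the Abel inversion formula \eqref{f: E_F}, and the change of variables $n=d+2j+1$ together with the geometric series $\sum_j q^{-j}\le 2$ for $q\ge 2$. The only difference is the order of operations (you compute the geometric sum over $x$ before inserting $E_F$, the paper does the reverse), which has no mathematical bearing on the argument.
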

\begin{proof}
By Corollary \ref{c: KF(L)}, we can apply the formula \eqref{f: kernelF(Lq)} and deduce that, for every $y\in \Tq$,
\[\begin{split}
&\sum_{x\in\Tq}|K_{F(\opL_{\Tq})}(x,y)| \, w(d(x,y)) \,\mq(x)\\
&\leq \sum_{x\in\Tq} w(d(x,y)) \, q^{\frac{\ell(x)-\ell(y)}{2}} \sum_{j\in\NN} q^{-\frac{d(x,y)+2j}{2}} \, |\tilde\nabla_{\ZZ}k_{F(\Delta_{\ZZ})}(d(x,y)+2j+1)| \\
&\leq \sum_{x\in\Tq} q^{\frac{\ell(x)-\ell(y)}{2}} \sum_{j\in\NN} q^{-\frac{d(x,y)+2j}{2}} \, w(d(x,y)+2j+1) \, |\tilde\nabla_{\ZZ} k_{F(\Delta_{\ZZ})}(d(x,y)+2j+1)|,
\end{split}\]
as $w$ is increasing. By the estimate
\[
\sum_{x \colon d(x,y)=n} q^{\frac{\ell(x)-\ell(y)}{2}} \approx q^{n/2} (n+1)
\]
from \cite[Lemma 2.7]{MSV}, we then deduce
\[\begin{split}
&\sum_{x\in\Tq}|K_{F(\opL_{\Tq})}(x,y)| \, w(d(x,y)) \,\mq(x)\\
&\lesssim \sum_{n\in\NN} q^{n/2} \, (n+1) \sum_{j\in\NN} q^{-\frac{n+2j}{2}} w(n+2j+1) \, |\tilde\nabla_{\ZZ} k_{F(\Delta_{\ZZ})}(n+2j+1)| \\
&\leq \sum_{n\in\NN} \sum_{j\in\NN} q^{-j}(n+2j+1) \, w(n+2j+1) \, |\tilde\nabla_{\ZZ} k_{F(\Delta_{\ZZ})}(n+2j+1)| \\
&= \sum_{j\in\NN} \sum_{n\geq 2j+1} q^{-j} \, n \, w(n) \, |\tilde\nabla_{\ZZ} k_{F(\Delta_{\ZZ})}(n)| \\
&\lesssim \sum_{n\in\NN} n \, w(n)|\tilde\nabla_{\ZZ} k_{F(\Delta_{\ZZ})}(n)|,
\end{split}\]
where we used the change of indices $n+2j+1 \mapsto n$.  
\end{proof}

\subsection{Differentiable functional calculus and heat kernel estimates for the flow Laplacian}

Due to the uniformity in $q$ of the estimates for polynomial functions $F(\Lq)$ of the flow Laplacian on the homogeneous tree $\Tq$ in Proposition \ref{p: stimaL1pesataTq}, we can apply Theorem \ref{t: unif_transf} to obtain analogous estimates on any flow tree of bounded degree.

\begin{corollary}\label{c: stimaL1pesata_trans}
Let $(T,m)$ be a flow tree.
For every $F \in \Hol(1,2)$ and every increasing weight $w : \NN \to [0,\infty)$,
\begin{equation}\label{f: stimaL1pesata_trans}
\sup_{y\in T} \sum_{x\in T} |K_{F(\opL)}(x,y)| \, w(d(x,y)) \,m(x)
\lesssim \sum_{n\in\NN} n \, w(n) \, |\tilde\nabla_{\ZZ}k_{F(\Delta_{\ZZ})}(n)|,
\end{equation}
where the implicit constant does not depend on $(T,m)$.
\end{corollary}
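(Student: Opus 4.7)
The plan is to combine the universal transference principle of Theorem \ref{t: unif_transf} with the $q$-uniform weighted $L^1$ bound on homogeneous trees provided by Proposition \ref{p: stimaL1pesataTq}. The bridge between the commutative functional calculus of $\opL$ and the noncommutative functional calculus of $(\Sigma,\Sigma^*)$, to which Theorem \ref{t: unif_transf} applies, is furnished by Lemma \ref{l: chvar}.

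First I would fix $F\in\Hol(1,2)$ and introduce the noncommutative power series
\[
\widetilde F(Z_1,Z_2) \defeq F\bigl((1-Z_2)(1-Z_1)/2\bigr).
\]
By Lemma \ref{l: chvar}, $\widetilde F\in\Hol(2,1)$, and in view of the identity \eqref{f: opL_Sigma} together with Lemma \ref{l: subs_pws} one has $F(\opL)=\widetilde F(\Sigma,\Sigma^*)$ on any flow tree, so in particular $K_{F(\opL)}=K_{\widetilde F(\Sigma,\Sigma^*)}$. Next, to the weight $w$ of the statement I associate the three-variable weight $\bar w(d,\ell_1,\ell_2)\defeq w(d)$, which is increasing in its first argument (and constant in the other two), so it lies in the class admitted by Theorem \ref{t: unif_transf}. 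Applying the weighted transference estimate \eqref{f: unif_transf_weight} to $\widetilde F$ with the weight $\bar w$ gives
\[
\sup_{y\in T}\sum_{x\in T} w(d(x,y))\,|K_{F(\opL)}(x,y)|\,m(x)
\le \liminf_{q\to\infty}\sup_{y\in\Tq}\sum_{x\in\Tq} w(d_{\Tq}(x,y))\,|K_{F(\opL_{\Tq})}(x,y)|\,\mq(x),
\]
where on the right we used again that $F(\opL_{\Tq})=\widetilde F(\Sigma_{\Tq},\Sigma_{\Tq}^*)$.

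Since $F\in\Hol(1,2)$ defines a continuous function on $[0,2]$ (the series converges absolutely on the closed disc of radius $2$), Proposition \ref{p: stimaL1pesataTq} applies to $F$ on each $\Tq$ and yields, uniformly in $q$,
\[
\sup_{y\in\Tq}\sum_{x\in\Tq} w(d_{\Tq}(x,y))\,|K_{F(\opL_{\Tq})}(x,y)|\,\mq(x)
\lesssim \sum_{n\in\NN} n\,w(n)\,|\tilde\nabla_{\ZZ}k_{F(\Delta_{\ZZ})}(n)|.
\]
As the right-hand side is independent of $q$, passing to the $\liminf$ in $q$ yields the desired estimate \eqref{f: stimaL1pesata_trans}.

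The proof is essentially a bookkeeping exercise once the two main ingredients (Theorem \ref{t: unif_transf} and Proposition \ref{p: stimaL1pesataTq}) are in place, so there is no real obstacle to overcome; the only point that requires some care is the passage between the single-variable calculus of $\opL$ and the noncommutative two-variable calculus of $(\Sigma,\Sigma^*)$, which is handled via the substitution $(1-Z_2)(1-Z_1)/2$ controlled by Lemma \ref{l: chvar}, together with the observation that a weight depending only on $d(x,y)$ is admissible in the weighted transference inequality.
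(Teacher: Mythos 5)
Your proof is correct and follows exactly the approach the paper indicates: transfer the $q$-uniform estimate of Proposition \ref{p: stimaL1pesataTq} via the universal transference of Theorem \ref{t: unif_transf}, using Lemma \ref{l: chvar} and \eqref{f: opL_Sigma} to pass from the one-variable calculus of $\opL$ to the two-variable calculus of $(\Sigma,\Sigma^*)$, and using the $d$-dependent-only weight $\bar w(d,\ell_1,\ell_2)=w(d)$. The paper leaves the proof as an immediate consequence of those two results, and your bookkeeping supplies the details faithfully.
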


We now combine the above inequality with the estimate on $\ZZ$ from Lemma \ref{l: FDeltaZsenzasupporto} to deduce a weighted $L^1$ estimate for sufficiently smooth functions of a flow Laplacian. 

\begin{prop}\label{p: stimaL1Tsenzasupporto}
Let $(T,m)$ be a flow tree.
For every $\alpha \geq 0$ and $\varepsilon >0$,
and for every $F\in L^2_{\alpha+3/2+\varepsilon}(\RR)$,
\begin{equation}\label{f: stimaL1Tsenzasupporto}
\sup_{y\in T} \sum_{x\in T} |K_{F(\opL)}(x,y)| \, (1+d(x,y))^{\alpha} \,m(x) \lesssim_{\alpha,\varepsilon} \|F\|_{L^2_{\alpha+3/2+\varepsilon}}.
\end{equation}
Moreover, for every increasing weight $w : \NN \to [0,\infty)$ satisfying $w(n) \lesssim_w (1+n)^\alpha$ for all $n \in \NN$, the estimate \eqref{f: stimaL1pesata_trans} holds for all $F\in L^2_{\alpha+3/2+\varepsilon}(\RR)$.
\end{prop}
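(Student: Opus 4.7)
The plan is to reduce Proposition~\ref{p: stimaL1Tsenzasupporto} to the weighted $\ell^1$ estimate of Corollary~\ref{c: stimaL1pesata_trans} together with the $L^2$ Sobolev estimate of Lemma~\ref{l: FDeltaZsenzasupporto}, via the Cauchy--Schwarz inequality. The main technical point is that Corollary~\ref{c: stimaL1pesata_trans} is stated only for $F \in \Hol(1,2)$, whereas we need it for $F \in L^2_{\alpha+3/2+\varepsilon}(\RR)$; the Sobolev embedding $L^2_{s}(\RR) \hookrightarrow C_0(\RR)$ (valid for $s > 1/2$) suggests that the natural extension is to $F \in C[0,2]$, which I would carry out first.

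The ingredients for this extension are those employed in the proof of Theorem~\ref{t: unif_transf}. Let $((T_j, m_j))_j$ be an approximating sequence of $(T,m)$ by uniformly rational flow trees (Corollary~\ref{c: point approx}), and let $\pi_j : (\TT{q_j}, m_{\TT{q_j}}) \to (T_j, m_j)$ be a flow submersion (Proposition~\ref{p: comp unif rat}). For every $F \in C[0,2]$, Proposition~\ref{p: stimaL1pesataTq} provides the desired weighted $L^1$ estimate on $(\TT{q_j}, m_{\TT{q_j}})$ with implicit constant independent of $q_j$. This estimate transfers to $(T_j, m_j)$ via Proposition~\ref{p: stimeK2K1}, once one has verified that $F(\opL_{\TT{q_j}})$ is $\pi_j$-compatible with $\pi_j(F(\opL_{\TT{q_j}})) = F(\opL_j)$; this compatibility is known for polynomial $F$ by Proposition~\ref{p: Intral}, and extends to any $F \in C[0,2]$ by a pointwise-convergence argument based on Corollary~\ref{c: KF(L)} (uniform convergence $P_n \to F$ on $[0,2]$ gives pointwise convergence of kernels, to which one applies dominated convergence using the $q$-uniform $L^1$ bound of Proposition~\ref{p: stimaL1pesataTq}). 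Passing to the limit $j \to \infty$ via Corollary~\ref{c: conv_hker}\ref{en: c_conv_hker_opL} then yields the weighted $L^1$ estimate \eqref{f: stimaL1pesata_trans} on $(T,m)$ for every $F \in C[0,2]$, hence in particular for every $F \in L^2_{\alpha+3/2+\varepsilon}(\RR)$ by Sobolev embedding, establishing the ``Moreover'' part of the proposition.

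For the main estimate \eqref{f: stimaL1Tsenzasupporto}, I would take $w(n) = (1+n)^\alpha$ in the extended \eqref{f: stimaL1pesata_trans} and apply Cauchy--Schwarz:
\[
\sum_{n \in \NN} n(1+n)^\alpha |\tilde\nabla_\ZZ k_{F(\Delta_\ZZ)}(n)| \leq \biggl(\sum_{n \in \NN} (1+n)^{-1-2\varepsilon}\biggr)^{1/2} \biggl(\sum_{n \in \NN} (1+n)^{2\alpha+3+2\varepsilon} |\tilde\nabla_\ZZ k_{F(\Delta_\ZZ)}(n)|^2\biggr)^{1/2},
\]
where the first factor is finite since $1+2\varepsilon > 1$, and the second is bounded by $\|F\|_{L^2_{\alpha+3/2+\varepsilon}}^2$ thanks to Lemma~\ref{l: FDeltaZsenzasupporto} applied with parameter $\alpha+3/2+\varepsilon$. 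The principal obstacle is the extension of Corollary~\ref{c: stimaL1pesata_trans} to $C[0,2]$; the Cauchy--Schwarz step is then routine.
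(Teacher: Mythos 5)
Your overall ingredients (Proposition~\ref{p: stimaL1pesataTq} on $\Tq$, transference to $(T,m)$, Cauchy--Schwarz against Lemma~\ref{l: FDeltaZsenzasupporto}) are the same as the paper's, but you restructure the argument by attempting to first extend Corollary~\ref{c: stimaL1pesata_trans} from $\Hol(1,2)$ to $C[0,2]$, and this extension step has a genuine gap. You claim the $\pi$-compatibility $\pi_j(F(\opL_{\TT{q_j}}))=F(\opL_j)$ and the associated weighted kernel estimate pass from polynomials $P_n$ to $F\in C[0,2]$ by ``dominated convergence using the $q$-uniform $L^1$ bound of Proposition~\ref{p: stimaL1pesataTq}.'' But if $P_n\to F$ only uniformly on $[0,2]$, there is no control on the quantities $\sum_n n\,|\tilde\nabla_{\ZZ}k_{P_n(\Delta_{\ZZ})}(n)|$: each is finite (the $P_n$ are polynomials), yet they may blow up as $n\to\infty$, so Proposition~\ref{p: stimaL1pesataTq} supplies no dominating function. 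Worse, even if one works at the level of the final inequality rather than the compatibility relation, Fatou's lemma applied to the left-hand side lands you with $\liminf_n \sum_m m\,w(m)\,|\tilde\nabla_{\ZZ}k_{P_n(\Delta_{\ZZ})}(m)|$ on the right, and Fatou applied to \emph{that} side goes the wrong way — it bounds the value at $F$ by the $\liminf$, not vice versa. So neither Fatou nor dominated convergence closes the loop under mere uniform convergence. (You also cannot simply invoke Proposition~\ref{p: Intral_smooth}, which does cover this, because it is derived from Theorem~\ref{t: multipliers}, itself a consequence of the present statement — a circularity you would need to break explicitly.)

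The fix, which is precisely what the paper does, is to approximate in $L^2_{\alpha+3/2+\varepsilon}$ rather than uniformly, after first inserting a smooth cutoff $\chi$ (with $\chi|_{[0,2]}=1$) so that $\chi F_n\to\chi F$ in $L^2_{\alpha+3/2+\varepsilon}(\RR)$ is even possible (polynomials cannot approximate a general element of $L^2_s(\RR)$ in that norm; they are not in $L^2(\RR)$). Once you approximate in this stronger topology, Cauchy--Schwarz and Lemma~\ref{l: FDeltaZsenzasupporto} applied to $P_n-P_{n'}$ show that both $\sum_m m\,w(m)\,|\tilde\nabla_{\ZZ}k_{P_n(\Delta_{\ZZ})}(m)|$ and (via the a~priori polynomial case of the weighted bound on $T$) the kernels $K_{P_n(\opL)}$ form Cauchy sequences in the relevant weighted $\ell^1$-norms, which is enough to pass to the limit and identify it with $K_{F(\opL)}$ using $L^2$ operator-norm convergence. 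In other words, the paper sidesteps the need to enlarge $\Comp(\pi)$ beforehand: it proves the entire chain (transference $+$ Cauchy--Schwarz $+$ Lemma~\ref{l: FDeltaZsenzasupporto}) for polynomials, obtaining the a~priori bound against $\|\chi F\|_{L^2_{\alpha+3/2+\varepsilon}}$, and then does a single density argument at the end. Your strategy can be made to work, but after correcting the approximation scheme it collapses onto the paper's argument; there is no shortcut here via uniform approximation or $C[0,2]$ alone, because $C[0,2]$ regularity does not control the weighted $\ell^1$-norms of $\tilde\nabla_{\ZZ}k_{F(\Delta_{\ZZ})}$ that the whole scheme rests on.
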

\begin{proof}
Let $\chi \in C_c^\infty(\RR)$ be such that $\supp \chi \subseteq [-1,3]$ and $\chi|_{[0,2]} = 1$. In place of \eqref{f: stimaL1Tsenzasupporto}, we shall prove the apparently stronger estimate
\begin{equation}\label{f: apriori}
\sup_{y\in T} \sum_{x\in T} |K_{F(\opL)}(x,y)| \, (1+d(x,y))^{\alpha} \,m(x) \lesssim_{\alpha,\varepsilon} \|\chi F\|_{L^2_{\alpha+3/2+\varepsilon}}
\end{equation}
for all $F \in L^2_{\alpha+3/2+\varepsilon}(\RR)$.

Assume first that $F$ is a polynomial. By Corollary \ref{c: stimaL1pesata_trans}, the Cauchy--Schwarz inequality and Lemma \ref{l: FDeltaZsenzasupporto},
\begin{equation}\label{f: apriori2}
\begin{split}
&\sup_{y \in T} \sum_{x \in T} |K_{F(\opL)}(x,y)| \, (1+d(x,y))^{\alpha} \, \mq(x) \\
&\lesssim \sum_{n \in \NN} n \, (1+n)^{\alpha} \, |\tilde\nabla_{\ZZ} k_{F(\Delta_{\ZZ})}(n)| \\
&\lesssim \left(\sum_{n \in \NN} (1+n)^{2\alpha+3+2\varepsilon} \, |\tilde\nabla_{\ZZ} k_{(\chi F)(\Delta_{\ZZ})}(n)|^2 \right)^{1/2} \left(\sum_{n \in \NN} (1+n)^{-1-2\varepsilon}\right)^{1/2} \\
&\lesssim_{\alpha,\varepsilon} \|\chi F\|_{L^2_{\alpha+3/2+\varepsilon}},
\end{split}
\end{equation}
where we used that $k_{F(\Delta_{\ZZ})}$ only depends on $F|_{[0,2]}$ and therefore $k_{(\chi F)(\Delta_{\ZZ})} = k_{F(\Delta_{\ZZ})}$. This proves \eqref{f: apriori} when $F$ is a polynomial.

A density argument then allows us to extend the validity of \eqref{f: apriori} to any $F \in L^2_{\alpha+3/2+\varepsilon}(\RR)$. Indeed, from the Stone--Weierstrass Theorem one readily deduces that there exists a sequence $F_n$ of polynomials such that $\chi F_n \to \chi F$ in $L^2_{\alpha+3/2+\varepsilon}(\RR)$. Applying \eqref{f: apriori} to the polynomials $F_n-F_m$ then shows that the sequence of kernels $K_{F_n(\opL)}$ is a Cauchy sequence and converges (with respect to the weighted norm in the left-hand side of \eqref{f: apriori}, thus also pointwise) to a function $H_F$ on $T \times T$ satisfying
\[
\sup_{y\in T} \sum_{x\in T} |H_F(x,y)| \, (1+d(x,y))^{\alpha} \, m(x) \lesssim_{\alpha,\varepsilon} \|\chi F\|_{L^2_{\alpha+3/2+\varepsilon}} .
\]
On the other hand, by Sobolev's embedding, the $F_n$ converge uniformly to $F$ on $[0,2]$, thus the operators $F_n(\opL)$ converge to $F(\opL)$ in the $L^2$ operator norm; this implies the pointwise convergence of the corresponding convolution kernels, so the limit $H_F$ of the kernels $K_{F_n(\opL_{\Tq})}$ must be $K_{F(\opL_{\Tq})}$, and \eqref{f: apriori} follows.

Take now any increasing weight $w : \NN \to [0,\infty)$ satisfying $w(n) \lesssim_w (1+n)^\alpha$. From \eqref{f: apriori} we deduce in particular that
\begin{equation}\label{f: wapriori}
\sup_{y\in T} \sum_{x\in T} |K_{F(\opL)}(x,y)| \, w(d(x,y)) \,m(x) \lesssim_{w,\alpha,\varepsilon} \|\chi F\|_{L^2_{\alpha+3/2+\varepsilon}}
\end{equation}
for all $F \in L^2_{\alpha+3/2+\varepsilon}(\RR)$. Moreover, arguing as in \eqref{f: apriori2}, from Lemma \ref{l: FDeltaZsenzasupporto} and the Cauchy--Schwarz inequality we also deduce that
\begin{equation}\label{f: waprioriZ}
\sum_{n\in\NN} n \, w(n) \, |\tilde\nabla_{\ZZ}k_{F(\Delta_{\ZZ})}(n)| 
\lesssim_{w,\alpha,\varepsilon} \|\chi F\|_{L^2_{\alpha+3/2+\varepsilon}}
\end{equation}
for all $F \in L^2_{\alpha+3/2+\varepsilon}(\RR)$. In particular, for a given $F \in L^2_{\alpha+3/2+\varepsilon}(\RR)$, if we take as before a sequence $F_n$ of polynomials such that $\chi F_n \to \chi F$ in $L^2_{\alpha+3/2+\varepsilon}(\RR)$, then the kernels $K_{F_n(\opL)}$ and $k_{F_n(\Delta_\ZZ)}$ converge to $K_{F(\opL)}$ and $k_{F(\Delta_\ZZ)}$ in the sense of the weighted Lebesgue norms in the left-hand sides of \eqref{f: wapriori} and \eqref{f: waprioriZ}. We can therefore apply \eqref{f: stimaL1pesata_trans} to the polynomials $F_n$ and then pass to the limit as $n \to \infty$ to deduce the analogous estimate \eqref{f: stimaL1pesata_trans} for $F$.
\end{proof}

We can now prove Theorem \ref{t: multipliers}. 

\begin{proof}[Proof of Theorem \ref{t: multipliers}]
Suppose that $F\in L^2_s(\RR)$ for some $s>3/2$. Then, by Proposition \ref{p: stimaL1Tsenzasupporto},
\[
\sup_{y\in T} \sum_{x\in T} |K_{F(\opL)}(x,y)| \, m(x)
\lesssim_s \|F\|_{L^2_{s}}, 
\]
so $F(\opL)$ is bounded on $L^1(m)$ and 
\[ 
\|F(\opL)\|_{L^1 \to L^1} \lesssim_s \|F\|_{L^2_{s}}. 
\]
By applying the above estimate to the conjugate function $\overline{F}$, we obtain an analogous $L^\infty$ bound for $F(\opL)$, which is the adjoint of $\overline{F}(\opL)$. By interpolation it follows that $F(\opL)$ is bounded on $L^p(m)$ for $p\in (1,\infty)$. 
\end{proof}

As a consequence, we can complete the characterisation of the $L^p$ spectrum of flow Laplacians.

\begin{corollary}\label{c: spectrum}
Let $(T,m)$ be a flow tree and $p \in [1,\infty]$. The $L^p$ spectrum of $\opL$ is the interval $[0,2]$.
\end{corollary}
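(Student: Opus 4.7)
The plan is to upgrade Proposition \ref{p: spectrum} by ruling out any non-real points in the $L^p$ spectrum, using the differentiable functional calculus provided by Theorem \ref{t: multipliers}. Proposition \ref{p: spectrum} already gives that the real points of the $L^p$ spectrum are exactly $[0,2]$ (and in particular $[0,2]$ is contained in the spectrum), so it remains only to show that every $\zeta \in \CC \setminus [0,2]$ lies in the resolvent set of $\opL$ on $L^p(m)$ for all $p \in [1,\infty]$.

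Fix such a $\zeta$ and choose $\chi \in C_c^\infty(\RR)$ equal to $1$ on an open neighbourhood of $[0,2]$. Define
\[
F_\zeta(\lambda) \defeq \frac{\chi(\lambda)}{\zeta-\lambda}, \qquad \lambda\in\RR,
\]
which is a compactly supported smooth function, hence lies in $L^2_s(\RR)$ for every $s>0$. By Theorem \ref{t: multipliers}, $F_\zeta(\opL)$ is a bounded operator on $L^p(m)$ for every $p\in[1,\infty]$. Since $\opL \in \Bdd_\fin(m)$, the operator $\zeta I-\opL$ is likewise bounded on $L^p(m)$ for every $p$. The proposed inverse is $F_\zeta(\opL)$.

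To check the two operator identities $(\zeta I-\opL)F_\zeta(\opL)=I=F_\zeta(\opL)(\zeta I-\opL)$ on $L^p(m)$, I would first verify them on $c_{00}(T)$. This is where the $L^2$ Borel functional calculus enters: on $L^2(m)$ we have $(\zeta I-\opL)F_\zeta(\opL)=\chi(\opL)$, and Proposition \ref{p: spectrum} identifies the $L^2$ spectrum of $\opL$ as $[0,2]$, on which $\chi\equiv 1$; hence $\chi(\opL)=I$ on $L^2(m)$. Restricting to $c_{00}(T)\subseteq L^2(m)$ gives the pointwise identities on $c_{00}(T)$, and the same for the opposite composition. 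For $p\in[1,\infty)$ the density of $c_{00}(T)$ in $L^p(m)$, combined with the $L^p$ boundedness of both sides, upgrades these to genuine operator identities on $L^p(m)$; thus $\zeta I-\opL$ is boundedly invertible on $L^p(m)$ with inverse $F_\zeta(\opL)$.

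Finally, the case $p=\infty$ follows by duality. The adjoint of $\zeta I-\opL$ acting on $L^\infty(m)$ is $\bar\zeta I-\opL$ acting on $L^1(m)$ (self-adjointness of $\opL$ on $L^2$ makes this bookkeeping routine), and by the previous paragraph applied to $\bar\zeta\in\CC\setminus[0,2]$ this adjoint is boundedly invertible; hence $\zeta I-\opL$ is invertible on $L^\infty(m)$ too. I do not anticipate any serious obstacle: the only delicate point is to keep track of the fact that $F_\zeta(\opL)$ produced by Theorem \ref{t: multipliers} agrees on $L^p\cap L^2$ with the one produced by the $L^2$ Borel functional calculus, which is what allows the $L^2$ identity $\chi(\opL)=I$ to transfer first to $c_{00}(T)$ and then, by density and duality, to all $L^p(m)$.
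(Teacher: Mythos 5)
Your proposal is correct and takes essentially the same route as the paper: both proofs apply Theorem \ref{t: multipliers} to (a cutoff of) the resolvent function $(\zeta-\cdot)^{-1}$ to produce a bounded inverse of $\zeta I-\opL$ on $L^p(m)$, and then invoke Proposition \ref{p: spectrum} to identify the remaining part $[0,2]$ of the spectrum. Your write-up merely spells out in more detail the step the paper leaves implicit, namely that the $L^p$-bounded extension of $F_\zeta(\opL)$ agrees with the $L^2$ Borel-calculus operator on $c_{00}(T)$, so the inverse identities transfer to $L^p$ by density (and to $L^\infty$ by duality).
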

\begin{proof}
In light of Proposition \ref{p: spectrum}, it only remains to prove that the $L^p$ spectrum of $\opL$ is real. On the other hand, if $z \in \CC \setminus \RR$, then the function $F_z(\lambda) = (\lambda-z)^{-1}$ is smooth  and bounded on $\RR$ with all its derivatives, so by Theorem \ref{t: multipliers} the operator $F_z(\opL) = (\opL-z)^{-1}$ is bounded on $L^p(m)$ for all $p \in [1,\infty]$; thus $\CC \setminus \RR$ is in the $L^p$ resolvent of $\opL$, as desired.
\end{proof}

A further consequence of Proposition \ref{p: stimaL1Tsenzasupporto} is the following result, extending the class of compatible operators for a flow submersion to all sufficiently smooth functions $F(\opL)$ of the flow Laplacian; this should be compared with Proposition \ref{p: Intral} above, which only gives this result for analytic functions $F$.

\begin{prop}\label{p: Intral_smooth}
Let $\pi : (T_1,m_1) \to (T_2,m_2)$ be a flow submersion. Denote by $\opL_j$ the flow Laplacian on $(T_j,m_j)$ for $j=1,2$. Let $F \in L^2_{3/2+\varepsilon}(\RR)$ for some $\varepsilon > 0$. Then $F(\opL_1) \in \overline{\Comp_\fin(\pi)}$ and $\pi(F(\opL_1)) = F(\opL_2)$.
\end{prop}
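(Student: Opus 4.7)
The plan is to reduce the statement to its polynomial case, which is already covered by Proposition \ref{p: Intral}, by approximating $F$ in a norm strong enough to give convergence in the Banach algebra $\Bdd(m_1)$. Fix a cutoff $\chi \in C_c^\infty(\RR)$ with $\chi|_{[0,2]} = 1$; then $\chi F$ has compact support and lies in $L^2_{3/2+\varepsilon}(\RR)$, so by the Stone--Weierstrass density argument used inside the proof of Proposition \ref{p: stimaL1Tsenzasupporto}, one can find polynomials $F_n \in \Pol(1)$ with $\chi F_n \to \chi F$ in $L^2_{3/2+\varepsilon}(\RR)$. Since the spectrum of $\opL_j$ is contained in $[0,2]$ (Corollary \ref{c: spectrum}), where $\chi \equiv 1$, the Borel functional calculus gives $F_n(\opL_j) = (\chi F_n)(\opL_j)$ and $F(\opL_j) = (\chi F)(\opL_j)$ for $j=1,2$.

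The key step is to upgrade Sobolev convergence of the symbols to $\Bdd(m_j)$-convergence of the operators. Applying Proposition \ref{p: stimaL1Tsenzasupporto} with $\alpha=0$ to the multiplier $\chi(F_n-F)$ yields
\[
\|F_n(\opL_j) - F(\opL_j)\|_{L^1(m_j) \to L^1(m_j)} \lesssim_\varepsilon \|\chi(F_n-F)\|_{L^2_{3/2+\varepsilon}} \xrightarrow[n\to\infty]{} 0.
\]
Since $\opL_j$ is selfadjoint, $F(\opL_j)^* = \overline{F}(\opL_j)$, and the same bound applied to $\overline{\chi(F_n - F)}$ controls $\|F_n(\opL_j) - F(\opL_j)\|_{\infty \to \infty}$. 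Hence $F_n(\opL_j) \to F(\opL_j)$ in $\Bdd(m_j)$ for $j=1,2$.

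For each polynomial $F_n$, the identity \eqref{f: opL_Sigma} together with Lemma \ref{l: chvar} expresses $F_n(\opL_1)$ as $G_n(\Sigma_1,\Sigma_1^*)$ for some $G_n \in \Pol(2)$; Proposition \ref{p: Intral} then gives $F_n(\opL_1) \in \Comp_\fin(\pi)$ with $\pi(F_n(\opL_1)) = F_n(\opL_2)$. Since $\overline{\Comp_\fin(\pi)}$ is closed in $\Bdd(m_1)$ and $F_n(\opL_1) \to F(\opL_1)$ in this norm, we conclude that $F(\opL_1) \in \overline{\Comp_\fin(\pi)}$. Finally, by Proposition \ref{p: Cpi} the homomorphism $\pi \colon \Comp(\pi) \to \Bdd(m_2)$ is continuous of norm $1$, so passing to the limit in $\pi(F_n(\opL_1)) = F_n(\opL_2)$ yields the desired identity $\pi(F(\opL_1)) = F(\opL_2)$.

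The only delicate point in this plan is the passage from Sobolev convergence of $F_n$ to $\Bdd(m_j)$-convergence of $F_n(\opL_j)$; everything else is a soft closure and continuity argument. That passage hinges on the fact that the kernel estimate in Proposition \ref{p: stimaL1Tsenzasupporto} is insensitive to the underlying flow tree, so it applies simultaneously to $(T_1,m_1)$ and $(T_2,m_2)$ with the same constant, which is precisely why the $L^2_{3/2+\varepsilon}$ hypothesis is the natural one for this result.
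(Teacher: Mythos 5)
Your proof is correct and follows essentially the same strategy as the paper: cut off $F$ to a compactly supported function, approximate it by polynomials in the Sobolev norm, use the uniform $L^1$ kernel estimate to upgrade this to convergence $F_n(\opL_j) \to F(\opL_j)$ in $\Bdd(m_j)$, and then pass to the limit in $\Comp_\fin(\pi)$ using Propositions \ref{p: Intral} and \ref{p: Cpi}. The only cosmetic difference is that you invoke Proposition \ref{p: stimaL1Tsenzasupporto} directly (with $\alpha=0$ and its conjugate for the adjoint), while the paper cites Theorem \ref{t: multipliers}, which is itself a one-line consequence of that proposition.
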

\begin{proof}
Let $\chi \in C_c^\infty(\RR)$ be such that $\supp \chi \subseteq [-1,3]$ and $F|_{[0,2]}=1$. Much as in the proof of Proposition \ref{p: stimaL1Tsenzasupporto}, we can find a sequence of polynomials $F_n$ such that $\chi F_n \to \chi F$ in $L^2_{3/2+\varepsilon}(\RR)$. 
 Since the spectrum of $\opL_j$ is $[0,2]$, clearly $F(\opL_j) = (\chi F)(\opL_j)$ and $F_n(\opL_j) = (\chi F_n)(\opL_j)$, for $j=1,2$. 
In particular, from Theorem \ref{t: multipliers} it follows that $F_n(\opL_j) \to F(\opL_j)$ in $\Bdd(m_j)$ as $n \to \infty$. Since $F_n(\opL_1) \in \Comp_\fin(\pi)$ and $\pi(F_n(\opL_1)) = F_n(\opL_2)$ by Proposition \ref{p: Intral}, and moreover $\pi : \Comp(\pi) \to \Bdd(m_2)$ is continuous by Proposition \ref{p: Cpi}, we deduce that $F(\opL_1) \in \overline{\Comp_\fin(\pi)}$ and $\pi(F(\opL_1)) = F(\opL_2)$, as desired.
\end{proof}

Combining Proposition \ref{p: Intral_smooth} with Propositions \ref{p: stimeK2K1} and \ref{p: Lptransf} shows that $L^p$ and weighted kernel estimates for operators of the form $F(\opL)$ with $F$ sufficiently smooth can be transferred via flow submersions. As these estimates are also amenable to perturbative arguments by Corollary \ref{c: conv_hker}\ref{en: c_conv_hker_opL}, we conclude that the ``universal transference'' result of Theorem \ref{t: unif_transf} extends to sufficiently smooth (but not necessarily analytic) functions of the flow Laplacian.

\begin{theorem}\label{t: unif_transf_opL}
Let $(T,m)$ be a flow tree. 
Let $F\in L^2_{3/2+\varepsilon}(\RR)$ for some $\varepsilon > 0$. Then, for all $p \in [1,\infty]$,
\[
\|F(\opL)\|_{L^p(m) \to L^p(m)} \leq \liminf_{q \to \infty} \|F(\Lq)\|_{L^p(\mq) \to L^p(\mq)}.
\]
Moreover, for every weight $w:\NN \times \ZZ\times \ZZ \to [0,\infty)$ which is increasing in the first variable,
\begin{multline*}
\sup_{y \in T} \sum_{x \in T} w(d(x,y),\ell(x),\ell(y)) \, |K_{F(\opL)}(x,y)| \, m(x) \\
\le \liminf_{q \to \infty} \sup_{y \in \Tq} \sum_{x \in \Tq} w(d_{\Tq}(x,y),\ell(x),\ell(y)) \, |K_{F(\Lq)}(x,y)| \, \mq(x).
\end{multline*}
\end{theorem}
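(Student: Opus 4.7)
The plan is to mirror the proof of Theorem \ref{t: unif_transf} for analytic $F$, but replace Proposition \ref{p: Intral} by its smooth extension Proposition \ref{p: Intral_smooth}. The required inputs are: Proposition \ref{p: Intral_smooth} (Sobolev functional calculus of the flow Laplacian is $\pi$-compatible), Propositions \ref{p: Lptransf} and \ref{p: stimeK2K1} (transfer of $L^p$ bounds and weighted kernel bounds through a flow submersion), Proposition \ref{p: comp unif rat} (uniformly rational flow trees are quotients of homogeneous ones), Corollary \ref{c: point approx} (uniformly rational approximation), and Corollary \ref{c: conv_hker}\ref{en: c_conv_hker_opL} (perturbative passage to the limit, valid for any continuous function of $\opL$).

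First I would fix $p \in [1,\infty]$, set $L := \liminf_{q \to \infty} \|F(\Lq)\|_{L^p(\mq) \to L^p(\mq)}$, and pick a sequence of positive integers $q_k \to \infty$ with $\|F(\Lq_k)\|_{L^p(\mq_k) \to L^p(\mq_k)} \to L$. I would then produce, by a mild refinement of Corollary \ref{c: point approx}, an approximating sequence $((T_j,m_j))_j$ of $(T,m)$ in which each $m_j$ is $q_{k(j)}$-uniformly rational with $k(j) \to \infty$. The refinement is routine bookkeeping: in Proposition \ref{p: point approx} one is free to choose the rationality level $q$ to be any integer exceeding the degree bound of the subtree at hand, so one may always force it to lie in the prescribed subsequence $(q_k)$ and, furthermore, to diverge with $j$.

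Next, for each $j$, Proposition \ref{p: comp unif rat} provides a flow submersion $\pi_j : (\TT{q_{k(j)}}, \mq_{k(j)}) \to (T_j, m_j)$, which may be taken level-preserving after an inconsequential shift of the level function. Since $F \in L^2_{3/2+\varepsilon}(\RR)$, Proposition \ref{p: Intral_smooth} gives $F(\Lq_{k(j)}) \in \overline{\Comp_\fin(\pi_j)}$ and $\pi_j(F(\Lq_{k(j)})) = F(\opL_{T_j})$. Proposition \ref{p: Lptransf} then yields
\[
\|F(\opL_{T_j})\|_{L^p(m_j) \to L^p(m_j)} \leq \|F(\Lq_{k(j)})\|_{L^p(\mq_{k(j)}) \to L^p(\mq_{k(j)})},
\]
while Proposition \ref{p: stimeK2K1} gives the corresponding weighted kernel inequality (here the monotonicity of $w$ in its first variable is crucial, exactly as in the proof of Theorem \ref{t: unif_transf}).

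Finally, I would apply Corollary \ref{c: conv_hker}\ref{en: c_conv_hker_opL} to the continuous function $F|_{[0,2]}$, obtaining
\[
\|F(\opL)\|_{L^p(m) \to L^p(m)} \leq \liminf_{j \to \infty} \|F(\opL_{T_j})\|_{L^p(m_j) \to L^p(m_j)}
\]
and the analogous weighted kernel estimate; chaining with the transfer bounds above, and using that $k(j) \to \infty$ with $\|F(\Lq_{k(j)})\| \to L$, gives the theorem. The only non-mechanical step is the refinement of Corollary \ref{c: point approx} to align the rationality levels with the chosen subsequence; everything else is a direct concatenation of results already established, with Proposition \ref{p: Intral_smooth} being the decisive ingredient that upgrades the scheme from analytic $F$ to Sobolev $F$.
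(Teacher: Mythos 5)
Your proof is correct and follows essentially the same route the paper takes: the paper derives Theorem~\ref{t: unif_transf_opL} precisely by combining Proposition~\ref{p: Intral_smooth} with Propositions~\ref{p: stimeK2K1}, \ref{p: Lptransf} and Corollary~\ref{c: conv_hker}\ref{en: c_conv_hker_opL}, just as you do. Your explicit bookkeeping step --- refining Corollary~\ref{c: point approx} so that the rationality levels $q_{k(j)}$ lie in a subsequence realising $\liminf_q \|F(\Lq)\|_{p\to p}$ --- is a detail the paper leaves implicit, and it is indeed needed to obtain the $\liminf$ rather than a mere $\limsup$ or $\sup$, so it is a welcome clarification rather than an overcomplication.
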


Finally, we establish the heat kernel bounds for flow Laplacians on arbitrary flow trees stated in the Introduction.

\begin{proof}[Proof of Theorem \ref{t: est on T}]
In the case $(T,m) = (\Tq,\mq)$ and $t \geq 1$, the desired estimates are established in \cite[Theorem 1.1 and Proposition 2.8]{MSV}; as these estimates hold uniformly in $q$, and moreover the operators $\exp(-t\opL)$, $\nabla \exp(-t\opL)$, $\exp(-t\opL) \nabla^*$, $\nabla \exp(-t\opL) \nabla^*$ are of the form $F(\Sigma,\Sigma^*)$ for appropriate choices of $F \in \Hol(1,2)$ (see Section \ref{ss: noncommutative}), we can directly apply the universal transference result of Theorem \ref{t: unif_transf} to deduce the analogous estimates on any flow tree $(T,m)$.

It remains to prove the bounds \eqref{f: levelestimates} when $0<t <1$. These follow directly from the fact that $\nabla e^{-t \opL}$ is bounded on $L^1(m)$ and $L^\infty(m)$, uniformly in $t$, as both $\nabla$ and $e^{-t \opL}$ are. More precisely, 
\begin{align*}
\sup_{x \in T} \sum_{z \in T \colon \ell(z)=l} |K_{\nabla e^{-t\opL}}(x,z)| \, m(z) &\le  \|\nabla e^{-t\opL} \|_{L^\infty \to L^\infty}\lesssim 1, \\ 
\sup_{x \in T} \sum_{z \in T \colon \ell(z)=l} |K_{\nabla e^{-t\opL}}(z,x)| \, m(z) &\le  \|\nabla e^{-t\opL} \|_{L^1\to L^1}\lesssim 1,
\end{align*}
as required.
\end{proof}

\section{Singular integrals}

In this final section we discuss boundedness results for spectral multipliers and Riesz transforms on flow trees, thus completing the proofs of Theorems \ref{t: Riesz} and \ref{t: MHmultipliers}. The universal transference results of the previous sections, as well as the heat kernel estimates and the $L^1$ bounds for functions of the flow Laplacian, are here combined with an appropriate singular integral theory adapted to the nondoubling setting of flow trees.

\subsection{Calder\'on--Zygmund theory on locally doubling flow trees}\label{ss: CZ}

A basic tool that we shall use to establish boundedness properties of singular integrals on a locally doubling flow tree is the following result, which is based on the Calder\'on--Zygmund and Hardy space theory developed in \cite{LSTV} on locally doubling flow trees, on the basis of the Calder\'on--Zygmund theory for certain nondoubling spaces in \cite{hs}. Specifically, the statement below can be obtained from \cite[Theorem 1.2]{hs} and \cite[Theorem 5.8]{LSTV}; in the case of a homogeneous tree, the statement can be found in \cite[Proposition 3.1]{MSV}. We refer to \cite{ATV2,ATV1,LSTV,San} for the definitions of the atomic Hardy space $H^1(m)$ and the dual space $BMO(m)$ on a flow tree $(T,m)$ used throughout.

\begin{theorem}\label{t: lemmahebisch}
Let $(T,m)$ be a locally bounded flow tree.
Let $\cO$ be a bounded operator on $L^2(m)$, whose integral kernel decomposes as
\[
K_{\cO}(x,y) = \sum_{n \in \ZZ} K_n(x,y) \qquad \forall x,y \in T,
\]
in the sense of pointwise convergence. Assume that there exist constants $C>0$, $c \in (0,1)$ and $a>0$ such that
\begin{align*}
    \sup_{y \in T} \sum_{x \in T} |K_{n}(x,y)| \, (1+c^{n}d(x,y))^a \, m(x) &\le C , \\ 
    \sup_{y \in T} \sum_{x \in T} |\nabla_y K_{n}(x,y) | \, m(x) &\le C\, c^{n} . 
\end{align*}
Then, the operator $\cO$ is of weak type $(1,1)$, bounded on $L^p(m)$ for every $p\in (1,2]$, and bounded from $H^1(m)$ to $L^1(m)$.
\end{theorem}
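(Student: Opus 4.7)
The plan is to reduce the statement to an application of the abstract Calder\'on--Zygmund-type theorem for singular integrals on locally doubling flow trees developed in \cite[Theorem 5.8]{LSTV}, which in turn is modelled on the Hebisch--Steger machinery of \cite[Theorem 1.2]{hs}. That result says that an $L^2$-bounded operator whose integral kernel satisfies a suitable integral H\"ormander-type condition relative to the family of admissible sets of \cite{LSTV} is automatically of weak type $(1,1)$, bounded on $L^p(m)$ for every $p \in (1,2]$, and bounded from $H^1(m)$ into $L^1(m)$. Since the $L^2$ boundedness of $\cO$ is part of the hypotheses, the task reduces to verifying the H\"ormander condition: namely, that for every admissible set $R$ of radius $r = r(R)$ and all $y,y' \in R$,
\[
\sum_{x \notin R^*} |K_\cO(x,y)-K_\cO(x,y')|\,m(x) \lesssim 1,
\]
with an implicit constant independent of $R,y,y'$, where $R^*$ is a fixed enlargement of $R$ chosen so that $x \notin R^*$ forces $d(x,z) \gtrsim r$ for all $z \in R$.

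To this end, I would use the decomposition $K_\cO = \sum_{n \in \ZZ} K_n$ and split the sum in $n$ according to whether the intrinsic scale $c^{-n}$ of $K_n$ is larger or smaller than $r$. In the \emph{large-scale} regime $c^{-n} \gtrsim r$, I would telescope $K_n(x,y)-K_n(x,y')$ along the geodesic from $y$ to $y'$ (whose length is $d(y,y') \lesssim r$) and apply the gradient hypothesis to obtain
\[
\sum_x |K_n(x,y)-K_n(x,y')|\,m(x) \lesssim r\,c^n;
\]
summing $r\,c^n$ over $\{n : c^{-n} \gtrsim r\}$ gives a geometric series whose largest term is $\lesssim 1$, hence total $O(1)$. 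In the \emph{small-scale} regime $c^{-n} \lesssim r$, the two terms in the difference are controlled separately: since $x \notin R^*$ forces $d(x,y) \gtrsim r$ and $d(x,y') \gtrsim r$, the weighted $L^1$ hypothesis upgrades to
\[
\sum_{x \notin R^*} |K_n(x,y)|\,m(x) \lesssim (1+c^n r)^{-a} \lesssim (c^n r)^{-a},
\]
and analogously with $y'$ in place of $y$; summing over $\{n : c^{-n} \lesssim r\}$ yields another convergent geometric series with ratio $c^a \in (0,1)$, completing the verification of the H\"ormander condition.

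The principal technical obstacle will be interfacing with the intrinsic geometry of the admissible sets of \cite{LSTV} — which play the role of Calder\'on--Zygmund cubes in this nondoubling setting — and matching their scale to the parameter $c^{-n}$ of the pieces $K_n$; in particular, one needs to select a suitable dilate $R^*$ and verify the uniform lower bound $d(x,y) \gtrsim r$ for $y \in R$ and $x \notin R^*$. These are, however, by-now standard geometric features of the Hebisch--Steger/\cite{LSTV} framework, and once they are at hand the two regimes above combine cleanly to give the required H\"ormander condition, whence the theorem follows as a direct application of \cite[Theorem 5.8]{LSTV}.
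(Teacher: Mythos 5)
Your plan is essentially the same as the paper's: the paper proves this result simply by citing \cite[Theorem 1.2]{hs} and \cite[Theorem 5.8]{LSTV} (and \cite[Proposition 3.1]{MSV} for the homogeneous case), and your proposal spells out how those references are to be invoked, by verifying the integral H\"ormander condition that \cite[Theorem 5.8]{LSTV} requires. The verification you sketch is correct and standard: in the large-scale regime $c^{-n}\gtrsim r$, telescoping $K_n(x,\cdot)$ along the $\le d(y,y')\lesssim r$ steps of the geodesic from $y$ to $y'$ and using the $\nabla_y$-estimate gives $\sum_x|K_n(x,y)-K_n(x,y')|\,m(x)\lesssim r\,c^n$, which sums geometrically to $O(1)$ over $\{n: c^n\lesssim 1/r\}$; in the small-scale regime $c^{-n}\lesssim r$, the containment $x\notin R^*\Rightarrow d(x,y)\gtrsim r$ converts the weighted $L^1$ bound into $\sum_{x\notin R^*}|K_n(x,y)|\,m(x)\lesssim (c^n r)^{-a}$, whose sum over $\{n: c^n r\gtrsim 1\}$ is a geometric series with ratio $c^a<1$. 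The geometric facts you flag (the choice of dilate $R^*$ and the lower bound on $d(x,y)$ for $y\in R$, $x\notin R^*$) are indeed built into the construction of admissible trapezoids in \cite{LSTV} — this paper recalls them in Section 6 (each admissible $R=R_{h',h'',I}(x)$ has $\diam R\lesssim h'$ and $R^*=\{y: d(y,R)<h'\}$), so your reduction is complete. One minor point: the statement says ``locally bounded'' but, given the hypotheses under which \cite[Theorem 5.8]{LSTV} applies and the surrounding text, this should read ``locally doubling''.
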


The above result is not enough for implementing our transference strategy and obtaining $L^p$ bounds for singular integrals on arbitrary flow trees, beyond the locally doubling case. Indeed, by following the proofs in \cite{hs,LSTV} in the case $(T,m) = (\Tq,\mq)$, one would deduce $L^p$ bounds which may depend on $q$ (for example, several $q$-dependent bounds appear in the construction of Calder\'on--Zygmund decompositions in the proof of \cite[Theorem 3.1]{hs}, and similar issues appear in \cite{LSTV}). Nevertheless, with some adjustments of the arguments in \cite{hs,LSTV} we can establish the following sharper version of Theorem \ref{t: lemmahebisch} in the case of homogeneous trees.

\begin{theorem}\label{t: lemmahebisch_sharp}
Let $q \in \NN_+$, $q \geq 2$. Let $\cO$ be a bounded operator on $L^2(\mq)$, whose integral kernel decomposes as
\[
K_{\cO}(x,y) = \sum_{n \in \ZZ} K_n(x,y) \qquad \forall x,y \in \Tq,
\]
in the sense of pointwise convergence. Assume that there exist constants $C>0$, $c \in (0,1)$ and $a>0$ such that
\begin{equation}\label{f: lemmahebisch_sharp_kernelbds}
\begin{aligned}
    \sup_{y \in \Tq} \sum_{x \in \Tq} |K_{n}(x,y)| \, (1+c^{n}d(x,y))^a \, \mq(x) &\le C , \\ 
    \sup_{y \in \Tq} \sum_{x \in \Tq} |\nabla_y K_{n}(x,y) | \, \mq(x) &\le C\, c^{n} .
\end{aligned}
\end{equation}
Then, the operator $\cO$ is of weak type $(1,1)$ and bounded on $L^p(\mq)$ for every $p\in (1,2]$, where the bounds on $\cO$ only depend on $C,a,c$ and $\|\cO\|_{L^2(\mq) \to L^2(\mq)}$, but not on $q$.
\end{theorem}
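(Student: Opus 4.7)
The plan is to follow the Calderón–Zygmund argument that underlies Theorem \ref{t: lemmahebisch}, namely the one of Hebisch–Steger \cite{hs} specialised to flow trees in \cite{LSTV} (and worked out explicitly for homogeneous trees in \cite{MSV}), while tracking all auxiliary constants to ensure their $q$-independence. The basic cells of the decomposition are the admissible trapezoids $R_{x,h} = \{z \in \Tq \colon z \leq x, \ \ell(z) \geq \ell(x) - h\}$. By the flow property, $\mq(R_{x,h}) = (h+1)\mq(x)$ \emph{exactly}, and the downward-stretched trapezoid $R_{x,Kh}$ satisfies $\mq(R_{x,Kh}) \leq (K+1)\mq(R_{x,h})$ for any $K \geq 1$; both identities are completely $q$-independent and provide a $q$-uniform substitute for the doubling property. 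A stopping-time argument on admissible trapezoids, as in \cite{hs,LSTV}, then yields, for each $f \in L^1(\mq) \cap L^2(\mq)$ and $\alpha > 0$, a disjoint family $\{R_i = R_{x_i,h_i}\}_i$ with $\alpha \lesssim \mq(R_i)^{-1}\int_{R_i}|f|\,d\mq \lesssim \alpha$, $|f| \leq \alpha$ $\mq$-a.e.\ off $\bigcup_i R_i$, $\sum_i \mq(R_i) \lesssim \alpha^{-1}\|f\|_1$, and an associated ``enlargement'' $R_i^*$ of $R_i$ (of the Hebisch–Steger type, obtained by downward stretching in the flow sense) satisfying $\sum_i \mq(R_i^*) \lesssim \alpha^{-1}\|f\|_1$, all constants $q$-independent by the preceding measure identities. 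The corresponding splitting $f = g + \sum_i b_i$, with $g$ the average-based good part and $b_i = (f - \langle f\rangle_{R_i})\mathbf{1}_{R_i}$, gives $\|g\|_\infty \lesssim \alpha$, $\|g\|_2^2 \lesssim \alpha\|f\|_1$, $\int b_i\,d\mq = 0$, $\supp b_i \subseteq R_i$, and $\sum_i\|b_i\|_1 \lesssim \|f\|_1$.

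The weak type $(1,1)$ estimate then follows from the decomposition $\mq(\{|\cO f| > \alpha\}) \leq \mq(\{|\cO g| > \alpha/2\}) + \mq(\bigcup_i R_i^*) + 2\alpha^{-1}\sum_i \|\cO b_i\|_{L^1(\Tq \setminus R_i^*)}$. Chebyshev and $L^2$-boundedness handle the first term, giving $\lesssim \alpha^{-1}\|\cO\|_{L^2\to L^2}^2\|f\|_1$, and the second is already controlled by construction. For the third, one decomposes $\cO b_i = \sum_n T_n b_i$ (with $T_n$ the operator with kernel $K_n$) and splits the sum over $n$ at the threshold $n_i$ with $c^{n_i}h_i \approx 1$. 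For $n \geq n_i$ (smooth kernel at scale $c^{-n} \geq h_i$), the zero-mean property of $b_i$ allows replacing $K_n(x,y)$ by $K_n(x,y) - K_n(x,x_i)$; telescoping this difference along the ancestral path from $y \in R_i$ to $x_i$ (of length $\leq h_i$) and using the Lipschitz hypothesis in \eqref{f: lemmahebisch_sharp_kernelbds} yields $\|T_n b_i\|_1 \lesssim c^n h_i \|b_i\|_1$, which sums over $n \geq n_i$ to $\lesssim \|b_i\|_1$. For $n \leq n_i$ (concentrated kernel at scale $c^{-n} \leq h_i$), the weighted $L^1$ hypothesis on $K_n$ combined with the fact that $R_i^*$ excludes a suitable $h_i$-neighbourhood of $R_i$ in the flow sense gives $\|T_n b_i\|_{L^1(\Tq\setminus R_i^*)} \lesssim (c^n h_i)^{-a}\|b_i\|_1$, which also sums to $\lesssim \|b_i\|_1$. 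Summing over $i$ produces $\sum_i \|\cO b_i\|_{L^1(\Tq\setminus R_i^*)} \lesssim \|f\|_1$, and combining the three contributions yields the weak type $(1,1)$ bound with a constant depending only on $C, a, c$ and $\|\cO\|_{L^2 \to L^2}$. Marcinkiewicz interpolation with the $L^2$ bound then gives the $L^p$ bounds for $p \in (1,2]$.

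The main obstacle is the proper execution of the Calderón–Zygmund decomposition and the choice of the enlargements $R_i^*$ in a way that yields $q$-independent constants, in the face of the non-doubling character of $\mq$ on $\Tq$. This is delicate because a naive metric enlargement (e.g., an $h_i$-neighbourhood of $R_i$) has measure blowing up like $q^{h_i}\mq(x_i)$ due to the exponential growth of $\mq$ upward, and is therefore unusable. The resolution, already implicit in the Hebisch–Steger framework \cite{hs} and its flow-tree version \cite{LSTV}, is to use an enlargement that stretches only in the downward direction and to verify that the kernel estimates in \eqref{f: lemmahebisch_sharp_kernelbds}, combined with the flow-adapted Hörmander-type condition obtained from the telescoping argument, suffice to control $\|\cO b_i\|_{L^1(\Tq\setminus R_i^*)}$ for this particular choice of $R_i^*$; all these manipulations involve only the intrinsic measure identities of trapezoids and are therefore $q$-uniform. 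Once this bookkeeping is carried out, the kernel hypotheses \eqref{f: lemmahebisch_sharp_kernelbds} feed directly into the weak-type analysis without requiring any further doubling assumption.
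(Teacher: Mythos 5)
The overall Calder\'on--Zygmund strategy you describe (stopping-time decomposition on flow-adapted cells, good/bad splitting, treating $\cO b_i$ via a telescoping H\"ormander estimate for large scales and the weighted bound for small scales) is indeed the right framework, and it is the one the paper uses. However, you have misidentified where the $q$-dependence actually lives, and consequently the key step of your argument is unjustified.

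You claim that the identities $\mq(R_{x,h}) = (h+1)\mq(x)$ and $\mq(R_{x,Kh}) \lesssim K\,\mq(R_{x,h})$ provide ``a $q$-uniform substitute for the doubling property'' and that the stopping-time argument of \cite{hs,LSTV} therefore yields a decomposition with $q$-independent constants. This is not correct. The problematic quantity is not the ratio between a cell and its downward stretch, but the ratio between a cell and one of its \emph{dyadic children} when the cell is split \emph{vertically}: a cone (or trapezoid) rooted at $x$ gets split into cones rooted at the $q$ successors of $x$, and then $\mq(x)/\mq(y) = q$ for $y \in \succ(x)$, so the parent-to-child measure ratio is of order $q$. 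In a stopping-time argument this ratio is exactly what controls the upper bound $\mq(R_i)^{-1}\int_{R_i}|f|\,d\mq \lesssim \alpha$ (and hence $\|g\|_\infty \lesssim \alpha$), since one compares the stopping cell with its parent, where the average is still $\leq \alpha$. With the construction of \cite{LSTV} taken verbatim, this bound reads $\lesssim q\alpha$, not $\lesssim \alpha$, and the $q$-dependence propagates through the entire weak $(1,1)$ estimate. The issue you do discuss --- that metric enlargements of cells have exponentially large measure --- is real but was already resolved in \cite{hs,LSTV} via downward stretching; it is not the obstacle to $q$-uniformity.

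What is actually needed, and what the paper's proof supplies, is a refinement of the dyadic structure so that every admissible cell has a bounded number of children (at most two) and a $q$-uniformly bounded parent-to-child measure ratio. Concretely, the paper enlarges the collection of admissible trapezoids $R_{h',h'',I}(x)$ by allowing the index set $I$ to range over a dyadic family of subintervals of $\{0,\dots,q-1\}$ (with respect to a fixed enumerator of $\Tq$); vertical splitting is then performed by bisecting $I$ rather than splitting into all $q$ successors at once. This introduces $O(\log q)$ intermediate generations between a trapezoid of the old type and its old-type children, and it is this refinement that makes the constants in the Calder\'on--Zygmund decomposition (and hence the weak $(1,1)$ bound obtained by feeding the kernel hypotheses \eqref{f: lemmahebisch_sharp_kernelbds} into the \cite{hs} machinery) independent of $q$. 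Without such a device your proposal does not close the gap, because the step ``all constants $q$-independent by the preceding measure identities'' is precisely where the original \cite{LSTV} construction fails to be $q$-uniform.
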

\begin{proof}
By a careful inspection of the proofs in \cite{LSTV}, applied to the case of $(\Tq,\mq)$, one realises that the source of $q$-dependence in the Calder\'on--Zygmund decompositions (particularly in the constants in \cite[Theorem 3.6]{LSTV}) lies in the construction of admissible trapezoids and their dyadic splitting in \cite[Sections 3.1 and 3.2]{LSTV}: namely, the maximum number of dyadic children of a trapezoid grows linearly in $q$, and a corresponding growth appears in the maximum ratio between the measures of a trapezoid and one of its children. We now explain how to modify that construction, by expanding the collection of admissible trapezoids, so that the bounds on the number of children and the father-child measure ratio are uniform in $q$.

For any interval $I = \{ x \in \ZZ \colon a \leq x < b \}$ in $\ZZ$ (here $a,b \in \ZZ$ and $b > a$), we shall denote by $I_-$ and $I_+$ its approximate left and right halves, defined by $I_- = \{ x \in \ZZ \colon a \leq x < \lfloor (a+b)/2 \rfloor \}$ and $I_+ = \{ x \in \ZZ \colon \lfloor (a+b)/2 \rfloor \leq x < b\}$. If $I$ is not a singleton, then the sets $I_-$ and $I_+$ are disjoint nonempty intervals in $\ZZ$ whose union is $I$, and we declare them to be the \emph{children} of $I$; instead, if $I$ is a singleton, then we declare $I$ to be the only child of $I$.

Let $\Int_q = \{0,\dots,q-1\}$. We define $\Dy_q$ to be the smallest collection of subintervals of $\Int_q$ including $\Int_q$ and all its descendants (children, children of children, and so on). 
Fix an enumerator $\ord : \Tq \to \Int_q$ of $\Tq$, and, for any $x \in \Tq$ and $j \in \Int_q$, let $\succ_j(x)$ be the only successor of $x$ with $\ord(\succ_j(x)) = j$. 
For any $x \in \Tq$, any $h',h'' \in \Npos$ with $h''> h'$, and any $I \in \Dy_q$, we define
\[
R_{h',h'',I}(x) = \{ y \in \Tq \colon h' \leq d(x,y) < h'' \} \cap \bigcup_{ j \in I } \Delta_{\succ_j(x)}.
\]
Notice that, when $I = \Int_q$, we recover the sets $R_{h'}^{h''}(x)$ defined in \cite[Section 3.1]{LSTV}; on the other hand, if $I = \{j\}$ is a singleton, then $R_{1,2,I}(x) = \{\succ_j(x)\}$ is a singleton too. We declare $R_{h',h'',I}(x)$ to be an \emph{admissible trapezoid} if either $I = \Int_q$ and $2 \leq h''/h' \leq 12$, or if $2 \leq h''/h' < 4$ and $I \in \Dy_q$ is arbitrary. It is readily checked that the collection of admissible trapezoids defined here is finer than that constructed in \cite[Section 3.1]{LSTV} with parameter $\beta = 12$.

Much as in \cite[Section 3.2]{LSTV}, we now describe the dyadic children of an admissible trapezoid $R_{h',h'',I}(x)$. We distinguish a few cases:
\begin{itemize}
\item if $4 \leq h''/h' \leq 12$ (so necessarily $I = \Int_q$), then we cut the trapezoid horizontally and declare $R_{h',2h',I}(x)$ and $R_{2h',h'',I}(x)$ to be the children of $R_{h',h'',I}(x)$;
\item if $2 \leq h''/h' < 4$ and $I \in \Dy_q$ is not a singleton, then we cut the trapezoid vertically and declare $R_{h',h'',I_-}(x)$ and $R_{h',h'',I_+}(x)$ to be the children of $R_{h',h'',I}(x)$;
\item if $2 \leq h''/h' < 4$, $I = \{j\}$ is a singleton and $h_1 \geq 2$, then 
$R_{h',h'',I}(x) = R_{h'-1,h''-1,\Int_q}(\succ_j(x))$
and we reduce to the previous cases;
\item if $I = \{j\}$ is a singleton, $h_1 = 1$ and $h_2 = 3$, then we cut the trapezoid horizontally and declare $R_{1,2,I}(x)$ and $R_{2,3,I}(x) = R_{1,2,\Int_q}(\succ_j(x))$ to be the children of $R_{1,3,I}(x)$;
\item if $I = \{j\}$ is a singleton, $h_1 = 1$ and $h_2 = 2$, then $R_{1,2,I}(x)$ is a singleton and is the only child of itself.
\end{itemize}

A comparison of the above construction with that in \cite[Section 3.2]{LSTV} shows that here we have simply introduced a few ``intermediate generations'' between the original admissible trapezoids: specifically, when making vertical cuts, in place of immediately splitting a trapezoid $R_{h',h'',\Int_q}(x)$ into the $q$ trapezoids $R_{h'-1,h''-1,\Int_q}(y)$ with $y \in \succ(x)$ as in \cite{LSTV}, here we introduce as intermediate steps the trapezoids $R_{h',h'',I}(x)$ where $I \in \Dy_q$. The advantage of the present construction is that any admissible trapezoid has now at most two dyadic children, and moreover the ratio of the measures between an admissible trapezoid and a child is bounded uniformly in $q$. In addition, if $R = R_{h',h'',I}(x)$ is an admissible trapezoid, then clearly $\diam(R) \lesssim h'$ with a $q$-independent implicit constant; furthermore, if we set $R^* = \{ y \in \Tq \colon d(y,R) < h'\}$, then $R^* \subseteq R_{1,h'+h''-1,I}(x)$, and the ratio of the measures of $R^*$ and $R$ is also $q$-uniformly bounded.

Thanks to these $q$-uniform bounds in the construction of admissible trapezoids, we can now follow the arguments in \cite[Sections 3.3 and 3.4]{LSTV} to construct Calder\'on--Zygmund decompositions of functions in $L^1(\mq)$ with $q$-independent bounds. By combining this with the arguments in \cite[Section 1]{hs}, we finally deduce, for any operator $\cO$ satisfying the assumptions of Theorem \ref{t: lemmahebisch_sharp}, a weak type $(1,1)$ bound that may depend on $\|\cO\|_{2 \to 2}$ and the constants in \eqref{f: lemmahebisch_sharp_kernelbds}, but not otherwise on $q$; the same $q$-independence is then shared by the corresponding $L^p$ bounds for $p \in (1,2)$ derived from the Marcinkiewicz Interpolation Theorem.
\end{proof}

\subsection{The Riesz transform on a flow tree}

Let $(T,m)$ be a flow tree. We denote by $\Rz$ the Riesz transform on $T$, formally defined as the operator $\Rz\defeq\nabla \opL^{-1/2}$, where $\opL$ is the flow Laplacian associated to $m$. From the definition of the flow Laplacian in Section \ref{ss: spectrum} it is clear that
\begin{equation}\label{f: Rz_L2}
\|\opL^{1/2} f \|_{L^2(m)}^2 = \langle \opL f, f \rangle = \frac{1}{2} \|\nabla f\|_{L^2(m)}^2 \qquad \forall f\in L^2(m),
\end{equation}
whence it follows that $\Rz$ is bounded on $L^2(m)$ with norm $\sqrt{2}$. The purpose of this section is studying $L^p$ boundedness properties of $\Rz$ for $p \neq 2$.

\begin{lemma}\label{l: Riesz_kernel}
Let $(T,m)$ be a flow tree.  The integral kernel of $\Rz$ is 
\begin{equation}\label{f:KR}
K_{\Rz}(x,y)=\frac{1}{\sqrt{\pi}}\int_0^\infty K_{\nabla e^{-t\opL}}(x,y) \frac{dt}{\sqrt{t}} \qquad \forall x,y\in T,
\end{equation}
where the integral converges absolutely for any $x,y \in T$.
\end{lemma}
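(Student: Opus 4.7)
My plan is to derive the formula \eqref{f:KR} via the scalar subordination identity
\[
\lambda^{-1/2}=\frac{1}{\sqrt{\pi}}\int_0^\infty e^{-t\lambda}\,\frac{dt}{\sqrt{t}}\qquad(\lambda>0),
\]
applied to $\opL$ through the spectral theorem. By Proposition~\ref{p: spectrum}, $\opL$ is selfadjoint on $L^2(m)$ with spectrum $[0,2]$; although $0$ lies in this spectrum, making $\opL^{-1/2}$ unbounded and only densely defined, the composition $\Rz=\nabla\opL^{-1/2}$ extends to a bounded operator on $L^2(m)$ with $\Rz^*\Rz=2I$, as a consequence of \eqref{f: Rz_L2}. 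The Borel functional calculus for $\opL$ then gives $\opL^{-1/2}f=\pi^{-1/2}\int_0^\infty e^{-t\opL}f\,dt/\sqrt{t}$ as a Bochner integral in $L^2(m)$ for $f\in\mathrm{Dom}(\opL^{-1/2})$, and composing on the left with $\nabla$ (bounded on $L^2(m)$, with $\|\nabla\|_{2\to2}\le 2$) produces $\Rz f=\pi^{-1/2}\int_0^\infty\nabla e^{-t\opL}f\,dt/\sqrt{t}$.

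Next I apply this identity to $f=\delta_y/m(y)$ and evaluate at $x$. To justify $\delta_y/m(y)\in\mathrm{Dom}(\opL^{-1/2})$, I would use the structural information on the spectral measure of $\opL$ issuing from the Abel-transform analysis of Section~\ref{s: homogeneous}: the connection of $\opL$ with the discrete Laplacian on $\ZZ$ via Proposition~\ref{p: KF(L)} shows that the spectral density of $\opL$ at such concentrated vectors vanishes to order $\lambda^{1/2}$ near $\lambda=0$, so that $\int_0^2\lambda^{-1}\,d\mu_{\delta_y/m(y)}(\lambda)<\infty$. Since the evaluation functional $g\mapsto g(x)$ on $L^2(m)$ is continuous with norm $m(x)^{-1/2}$, it commutes with the Bochner integral, producing the pointwise identity \eqref{f:KR} as an improper integral.

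The technical obstacle, and the main step, is upgrading this to absolute convergence. I split the integral at $t=1$. For $t\in(0,1]$, uniform $L^2$-boundedness of $\nabla e^{-t\opL}$ yields the crude pointwise bound $|K_{\nabla e^{-t\opL}}(x,y)|\le 2/\sqrt{m(x)m(y)}$, which is integrable against $t^{-1/2}\,dt$ on this interval. For $t\ge 1$, the second bound of Theorem~\ref{t: est on T} gives $|K_{\nabla e^{-t\opL}}(x,y)|\le c_\varepsilon\,e^{-\varepsilon d(x,y)/\sqrt{t}}/(m(x)\sqrt{t})$; after multiplying by $t^{-1/2}$, this estimate is only marginally non-integrable at infinity, so an extra factor of $t^{-1}$ of decay must be extracted. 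This is the hardest part: I would obtain it by refining the analysis of the heat kernel, either by combining the first two estimates of Theorem~\ref{t: est on T} via the semigroup factorisation $\nabla e^{-t\opL}=\nabla e^{-\opL}\cdot e^{-(t-1)\opL}$ for $t\ge 2$, or by invoking the sharper Gaussian-type pointwise estimates for homogeneous trees in \cite{MSV} and transferring them to the general flow tree via Theorem~\ref{t: unif_transf_opL}. Either route yields $|K_{\nabla e^{-t\opL}}(x,y)|=O(t^{-3/2})$ as $t\to\infty$ (with an implicit constant depending on the fixed pair $x,y$), which makes the tail integral absolutely convergent and completes the proof.
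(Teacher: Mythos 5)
There is a genuine gap in the first step. You assert $\delta_y/m(y)\in\mathrm{Dom}(\opL^{-1/2})$, justifying it by a claim that "the spectral density of $\opL$ at such concentrated vectors vanishes to order $\lambda^{1/2}$ near $\lambda=0$." This is false already on the flow tree $\TT{1}=\ZZ$: the spectral measure of $\Delta_\ZZ$ at $\delta_0$ has density $\big(\pi\sqrt{2\lambda-\lambda^2}\,\big)^{-1}$, which blows up like $\lambda^{-1/2}$ near $0$, so $\int_0^2\lambda^{-1}\,d\mu_{\delta_0}(\lambda)=\infty$ and $\delta_0\notin\mathrm{Dom}(\Delta_\ZZ^{-1/2})$. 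Consequently you cannot apply the Bochner subordination formula for $\opL^{-1/2}$ to $\delta_y/m(y)$ and then compose with $\nabla$; the boundedness of $\Rz=\nabla\opL^{-1/2}$ on $L^2(m)$ comes from cancellation between $\nabla$ and the singularity of $\opL^{-1/2}$, and the composition must be handled as a whole. The paper avoids this precisely by introducing the truncated functions $F_n(s)=\pi^{-1/2}\int_0^n e^{-ts}\,dt/\sqrt t$: these are bounded and increase to $s^{-1/2}$, so $\nabla F_n(\opL)\to\Rz$ in the strong $L^2$ operator topology by \eqref{f: Rz_L2}, and strong convergence gives pointwise convergence of kernels via $|K_{\nabla F_n(\opL)}(x,y)-K_{\Rz}(x,y)|^2 m(x)m(y)\le\|[\nabla F_n(\opL)-\Rz]\chr_{\{y\}}\|_2^2$.

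The second step, controlling $\int_1^\infty$, is also not correctly resolved. Neither of your proposed routes works as stated: the semigroup factorisation $\nabla e^{-t\opL}=\nabla e^{-\opL}\,e^{-(t-1)\opL}$ produces no extra $t$-decay for a fixed pair $(x,y)$ (the $L^1$ bound on $e^{-(t-1)\opL}$ only yields a $t$-independent constant), and Theorem~\ref{t: unif_transf_opL} transfers weighted $L^1$ kernel estimates and $L^p$ norms, not pointwise Gaussian bounds, so there is no mechanism in the paper to transfer the pointwise estimates of \cite{MSV}. Moreover $O(t^{-3/2})$ is more than is needed: the estimate actually required is already contained in \eqref{f: levelestimates} of Theorem~\ref{t: est on T}, which for any fixed $y$ (a single vertex at its level) gives $|K_{\nabla e^{-t\opL}}(x,y)|\le C/(m(y)(1+t))$, whence $\int_0^\infty|K_{\nabla e^{-t\opL}}(x,y)|\,dt/\sqrt t\lesssim m(y)^{-1}\int_0^\infty (1+t)^{-1}t^{-1/2}\,dt<\infty$. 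Combining that absolute convergence with the pointwise kernel convergence from the previous paragraph is exactly how the paper establishes \eqref{f:KR}.
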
 
\begin{proof}
Let $\{F_n\}_{n \in \NN}$ be the sequence of functions defined on $(0,\infty)$ by 
\begin{equation}\label{f: truncated_Riesz}
F_n(s)= \frac{1}{\sqrt{\pi}}\int_0^n e^{-ts} \frac{dt}{\sqrt{t}},\qquad s>0.
\end{equation}
Then, it is clear that $F_n \le F_{n+1}$ for every $n \in \NN$ and $F_n$ converges pointwise on $(0,\infty)$ to the function $F(s)=\frac{1}{\sqrt{s}}$. Moreover, thanks to \eqref{f: Rz_L2}, an application of the Borel functional calculus allows us to conclude that $\nabla F_n(\opL) \to \Rz$ in the strong $L^2$ operator topology (see also \cite[Proposition 4.1]{MaVa}). Hence for every $x,y\in T$,
\[
|K_{\nabla {F_{n}}(\opL)}(x,y)- K_{\Rz}(x,y)|^2 m(x) m(y) \leq \|[\nabla F_n(\opL)-\Rz] \chr_{\{y\}}\|_2^2,
\]
which tends to $0$ as $n\to\infty$. It follows that 
\[
\lim_{n\to \infty}K_{\nabla {F_{n}}(\opL)}(x, y)=K_{\Rz}(x, y),\qquad \forall x,y\in T. 
\] 
On the other hand, by the estimates \eqref{f: levelestimates} we deduce that 
\[
\lim_{n\to \infty}  \frac{1}{\sqrt{\pi}}\int_0^n K_{\nabla e^{-t\opL}}(x,y) \frac{dt}{\sqrt{t}}= \frac{1}{\sqrt{\pi}}\int_0^\infty K_{\nabla e^{-t\opL}}(x,y) \frac{dt}{\sqrt{t}},
\]
with absolute converge of the integrals, and \eqref{f:KR} follows.
\end{proof}

We now proceed with the proof of the $L^p$ boundedness properties for $p \leq 2$ of the Riesz transform $\Rz$ stated in Theorem \ref{t: Riesz}, as well as the corresponding endpoint bounds.

\begin{proof}[Proof of Theorem \ref{t: Riesz}, case $p \leq 2$ and endpoint bounds]
Let $(T,m)$ be a flow tree.
Denote by $\Rz^{(0)}$ and $\Rz^{(\infty)}$ the operators defined by 
\begin{align*}
\Rz^{(0)} &=\frac{1}{\sqrt{\pi}}\int_0^1 \nabla e^{-t\opL} \frac{dt}{\sqrt{t}}, \\ 
\Rz^{(\infty)} &=\frac{1}{\sqrt{\pi}}\int_1^\infty \nabla e^{-t\opL} \frac{dt}{\sqrt{t}},
\end{align*}
so that $\Rz=\Rz^{(0)}+\Rz^{(\infty)}$. 

Since $\nabla e^{-t\opL}$ is bounded on $L^1(m)$ and $L^\infty(m)$ uniformly in $t \in (0,\infty)$, as both $\nabla$ and $e^{-t\opL}$ are, it follows by Minkowski's integral inequality that $\Rz^{(0)}$ is bounded on $L^p(m)$ for every $p \in [1,\infty]$. Hence $\Rz^{(\infty)} = \Rz - \Rz^{(0)}$ is $L^2$-bounded.

Assume now that $m$ is locally doubling. In this case, we can follow the proof given in \cite{MSV} for the case of a homogeneous tree to deduce the required boundedness properties of $\Rz^{(\infty)}$. Namely, we consider the dyadic decomposition
\[
\Rz^{(\infty)} = \sum_{n=0}^\infty \frac{1}{\sqrt{\pi}}\int_{2^n}^{2^{n+1}} \nabla e^{-t\opL} \frac{dt}{\sqrt{t}} \eqdef \sum_{n=0}^\infty \Rz^{(\infty)}_n,
\]
and from the heat kernel bounds of Theorem \ref{t: est on T} we deduce the following kernel estimates for the dyadic pieces:
\begin{equation}\label{f: Rz_dyad_bds}
\begin{aligned}
   \sup_{y \in T} \sum_{x \in T} |K_{\Rz^{(\infty)}_n}(x,y)| \, e^{\varepsilon d(x,y)/2^{n/2}} m(x) &\lesssim 1, \\
    \sup_{y \in T} \sum_{x \in T} |\nabla_y K_{\Rz^{(\infty)}_n}(x,y)| \, e^{\varepsilon d(x,y)/2^{n/2}} m(x) &\lesssim 2^{-n/2}
\end{aligned}
\end{equation}
(see \cite[Lemma 3.5]{MSV}). Thus, from Theorem \ref{t: lemmahebisch} we deduce that $\Rz^{(\infty)}$ is bounded from $H^1(m)$ to $L^1(m)$, is of weak type $(1,1)$ and bounded on $L^p(m)$ for all $p\in (1,2]$.	

It remains to prove the $L^p$ boundedness for $p \in (1,2)$ of $\Rz^{(\infty)}$ on an arbitrary flow tree $(T,m)$. We will achieve this by means of the transference results in the previous section.
Namely, if we set
\[
\Rz^{(N)} = \frac{1}{\sqrt{\pi}}\int_1^{2^N} \nabla e^{-t\opL} \frac{dt}{\sqrt{t}},
\]
then $\Rz^{(N)} \to \Rz^{(\infty)}$ as $N \to \infty$ in the strong operator topology of $L^2(m)$; thus
\[
\left\| \Rz^{(\infty)} \right\|_{L^p(m) \to L^p(m)} \leq \sup_{N \in \Npos}  \left\| \Rz^{(N)} \right\|_{L^p(m) \to L^p(m)}.
\]
On the other hand, 
we can write $\Rz^{(N)} = \nabla G_N(\opL)$, where
\[
G_N(z) = \frac{1}{\sqrt{\pi}}\int_1^{2^N}  e^{-tz} \frac{dt}{\sqrt{t}}
\]
is an entire function; thus, the universal transference result of Theorem \ref{t: unif_transf} applies, and 
\[
\sup_{N \in \Npos} \left\| \Rz^{(N)} \right\|_{L^p(m) \to L^p(m)} 
\leq \sup_{N \in \Npos} \sup_{q} \left\| \Rq^{(N)} \right\|_{L^p(\mq) \to L^p(\mq)},
\]
where $\Rq^{(N)} = \nabla G_N(\opL)$. Thus, we are reduced to proving, for $p \in (1,2]$, an $L^p$ bound for the $\Rq^{(N)}$ which is uniform in both $q$ and $N$.

On the other hand, much as before, we can consider the dyadic decomposition
\[
\Rq^{(N)} =\frac{1}{\sqrt{\pi}}\int_1^{2^N} \nabla_{\Tq} e^{-t\Lq} \frac{dt}{\sqrt{t}} = \sum_{n=0}^{N-1} \frac{1}{\sqrt{\pi}}\int_{2^n}^{2^{n+1}} \nabla_{\Tq} e^{-t\Lq} \frac{dt}{\sqrt{t}},
\]
and for each dyadic piece we have the kernel estimates of \eqref{f: Rz_dyad_bds}, which hold uniformly in $N$ and $q$. Moreover trivially $\|\Rq^{(N)}\|_{L^2(\mq) \to L^2(\mq)} \leq \sqrt{2}$ (cf., e.g., \cite[Proposition 4.1]{MaVa}). So the operators $\Rq^{(N)}$ satisfy the assumptions of Theorem \ref{t: lemmahebisch_sharp} uniformly in $q$ and $N$, whence we deduce their $L^p$ boundedness for $p \in (1,2]$ with the required uniformity.
\end{proof}

To prove the case $p>2$ of Theorem \ref{t: Riesz}
we shall employ a different strategy, which is partly an adaptation of the strategy used in \cite{LMSTV} in the case of homogeneous trees.

We start by showing that the integral kernel of the Riesz transform on a flow tree is related to the corresponding kernel on a quotient. 

\begin{prop}\label{p: rel Ker R}
Let $\pi : (T_1,m_1) \to (T_2,m_2)$ be a flow submersion. Let $\Rz_j$ denote the Riesz transform on $(T_j,m_j)$ for $j=1,2$.
Then, for every $x,y\in T_2$ and $\overline{x} \in \pi^{-1}\{x\}$,
\begin{equation}\label{f: rel Ker R}
\begin{aligned}
K_{\Rz_2}(x,y) &= \frac{1}{m_2(y)} \sum_{z \in \pi^{-1}\{y\}} K_{\Rz_1}(\overline{x},z) \, m_1(z), \\ 
K_{\Rz_2^*}(x,y) &= \frac{1}{m_2(y)} \sum_{z \in \pi^{-1}\{y\}} K_{\Rz_1^*}(\overline{x},z) \, m_1(z).
\end{aligned}
\end{equation}
\end{prop}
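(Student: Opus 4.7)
The plan is to derive \eqref{f: rel Ker R} from the fibre-sum formula \eqref{f: rel ker} of Proposition \ref{p: K2K1} applied to the operator $\nabla_j e^{-t\opL_j}$ at each fixed $t>0$, combined with the time-integral representation \eqref{f:KR} from Lemma \ref{l: Riesz_kernel}, after an application of Fubini--Tonelli to interchange sum over the fibre with integration in $t$.

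For the first identity, I would first verify that $\nabla_1 e^{-t\opL_1} \in \Comp(\pi)$ with $\pi(\nabla_1 e^{-t\opL_1}) = \nabla_2 e^{-t\opL_2}$. Indeed, Proposition \ref{p: Intral} gives $\Sigma_1 \in \Comp_{\fin}(\pi)$ and $\pi(\Sigma_1) = \Sigma_2$, whence $\nabla_1 = I - \Sigma_1 \in \Comp_{\fin}(\pi)$ and $\pi(\nabla_1) = \nabla_2$; moreover, since $\lambda \mapsto e^{-t\lambda}$ is smooth, Proposition \ref{p: Intral_smooth} yields $e^{-t\opL_1} \in \overline{\Comp_\fin(\pi)}$ and $\pi(e^{-t\opL_1}) = e^{-t\opL_2}$. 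The algebra property in Proposition \ref{p: Cpi} then gives the claim. Consequently, formula \eqref{f: rel ker} implies that, for every $t>0$,
\[
K_{\nabla_2 e^{-t\opL_2}}(x,y) = \frac{1}{m_2(y)} \sum_{z \in \pi^{-1}\{y\}} K_{\nabla_1 e^{-t\opL_1}}(\overline{x}, z) \, m_1(z),
\]
with absolute convergence of the sum. Substituting this into \eqref{f:KR} for $K_{\Rz_2}$ and swapping sum with integral would yield the first identity of \eqref{f: rel Ker R}.

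The main obstacle is justifying this Fubini--Tonelli exchange, since the trivial bound $t^{-1/2} \|\nabla_1 e^{-t\opL_1}\|_{L^\infty \to L^\infty}$ from Theorem \ref{t: est on T} is only of order $t^{-1}$ for large $t$ and hence not integrable. The key observation is that, because the submersion $\pi$ shifts levels by a constant $c_\pi$, every vertex $z \in \pi^{-1}\{y\}$ lies at the fixed level $\ell(y) - c_\pi$ in $T_1$. Hence the first bound in \eqref{f: levelestimates} of Theorem \ref{t: est on T} gives
\[
\sum_{z \in \pi^{-1}\{y\}} |K_{\nabla_1 e^{-t\opL_1}}(\overline{x}, z)| \, m_1(z) \leq \frac{C}{1+t}
\]
uniformly in $\overline{x}$ and $y$, and since $\int_0^\infty t^{-1/2}(1+t)^{-1} \, dt < \infty$, Fubini--Tonelli applies and the first identity of \eqref{f: rel Ker R} follows.

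For the second identity, I would use the analogous representation
\[
K_{\Rz_j^*}(x,y) = \frac{1}{\sqrt{\pi}} \int_0^\infty K_{e^{-t\opL_j} \nabla_j^*}(x,y) \, \frac{dt}{\sqrt{t}},
\]
which follows from \eqref{f:KR} combined with $K_{\Rz_j^*}(x,y) = \overline{K_{\Rz_j}(y,x)}$ and the identity $(\nabla_j e^{-t\opL_j})^* = e^{-t\opL_j} \nabla_j^*$. By taking adjoints in the previous step and using Proposition \ref{p: Cpi}, the operator $e^{-t\opL_1}\nabla_1^*$ is $\pi$-compatible with $\pi(e^{-t\opL_1}\nabla_1^*) = e^{-t\opL_2}\nabla_2^*$, so \eqref{f: rel ker} again provides a fibre-sum relation. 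Since $|K_{e^{-t\opL_1}\nabla_1^*}(\overline{x}, z)| = |K_{\nabla_1 e^{-t\opL_1}}(z, \overline{x})|$, the \emph{second} estimate in \eqref{f: levelestimates} now plays the role that the first one did above and yields the bound needed to justify Fubini, after which the argument concludes exactly as in the first part.
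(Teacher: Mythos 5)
Your argument is correct and runs along essentially the same lines as the paper's: establish $\pi$-compatibility of a parametrized variant of the heat-gradient operator (at each fixed $t$ in your case, for the truncated Riesz transforms $\Rz_1^{(N)} = \nabla_1 F_N(\opL_1)$ in the paper), apply the fibre-sum formulas of Proposition \ref{p: K2K1}, and use the level estimates \eqref{f: levelestimates} to justify the interchange of sum and integral (equivalently, the limit $N \to \infty$); for the second identity you pass to $e^{-t\opL_1}\nabla_1^*$ via the $*$-algebra structure of $\Comp(\pi)$, while the paper uses \eqref{f: rel ker2} together with the adjoint kernel relation, which amounts to the same thing. One small imprecision: you invoke Proposition \ref{p: Intral_smooth} to get $e^{-t\opL_1} \in \overline{\Comp_\fin(\pi)}$ on the grounds that $\lambda \mapsto e^{-t\lambda}$ is smooth, but that proposition requires $F \in L^2_{3/2+\varepsilon}(\RR)$, and $e^{-t\lambda}$ fails this since it blows up as $\lambda \to -\infty$. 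The fix is immediate: either replace $F$ by $\chi F$ with $\chi$ a smooth cutoff equal to $1$ on $[0,2]$ (which leaves $F(\opL_1)$ unchanged), or simply observe that $e^{-t\lambda} \in \Hol(1,2)$ and invoke Proposition \ref{p: Intral} via Lemma \ref{l: chvar}, which yields $\nabla_1 e^{-t\opL_1} \in \overline{\Comp_\fin(\pi)}$ directly — this is essentially the route the paper takes for the entire functions $F_N$.
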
 
\begin{proof}
The idea is to exploit Proposition \ref{p: K2K1}, however the Riesz transforms $\Rz_j$ are unbounded on $L^1(m_j)$ and $L^\infty(m_j)$, so the theory of $\pi$-compatible operators does not directly apply to them. On the other hand, for any $N \in \Npos$, the truncated integrals $F_N$ from \eqref{f: truncated_Riesz} are entire functions, thus the operators
\[
\Rz_j^{(N)} \defeq \nabla_j F_N(\opL_j) = \frac{1}{\sqrt{\pi}} \int_0^N \nabla_j e^{-t\opL_j} \,\frac{dt}{\sqrt{t}}
\]
are in $\Bdd(m_j)$. Therefore, from Proposition \ref{p: Intral} we deduce that $\Rz_1^{(N)} \in \overline{\Comp_\fin(\pi)}$ and $\pi(\Rz_1^{(N)}) = \Rz_2^{(N)}$.

Thus, by Proposition \ref{p: K2K1},
\[\begin{split}
K_{\Rz^{(N)}_2}(x,y) 
&= \frac{1}{m_2(y)}\sum_{z \in \pi^{-1}\{y\}} K_{\Rz_1^{(N)}}(\overline{x},z) \, m_1(z) \\
&= \frac{1}{m_2(x)}\sum_{z \in \pi^{-1}\{x\}} K_{\Rz_1^{(N)}}(z,\overline{y}) \, m_1(z)
\end{split}\]
for all $x,y \in T$, $\overline{x}\in\pi^{-1}\{x\}$ and $\overline{y} \in \pi^{-1}\{y\}$,
i.e.,
\begin{equation}\label{f: truncated_id}
\begin{split}
\int_0^N  K_{\nabla_2 e^{-t{\opL_2}}}(x,y) \, \frac{dt}{\sqrt{t}} 
&= \frac{1}{m_2(y)}\sum_{z \in \pi^{-1}\{y\}} \int_0^N  K_{\nabla_1 e^{-t\opL_1}}(\overline{x},z) \, m_1(z) \, \frac{dt}{\sqrt{t}} \\
&= \frac{1}{m_2(x)}\sum_{z \in \pi^{-1}\{x\}} \int_0^N  K_{\nabla_1 e^{-t\opL_1}}(z,\overline{y}) \, m_1(z) \, \frac{dt}{\sqrt{t}}.
\end{split}
\end{equation}
Notice now that, by \eqref{f: levelestimates},
\[
\sum_{z \in \pi^{-1}\{y\}} |K_{\nabla e^{-t\opL_1}}(\overline{x},z)| \, m_1(z), \ \sum_{z \in \pi^{-1}\{y\}} |K_{\nabla e^{-t\opL_1}}(z,\overline{y})| \, m_1(z) \lesssim \frac{1}{1+t}.
\]
Hence, by Fubini's Theorem and the Dominated Convergence Theorem, we can take the limit as $N \to \infty$ in \eqref{f: truncated_id} and deduce that
\[\begin{split}
\int_0^\infty  K_{\nabla_2 e^{-t{\opL_2}}}(x,y) \, \frac{dt}{\sqrt{t}} 
&= \frac{1}{m_2(y)}\sum_{z \in \pi^{-1}\{y\}} \int_0^\infty  K_{\nabla_1 e^{-t\opL_1}}(\overline{x},z) \, m_1(z) \, \frac{dt}{\sqrt{t}} \\
&= \frac{1}{m_2(x)}\sum_{z \in \pi^{-1}\{x\}} \int_0^\infty  K_{\nabla_1 e^{-t\opL_1}}(z,\overline{y}) \, m_1(z) \, \frac{dt}{\sqrt{t}},
\end{split}\]
that is, by Lemma \ref{l: Riesz_kernel},
\[\begin{split}
K_{\Rz_2}(x,y) 
&= \frac{1}{m(y)} \sum_{z \in \pi^{-1}\{y\}} K_{\Rz_1}(\overline{x},z) \, m_1(z) \\
&= \frac{1}{m(x)} \sum_{z \in \pi^{-1}\{x\}} K_{\Rz_1}(z,\overline{y}) \, m_1(z).
\end{split}\]
In light of the relation between the kernels of an operator and its adjoint, this proves \eqref{f: rel Ker R}.
\end{proof}

In the following statement, we use the notation
\[
\tilde\Sigma_n = \begin{cases}
\Sigma^n &\text{if } n \geq 0,\\
(\Sigma^*)^{-n} &\text{if } n < 0.
\end{cases}
\]
Moreover,
\begin{equation}\label{f: conv_Rzdiff_Z}
\tilde k_{\ZZ}(n)=\frac{2\sqrt{2}}{\pi}\frac{n}{n^2-1/4}, \qquad n \in \ZZ,
\end{equation}
is the convolution kernel of $\tilde \Rz_{\ZZ} \defeq \tilde\nabla_\ZZ \Delta_\ZZ^{-1/2}$, i.e., the skewsymmetric part of the discrete Hilbert transform on $\ZZ$ (see \cite{ADP} or \cite[Equation (2.5)]{LMSTV}).

\begin{prop}\label{p: Riesz_somma}
Let $(T,m)$ be a flow tree. Then, for all $f \in c_{00}(T)$,
\begin{equation}\label{f: R-R*_somma}
(\Rz-\Rz^*) f =\sum_{n \in \ZZ} \tilde k_{\ZZ}(n) \tilde \Sigma_{-n} f
\end{equation}
in the sense of pointwise convergence on $T$.
\end{prop}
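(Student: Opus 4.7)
The plan is to compute $K_{\Rz-\Rz^*}$ in closed form on the homogeneous tree $(\Tq,\mq)$, read \eqref{f: R-R*_somma} off from that, and then push the kernel identity to an arbitrary flow tree by combining transference to flow quotients with a perturbative argument.

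On $\Tq$, Lemma \ref{l: Riesz_kernel} and the realness and symmetry of the heat kernel give
\[
K_{\Rz-\Rz^*}(x,y) = \frac{1}{\sqrt\pi}\int_0^\infty \bigl[K_{e^{-t\Lq}}(x,\pred y)-K_{e^{-t\Lq}}(\pred x,y)\bigr]\,\frac{dt}{\sqrt t}.
\]
Inserting the explicit formula $K_{e^{-t\Lq}}(a,b)=q^{-(\ell(a)+\ell(b))/2}E_{e^{-t\cdot}}(d(a,b))$ from Corollary \ref{c: KF(L)}, an elementary case analysis shows that $d(x,\pred y)=d(\pred x,y)=d(x,y)-1$ when $x$ and $y$ are not comparable (so the integrand vanishes), while for comparable $x\ne y$ the integrand reduces to $\pm\,q^{-(\ell(x)+\ell(y)+1)/2}\bigl[E_{e^{-t\cdot}}(d-1)-E_{e^{-t\cdot}}(d+1)\bigr]$ with $d=d(x,y)$, and with sign $+$ for $y<x$, $-$ for $x<y$. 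A direct telescoping computation from the definition \eqref{f: E_F} yields the key identity
\[
E_F(d-1)-E_F(d+1) = q^{(1-d)/2}\,\tilde\nabla_{\ZZ} k_{F(\Delta_{\ZZ})}(d),
\]
and combining this with the subordination formula $\frac{1}{\sqrt\pi}\int_0^\infty k_{e^{-t\Delta_{\ZZ}}}\,dt/\sqrt t=k_{\Delta_{\ZZ}^{-1/2}}$ and the oddness of $\tilde k_{\ZZ}$ collapses everything to
\[
K_{\Rz-\Rz^*}(x,y) =
\begin{cases}
  m(x)^{-1}\,\tilde k_{\ZZ}(\ell(x)-\ell(y)) & \text{if } y<x,\\
  m(y)^{-1}\,\tilde k_{\ZZ}(\ell(x)-\ell(y)) & \text{if } x<y,\\
  0 & \text{otherwise.}
\end{cases}
\]
Plugging this kernel into $(\Rz-\Rz^*)f(x)=\sum_y K_{\Rz-\Rz^*}(x,y)\,f(y)\,m(y)$ and regrouping by horocycle, the sum over $\{y\in\succ^n(x)\}$ produces $\tilde k_{\ZZ}(n)(\Sigma^*)^nf(x)=\tilde k_{\ZZ}(n)\tilde\Sigma_{-n}f(x)$ for $n>0$ and the single term $y=\pred^k(x)$ produces $\tilde k_{\ZZ}(-k)\Sigma^kf(x)=\tilde k_{\ZZ}(-k)\tilde\Sigma_kf(x)$ for $k>0$; together with $\tilde k_{\ZZ}(0)=0$, this yields \eqref{f: R-R*_somma} on $\Tq$.

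For a general flow tree $(T,m)$, I would pick a uniformly rational approximating sequence $(T_j,m_j)$ via Corollary \ref{c: point approx}; each $(T_j,m_j)$ is by Proposition \ref{p: comp unif rat} a flow quotient of some $(\TT{q_j},m_{\TT{q_j}})$. Combining the averaging formula of Proposition \ref{p: rel Ker R} with the closed form above on $\TT{q_j}$, and using Proposition \ref{p: pushforward} for the $m_{\TT{q_j}}$-measure of fibres in $\Delta_{\bar x}$, a short case analysis shows that the same closed-form expression for $K_{\Rz_j-\Rz_j^*}$ persists on $(T_j,m_j)$. For fixed $x,y\in T$, both points eventually lie in $T_j$, and Proposition \ref{p: conv_hker_adv}\ref{en: conv_hker_d}, applied to the operator $\nabla e^{-t\opL}=F(\Sigma,\Sigma^*)$ for a suitable $F\in\Hol(2,1)$, gives pointwise convergence $K_{\nabla_j e^{-t\opL_j}}(x,y)\to K_{\nabla e^{-t\opL}}(x,y)$. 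The level estimate \eqref{f: levelestimates} of Theorem \ref{t: est on T}, whose constant is independent of the flow tree, yields the $j$-uniform integrable dominant $|K_{\nabla_j e^{-t\opL_j}}(x,y)|\lesssim[m_j(y)(1+t)]^{-1}$, and dominated convergence in the integral \eqref{f:KR} (and the analogous one for $\Rz^*$) then transfers the closed-form kernel from $(T_j,m_j)$ to $(T,m)$; horocycle regrouping as above produces \eqref{f: R-R*_somma}.

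The main obstacle is the case analysis on $\Tq$ together with the telescoping identity for $E_F$; once those are in hand, the transference and perturbation steps are essentially bookkeeping on top of the machinery developed in Sections~\ref{s: submersions} and \ref{s: perturbation}.
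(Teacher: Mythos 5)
The proposal is correct and, for the transference-to-quotient and perturbative steps, follows essentially the same route as the paper (Proposition~\ref{p: rel Ker R}, Corollary~\ref{c: point approx}, Proposition~\ref{p: conv_hker_adv} combined with dominated convergence via the level bounds \eqref{f: levelestimates}). Where you genuinely differ is in the homogeneous case: the paper simply quotes \cite[Equation (4.4)]{LMSTV} for the kernel identity on $\Tq$, whereas you re-derive it from Corollary~\ref{c: KF(L)} together with the telescoping identity $E_F(d-1)-E_F(d+1)=q^{(1-d)/2}\tilde\nabla_{\ZZ}k_{F(\Delta_{\ZZ})}(d)$ and the subordination formula; that identity, and your sign/distance case analysis ($d(x,\pred y)=d(\pred x,y)=d(x,y)-1$ when $x,y$ are incomparable, $d\mp1$ shifts otherwise), all check out, giving a nice self-contained alternative to the citation. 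A second minor difference: in the case $y<x$ you evaluate the fibre sum $\sum_{z\in\pi^{-1}\{y\},\,z<\bar x}\mq(z)/\mq(\bar x)=m(y)/m(x)$ via Proposition~\ref{p: pushforward}, while the paper instead appeals to skewadjointness of $\Rz-\Rz^*$ to reduce to the case $x<y$; both routes are fine. One small point to tighten: the dominating function $[m_j(y)(1+t)]^{-1}$ you propose is $j$-dependent; since $m_j(y)\to m(y)>0$ it is bounded uniformly in $j$ for $j$ large, so dominated convergence still applies, but this uniformity ought to be stated explicitly.
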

\begin{proof}
It is straightforward to check that the identity \eqref{f: R-R*_somma} can be equivalently rewritten in terms of integral kernels as follows:
\begin{equation}\label{f: kRR*T}
K_{\Rz - \Rz^*}(x,y) = \begin{cases}
\frac{\Tilde{k}_{\ZZ}(\ell(x)-\ell(y))}{m(y)} & \text{if } x < y, \\ 
\frac{\Tilde{k}_{\ZZ}(\ell(x)-\ell(y))}{m(x)} & \text{if } x > y, \\
0 & \text{otherwise}.
\end{cases}
\end{equation}
Our proof will focus on verifying this kernel identity.

First of all, from \cite[Equation (4.4)]{LMSTV} we already know that the result holds true on any homogeneous tree $\Tq$, thus
\begin{equation}\label{f: kRR*Tq}
K_{\Rq - \Rq^*}(x,y) = \begin{cases}
\frac{\Tilde{k}_{\ZZ}(\ell(x)-\ell(y))}{\mq(y)} & \text{if } x < y, \\ 
\frac{\Tilde{k}_{\ZZ}(\ell(x)-\ell(y))}{\mq(x)} & \text{if } x > y, \\
0 & \text{otherwise}.
\end{cases}
\end{equation}

Assume now that $(T,m)$ is a $q$-uniformly rational flow tree. Then, by Proposition \ref{p: comp unif rat}, there exists a level-preserving flow submersion $\pi : (\Tq, \mq) \to (T,m)$.
So, by \eqref{f: kRR*Tq} and Proposition \ref{p: rel Ker R}, given $x,y \in T$ and $\overline{x} \in \pi^{-1}\{x\}$,
\[\begin{split}
&K_{\Rz- \Rz^*}(x,y)\\
&= \frac{1}{m(y)}\left[\sum_{z \in \pi^{-1}\{y\} \colon z > \overline{x}}\Tilde{k}_{\ZZ}(\ell(\overline{x})-\ell(z))+\sum_{z \in \pi^{-1}\{y\} \colon z < \overline{x}}\Tilde{k}_{\ZZ}(\ell(\overline{x})-\ell(z)) \frac{\mq(z)}{\mq(\overline{x})}\right] \\
&= \frac{\Tilde{k}_{\ZZ}(\ell(x)-\ell(y))}{m(y)}\left[\sum_{z \in \pi^{-1}\{y\} \colon z > \overline{x}} 1 +\sum_{z \in \pi^{-1}\{y\} \colon z < \overline{x}} \frac{\mq(z)}{\mq(\overline{x})}\right].
\end{split}\]
Now, if $x \not < y$ and $y \not < x$, by the strict monotonicity of $\pi$ there cannot exist a $z \in \pi^{-1}(y)$ such that $\overline{x} < z$ or $z < \overline{x}$, and therefore from the above formula we deduce that $K_{\Rz-\Rz^*}(x,y) = 0$ in this case. If instead $y > x$, then $y = \pred^{\ell(y)-\ell(x)}(x)$, so the only $z \in \pi^{-1}\{y\}$ comparable with $\overline{x}$ is given by $z = \pred^{\ell(y)-\ell(x)}(\overline{x})$, and the above formula gives
\begin{equation}\label{f: RR*diffpart}
K_{\Rz- \Rz^*}(x,y) = \frac{\Tilde{k}_{\ZZ}(\ell(x)-\ell(y))}{m(y)}.
\end{equation}
Finally, if $y < x$, as $\Rz-\Rz^*$ is skewadjoint,
\[
K_{\Rz- \Rz^*}(x,y) = -\overline{K_{\Rz- \Rz^*}(y,x)} = -\overline{\frac{\Tilde{k}_{\ZZ}(\ell(y)-\ell(x))}{m(x)}} = \frac{\Tilde{k}_{\ZZ}(\ell(x)-\ell(y))}{m(x)},
\]
where we applied \eqref{f: RR*diffpart} with $x$ and $y$ swapped, and used that $\tilde k_{\ZZ}$ is real and odd.
This concludes the proof of \eqref{f: kRR*T} in the case $(T,m)$ is uniformly rational.

Finally, consider an arbitrary flow tree $(T,m)$. By Corollary \ref{c: point approx}, there exists an approximating sequence $((T_j,m_j))_j$ where each $m_j$ is uniformly rational. So we know that \eqref{f: kRR*T} holds for each $(T,m_j)$, i.e.,
\begin{equation}\label{f: kRR*Tj}
K_{\Rz_j - \Rz_j^*}(x,y) = \begin{cases}
\frac{\Tilde{k}_{\ZZ}(\ell(x)-\ell(y))}{m_j(y)} & \text{if } x < y, \\ 
\frac{\Tilde{k}_{\ZZ}(\ell(x)-\ell(y))}{m_j(x)} & \text{if } x > y, \\
0 & \text{otherwise}
\end{cases}
\end{equation}
for all $x,y \in T_j$. Since the right-hand side of \eqref{f: kRR*Tj} clearly converges to that of \eqref{f: kRR*T} as $j \to \infty$, it only remains to check that $K_{\Rz_j - \Rz_j^*} \to K_{\Rz - \Rz^*}$ pointwise on $T \times T$, or equivalently, that $K_{\Rz_j} \to K_{\Rz}$ pointwise. Indeed, by Lemma \ref{l: Riesz_kernel},
\[
K_{\Rz}(x,y)=\frac{1}{\sqrt{\pi}}\int_0^\infty K_{\nabla e^{-t\opL}}(x,y) \frac{dt}{\sqrt{t}}, \quad K_{\Rz_j}(x,y)=\frac{1}{\sqrt{\pi}}\int_0^\infty K_{\nabla e^{-t\opL_j}}(x,y) \frac{dt}{\sqrt{t}}
\]
and we know from Proposition \ref{p: conv_hker_adv} that $K_{\nabla e^{-t\opL_j}}(x,y) \to K_{\nabla e^{-t\opL}}(x,y)$ as $j \to \infty$ for any $x,y \in T$ and $t > 0$. Thus, in light of the bound \eqref{f: levelestimates}, the desired convergence result follows from the Dominated Convergence Theorem.
\end{proof}

After establishing the crucial identity \eqref{f: R-R*_somma}, we can now proceed by following closely the strategy of \cite{LMSTV} to derive $L^p$ boundedness properties of $\Rz-\Rz^*$ from those of $\tilde\Rz_\ZZ$. For the reader's convenience, we briefly describe the main steps.

We need to introduce some more notation, analogous to the one in \cite[Section 3]{LMSTV}. Let $(T,m)$ be a flow tree. The flow measure $m$ determines a Borel measure $\nu$ on the punctured boundary $\Omega \defeq \partial T \setminus \{\myth\}$, which is uniquely defined by the condition
\[
\nu(\Omega_x) = m(x) \qquad\forall x \in T,
\]
where $\Omega_x \defeq \{\omega\in \Omega \colon x \in (\omega,\myth)\}$.
Correspondingly, we equip the product $\Omega \times \ZZ$ with the product measure $\nu \times \#$.

We define the lifting operator $\Psi: \CC^{T} \to \CC^{\Omega \times \ZZ}$ by 
\[
\Psi f(\omega,n)=f(\omega_n),
\]
where $\omega_n$ is the unique vertex of level $n$ which belongs to $(\omega, \myth)$. Furthermore, we define the shift operator $\sigma : \CC^{\Omega \times \ZZ} \to \CC^{\Omega \times \ZZ}$ by 
\[
\sigma g(\omega,n) = g(\omega,n+1) \qquad \omega \in \Omega, \ n \in \ZZ.
\]
Easy computations show the following properties of the lifting operator $\Psi$ and the shift operator $\sigma$, which can be found in \cite[Propositions 3.1, 3.2 and 3.4]{LMSTV} in the case of homogeneous trees.

\begin{prop}\label{p: PhiPhi*}
The following properties hold.
\begin{enumerate}[label=(\roman*)]
\item\label{en:phiphiiso} $\Psi$ is an isometric embedding from $L^p(m)$ to $L^p(\nu \times \#)$ for every $p\in [1,\infty]$. Correspondingly, the adjoint operator $\Psi^*$ is bounded from $L^p(\nu \times \#)$ to $L^p(m)$ with norm $1$. 
\item\label{en:phiphiLp} For any $p \in [1,\infty]$, an operator $A$ is bounded on $\ell^p(\ZZ)$ if and only if $\id_\Omega \otimes A$ is bounded on $L^p(\nu \times \#)$, and their norms are the same.
\item\label{en:phiphishift} For any $n \in \ZZ$,
\[
\tilde\Sigma_n = \Psi^* \sigma^n \Psi.
\]
\end{enumerate}
\end{prop}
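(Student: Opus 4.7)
The plan is to verify each of the three assertions by a direct computation leveraging the defining identity $\nu(\Omega_x) = m(x)$ together with the fundamental partition $\Omega = \bigsqcup_{x \colon \ell(x) = n} \Omega_x$, valid for every $n \in \ZZ$.

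For part \ref{en:phiphiiso}, I would fix $p \in [1,\infty)$ and compute
\[
\|\Psi f\|_{L^p(\nu\times\#)}^p = \sum_{n \in \ZZ} \int_\Omega |f(\omega_n)|^p \, d\nu(\omega) = \sum_{n \in \ZZ} \sum_{x \colon \ell(x) = n} |f(x)|^p \, m(x) = \|f\|_{L^p(m)}^p,
\]
where the middle equality uses the above partition and the fact that $\Psi f(\omega,n) = f(x)$ for all $\omega \in \Omega_x$ with $\ell(x)=n$; the case $p=\infty$ is analogous. Next, I would compute the adjoint explicitly: pairing $\Psi f$ against a test function and rearranging via the same partition yields
\[
\Psi^* g(x) = \frac{1}{m(x)} \int_{\Omega_x} g(\omega,\ell(x)) \, d\nu(\omega),
\]
which is a fibrewise average. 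Since $d\nu/m(x)$ is a probability measure on $\Omega_x$, Jensen's inequality combined with Fubini's theorem gives $\|\Psi^* g\|_{L^p(m)} \leq \|g\|_{L^p(\nu \times \#)}$ for every $p \in [1,\infty]$; the direct verification $\Psi^* \Psi = I$ then forces $\|\Psi^*\|_{L^p \to L^p} = 1$.

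For part \ref{en:phiphiLp}, I would exploit that $\id_\Omega \otimes A$ acts fibrewise in the $\ZZ$-variable, so by Fubini
\[
\|(\id_\Omega \otimes A) g\|_{L^p(\nu\times\#)}^p = \int_\Omega \|A g(\omega,\cdot)\|_{\ell^p(\ZZ)}^p \, d\nu(\omega),
\]
which immediately gives $\|\id_\Omega \otimes A\|_{L^p \to L^p} \leq \|A\|_{\ell^p \to \ell^p}$. For the reverse inequality I would test on elementary tensors $g(\omega,n) = \chr_S(\omega) \, h(n)$ with $\nu(S) \in (0,\infty)$, which reproduces $\|Ah\|_{\ell^p}$ on the output side and recovers the matching lower bound.

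For part \ref{en:phiphishift}, I plan to apply the explicit formula for $\Psi^*$ obtained above to $\sigma^n \Psi f$. When $n \geq 0$, for every $\omega \in \Omega_x$ the vertex $\omega_{\ell(x)+n}$ coincides with $\pred^n(x)$, so the integrand is constant in $\omega$ and the expression collapses to $f(\pred^n(x)) = \Sigma^n f(x)$. When $n < 0$, the level $\ell(x) + n$ lies below $x$ and the partition $\Omega_x = \bigsqcup_{y \in \succ^{|n|}(x)} \Omega_y$ converts the integral into $\tfrac{1}{m(x)} \sum_{y \in \succ^{|n|}(x)} f(y) \, m(y)$; a short induction on $|n|$, based on the definition of $\Sigma^*$, identifies this with $(\Sigma^*)^{|n|} f(x)$. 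No serious obstacle is anticipated: the entire statement is a bookkeeping exercise in Fubini's theorem, and the measure $\nu$ has been designed precisely so as to make $\Psi$ isometric.
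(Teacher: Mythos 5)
Your proof is correct and supplies exactly the direct computations that the paper alludes to with ``easy computations show'' and defers to \cite[Propositions 3.1, 3.2, 3.4]{LMSTV} for the homogeneous case: the isometry and Jensen--Fubini argument in part \ref{en:phiphiiso}, the fibrewise Fubini computation together with the elementary-tensor test in part \ref{en:phiphiLp}, and the case split on the sign of $n$ using $\Omega_x = \bigsqcup_{y \in \succ^{|n|}(x)} \Omega_y$ in part \ref{en:phiphishift} are all sound. No gaps.
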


We can now proceed to prove the $L^p$ boundedness of the skewsymmetric part of the Riesz transform.

\begin{prop}\label{p: Rzdiff_Lp}
Let $(T,m)$ be a flow tree. Then, for all $p \in (1,\infty)$, the operator $\Rz-\Rz^*$ is bounded on $L^p(m)$, and more precisely
\begin{equation}\label{f: Rzdiff_Lp}
\|\Rz-\Rz^*\|_{L^p(m) \to L^p(m)} \leq \| \tilde\Rz_{\ZZ} \|_{\ell^p(\ZZ) \to \ell^p(\ZZ)}.
\end{equation}
\end{prop}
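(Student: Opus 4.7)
The plan is to use the pointwise series representation from Proposition \ref{p: Riesz_somma} together with the factorisation $\tilde\Sigma_{-n} = \Psi^* \sigma^{-n} \Psi$ of part \ref{en:phiphishift} of Proposition \ref{p: PhiPhi*}, in order to reduce the action of $\Rz - \Rz^*$ to that of the ``model'' operator $\tilde\Rz_{\ZZ}$ on each $\ZZ$-slice of $\Omega \times \ZZ$. This mirrors the strategy employed in \cite{LMSTV} in the homogeneous setting.

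Fix $p \in (1,\infty)$ and $f \in c_{00}(T)$. Combining Proposition \ref{p: Riesz_somma} with Proposition \ref{p: PhiPhi*}\ref{en:phiphishift} and interchanging $\Psi^*$ with the sum, I will rewrite
\[
(\Rz - \Rz^*) f = \sum_{n \in \ZZ} \tilde k_{\ZZ}(n) \, \Psi^* \sigma^{-n} \Psi f = \Psi^*\, T\, \Psi f,
\]
where $T \defeq \sum_{n \in \ZZ} \tilde k_{\ZZ}(n) \, \sigma^{-n}$. A direct computation using $\sigma g(\omega,k) = g(\omega,k+1)$ shows that $T g(\omega,k) = \sum_n \tilde k_{\ZZ}(n) \, g(\omega, k-n)$, i.e., $T$ acts as convolution by $\tilde k_{\ZZ}$ in the second variable, so $T = \id_\Omega \otimes \tilde\Rz_{\ZZ}$. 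Granted this identity, \eqref{f: Rzdiff_Lp} follows at once from parts \ref{en:phiphiiso} and \ref{en:phiphiLp} of Proposition \ref{p: PhiPhi*}: indeed $\Psi$ and $\Psi^*$ have $L^p$ operator norm at most $1$, while $T$ has $L^p(\nu \times \#)$-norm equal to $\|\tilde\Rz_{\ZZ}\|_{\ell^p(\ZZ) \to \ell^p(\ZZ)}$, and composition gives
\[
\|(\Rz - \Rz^*) f\|_{L^p(m)} \le \|\tilde\Rz_{\ZZ}\|_{\ell^p(\ZZ) \to \ell^p(\ZZ)} \, \|f\|_{L^p(m)}.
\]
Density of $c_{00}(T)$ in $L^p(m)$ then extends the bound to all of $L^p(m)$.

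The main technical point, and the only real obstacle, is justifying the interchange of $\Psi^*$ with the sum defining $T$, given that $\tilde k_{\ZZ}(n) \sim 1/n$ is not absolutely summable and so $T$ is genuinely singular. The key observation is that, since $f \in c_{00}(T)$ has finite support, for each $x \in T$ the set
\[
I(x) \defeq \{ n \in \ZZ \colon \ell(x) - n \in \ell(\supp f) \}
\]
is finite, independent of $\omega$. Moreover, for every $\omega \in \Omega_x$,
\[
\sigma^{-n} \Psi f(\omega, \ell(x)) = \Psi f(\omega, \ell(x)-n) = f(\omega_{\ell(x)-n})
\]
vanishes unless $n \in I(x)$, so $T \Psi f(\omega, \ell(x))$ is in fact a \emph{finite} sum whose index set does not depend on $\omega$. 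This makes swapping $\Psi^*$ with the sum a trivial application of linearity, yielding the required factorised identity, and the rest of the proof proceeds as sketched above.
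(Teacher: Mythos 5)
Your proposal is correct and follows essentially the same route as the paper: rewrite the pointwise series for $\Rz - \Rz^*$ from Proposition \ref{p: Riesz_somma} using $\tilde\Sigma_{-n} = \Psi^*\sigma^{-n}\Psi$, factorize as $\Psi^*(\id_\Omega \otimes \tilde\Rz_\ZZ)\Psi$, and conclude via Proposition \ref{p: PhiPhi*}. The paper states the factorization $(\Rz-\Rz^*)f = \Psi^*(\id_\Omega\otimes\tilde\Rz_\ZZ)\Psi f$ without comment, whereas you add a welcome justification of the interchange of $\Psi^*$ with the infinite sum, observing that for $f \in c_{00}(T)$ and fixed $x$ the relevant index set $I(x)$ is finite and independent of $\omega$, so the manipulation is a finite-linearity argument; this is a legitimate and helpful piece of bookkeeping that the paper's terse proof leaves implicit.
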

\begin{proof}
Recall that $\tilde k_\ZZ$ is the convolution kernel of $\tilde\Rz_{\ZZ}$.
In light of Proposition \ref{p: PhiPhi*}\ref{en:phiphishift}, we can rewrite the identity from Proposition \ref{p: Riesz_somma} as
\[
(\Rz-\Rz^*)f = \sum_{n \in \ZZ} \Tilde{k}_\ZZ(n) \Psi^* \sigma^{-n} \Psi f = \Psi^*(\id_{\Omega} \otimes \tilde\Rz_{\ZZ})\Psi f.
\]
From Proposition \ref{p: PhiPhi*}\ref{en:phiphiiso}-\ref{en:phiphiLp} we therefore deduce the bound \eqref{f: Rzdiff_Lp}.
As $\tilde\Rz_{\ZZ}$ is bounded on $\ell^p(\ZZ)$ for any $p \in (1,\infty)$ (see, e.g., \cite{HSC} or \cite[Proposition 2.3]{LMSTV}), the desired result follows.
\end{proof}

Finally, we complete the proof of Theorem \ref{t: Riesz}.

\begin{proof}[Proof of Theorem \ref{t: Riesz}, case $p>2$]
We already know from the case $p\leq 2$ of Theorem \ref{t: Riesz} that $\Rz$ is bounded on $L^p(m)$ for all $p \in (1,2)$, which means that $\Rz^*$ is bounded on $L^p(m)$ for all $p \in (2,\infty)$. As $\Rz-\Rz^*$ is also bounded on $L^p(m)$ for $p \in (2,\infty)$ by Proposition \ref{p: Rzdiff_Lp}, the sum $\Rz = \Rz^* + (\Rz-\Rz^*)$ must also be bounded on $L^p(m)$ for $p \in (2,\infty)$.
\end{proof}

We end the section with a negative boundedness result for $\Rz$ and its adjoint.

\begin{prop}\label{p: RunboundedLinftyBMO}
Let $(T,m)$ be a locally doubling flow tree which is not isomorphic to $\ZZ$, i.e., such that $q(x) > 1$ for some $x \in T$. Then, $\Rz^*$ is unbounded from $H^1(m)$ to $L^1(m)$ and, consequently, $\Rz$ is unbounded from $L^\infty(m)$ to $BMO(m)$.
\end{prop}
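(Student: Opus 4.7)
The plan is to exhibit an explicit $H^1(m)$-atom $a$ such that $(\Rz-\Rz^*)a$ fails to belong to $L^1(m)$; since $\Rz$ is itself bounded from $H^1(m)$ to $L^1(m)$ by Theorem~\ref{t: Riesz}, this will immediately force $\Rz^*$ to be unbounded between those spaces. The unboundedness of $\Rz$ from $L^\infty(m)$ to $BMO(m)$ then follows by duality, because $BMO(m)$ is identified with the dual of $H^1(m)$ in the Calder\'on--Zygmund framework of \cite{LSTV}.

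To construct $a$, observe that, since $(T,m)$ is not isomorphic to $\ZZ$, there exists $x_0\in T$ with $q(x_0)\geq 2$, so we may pick distinct $y_1,y_2\in\succ(x_0)$. The set $R\defeq\succ(x_0)$ is an admissible trapezoid in the sense of \cite{LSTV} (of the form $R_{1,2}(x_0)$), and the function
\[
a\defeq c\left(\frac{\chr_{\{y_1\}}}{m(y_1)}-\frac{\chr_{\{y_2\}}}{m(y_2)}\right),\qquad c\defeq\frac{\min\{m(y_1),m(y_2)\}}{m(x_0)},
\]
is supported in $R$, has vanishing integral, and satisfies $\|a\|_\infty\leq 1/m(R)$, so $a$ is a legitimate $H^1(m)$-atom.

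Next, I would use the explicit kernel formula \eqref{f: kRR*T} to evaluate $(\Rz-\Rz^*)a(x)$ according to the position of $x$ relative to $\{y_1,y_2\}$. For every $x\geq x_0$, both kernel values $K_{\Rz-\Rz^*}(x,y_i)$ equal $\tilde k_\ZZ(\ell(x)-\ell(x_0)+1)/m(x)$ (because $\ell(y_1)=\ell(y_2)$), so the cancellation built into $a$ makes the output vanish; the same occurs whenever $x$ is incomparable to both $y_i$. For $x$ strictly below $y_1$, the vertex $x$ is incomparable to $y_2$ (the tents $\Delta_{y_1}$ and $\Delta_{y_2}$ being disjoint), so only the $y_1$-term survives and
\[
(\Rz-\Rz^*)a(x)=\frac{c\,\tilde k_\ZZ(\ell(x)-\ell(y_1))}{m(y_1)},
\]
with an analogous expression for $x$ strictly below $y_2$.

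It then remains to estimate $\|(\Rz-\Rz^*)a\|_{L^1(m)}$. Grouping the vertices $x$ strictly below $y_i$ by level and using the defining flow identity $\sum_{x\leq y_i,\,\ell(x)=\ell(y_i)-n}m(x)=m(y_i)$ for every $n\geq 1$, each of the two tent-sums reduces to $c\sum_{n\geq 1}|\tilde k_\ZZ(n)|$, which diverges since $|\tilde k_\ZZ(n)|\approx 1/n$ by \eqref{f: conv_Rzdiff_Z}. The main subtlety is simply to verify that $a$ fits exactly the atomic normalisation adopted in \cite{LSTV}; once this is in place the divergence of the series delivers the required contradiction, and the $L^\infty\to BMO$ statement then follows by duality.
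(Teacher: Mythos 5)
Your proof is correct and follows essentially the same route as the paper's: both reduce to showing $\Rz-\Rz^*$ fails to be $H^1\to L^1$, both exploit the explicit kernel formula \eqref{f: kRR*T} for $\Rz-\Rz^*$ together with the sibling construction under a branching vertex, and both conclude from the divergence of $\sum_n |\tilde k_\ZZ(n)|\approx\sum_n 1/n$ via the flow identity for level sums. The only (cosmetic) differences are that you normalize the test function to an explicit atom and compute $\|(\Rz-\Rz^*)a\|_{L^1}$ directly over both tents $\Delta_{y_1},\Delta_{y_2}$, whereas the paper works with the unnormalized $g\in H^1(m)$ and instead pairs against $\chr_{\Delta_{x_1}}\in L^\infty(m)$ (using one tent only), which yields the same divergent series.
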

\begin{proof}
Since $\Rz$ is bounded from $H^1(m)$ to $L^1(m)$ by Theorem \ref{t: Riesz}, it suffices to prove the unboundedness from $H^1(m)$ to $L^1(m)$ of $\Rz-\Rz^*$.

By our assumption on $T$, we can choose $x_1 \in T$ such that $q(\pred(x_1)) \ge 2$, and denote by $x_2$ a vertex in $\succ(\pred(x_1))$ different from $x_1$.  Define $f=\chr_{\Delta_{x_1}}$ and $g=\frac{\chr_{\{x_1\}}}{m(x_1)}-\frac{\chr_{\{x_2\}}}{m(x_2)}$, and observe that $f \in L^\infty(m)$ and $g\in H^1(m)$. By \eqref{f: kRR*T} and \eqref{f: conv_Rzdiff_Z},
\[\begin{split}
|\langle f, (\Rz-\Rz^*) g \rangle|
&= \left| \sum_{x \colon x \le x_1} \sum_{y \in T}K_{\Rz-\Rz^*}(x,y) \left(\frac{\chr_{\{x_1\}}(y)}{m(x_1)}-\frac{\chr_{\{x_2\}}(y)}{m(x_2)}\right) \, m(y) \, m(x) \right| \\ 
&= \left| \sum_{x \colon x \le x_1} K_{\Rz-\Rz^*}(x,x_1) \, m(x) \right| \\ 
&\approx \sum_{x \colon x \le x_1}\frac{1}{(d(x,x_1)+1)m(x_1)} \, m(x) \\ 
&= \sum_{n=0}^\infty \frac{1}{n+1}
=\infty.
\end{split}\]
This shows that $(\Rz-\Rz^*) g$ does not lie in $L^1(m)$, as desired.
\end{proof}

\subsection{Spectral multipliers of the flow Laplacian}

In this section we prove Theorem \ref{t: MHmultipliers}, establishing $L^p$ boundedness properties for functions $F(\opL)$ of the flow Laplacian under suitable scale-invariant smoothness conditions on $F$.

One of the fundamental ingredients in the proof is the following scale-invariant version of the weighted $L^1$ estimate of Proposition \ref{p: stimaL1Tsenzasupporto}, which corresponds to the estimate on $\ZZ$ from Lemma \ref{l: FtDeltaZconsupporto}.

\begin{prop}\label{p: stimaL1Tconsupporto}
Let $(T,m)$ be a flow tree.
For every $\alpha \in [0,\infty)$ and $\beta>\alpha+3/2$, for every $F\in L^2_{\beta}(\RR)$ supported in $[1/4,7/4]$, 
\[
\sup_{t\geq 1} \sup_{y\in T} \sum_{x\in T} |K_{F(t\opL)}(x,y)| \left(1+\frac{d(x,y)}{\sqrt{t}}\right)^{\alpha}\,m(x)
\lesssim_{\alpha,\beta} \|F\|_{L^2_{\beta}}.
\]
\end{prop}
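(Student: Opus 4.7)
The plan is to prove a $q$-uniform version of the estimate on the homogeneous trees $(\Tq,\mq)$ first, and then transfer it to an arbitrary flow tree $(T,m)$ via Theorem \ref{t: unif_transf_opL}. This mirrors the strategy used in the proof of Proposition \ref{p: stimaL1Tsenzasupporto}, with the scale-invariant Lemma \ref{l: FtDeltaZconsupporto} replacing its non-scale-invariant counterpart Lemma \ref{l: FDeltaZsenzasupporto}.

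To obtain the $q$-uniform bound on $\Tq$, I would apply Proposition \ref{p: stimaL1pesataTq} to the continuous function $\lambda \mapsto F(t\lambda)$ and the increasing weight $w(n) = (1+n/\sqrt{t})^{\alpha}$, reducing matters to bounding $\sum_{n\in\NN} n(1+n/\sqrt{t})^\alpha |\tilde\nabla_{\ZZ} k_{F(t\Delta_{\ZZ})}(n)|$ by $\|F\|_{L^2_\beta}$. For this I would apply the Cauchy--Schwarz inequality with the splitting
\[
n(1+n/\sqrt{t})^\alpha = \bigl[(1+n/\sqrt{t})^\beta\bigr] \cdot \bigl[n(1+n/\sqrt{t})^{\alpha-\beta}\bigr],
\]
obtaining the product of $\bigl(\sum_n (1+n/\sqrt{t})^{2\beta}|\tilde\nabla_{\ZZ} k_{F(t\Delta_{\ZZ})}(n)|^2\bigr)^{1/2}$ and $\bigl(\sum_n n^2(1+n/\sqrt{t})^{2(\alpha-\beta)}\bigr)^{1/2}$. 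The first factor is $\lesssim t^{-3/4}\|F\|_{L^2_\beta}$ by Lemma \ref{l: FtDeltaZconsupporto} applied with parameter $\beta$ in place of $\alpha$ (here the support condition on $F$ is crucial). The second factor is $\lesssim t^{3/4}$: after the change of variable $u = n/\sqrt{t}$, the sum is comparable to $t^{3/2}\int_0^\infty u^2(1+u)^{-2(\beta-\alpha)}\,du$, and the integral converges precisely because $\beta-\alpha>3/2$. The two $t$-powers cancel, yielding the desired $q$-uniform bound
\[
\sup_{y\in\Tq} \sum_{x\in\Tq} |K_{F(t\opL_{\Tq})}(x,y)| \left(1+\frac{d(x,y)}{\sqrt{t}}\right)^{\alpha} \mq(x) \lesssim_{\alpha,\beta} \|F\|_{L^2_\beta}.
\]

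Finally, I would transfer this $q$-uniform estimate to $(T,m)$ by invoking Theorem \ref{t: unif_transf_opL}, applied to the function $G \defeq F(t\cdot) \in L^2_{3/2+\varepsilon}(\RR)$ (for any $\varepsilon \in (0,\beta-3/2)$; since $t$ is fixed, the potentially $t$-dependent $L^2_\beta$ norm of $G$ is irrelevant to the applicability of the theorem) and to the weight $(k,\ell_1,\ell_2) \mapsto (1+k/\sqrt{t})^\alpha$, which is indeed increasing in the first variable. The universal transference then yields the proposition. I do not expect any substantial obstacle: the main technical points are the bookkeeping of the $t$-powers in the Cauchy--Schwarz step and the verification that $G = F(t\cdot)$ is still supported in $[0,7/4]$ for $t \geq 1$ so that Lemma \ref{l: FtDeltaZconsupporto} can indeed be invoked at the $\ZZ$-level.
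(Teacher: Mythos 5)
Your proposal is correct and is essentially the paper's proof. Both hinge on reducing to the $\ZZ$-estimate via the Abel-transform identity, applying Cauchy--Schwarz so as to isolate the factor $(1+n/\sqrt t)^\beta|\tilde\nabla_\ZZ k_{F(t\Delta_\ZZ)}(n)|$ (for which Lemma~\ref{l: FtDeltaZconsupporto} with parameter $\beta$ supplies the decisive $t^{-3/4}\|F\|_{L^2_\beta}$ bound), and cancelling the remaining $t$-powers in the complementary sum using $\beta-\alpha>3/2$; your split $n(1+n/\sqrt t)^\alpha=(1+n/\sqrt t)^\beta\cdot n(1+n/\sqrt t)^{\alpha-\beta}$ is an algebraic rearrangement of the paper's $n(1+n/\sqrt t)^\alpha\le\sqrt t\,(1+n/\sqrt t)^{\beta}\cdot(1+n/\sqrt t)^{-1/2-\varepsilon}$, and your computation $\sum_n n^2(1+n/\sqrt t)^{2(\alpha-\beta)}\approx t^{3/2}$ matches the paper's $\sqrt t\cdot\bigl(\sum_n(1+n/\sqrt t)^{-1-2\varepsilon}\bigr)^{1/2}\approx t^{3/4}$. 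The one difference is packaging: the paper's proof is shorter because it directly invokes the final assertion of Proposition~\ref{p: stimaL1Tsenzasupporto}, which already delivers the weighted estimate~\eqref{f: stimaL1pesata_trans} on the arbitrary flow tree $(T,m)$ for all $F\in L^2_{\alpha+3/2+\varepsilon}(\RR)$ (the transference and polynomial-density argument having been carried out there once and for all), so no fresh transference step is needed. You instead re-run the transference machinery by first obtaining the $q$-uniform bound on $\Tq$ via Proposition~\ref{p: stimaL1pesataTq} and then applying Theorem~\ref{t: unif_transf_opL}; this is valid, and your observations that $F(t\cdot)\in C[0,2]$ by Sobolev embedding (so Proposition~\ref{p: stimaL1pesataTq} applies) and that the $t$-dependence of $\|F(t\cdot)\|_{L^2_\beta}$ is irrelevant for the applicability of the transference theorem correctly handle the technical points.
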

\begin{proof}
Let $\varepsilon > 0$ be such that $\beta = \alpha+3/2+\varepsilon$.

Let $F \in L^2_\beta(\RR)$ and $t\geq 1$. By Proposition \ref{p: stimaL1Tsenzasupporto}, we can apply the estimate \eqref{f: stimaL1pesata_trans} to the function $F(t\cdot) \in L^2_\beta(\RR)$ and the weight $w(n) = (1+n/\sqrt{t})^\alpha$, thus
\[\begin{split}
&\sup_{y \in T} \sum_{x \in T} |K_{F(t\opL)}(x,y)| \left(1+\frac{d(x,y)}{\sqrt{t}}\right)^{\alpha} \,m(x) \\
&\lesssim \sum_{n\in\NN} n \left(1+\frac{n}{\sqrt t}\right)^{\alpha} |\tilde\nabla_{\ZZ} k_{F(t\Delta_{\ZZ})}(n)| \\
&\leq \sqrt{t} \sum_{n\in\NN} \left(1+\frac{n}{\sqrt t}\right)^{\alpha+1} |\tilde\nabla_{\ZZ} k_{F(t\Delta_{\ZZ})}(n)|.
\end{split}\]
As $\varepsilon > 0$, 
the Cauchy--Schwarz inequality and Lemma \ref{l: FtDeltaZconsupporto} then give
\[\begin{split}
&\sup_{y \in T} \sum_{x\in T} |K_{F(t\opL)}(x,y)| \left(1+\frac{d(x,y)}{\sqrt t}\right)^{\alpha} \,m(x)\\
&\leq \sqrt{t} \left(\sum_{n\in\NN}  \left(1+\frac{n}{\sqrt t}\right)^{-1-2\varepsilon} \right)^{1/2} \left(\sum_{n\in\NN} \left[ \left(1+\frac{n}{\sqrt t}\right)^{3/2+\alpha+\varepsilon}|\tilde\nabla_{\ZZ}k_{F(t\Delta_{\ZZ})}(n)|\right]^2  \right)^{1/2}\\
&\lesssim_{\alpha,\beta} \|F\|_{L^2_\beta},
\end{split}\]
where we used that
\[
\sum_{n\in\NN} \left(1+\frac{n}{\sqrt t}\right)^{-1-2\varepsilon} \approx_\varepsilon t^{1/2}
\]
for $t \geq 1$.
\end{proof}

The above estimate, combined with the gradient heat kernel bounds of Theorem \ref{t: est on T}, gives an $L^1$ estimate for the gradient of the integral kernel of $F(t\opL)$. 

\begin{corollary}\label{c: stimagradiente}
Let $(T,m)$ be a flow tree.
For every $\varepsilon >0$, every $F\in L^2_{3/2+\varepsilon}(\RR)$ supported in $[1/4,7/4]$ and every $t\geq 1$,
\[
\sup_{y\in T} \sum_{x\in T} |\nabla_yK_{F(t\opL)}(x,y)| \, m(x)
\lesssim_{\varepsilon} t^{-1/2} \, \|F\|_{L^2_{3/2+\varepsilon}}.
\]
\end{corollary}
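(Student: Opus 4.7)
The starting point will be to recognise that $\nabla_y K_{F(t\opL)}(x,y) = K_{F(t\opL)}(x,y) - K_{F(t\opL)}(x,\pred(y))$ is precisely the integral kernel of the composite operator $F(t\opL)\nabla^*$. This is a short direct computation: writing $\nabla^* = I - \Sigma^*$ and unwinding the formula for $\Sigma^*$ from Section \ref{ss: spectrum}, applying $F(t\opL)\nabla^*$ to a function $f$ and reindexing the resulting sum via the change of variable $y = \pred(z)$ (using $m(y) = \sum_{z \in \succ(y)} m(z)$) yields exactly the kernel above. Consequently the desired estimate is equivalent to the operator bound
\[
\|F(t\opL)\nabla^*\|_{L^1(m)\to L^1(m)} \lesssim_{\varepsilon} t^{-1/2}\,\|F\|_{L^2_{3/2+\varepsilon}}.
\]

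The core idea is then to manufacture a heat semigroup factor in $F(t\opL)$ and let it absorb the $\nabla^*$. Define $G(\lambda) \defeq F(\lambda)\,e^{\lambda}$; by the Borel functional calculus one has
\[
F(t\opL)\nabla^* = G(t\opL)\cdot\bigl(e^{-t\opL}\nabla^*\bigr),
\]
so by the submultiplicativity of the $L^1\to L^1$ norm under composition it suffices to bound each factor separately. For the first, $G$ is still supported in $[1/4,7/4]$ and differs from $F$ by multiplication by a compactly supported smooth function (a cutoff version of $e^{\lambda}$), hence $\|G\|_{L^2_{3/2+\varepsilon}} \lesssim \|F\|_{L^2_{3/2+\varepsilon}}$; Proposition \ref{p: stimaL1Tconsupporto} applied to $G$ with $\alpha = 0$ then gives $\|G(t\opL)\|_{1\to 1} \lesssim_\varepsilon \|F\|_{L^2_{3/2+\varepsilon}}$. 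For the second factor, since $t\ge 1$, the second inequality of Theorem \ref{t: est on T} (dropping the exponential weight on the left-hand side) yields $\|e^{-t\opL}\nabla^*\|_{1\to 1} \lesssim t^{-1/2}$. Multiplying the two estimates gives the claim.

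The main obstacle is conceptual rather than technical: Proposition \ref{p: stimaL1Tconsupporto} alone provides only a uniform-in-$t$ bound on $\|F(t\opL)\|_{1\to 1}$, with no $t^{-1/2}$ decay. The decay must therefore be extracted entirely from the gradient factor, and the $L^1$ gradient heat kernel estimates in Theorem \ref{t: est on T} essentially dictate the strategy: the only way to exploit the $t^{-1/2}$ gain they provide is to physically split an $e^{-t\opL}$ off from $F(t\opL)$. This manoeuvre is made painless precisely by the support condition $\supp F \subseteq [1/4,7/4]$, which ensures that multiplication by $e^{\lambda}$ is a harmless (Schwartz) multiplier on $L^2_{3/2+\varepsilon}(\RR)$ and preserves the relevant support hypothesis, so that Proposition \ref{p: stimaL1Tconsupporto} can still be applied to the modified multiplier $G$.
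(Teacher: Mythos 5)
Your proof is correct and follows essentially the same route as the paper: introducing $G(\lambda)=F(\lambda)e^{\lambda}$, factoring $F(t\opL)=G(t\opL)\,e^{-t\opL}$, bounding $\|G(t\opL)\|_{1\to 1}$ via Proposition~\ref{p: stimaL1Tconsupporto}, and drawing the $t^{-1/2}$ decay from the gradient heat kernel estimate in Theorem~\ref{t: est on T}. (Minor slip: the estimate you need for $\|e^{-t\opL}\nabla^*\|_{1\to1}$ is the \emph{third} displayed bound in Theorem~\ref{t: est on T}, not the second.)
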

\begin{proof}
Define $G(\lambda)=F(\lambda) \, e^{\lambda}$ and write $F(t\opL)=G(t\opL) \, e^{-t\opL}$, so that
\[
K_{F(t\opL)}(x,y)=\sum_{z\in T}K_{G(t\opL)}(x,z)K_{ e^{-t\opL}}(z,y) \, m(z),
\]
and
\[
\nabla_y K_{F(t\opL)}(x,y)=\sum_{z\in T}K_{G(t\opL)}(x,z)\nabla_yK_{ e^{-t\opL}}(z,y) \, m(z).
\]
It follows that 
\[\begin{split}
&\sup_{y\in T}\sum_{x\in T }|\nabla_yK_{F(t\opL)}(x,y)| \, m(x)\\
&\leq \sup_{z\in T}\sum_{x\in T }|K_{G(t\opL)}(x,z)| \, m(x) 
\, \sup_{y\in T}\sum_{z\in T}|\nabla_yK_{e^{-t\opL}}(z,y)| \,m(z)\\
&\lesssim_\varepsilon \|G\|_{L^2_{3/2+\varepsilon}} \, t^{-1/2}\\
&\lesssim_\varepsilon t^{-1/2}\,\|F\|_{L^2_{3/2+\varepsilon}},
\end{split}\]
where we applied Proposition \ref{p: stimaL1Tconsupporto} to the function $G$ with $\alpha = 0$ and $\beta = 3/2+\varepsilon$, and the gradient heat kernel bound from Theorem \ref{t: est on T}.
\end{proof}

We can now prove Theorem \ref{t: MHmultipliers}. To do so, we shall use the modulation operator $\modE : \CC^T \to \CC^T$ defined by
\[
(\modE f)(x)=(-1)^{\ell(x)}f(x)\qquad x\in T, f\in \CC^T.
\]
It is easily seen that $\modE$ is selfadjoint and involutive, and preserves the norms in $L^p(m)$, $p\in [1,\infty]$, and $L^{1,\infty}(m)$. In addition,
\[
\modE \Sigma \modE= - \Sigma, \qquad \modE \Sigma^* \modE = -\Sigma^*, \qquad \modE \opAv \modE = -\opAv
\]
where $\opAv = (\Sigma+\Sigma^*)/2$ is as in \eqref{f: mathcalA}, which implies that
\begin{equation}\label{f: modul_opL}
\modE \opL \modE = 2I - \opL.
\end{equation}
The latter identity shows why it is natural that the assumption on $F$ in Theorem \ref{t: MHmultipliers}\ref{en: MHmultipliers_twist} is invariant under the change of variables $\lambda \mapsto 2-\lambda$.

\begin{proof}[Proof of Theorem \ref{t: MHmultipliers}]
Let us first prove part \ref{en: MHmultipliers_std}; here we assume that $(T,m)$ is locally doubling. Using smooth cutoff functions, we write $F$ as a sum $F=F^{(0)}+F^{(1)}$, with $F^{(0)}$ supported in $(-\infty,1/2)$ and $F^{(1)}$ supported in $(1/4,\infty)$.
From the assumption \eqref{f: MH_cond_std} on $F$ it follows that
\begin{equation}\label{eq:MHcond_red}
\sup_{t>0} \|F^{(0)}(t\cdot)\chi\|_{L^2_s} < \infty, \quad \|F^{(1)}\|_{L^2_s} < \infty,
\end{equation}
In particular, by Theorem \ref{t: multipliers}, $F^{(1)}(\opL)$ is bounded on $L^p(m)$ for every $p\in [1,\infty]$. 

We now consider $F^{(0)}$ and choose a function $\phi\in C^\infty_c(\RR)$ such that $\supp \phi \subseteq (1/4,1)$ and $\sum_{\ell\geq 0}\phi(2^\ell \lambda)=1$ for every $\lambda \in (0,1/2)$. Define $F^{(0)}_\ell =F^{(0)} (2^{-\ell}\cdot)\phi$. By Proposition \ref{p: stimaL1Tconsupporto} and Corollary \ref{c: stimagradiente} applied with $t=2^\ell$,	
\begin{equation}\label{f: CZ_cond_mult}
\begin{aligned}
\sup_{y\in T}\sum_{x\in T}|K_{F^{(0)}_\ell(2^\ell\opL)}(x,y)| \left(1+\frac{d(x,y)}{2^{\ell/2}}\right)^{\varepsilon} \, m(x)
&\lesssim_\varepsilon \|F^{(0)}(2^{-\ell}\cdot)\phi \|_{L^2_{s}}, \\
\sup_{y\in T}\sum_{x\in T}|\nabla_yK_{F^{(0)}_\ell(2^{\ell}\opL)}(x,y)| \, m(x)
&\lesssim_{\varepsilon} 2^{-\ell/2} \,\|  F^{(0)}(2^{-\ell}\cdot)\phi \|_{L^2_{s}}
\end{aligned}
\end{equation}
if $s>3/2+\varepsilon$. As
\[
\sup_{\ell \geq 0} \| F^{(0)}(2^{-\ell}\cdot)\phi \|_{L^2_{s}} \lesssim \sup_{t>0} \|F^{(0)}(t\cdot) \chi\|_{L^2_s} < \infty,
\]
it follows that the decomposition $F^{(0)}(\opL)=\sum_{\ell\geq 0} F^{(0)}_\ell(2^\ell\opL)$ satisfies the integral conditions of Theorem \ref{t: lemmahebisch}, whence we deduce that  $F^{(0)}(\opL)$ is bounded from $H^1(m)$ to $L^1(m)$, from $L^1(m)$ to $L^{1,\infty}(m)$, and on $L^p(m)$ for $p\in (1,2]$; thus the same boundedness properties are shared by $F(\opL) = F^{(0)}(\opL) + F^{(1)}(\opL)$. As $F(\opL)^* = \overline{F}(\opL)$ and $\overline{F}$ satisfies the same smoothness assumptions as $F$, by duality we also deduce the boundedness of $F(\opL)$ from $L^{\infty}(m)$ to $BMO(m)$ and on $L^p(m)$, $p\in [2,\infty)$.

Let us now prove part \ref{en: MHmultipliers_twist} when $(T,m)$ is locally doubling. Here, by using cutoff functions, we can decompose $F = F_0 + F_2$, where $\supp F_0 \subseteq (-\infty,5/4)$ and $\supp F_2 \subseteq (3/4,\infty)$. From \eqref{f: MH_cond_twist} it then follows that
\[
\sup_{t>0} \|F_0(t\cdot) \chi\|_{L^2_s} < \infty, \qquad \sup_{t>0} \|F_2(2-t\cdot) \chi\|_{L^2_s} < \infty. 
\]
From part \ref{en: MHmultipliers_std} we then deduce that $F_0(\opL)$ and $F_2(2-\opL)$ are of weak type $(1,1)$ and bounded on $L^p(m)$ for all $p \in (1,\infty)$. On the other hand, from \eqref{f: modul_opL} we deduce that $F_2(\opL) = \modE F_2(2-\opL) \modE$; since $\modE$ preserves the norms in $L^p(m)$, $p\in [1,\infty]$, and $L^{1,\infty}(m)$, the operator $F_2(\opL)$ inherits the corresponding boundedness properties of $F_2(2-\opL)$. Therefore also $F(\opL) = F_0(\opL) + F_2(\opL)$ is of weak type $(1,1)$ and bounded on $L^p(m)$ for $1 < p < \infty$, as desired.

It remains to discuss part \ref{en: MHmultipliers_twist} for any arbitrary flow tree $(T,m)$; in this case we only need to prove the $L^p$ boundedness of $F(\opL)$, $1 < p < \infty$, whenever $F$ satisfies the condition \eqref{f: MH_cond_twist}, and actually, by duality, it is enough to consider the case $p \leq 2$. Moreover, the same argument as above, using the decomposition $F = F_0 + F_2$ and the modulation operator $\modE$, allows us to reduce to the case where $F$ satisfies the stronger condition \eqref{f: MH_cond_std}.

In this case, as in the proof of part \ref{en: MHmultipliers_std} above, we decompose $F = F^{(0)} + F^{(1)}$, and deduce from Theorem \ref{t: multipliers} the $L^p$ boundedness of $F^{(1)}(\opL)$ for all $p \in [1,\infty]$. Moreover,
\[
F^{(0)}(\opL) = \sum_{\ell \geq 0} F^{(0)}_\ell(2^\ell \opL) = \lim_{N \to \infty} F^{(0)}_{(N)}(\opL),
\]
in the sense of the strong operator topology on $L^2(m)$, where
\begin{equation}\label{eq:F0partialsum}
F^{(0)}_{(N)} = \sum_{\ell = 0}^N F^{(0)}_\ell(2^\ell \cdot) = F \sum_{\ell=0}^N \phi(2^\ell \cdot).
\end{equation}
In particular
\[
\|F^{(0)}(\opL)\|_{L^p(m) \to L^p(m)} \leq \sup_{N \in \NN} \| F^{(0)}_{(N)}(\opL) \|_{L^p(m) \to L^p(m)},
\]
so in order to conclude it is enough to check that, for any $p \in (1,2]$, the truncations $F^{(0)}_{(N)}(\opL)$ of $F^{(0)}(\opL)$ are bounded on $L^p(m)$ uniformly in $N$. On the other hand, from \eqref{eq:F0partialsum} and \eqref{eq:MHcond_red} it is clear that each $F^{(0)}_{N}$ is in $L^2_s(\RR)$; thus, by applying Theorem \ref{t: unif_transf_opL}, we deduce that
\[
\|F^{(0)}(\opL)\|_{L^p(m) \to L^p(m)} \leq \sup_{N \in \NN} \sup_q \| F^{(0)}_{(N)}(\Lq) \|_{L^p(\mq) \to L^p(\mq)}.
\]
So we are reduced to proving an analogous bound for the truncations on the homogeneous tree $\Tq$, which however must be uniform both in $N$ and $q$.

An $L^p$ bound for each $F^{(0)}_{(N)}(\Lq)$ can be deduced following the proof of part \ref{en: MHmultipliers_std} above on the tree $(\Tq,\mq)$. Indeed, here we have the dyadic decomposition $F^{(0)}_{(N)}(\Lq)=\sum_{\ell = 0}^N F^{(0)}_\ell(2^\ell \Lq)$, and each dyadic piece satisfies the kernel estimates \eqref{f: CZ_cond_mult}, which hold uniformly in $N$ and $q$. Moreover, clearly
\[
\|F^{(0)}_{(N)}(\Lq)\|_{L^2(\mq) \to L^2(\mq)} \leq \|F^{(0)}_{(N)}\|_\infty \leq \|F^{(0)}\|_\infty.
\]
In other words, the truncations $F^{(0)}_{(N)}(\Lq)$ satisfy the assumptions of Theorem \ref{t: lemmahebisch_sharp} uniformly in $N$ and $q$; thus, for any $p \in (1,2]$, we deduce their $L^p(\mq)$ boundedness with the same uniformity in $q$ and $N$, as required.
\end{proof}

The assumption on the multiplier $F$ in Theorem \ref{t: MHmultipliers}\ref{en: MHmultipliers_twist} does not imply the boundedness of $F(\opL)$ from $H^1(m)$ to $L^1(m)$. This follows from the next result.

\begin{prop}\label{p: twist_MH_counterex}
Let $(T,m)$ be a locally doubling flow tree. For any $s > 3/2$, there exist functions $F : \RR \to \CC$ satisfying
\begin{equation}\label{f: MH_cond_onlytwist}
\sup_{t>0} \| F(2-t \cdot) \chi \|_{L^2_s} < \infty
\end{equation}
and such that $F(\opL)$ is not bounded from $H^1(m)$ to $L^1(m)$.
\end{prop}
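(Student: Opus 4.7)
The plan is to exploit the spectral symmetry \eqref{f: modul_opL} to reduce the problem to the standard Mih\'lin--H\"ormander setting via modulation. Setting $G(\mu) = F(2-\mu)$, we have $F(\lambda) = G(2-\lambda)$, hence $F(\opL) = G(2I - \opL) = \modE G(\opL)\modE$; moreover, the change of variables $\mu = 2-\lambda$ inside the $L^2_s$-norm shows that condition \eqref{f: MH_cond_onlytwist} on $F$ is exactly the standard scale-invariant condition \eqref{f: MH_cond_std} on $G$. Thus the task reduces to constructing $G$ satisfying \eqref{f: MH_cond_std} (with the given $s > 3/2$) such that $\modE G(\opL)\modE$ is unbounded from $H^1(m)$ to $L^1(m)$.

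Since $\modE$ is an $L^1$-isometric involution, $\|F(\opL)a\|_{L^1} = \|G(\opL)\modE a\|_{L^1}$ for any $a \in L^1(m)$. While $G(\opL)$ sends $H^1$-atoms into $L^1$ boundedly by Theorem \ref{t: MHmultipliers}\ref{en: MHmultipliers_std}, the modulated function $\modE a$ of an atom $a$ whose support spans distinct levels is generally \emph{not} an $H^1$-atom: the alternating sign $(-1)^{\ell(\cdot)}$ destroys the cancellation $\int a\,dm = 0$. Hence the plan is to choose $G$ with $G(\opL)$ bounded on $L^p$ for $p \in (1,\infty)$ and from $H^1$ to $L^1$, but \emph{not} bounded on $L^1$, and then exhibit atoms $a_n \in H^1(m)$ whose modulations $\modE a_n$ realise this $L^1$-unboundedness of $G(\opL)$.

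The canonical such $G$ is the imaginary-power multiplier $G(\lambda) = \lambda^{i\tau}$ (extended by $G(0)=0$), for $|\tau|$ sufficiently large. Since $G(t\lambda)\chi(\lambda) = t^{i\tau}\lambda^{i\tau}\chi(\lambda)$ and $|t^{i\tau}|=1$, $G$ satisfies \eqref{f: MH_cond_std} for every $s$ (the support of $\chi$ is bounded away from $0$, so $\lambda^{i\tau}\chi$ is smooth with bounded derivatives). The bad atoms would be built at a branching vertex of $T$ (available because local doubling forces bounded degree), supported on a small admissible trapezoid spanning two adjacent levels, so that $\modE$ converts cancellation into reinforcement. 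Following the strategy of Proposition \ref{p: RunboundedLinftyBMO}, one would test $F(\opL) a_n$ against an $L^\infty$ function such as $\chr_{\Delta_{x_n}}$ for $x_n$ in the support of $a_n$, rewriting the pairing as $\langle\modE\chr_{\Delta_{x_n}}, G(\opL)\modE a_n\rangle$; this expands to an alternating sum over $z \le x_n$ of kernel values of $G(\opL)$ that are \emph{summed} (rather than differenced) over consecutive levels in the second variable, so that the regularity of $K_{G(\opL)}(z,\cdot)$ no longer provides cancellation.

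The main obstacle is the quantitative verification that this pairing grows without bound. On a homogeneous tree $(\Tq, m_{\Tq})$, Proposition \ref{p: KF(L)} reduces the kernel analysis of $G(\Lq) = \Lq^{i\tau}$ to the skew-gradient $\tilde\nabla_\ZZ k_{G(\Delta_\ZZ)}$, which decays only like $O(n^{-2})$ with a logarithmic-phase oscillation; consequently the critical weighted sum $\sum_n n\,|\tilde\nabla_\ZZ k_{G(\Delta_\ZZ)}(n)|$ from Proposition \ref{p: stimaL1pesataTq} diverges logarithmically, and a matching lower bound (following the sharpness construction of Proposition \ref{p: mult_sharpness}) translates into the divergence of the pairing, in direct analogy with the series $\sum_n 1/n = \infty$ of Proposition \ref{p: RunboundedLinftyBMO}. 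For a general locally doubling flow tree $(T, m)$, an analogous analysis applies at a branching vertex of $T$, leveraging the bounded-degree structure to reduce kernel estimates to their homogeneous counterparts.
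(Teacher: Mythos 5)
Your starting point is the right one and matches the paper's: take $G(\lambda)=\lambda^{i\tau}$, set $F=G(2-\cdot)$, and use the modulation identity $F(\opL)=\modE\,G(\opL)\,\modE$ to convert atomic cancellation into reinforcement. But the execution you outline has several gaps, and the route you propose is both more complicated and, in part, wrong.

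First, you plan to build counterexample atoms ``at a branching vertex of $T$,'' but the proposition does \emph{not} exclude $T\cong\ZZ$, where no branching vertex exists. (Contrast this with Proposition~\ref{p: RunboundedLinftyBMO}, whose statement explicitly rules out $T\cong\ZZ$.) The paper handles $\ZZ$ directly: with $a=\delta_0-\delta_{-1}$ one has $\modE a=\pm(\delta_0+\delta_{-1})=\pm(2\delta_0-a)$, so
\[
F(\Delta_\ZZ)a=2\,\modE\,G(\Delta_\ZZ)\delta_0-\modE\,G(\Delta_\ZZ)a,
\]
and since $k_{\Delta_\ZZ^{i\alpha}}(n)\approx_\alpha |n|^{-1}$ (a known closed-form Gamma-quotient formula, no stationary-phase analysis needed) the first term fails to be in $L^1(\ZZ)$ while the second is in $L^1$ by Theorem~\ref{t: MHmultipliers}\ref{en: MHmultipliers_std}. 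The oscillatory-integral/stationary-phase program you invoke from Proposition~\ref{p: mult_sharpness} concerns $e^{it\lambda}$, not $\lambda^{i\tau}$; carrying it over would require nontrivial adaptation and is not what's needed here.

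Second, passing from $\ZZ$ to a general locally doubling $(T,m)$ is the step you leave vaguest (``an analogous analysis applies at a branching vertex''). The paper's actual mechanism is \emph{transference via the level map quotient} $\pi:T\to\ZZ$: one lifts $a$ to $b=\chr_{\{o\}}-\Sigma\chr_{\{o\}}\in H^1(m)$ with $\Phi_\pi^* b=a$, and since $\Phi_\pi^*$ has norm $1$ on $L^1$ and intertwines $F(\opL)$ with $F(\Delta_\ZZ)$, the $L^1$-divergence on $\ZZ$ forces divergence on $T$. Your sketch via pairings against $\chr_{\Delta_{x_n}}$ bypasses this and is not carried out.

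Third, you do not address a necessary technical point: $F(\lambda)=(2-\lambda)^{i\tau}$ is singular at $\lambda=2$, so it is not in $L^2_{3/2+\varepsilon}$ near $2$ and the transference lemma (Proposition~\ref{p: Intral_smooth}) does not apply directly. The paper resolves this by replacing $G$ with a sequence of truncated multipliers $H_n$ (smooth, vanishing near $0$) satisfying the scale-invariant bound uniformly in $n$, applying transference to each $H_n(2-\cdot)$, and letting $n\to\infty$; one also needs a Closed Graph Theorem argument to pass from ``no uniform operator-norm bound'' to the existence of a single bad $F$. Your proposal omits both the truncation and the closed-graph step.

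In short: the multiplier choice and the modulation idea are correct, but the plan you outline (branching vertices plus stationary phase plus direct kernel pairings on $T$) would not go through as stated, and it misses the $\ZZ$-first-then-transfer structure and the truncation/closed-graph bookkeeping that make the paper's proof work.
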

\begin{proof}
Arguing by contradiction, assume instead that there exists $s>3/2$ such that, for any function $F$ satisfying \eqref{f: MH_cond_onlytwist}, the operator $F(\opL)$ is bounded from $H^1(m)$ to $L^1(m)$ . An application of the Closed Graph Theorem then shows that the bound
\begin{equation}\label{f: H1L1Sob_bd_contr}
\|F(\opL)\|_{H^1(m) \to L^1(m)} \leq C \sup_{t>0} \| F(2-t \cdot) \chi \|_{L^2_s}
\end{equation}
holds for some $C \in (0,\infty)$. We shall now contradict the validity of this bound.

Fix $\alpha \in \RR \setminus \{0\}$, and consider the function 
\begin{equation}\label{f: potimm}
G(\lambda)=\lambda^{i\alpha},
\end{equation}
which satisfies the condition 
\[
\sup_{t>0} \|G(t\cdot)\chi\|_{L^2_s} < \infty
\]
for every $s>0$.
Define $F=G(2 -\cdot)$. Then obviously 
\begin{equation}\label{f: twMHbdG}
\sup_{t>0} \|F(2-t\cdot)\chi\|_{L^2_s} < \infty.
\end{equation}

Consider the flow tree $\ZZ$ equipped with the counting measure. It is not difficult to see that the operator $G(\Delta_{\ZZ})= \Delta_\ZZ^{i\alpha}$ is $L^1$-unbounded; indeed
\[
k_{\Delta_\ZZ^{i\alpha}}(n) = \frac{2^{i\alpha}}{\sqrt{\pi}} \frac{\Gamma(1/2+i\alpha)}{\Gamma(-i\alpha)} \frac{\Gamma(|n|-i\alpha)}{\Gamma(|n|+1+i\alpha)}, \qquad n \in \ZZ,
\]
(see, e.g., \cite[eq.\ (1.12)]{CRSTV}), and known asymptotics for the Gamma function (see, e.g., \cite[eq.\ (5.11.12)]{DLMF}) show that $|k_{\Delta_\ZZ^{i\alpha}}(n)| \approx_\alpha |n|^{-1}$ for large $|n|$, so $G(\Delta_\ZZ) \delta_0 = k_{\Delta_\ZZ^{i\alpha}} \notin L^1(\ZZ)$. On the other hand, the function $a=\delta_0-\delta_{-1}$ is an atom in $H^1(\ZZ)$ and, by Theorem \ref{t: MHmultipliers}\ref{en: MHmultipliers_std}, $G(\Delta_{\ZZ}) a\in L^1(\ZZ)$. Then
\[\begin{split}
F(\Delta_{\ZZ})a
&= \modE G(\Delta_{\ZZ}) \modE a \\
&= \modE G(\Delta_{\ZZ})(\delta_0+\delta_{-1}) \\
&= \modE G(\Delta_{\ZZ})(2\delta_0-a) \\
&= 2 \modE G(\Delta_{\ZZ})\delta_0 - \modE G(\Delta_{\ZZ}) a.
\end{split}\]
Since $\modE G(\Delta_{\ZZ}) a \in L^1(\ZZ)$ and $\modE G(\Delta_{\ZZ}) \delta_0 \notin L^1(\ZZ)$, we conclude that $F(\Delta_{\ZZ}) a \notin L^1(\ZZ)$. Hence $F$ satisfies \eqref{f: twMHbdG}, but $F(\Delta_{\ZZ})$ does not map $H^1(\ZZ)$ into $L^1(\ZZ)$, thus contradicting the bound \eqref{f: H1L1Sob_bd_contr} in the case of the flow tree $\ZZ$.

We shall now use transference results to construct a similar counterexample for every locally doubling flow tree. To do so, we first multiply the function $G$ defined in \eqref{f: potimm} by a smooth cutoff function, thus obtaining a new multiplier $H$ supported in $[0,3]$ such that $H|_{[0,2]}= F|_{[0,2]}$ and, for every $s>0$,
\[
\sup_{t>0} \|H(t\cdot)\chi\|_{L^2_s} < \infty.
\]
Given $\eta\in C^{\infty}_c(\RR)$ supported in $(-2,2)$ such that $\eta(\lambda)=1$ for $\lambda\in [-1,1]$ and $0\leq \eta\leq 1$, define $H_n(\lambda)=H(\lambda)(1-\eta(n\lambda))$, for $n\in \NN$. Then $H_n$ is supported in $[0,3]\setminus (-1/n,1/n)$, it is smooth and, for every $s>0$,
\begin{equation}\label{f: Hn}
\sup_{n\in\NN} \sup_{t>0} \|H_n(t\cdot)\chi\|_{L^2_s} < \infty.
\end{equation}
Moreover, $H_n$ tends to $H$ pointwise and boundedly on $(0,\infty)$ as $n \to \infty$, thus $H_n(2-\Delta_{\ZZ})$ tends to $H(2-\Delta_{\ZZ})$ in the strong operator topology on $L^2(\ZZ)$. In particular, $H_n(2-\Delta_{\ZZ})a$ tends to $H(2-\Delta_{\ZZ})a$ in $L^2(\ZZ)$ as $n \to \infty$. Since $H(2-\Delta_{\ZZ})a = F(\Delta_{\ZZ})a$ is not in $L^1(\ZZ)$, it follows that $\sup_n \|H_n(2-\Delta_{\ZZ})a\|_{L^1(\ZZ)} = \infty$. 

Let now $(T,m)$ be a locally doubling flow tree and let $\pi : T \to \ZZ$ be the submersion defined by $\pi(x)=\ell(x)$, $x\in T$. Then, by Proposition \ref{p: Intral_smooth}, $H_n(2-\opL) \in \Comp(\pi)$ and $\pi(H_n(2-\opL)) = H_n(2-\Delta_{\ZZ})$.
 Define $b=\chr_{\{o\}} -\Sigma \chr_{\{o\}}$, which is in $H^1(m)$ and such that $\Phi_\pi^*b=a$, where $\Phi_\pi$ is the lifting operator associated to $\pi$, and $\Phi_\pi^*$ is its adjoint, as in \eqref{f: adj_lifting}. From \eqref{f: M1M2comp_adj} it follows that $\Phi_\pi^* H_n(2-\opL)b=H_n(2-\Delta_{\ZZ})a$; as $\Phi_\pi^* : L^1(m) \to L^1(\ZZ)$ has norm $1$, 
\[
\sup_{n \in \NN} \|H_n(2-\opL)b\|_{L^1(m)} \geq \sup_{n \in \NN} \|H_n(2-\Delta_{\ZZ})a\|_{L^1(\ZZ)} = \infty.
\]
Thus, if we set $F_n = H_n(2-\cdot)$, then  
\[
\sup_{n\in \NN} \|F_n(\opL)\|_{H^1(m) \to L^1(m)}=\infty;
\]
together with \eqref{f: Hn}, this shows that the $F_n$ provide a counterexample to \eqref{f: H1L1Sob_bd_contr}.
\end{proof}

Finally, we discuss the optimality of the threshold $3/2$ in the above multiplier theorems in the case of the homogeneous trees. We start with a preliminary lemma.

\begin{lemma}\label{l: nabla_L1H1}
Let $(T,m)$ be a locally doubling flow tree. Then, the flow gradient $\nabla$ is bounded from $L^1(m)$ to $H^1(m)$.
\end{lemma}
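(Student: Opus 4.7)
The plan is to exhibit an explicit atomic decomposition of $\nabla f$ for an arbitrary $f \in L^1(m)$. Since $\nabla = I - \Sigma$, a direct computation gives
\[
\nabla \chr_{\{y\}} = \chr_{\{y\}} - \chr_{\succ(y)} \eqdef b_y \qquad\forall y\in T,
\]
so expanding $f = \sum_{y \in T} f(y) \chr_{\{y\}}$ pointwise yields the formal series
\[
\nabla f = \sum_{y \in T} f(y) \, b_y.
\]
Each block $b_y$ has three features that make it (a fixed multiple of) an atom in $H^1(m)$: by the flow property $m(y) = m(\succ(y))$ one has $\int_T b_y \, dm = 0$; the support $\{y\} \cup \succ(y)$ has $m$-measure $2m(y)$; and $\|b_y\|_\infty = 1$, so $b_y/(2m(y))$ meets the $L^\infty$-size bound $1/m(\supp b_y)$.

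Next I would verify that $\{y\} \cup \succ(y)$ fits in the admissible-trapezoid framework of \cite{LSTV}. Since $(T,m)$ is locally doubling, $T$ has bounded degree (\cite[Corollary 2.3]{LSTV}), so $\{y\} \cup \succ(y)$ is finite of uniformly bounded cardinality, and it is contained in a union of a controlled number of admissible trapezoids of measure $\approx m(y)$ (in the notation used in the proof of Theorem \ref{t: lemmahebisch_sharp}, for instance, $\succ(y)$ and $\{y\}$ are each, or together, admissible at the smallest scale). Hence $b_y/(2m(y))$ decomposes as a uniformly bounded linear combination of $H^1(m)$-atoms, giving $\|b_y/(2m(y))\|_{H^1(m)} \lesssim 1$ with an implicit constant depending only on the local doubling constants of $m$.

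With these pieces in place, the finite partial sums $\sum_{y \in F} f(y) b_y = \sum_{y \in F} 2m(y)f(y) \cdot [b_y/(2m(y))]$ (over finite $F \subset T$) form a Cauchy sequence in $H^1(m)$ with
\[
\Bigl\| \sum_{y \in F} f(y) b_y \Bigr\|_{H^1(m)} \lesssim \sum_{y \in F} 2 |f(y)| \, m(y) \le 2 \|f\|_{L^1(m)};
\]
completeness of $H^1(m)$ gives a limit, which coincides pointwise (hence in $L^1(m)$) with $\nabla f$, yielding $\|\nabla f\|_{H^1(m)} \lesssim \|f\|_{L^1(m)}$. The main technical obstacle is the second step: matching the small sets $\{y\} \cup \succ(y)$ precisely to the admissible trapezoids of \cite{LSTV}, and in particular checking that the atomic estimate $\|b_y/(2m(y))\|_{H^1(m)} \lesssim 1$ holds with a constant independent of $y$; depending on the exact definition of admissibility used, one may prefer to further split $b_y$ into a bounded number of mean-zero pieces, each supported on a single admissible trapezoid.
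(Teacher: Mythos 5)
Your proposal is correct and takes essentially the same approach as the paper: expand $\nabla f = \sum_{y} f(y)\,(\chr_{\{y\}}-\chr_{\succ(y)})$ and observe that, by the flow property and local doubling, each block is a uniformly bounded multiple of an $H^1(m)$-atom (the paper disposes of the atom verification by simply citing the definition of atoms in \cite[Definition 4.4]{LSTV}, which is precisely what you are working to reconstruct). Your one loose end — that the size normalisation of an atom is measured against an admissible set \emph{containing} the support rather than the support itself, so one must check that $\{y\}\cup\succ(y)$ sits inside an admissible trapezoid of $m$-measure $\approx m(y)$ — is where local doubling (equivalently, the bounded-degree and bounded father-to-child measure ratio it entails) is genuinely needed, and your sketch correctly identifies this.
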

\begin{proof}
For all $f \in L^1(m)$, we can write
\[
f = \sum_{x \in T} f(x) \, m(x) \, b_x, \qquad b_x \defeq \frac{\chr_{\{x\}}}{m(x)},
\]
thus
\[
\nabla f = \sum_{x \in T} f(x) m(x) \nabla b_x.
\]
Moreover, from \cite[Definition 4.4]{LSTV} it is clear that
\[
\nabla b_x = \frac{\chr_{\{x\}}-\chr_{\succ(x)}}{m(x)}
\]
belongs to $H^1(m)$, with $H^1$-norm uniformly bounded in $x \in T$. Thus,
\[
\|\nabla f\|_{H^1(m)} \leq \sum_{x \in T} |f(x)| \, m(x) \, \|\nabla b_x\|_{H^1(m)} \lesssim \|f\|_{L^1(m)},
\]
as required.
\end{proof}

We now show that the smoothness threshold $3/2$ in Theorems \ref{t: multipliers} and \ref{t: MHmultipliers} cannot be replaced by any smaller quantity. As explained in Remark \ref{r: sharpness}, this is a consequence of the following result, where $L^\infty_s(\RR)$ denotes the $L^\infty$ Sobolev space of order $s$ on $\RR$; much as in \cite{MMu}, the idea is to test the above multiplier theorems on a truncated version of the Schr\"odinger propagator.

\begin{prop}\label{p: mult_sharpness}
Let $\chi_0 \in C_c^\infty(\RR)$ be such that $\supp \chi_0 \subseteq (-1/2,1/2)$ and $\chi_0|_{[-1/4,1/4]} = 1$. For all $t \in \RR$, let $F_t(\lambda) = e^{it\lambda} \chi_0(\lambda)$. Then, for all $s \geq 0$,
\begin{equation}\label{f: above_est_mult_schr}
\|F_t\|_{L^\infty_{s}} \lesssim_s (1+|t|)^{s} \qquad\forall t \in \RR.
\end{equation}
Moreover, there exists $t_0 > 0$ such that
\begin{equation}\label{f: below_est_mult_schr}
\|F_t(\Lq)\|_{H^1(\mq) \to L^1(\mq)} \gtrsim_q t^{3/2} \qquad \forall t \geq t_0,\ q \geq 2.
\end{equation}
\end{prop}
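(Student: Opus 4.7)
For \eqref{f: above_est_mult_schr}, an elementary Leibniz argument gives $\|\partial^k F_t\|_\infty \lesssim_k (1+|t|)^k$ for integer $k$, since each differentiation of $e^{it\lambda}$ produces a factor of $|t|$ while $\chi_0$ is smooth with compact support; complex interpolation then covers general real $s \geq 0$.

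For \eqref{f: below_est_mult_schr}, the plan is to invoke Lemma~\ref{l: nabla_L1H1} to reduce matters to proving $\|F_t(\Lq) \nabla\|_{L^1(\mq) \to L^1(\mq)} \gtrsim_q t^{3/2}$, and then to test this norm on the unit-$L^1$ function $g = \chr_{\{y\}}/\mq(y)$ for some fixed $y \in \Tq$. Using Proposition~\ref{p: KF(L)} together with the observation that $d(x,w) = d(x,y)+1$ holds for every $w \in \succ(y)$ whenever $x$ is not a strict descendant of $y$, one readily computes
\[
F_t(\Lq) \nabla g(x) = q^{-(\ell(x)+\ell(y))/2} \bigl[E_{F_t}(d(x,y)) - q^{1/2}\, E_{F_t}(d(x,y)+1)\bigr]
\]
on such $x$. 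Summing $|F_t(\Lq)\nabla g(x)|\, \mq(x)$ only over non-descendants of $y$ and using the count \eqref{f: vertices_number} to obtain (restricted to non-descendant $x$ at distance $k$) $\sum q^{(\ell(x)-\ell(y))/2} \gtrsim_q k\, q^{k/2}$ for $k \geq 2$, one arrives at
\[
\|F_t(\Lq)\|_{H^1(\mq) \to L^1(\mq)} \gtrsim_q \sum_{k \geq 2} k\, q^{k/2}\, \bigl|E_{F_t}(k) - q^{1/2} E_{F_t}(k+1)\bigr|.
\]

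The next step is a Fourier representation. Substituting \eqref{f: E_F} together with \eqref{f: kernelFZ} and evaluating the geometric series $\sum_{j\geq 0} (q^{-1} e^{2i\theta})^j = (1-q^{-1} e^{2i\theta})^{-1}$ (absolutely convergent since $q \geq 2$), one derives the clean identity
\[
q^{k/2}\bigl[E_{F_t}(k) - q^{1/2} E_{F_t}(k+1)\bigr] = -2i\, \widehat{H_q}(k+1),
\]
where $H_q(\theta) \defeq \dfrac{\sin\theta\,(1-e^{i\theta})}{1-q^{-1} e^{2i\theta}}\, e^{it(1-\cos\theta)}\, \chi_0(1-\cos\theta)$. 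The desired bound thereby reduces to showing $\sum_{n \geq 3} n\, |\widehat{H_q}(n)| \gtrsim_q t^{3/2}$.

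The main obstacle is this last lower bound, which I would establish via stationary-phase asymptotics. Near $\theta = 0$, the amplitude satisfies $\sin\theta\,(1-e^{i\theta})/(1-q^{-1}e^{2i\theta}) = -i\theta^2 \cdot q/(q-1) + O(\theta^3)$, so after the substitution $u = \sqrt t\, \theta$ the Fourier integral becomes a Fresnel integral whose closed form $\int u^2 e^{iu^2/2+iau}\,du = \sqrt{2\pi}\, e^{i\pi/4}(i+a^2) e^{-ia^2/2}$ (with $a = n/\sqrt t$) yields $|\widehat{H_q}(n)| \gtrsim_q (1+n^2/t)/t^{3/2}$ in the range $1 \leq n \leq ct$, provided the constant $c \in (0,\sqrt{3}/2)$ keeps the stationary point $\theta_0 = -\arcsin(n/t)$ inside the support of $\theta \mapsto \chi_0(1-\cos\theta)$. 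Corrections from higher-order Taylor terms of both amplitude and phase contribute errors of size $O(t^{-5/2})$, which are absorbed into the main term once $t$ exceeds a $q$-independent threshold $t_0$. Summing then gives $\sum_{n=1}^{ct} n\,(1+n^2/t)/t^{3/2} \gtrsim t^{3/2}$, with dominant contribution from $n \approx t$, which completes the proof.
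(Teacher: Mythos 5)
Your proof of \eqref{f: above_est_mult_schr} is exactly the paper's (Leibniz plus interpolation). For \eqref{f: below_est_mult_schr}, the reduction via Lemma \ref{l: nabla_L1H1} to a lower bound on $\|F_t(\Lq)\nabla_{\Tq}\|_{L^1\to L^1}$, the kernel computation using Corollary \ref{c: KF(L)}, the passage to $\tilde E_{F_t}(k)=E_{F_t}(k)-q^{1/2}E_{F_t}(k+1)$, and the representation via a geometric-series resummation of the Abel-transform formula as a Fourier coefficient $q^{k/2}\tilde E_{F_t}(k)=-2i\,\widehat{H_q}(k+1)$ — all of this is correct and precisely parallel to the paper. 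The vertex count $\sum_{x \not< y,\ d(x,y)=k} q^{(\ell(x)-\ell(y))/2}\approx q^{k/2}(k+1)$ is also the one the paper uses.

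The gap is in the Fresnel-integral lower bound. You replace both the amplitude and the phase by their quadratic Taylor polynomials at $\theta=0$, rescale $u=\sqrt{t}\,\theta$, and claim the remainder is $O(t^{-5/2})$ uniformly for $1\le n\le ct$. This cannot be right in the regime that actually matters. As you observe, the $t^{3/2}$ total is driven by $n$ of order $t$, and there $\xi=n/t$ is bounded below, so the stationary point $\theta_c(\xi)=-\arcsin\xi$ sits at distance $\approx 1$ from the origin. Near $\theta_c$ the error in replacing $1-\cos\theta$ by $\theta^2/2$ is $\approx \theta_c^4/24=O(1)$, hence the phase error is $O(t)$ — not $O(t^{-1})$ or anything small — and $e^{it(1-\cos\theta)}$ and $e^{it\theta^2/2}$ are completely decorrelated on the region contributing to the integral. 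The same comment applies to truncating the amplitude at $-i\theta^2 q/(q-1)$. So the displayed closed-form Fresnel evaluation does not approximate $\widehat{H_q}(n)$ for $n\approx ct$, and the claimed $|\widehat{H_q}(n)|\gtrsim_q(1+n^2/t)/t^{3/2}$ is unsubstantiated exactly where it is needed. (For $n\ll\sqrt t$ your Fresnel estimate is plausible, but as you compute, those $n$ only contribute $O(t^{1/2})$ to the sum.)

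The paper sidesteps this entirely: it applies the stationary-phase lemma with the exact phase $\phi_\xi(\theta)=1-\cos\theta+\xi\theta$ (no Taylor expansion), giving $I_q(\xi;t)=\sqrt{2\pi i}\,\dfrac{t^{-1/2}}{\sqrt{1-\xi^2}}\,e^{it\phi_\xi(\theta_c)}\,A_q(\theta_c(\xi))+O(t^{-3/2})$ uniformly in $|\xi|\le 1/2$, and then restricts to $\xi\in[1/4,1/2]$. There $|\theta_c(\xi)|\gtrsim 1$, so $|A_q(\theta_c(\xi))|\approx 1$, the cutoff $\chi_0(1-\cos\theta_c)=1$, and $|I_q(\xi;t)|\gtrsim t^{-1/2}$ for $t\ge t_0$. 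Thus $|\tilde E_{F_t}(k)|\gtrsim q^{-k/2}t^{-1/2}$ for $t/4\le k+1\le t/2$, and summing over this band of $\approx t$ values of $k$, each contributing $\approx k\,t^{-1/2}\approx t^{1/2}$, yields the $t^{3/2}$ lower bound. If you replace your Fresnel step by this direct stationary-phase argument on the band $n\approx t$, your proof goes through.
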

\begin{proof}
An elementary computation shows the validity of \eqref{f: above_est_mult_schr}; indeed, clearly
\[
\|F_t^{(k)}\|_\infty \lesssim_k (1+|t|)^k
\]
for all $k \in \NN$, which implies the result for integer $s$; the case of fractional $s$ follows by interpolation.

It remains to prove the lower bound \eqref{f: below_est_mult_schr}. By Lemma \ref{l: nabla_L1H1}, it will be enough to prove that
\[
\|F_t(\Lq) \nabla_{\Tq}\|_{L^1(\mq) \to L^1(\mq)} \gtrsim_q t^{3/2} \qquad \forall t \geq t_0,\ q \geq 2
\]
for a sufficiently large $t_0 > 0$.

From Corollary \ref{c: KF(L)} it follows that
\[
K_{F_t(\Lq)}(x,y) = q^{-(\ell(x)+\ell(y))/2} E_{F_t}(d(x,y)),
\]
thus also
\[\begin{split}
K_{F_t(\Lq) \nabla_{\Tq}}(x,y) 
&= K_{F_t(\opL)} (x,y) - \frac{1}{q} \sum_{z \in \succ(y)} K_{F_t(\opL)}(x,z) \\
&= q^{-(\ell(x)+\ell(y))/2} \left[ E_{F_t}(d(x,y)) - q^{-1/2} \sum_{z \in \succ(y)} E_{F_t}(d(x,z)) \right].
\end{split}\]
In particular,
\begin{equation}\label{f: kernel_FtNabla}
K_{F_t(\Lq) \nabla_{\Tq}}(x,y) = q^{-(\ell(x)+\ell(y))/2} \tilde E_{F_t}(d(x,y)) \qquad\text{if } x \not< y, 
\end{equation}
where, in light of \eqref{f: E_F} and Lemma \ref{l: kernelFZ},
\begin{equation}\label{f: osc_comp}
\begin{split}
\tilde E_{F_t}(k) 
&= E_{F_t}(k) - q^{1/2} E_{F_t}(k+1) \\
&= \sum_{j\geq 0} q^{-(k+2j)/2} \, [\tilde{\nabla}_{\ZZ} k_{F_t(\Delta_{\ZZ})}(k+2j+1)-\tilde{\nabla}_{\ZZ} k_{F_t(\Delta_{\ZZ})}(k+2j+2)] \\
&= \frac{1}{i\pi} \sum_{j\geq 0} q^{-(k+2j)/2}  \int_{-\pi}^{\pi} \sin\theta \, F_t(1-\cos\theta) \, (1-e^{i\theta}) \, e^{i(k+2j+1)\theta} \, d\theta\\
&= \frac{q^{-k/2}}{i\pi} \int_{-\pi}^{\pi} \sin\theta \, F_t(1-\cos\theta) \, (1-e^{i\theta}) \, \left(1-\frac{e^{2i\theta}}{q}\right)^{-1} \, e^{i(k+1)\theta} \, d\theta \\
&= \frac{q^{-k/2}}{i\pi} I_q\left(\frac{k+1}{t};t\right)
\end{split}
\end{equation}
for all $k \in \ZZ$ and $t > 0$; here
\begin{align*}
I_q(\xi;t) &= \int_{\RR} e^{it \phi_\xi(\theta)} A_q(\theta) \,d \theta,\\
\phi_\xi(\theta) &= 1-\cos\theta + \xi\theta,\\
A_q(\theta) &=   (1-e^{i\theta}) \, \sin \theta  \, \left(1-\frac{e^{2i\theta}}{q}\right)^{-1} \, \chi_0(1-\cos\theta) \, \tilde\chi_0(\theta)
\end{align*}
and $\tilde\chi_0 \in C^\infty_c(\RR)$ is such that $\supp \tilde\chi_0 \subseteq (-\pi/2,\pi/2)$ and $\tilde\chi_0|_{[-\pi/3,\pi/3]} = 1$.

We now study the oscillatory integral $I_q(\xi;t)$. Notice that
\[
\phi_\xi'(\theta) = \sin\theta + \xi, \qquad \phi_\xi''(\theta) = \cos\theta.
\]
Thus, if $|\xi| \leq 1/2$, then the phase function $\phi_\xi$ has critical points $\pi k - (-1)^k \arcsin \xi$ where $k \in \ZZ$, and the only critical point lying in the support of the amplitude $A_q$ is the one for $k=0$, i.e., $\theta_c(\xi) \defeq -\arcsin \xi$. Moreover, clearly $\phi_\xi''(\theta_c(\xi)) = \sqrt{1-\xi^2} \approx 1$, i.e., the critical point is nondegenerate. The method of stationary phase (see, e.g., \cite[Theorem 7.7.6]{Hor1}) then shows that
\[
I_q(\xi;t) = \sqrt{2\pi i} \frac{t^{-1/2}}{\sqrt{1-\xi^2}} e^{it\phi_\xi(\theta_c(\xi))} \, A_q(\theta_c(\xi))  + O(t^{-3/2}) 
\quad\text{as } t \to \infty,
\]
uniformly in $|\xi| \leq 1/2$. In particular, in the region where $1/4 \leq |\xi| \leq 1/2$, we have
\[
|A_q(\theta_c(\xi))| \approx 1
\]
and
\[
|I_q(\xi;t)| \gtrsim t^{-1/2} \qquad\forall t \geq t_0
\]
for some sufficiently large $t_0 > 0$.

By \eqref{f: osc_comp}, this shows that, for all $t \geq t_0$, and all $k \in \NN$ such that $t/4 \leq k+1 \leq t/2$, 
\[
|\tilde E_{F_t}(k)| \gtrsim q^{-k/2} t^{-1/2}.
\]
Therefore, by \eqref{f: kernel_FtNabla}, for all $t \geq t_0$,
\[\begin{split}
&\|F_t(\Lq) \nabla_{\Tq}\|_{L^1(\mq) \to L^1(\mq)} \geq \sup_{y \in \Tq} \sum_{x \in \Tq \colon x \not< y} |K_{F_t(\Lq) \nabla_{\Tq}}(x,y)| \, q^{\ell(x)} \\
&\approx \sum_{k \in \NN} |\tilde E_{F_t}(k)| \, (k+1) \,q^{k/2} \gtrsim \sum_{k \colon t/4 \leq k+1 \leq t/2} t^{-1/2} \, (k+1) \approx t^{3/2},
\end{split}\]
as required. In the middle step, we used the fact that, for all $y \in \Tq$ and $k \in \NN$,
\[
\sum_{x \in \Tq \colon d(x,y) = k, \, x \not< y} q^{(\ell(x)-\ell(y))/2} \approx q^{k/2} (k+1),
\]
which is proved much in the same way as \cite[eq.\ (2.18)]{MSV}.
\end{proof}

\begin{remark}\label{r: sharpness}
Clearly \eqref{f: below_est_mult_schr} implies an analogous lower bound for the $L^1$ operator norm of $F_t(\Lq)$, while \eqref{f: above_est_mult_schr} implies an analogous upper bound for $\|F_t\|_{L^2_s}$, because $F_t$ is supported in $[-1/2,1/2]$; this shows that the bound \eqref{f: multipliers_bound} cannot hold for any $s < 3/2$, thus proving the optimality of Theorem \ref{t: multipliers} on $(\Tq,\mq)$ with $q \geq 2$. Moreover,
\[
\sup_{0 < t \leq 2} \|F(t\cdot) \chi\|_{L^2_s} \lesssim_s \sup_{0 < t \leq 2} \|F(t\cdot) \chi\|_{L^\infty_s} \lesssim_s \|F\|_{L^\infty_s}
\]
for all $s \geq 0$, where $\chi$ is as in Theorem \ref{t: MHmultipliers}; thus, similar considerations prove that no bound of the form
\[
\|F(\Lq)\|_{H^1 \to L^1} \lesssim_s \sup_{0 < t \leq 2} \|F(t\cdot) \chi\|_{L^2_s}
\]
may hold when $s < 3/2$ and $q \geq 2$; an application of the Closed Graph Theorem (as in the proof of Proposition \ref{p: twist_MH_counterex}) then shows the optimality of the $H^1 \to L^1$ bound of Theorem \ref{t: MHmultipliers}\ref{en: MHmultipliers_std}. This discussion actually shows that the threshold $3/2$ remains optimal even when the smoothness conditions in Theorems \ref{t: multipliers} and \ref{t: MHmultipliers} are strengthened by replacing $L^2_s$ with $L^\infty_s$.
\end{remark}

\end{document}